\def\draft{n}
\def\printname#1{
        \if\draft y
                \smash{\makebox[0pt]{\hspace{-0.5in}
                        \raisebox{8pt}{\tt\tiny #1}}}
        \fi
}
\def\lbl#1{\label{#1}\printname{#1}}
\def\nc{\newcommand}
   \nc\FI[2]{\begin{figure}
    \begin{center}\input{#1.pstex_t}\end{center}
    \caption{#2}
    \lbl{#1}
  \end{figure}}
\nc\FIG[3]{\begin{figure}
    \includegraphics[#3]{#1.eps}
    \caption{#2}
    \lbl{fig:#1}
    \end{figure}}
\nc\FF[3]{\begin{figure}
    \includegraphics[#3]{#1.eps}
    \caption{#2}
    \lbl{#1}
    \end{figure}}
    \nc\FIGc[3]{\begin{figure}[htpb]
    \centering
    \includegraphics[height=#3]{#1.eps}
    \caption{#2}
    \lbl{fig:#1}
    \end{figure}}
    \nc\FIGh[3]{\begin{figure}[htpb]
    \includegraphics[height=#3]{#1.eps}
    \caption{#2}
    \lbl{fig:#1}
    \end{figure}}
\theoremstyle{plain}
\newtheorem{theorem}{Theorem}[section]
\newtheorem{lemma}[theorem]{Lemma}
\newtheorem{corollary}[theorem]{Corollary}
\newtheorem{proposition}[theorem]{Proposition}
\theoremstyle{definition}
\newtheorem{remark}[theorem]{Remark}
\newcommand{\red}[1]{{\color{red}#1}}
\newcommand\no[1]{}
\newcommand{\la}{\langle}
\newcommand{\ra}{\rangle}
\newcommand{\RR}{\mathbb{R}}
\newcommand{\CC}{\mathbb{C}}
\newcommand{\ZZ}{\mathbb{Z}}
\newcommand{\BR}{\mathbb{R}}
\newcommand{\BC}{\mathbb{C}}
\newcommand{\BQ}{\mathbb{Q}}
\newcommand{\BN}{\mathbb{N}}
\newcommand{\BZ}{\mathbb{Z}}
\newcommand{\BT}{\mathbb{T}}
\newcommand{\BA}{\mathbb{A}}
\newcommand\bk{\mathbf k}
\newcommand\bl{\mathbf l}
\newcommand\bm{\mathbf m}
\newcommand\bn{\mathbf n}
\newcommand\bv{\mathbf v}
\newcommand\by{\mathbf y}
\newcommand\bu{\mathbf u}
\newcommand\bx{\mathbf x}
\newcommand\bw{\mathbf w}
\newcommand\gr{\text{gr}}
\newcommand{\ve}{\varepsilon}
\newcommand{\vp}{\varphi}
\newcommand{\D}{\Delta}
\newcommand{\La}{\Lambda}
\newcommand{\uLa}{\underline{\Lambda}}
\newcommand{\al}{\alpha}
\newcommand{\heta}{\hat \eta}
\newcommand{\tsur}{\tau^{\text{sur}}}
\newcommand{\intS}{\mathring{\Sigma}}
\newcommand{\circD}{\mathring{\D}}
\newcommand{\cS}{\mathscr{S}}
\newcommand{\tcS}{\widetilde{\cS}}
\newcommand{\cP}{\mathcal{P}}
\newcommand{\tA}{\widetilde{A}}
\newcommand{\tcA}{\widetilde{\mathcal{A}}}
\newcommand{\tZ}{\widetilde{Z}}
\newcommand{\tcZ}{\widetilde{\mathcal{Z}}}
\newcommand{\tBT}{\widetilde{\BT}}
\newcommand{\cA}{\mathcal{A}}
\newcommand{\cB}{\mathcal{B}}
\newcommand{\cC}{\mathcal{C}}
\newcommand{\cD}{\mathcal{D}}
\newcommand{\cN}{\mathcal{N}}
\newcommand{\cR}{\mathcal{R}}
\newcommand{\cF}{\mathcal{F}}
\newcommand{\cH}{\mathcal{H}}
\newcommand{\cT}{\mathcal{T}}
\newcommand{\cQ}{\mathcal{Q}}
\newcommand{\cE}{\mathcal{E}}
\newcommand{\cU}{\mathcal{U}}
\newcommand{\cZ}{\mathcal{Z}}
\newcommand{\cV}{\mathcal{V}}
\newcommand{\cY}{\mathcal{Y}}
\newcommand{\fS}{\mathfrak{S}} %
\newcommand{\ofS}{\overline{\fS}}  %
\newcommand{\fY}{\mathfrak{Y}}
\newcommand{\fX}{\mathfrak{X}}
\newcommand{\sX}{\mathfrak{X}}
\newcommand{\fN}{\mathfrak{N}}
\newcommand{\fM}{\mathfrak{M}}
\newcommand{\fU}{\mathfrak{U}}
\newcommand{\fm}{\mathfrak m}
\newcommand{\fn}{\mathfrak n}
\newcommand{\fT}{\mathfrak T}
\newcommand{\tiX}{\widetilde \fX}
\newcommand{\XD}{\fX(\D)}
\newcommand{\bfN}{\widetilde{\fN}}
\newcommand{\Om}{\Omega}
\newcommand{\vpD}{\vp_\D}
\newcommand{\tvD}{\tilde \varphi_\D}
\newcommand{\tvpD}{\tilde {\vp}_\D}
\newcommand{\SP}{{(\Sigma,\cP)}} %
\newcommand{\SPp}{{(\Sigma,\cP')}}
\newcommand{\SpP}{{(\Sigma',\cP)}}
\newcommand{\Sx}{\cS_\xi}
\newcommand{\Se}{\cS_\ve}
\newcommand{\ofSP}{(\ofS,\cV)} %
\newcommand{\tStP}{(\widetilde{\Sigma},\widetilde{\cP})}
\newcommand{\cSigma}{\mathring{\Sigma}}
\newcommand{\cHd}{\mathcal{H}_\bullet}
\newcommand{\MN}{(M,\cN)}
\newcommand{\pM}{\partial M}
\newcommand{\pS}{\partial \Sigma}
\newcommand\LMN{\cT\MN}
\newcommand\hPhi{\hat \Phi}
\newcommand{\qbinom}[2]{\left[\begin{matrix}
#1 \\ #2
\end{matrix}  \right]}
\newcommand\be{\begin{equation}}
\newcommand\ee{\end{equation}}
\def\fr{\operatorname{fr}}
\newcommand\BDs{B_\D^{\text{sur}}}
\newcommand\BDps{B_{\D,+}^{\text{sur}}}
\newcommand\vD{\varphi_\D}
\newcommand\Dess{\D_{\mathrm{ess}}}
\newcommand{\fXsur}{\mathfrak{X}^{\text{sur}}}
\newcommand{\fXsurD}{\fXsur(\D)}
\newcommand{\fXsurDp}{\fXsur_+(\D)}
\newcommand\vpDp{\vp_{\D'}}
\newcommand\bPsi{\bar \Psi}
\newcommand\bY{\bar Y}
\newcommand{\embed}{\hookrightarrow}
\newcommand{\tr}{\mathrm{tr}}
\newcommand{\Trq}{\mathrm{Tr}_q}
\newcommand{\Mat}{\mathrm{Mat}}
\newcommand{\id}{\mathrm{id}}
\newcommand{\Rel}{\mathrm{Rel}}
\newcommand{\pr}{\mathrm{pr}}
\newcommand\bd{{\mathrm{bd}}}
\newcommand\mon{{\mathrm{mon}}}
\newcommand\inn{{\mathrm{inn}}}
\newcommand{\supp}{\mathrm{supp}}
\DeclareMathOperator{\ord}{ord}
\newcommand{\Col}{\mathrm{Col}}
\newcommand{\Id}{\mathrm{Id}}
\newcommand{\inqa}{\mathrm{in}_{\mathrm{q,a}}}
\newcommand\Newt{\text{Newt}}
\newcommand\Cone{\text{Cone}}
\newcommand\Sp{\mathrm{Spec}}
\newcommand\Hom{{\mathrm{Hom}}}
\newcommand\ot{{\otimes}}
\newcommand\refl{\mathrm{refl}}
\newcommand{\hatP}{\widehat{\cP}}
\newcommand\Xp{\sX_+(\D)}
\newcommand\bXp{\overline\sX_+(\D)}
\newcommand\bXpr{\overline\sX'(\D)}
\newcommand\siD{\overline{\D}_\inn}
\newcommand\reS{\overline{\cS}}
\newcommand\reX{\overline{\sX}}
\newcommand\reNf{\overline{\fN}}
\newcommand\csX{\mathring{\mathfrak{X}}}
\newcommand\bfM{\widetilde {\fM}}
\newcommand\Cx{\mathbb{C}^\times}
\def\XzD{\sX_\xi(\D)}
\def\XeD{\sX_\ve(\D)}
\def\tXeD{\widetilde\sX_\ve(\D)}
\def\tXzD{\widetilde \sX_\xi(\D)}
\def\tPhi{\tilde\Phi}
\def\tSz{\widetilde \cS_\xi}
\def\tSe{\widetilde\cS_\ve}
\def\tF{\tilde F}
\def\tX{\widetilde \sX}
\def\tS{\widetilde \cS}
\def\ttS{\widetilde{\cS}}
\def\SPpp{(\Sigma', \cP')}
\newcommand{\An}{\mathbb{A}}
\def\SQ{(\Sigma, \cQ)}
\def\LSQ{\cT\SQ}
\newcommand\incl[2]{{\includegraphics[height=#1]{#2.eps}}}
\def\Lplu{\left(  \raisebox{-13pt}{\incl{1.2 cm}{figures/Lplus}} \right) }
\def\Lpluss{\left(  \raisebox{-13pt}{\incl{1.2 cm}{figures/Lpluss}} \right) }
\def\uplus{\left(  \raisebox{-13pt}{\incl{1.2 cm}{figures/uplus}} \right) }
\def\uminus{\left(  \raisebox{-13pt}{\incl{1.2 cm}{figures/uminus}} \right) }
\def\LLL{\left(  \raisebox{-13pt}{\incl{1.2 cm}{figures/LLL}} \right) }
\def\Lminus{\left(  \raisebox{-13pt}{\incl{1.2 cm}{figures/Lminus}} \right) }
\def\negkinka{\raisebox{-13pt}{\incl{1.2 cm}{figures/negativekinka}} }
\def\negkinkb{\raisebox{-13pt}{\incl{1.2 cm}{figures/negativekinkb}} }
\def\negkinkc{\raisebox{-13pt}{\incl{1.2 cm}{figures/negativekinkc}} }
\def\negkinkd{\raisebox{-13pt}{\incl{1.2 cm}{figures/negativekinkd}} }
\def\hatP{{\widehat{\partial}}}
\def\SxiS{\cS_{\xi}(\fS)}
\def\ZxiS{Z_{\xi}(\fS)}
\def\YxiS{ {\mathscr Y}_{\xi}(\fS)}
\def\ord{\mathrm{ord}}
\def\onto{\twoheadrightarrow}
\def\oD{{\mathring\Delta}}
\newcommand\TrqN{\text{tr}_{q^{N^2}}}
\newcommand\Trxi{\text{tr}_{\xi}}
\begin{document}

\doublespacing  %

\newcommand{\thesisTitle}{Quantum torus methods for Kauffman bracket skein modules}
\newcommand{\yourName}{Jonathan M. Paprocki}
\newcommand{\yourSchool}{Mathematics}
\newcommand{\yourMonth}{December}
\newcommand{\yourYear}{2019}

\begin{titlepage}
\begin{center}

\begin{singlespacing}

\textbf{\MakeUppercase{\thesisTitle}}\\
\vspace{10\baselineskip}
A Dissertation\\
Presented to\\
The Academic Faculty\\
\vspace{3\baselineskip}
By\\
\vspace{3\baselineskip}
\yourName\\
\vspace{3\baselineskip}
In Partial Fulfillment\\
of the Requirements for the Degree\\
Doctor of Philosophy in the\\
School of \yourSchool\\
\vspace{3\baselineskip}
Georgia Institute of Technology\\
\vspace{\baselineskip}
\yourMonth{} \yourYear{}
\vfill
Copyright \copyright{} \yourName{} \yourYear{}

\end{singlespacing}

\end{center}
\end{titlepage}

\currentpdfbookmark{Title Page}{titlePage}  %

\newcommand{\committeeMemberOne}{Dr. Thang T. Q. L\^e, Advisor}
\newcommand{\committeeMemberOneDepartment}{School of Mathematics}
\newcommand{\committeeMemberOneAffiliation}{Georgia Institute of Technology}

\newcommand{\committeeMemberTwo}{Dr. Jean Bellissard}
\newcommand{\committeeMemberTwoDepartment}{School of Mathematics, School of Physics}
\newcommand{\committeeMemberTwoAffiliation}{Georgia Institute of Technology}

\newcommand{\committeeMemberThree}{Dr. Stavros Garoufalidis}
\newcommand{\committeeMemberThreeDepartment}{School of Mathematics}
\newcommand{\committeeMemberThreeAffiliation}{Georgia Institute of Technology}

\newcommand{\committeeMemberFour}{Dr. Wade Bloomquist}
\newcommand{\committeeMemberFourDepartment}{Department of Mathematics}
\newcommand{\committeeMemberFourAffiliation}{University of California, Santa Barbara}

\newcommand{\committeeMemberFive}{Dr. Igor Belegradek}
\newcommand{\committeeMemberFiveDepartment}{School of Mathematics}
\newcommand{\committeeMemberFiveAffiliation}{Georgia Institute of Technology}

\newcommand{\approvalDay}{26}
\newcommand{\approvalMonth}{July}
\newcommand{\approvalYear}{2019}

\begin{titlepage}
\begin{singlespacing}
\begin{center}

\textbf{\MakeUppercase{\thesisTitle}}\\
\vspace{10\baselineskip}

\end{center}
\vfill

\ifdefined\committeeMemberFour

Approved by:
\vspace{2\baselineskip}		%

\begin{minipage}[b]{0.4\textwidth}
	
	\committeeMemberOne\\
	\committeeMemberOneDepartment\\
	\textit{\committeeMemberOneAffiliation}\\
	
	\committeeMemberTwo\\
	\committeeMemberTwoDepartment\\
	\textit{\committeeMemberTwoAffiliation}\\
	
	\committeeMemberThree\\
	\committeeMemberThreeDepartment\\
	\textit{\committeeMemberThreeAffiliation}\\
	
	\vspace{2\baselineskip}		%
	
\end{minipage}
\hspace{0.1\textwidth}
\begin{minipage}[b]{0.4\textwidth}
	
	\committeeMemberFour\\
	\committeeMemberFourDepartment\\
	\textit{\committeeMemberFourAffiliation}\\
	
	\ifdefined\committeeMemberSix
	\committeeMemberFive\\
	\committeeMemberFiveDepartment\\
	\textit{\committeeMemberFiveAffiliation}\\
	
	\committeeMemberSix\\
	\committeeMemberSixDepartment\\
	\textit{\committeeMemberSixAffiliation}\\
	
	Date Approved: \approvalMonth{} \approvalDay, \approvalYear
	\vspace{1\baselineskip}		%
	
	\else
	
	\committeeMemberFive\\
	\committeeMemberFiveDepartment\\
	\textit{\committeeMemberFiveAffiliation}\\
		
	Date Approved: \approvalMonth{} \approvalDay, \approvalYear
	\vspace{5\baselineskip}		%
	
	\fi
	
\end{minipage}

\else

\hspace{0.6\textwidth}
\begin{minipage}[b]{0.4\textwidth}
	
	Approved by:
	\vspace{2\baselineskip}		%
	
	\committeeMemberOne\\
	\committeeMemberOneDepartment\\
	\textit{\committeeMemberOneAffiliation}\\
	
	\committeeMemberTwo\\
	\committeeMemberTwoDepartment\\
	\textit{\committeeMemberTwoAffiliation}\\
	
	\committeeMemberThree\\
	\committeeMemberThreeDepartment\\
	\textit{\committeeMemberThreeAffiliation}\\
	
	\vspace{2\baselineskip}		%
	
	Date Approved: \approvalMonth{} \approvalDay, \approvalYear
	\vspace{\baselineskip}		%
	
\end{minipage}

\fi

\end{singlespacing}
\end{titlepage}

\newcommand{\yourQuote}{The quantum, strangest feature of this strange universe, cracks the armor that conceals the secret of existence.}
\newcommand{\yourAuthor}{John Archibald Wheeler}

\begin{titlepage}
\begin{center}

\vspace*{\fill}
\yourQuote\\
\textit{\yourAuthor}
\vspace*{\fill}

\end{center}
\end{titlepage}

\newcommand{\yourDedication}{For Eris}

\begin{titlepage}
\begin{center}

\vspace*{\fill}
\yourDedication\\
\vspace*{\fill}

\end{center}
\end{titlepage}

\pagenumbering{roman}
\addcontentsline{toc}{chapter}{Acknowledgments}
\setcounter{page}{5} %
\clearpage
\begin{centering}
\textbf{ACKNOWLEDGEMENTS}
\vspace{\baselineskip}
\end{centering}
\phantom{blah}\\
This dissertation is dedicated to everybody in my life who has supported me throughout my trek in graduate school. I would never have made it this far without a large supportive community cheering me on every step of the way.

I would first and foremost like to thank my adviser, Prof. Thang T. Q. L\^e. Studying under a master of infinite patience with the highest possible standards has been the greatest privilege of my life, and the technical skill and determination he has bestowed upon me through his tireless effort is a priceless gift that I never would have be able to obtain on my own.

I'd also like to thank my oldest mentor, Prof. Jean Bellissard, who first took me under his wing as an undergraduate ten years ago in 2009 directing a reading course on the quantum Hall effect. This was my initial exposure to the intersection of quantum physics and topology, marking my first step towards studying this endlessly fascinating topic.

I also must thank Prof. Stavros Garoufalidis, under whom I first studied Chern-Simons theory, and Prof. Matilde Marcolli, who introduced me to wonders of mathematical physics too numerous to list.

I also must thank everybody who supported my personal development and mental health through all these years. This list is very long, but there is no way I would have survived this trek without their aid. Special thanks must be acknowledged for my parents who have supported me in all of my endeavors for my entire life. And to my friends, too numerous to list, but Michelle Su, Amy Stewart, Megan Iams, Samantha Van Ness, Yeager Gaston, and Jonathan Popham, deserve a particular distinction for having made a special difference in my life at one point or another.

And to the organizations and communities I have had the privilege of being a part of that have kept me happy and sane this whole time. This includes Psi Upsilon Gamma Tau, the Southeast Regional Burning Man community, the Holochain community, Freeside Atlanta, the Qualia Computing community, and the UCLA Mind Wrecking Crew.

And lastly, to Georgia Tech, the focal point of my life since 2007. To Hell With Georgia!

\clearpage
\renewcommand{\cftchapdotsep}{\cftdotsep}  %
\renewcommand{\cftchapfont}{\bfseries}  %
\renewcommand{\cftchappagefont}{}  %
\renewcommand{\cftchappresnum}{Chapter }
\renewcommand{\cftchapaftersnum}{:}
\renewcommand{\cftchapnumwidth}{5em}
\renewcommand{\cftchapafterpnum}{\vskip\baselineskip} %
\renewcommand{\cftsecafterpnum}{\vskip\baselineskip}  %
\renewcommand{\cftsubsecafterpnum}{\vskip\baselineskip} %
\renewcommand{\cftsubsubsecafterpnum}{\vskip\baselineskip} %

\titleformat{\chapter}[display]
{\normalfont\bfseries\filcenter}{\chaptertitlename\ \thechapter}{0pt}{\MakeUppercase{#1}}

\renewcommand\contentsname{Table of Contents}

\begin{singlespace}
\tableofcontents
\end{singlespace}

\currentpdfbookmark{Table of Contents}{TOC}

\clearpage

\addcontentsline{toc}{chapter}{List of Figures}
\begin{singlespace}
\setlength\cftbeforefigskip{\baselineskip}  %
\listoffigures
\end{singlespace}
\clearpage
\no{
\addcontentsline{toc}{chapter}{Notation}
\setlength\cftbeforefigskip{\baselineskip} 
\input{notation.tex}
\clearpage 
}

\clearpage
\begin{centering}
\textbf{SUMMARY}\\
\vspace{\baselineskip}
\end{centering}

We investigate aspects of Kauffman bracket skein algebras of surfaces and modules of 3-manifolds using quantum torus methods. These methods come in two flavors: embedding the skein algebra into a quantum torus related to quantum Teichm\"uller space, or filtering the algebra and obtaining an associated graded algebra that is a monomial subalgebra of a quantum torus. We utilize the former method to generalize the Chebyshev homomorphism of Bonahon and Wong between skein algebras of surfaces to a Chebyshev-Frobenius homomorphism between skein modules of marked 3-manifolds, in the course of which we define a surgery theory, and whose image we show is either transparent or skew-transparent. The latter method is used to show that skein algebras of surfaces are maximal orders, which implies a refined unicity theorem, shows that $SL_2\BC$-character varieties are normal, and suggests a conjecture on how this result may be utilized for topological quantum compiling.

\clearpage
\pagenumbering{arabic}
\setcounter{page}{1} %

\titleformat{\chapter}[display]
{\normalfont\bfseries\filcenter}{\MakeUppercase\chaptertitlename\ \thechapter}{0pt}{\MakeUppercase{#1}}  %
\titlespacing*{\chapter}
  {0pt}{0pt}{30pt}	%

\titleformat{\section}{\normalfont\bfseries}{\thesection}{1em}{#1}

\titleformat{\subsection}{\normalfont}{\uline{\thesubsection}}{0em}{\uline{\hspace{1em}#1}}

\titleformat{\subsubsection}{\normalfont\itshape}{\thesubsection}{1em}{#1}

\chapter{Introduction}

In this dissertation we study aspects of Kauffman bracket skein modules of 3-manifolds and Kauffman bracket skein algebras of surfaces\footnote{For the entirety of this work, we will collectively refer to both simply as skein modules.} primarily via the utilization of two different ``quantum torus methods,'' one of which is a technique with close ties with quantum hyperbolic geometry and the other is via standard methods of filtered algebras and associated graded algebras. We begin in Section \ref{s.introkauffman} with an overview of what skein modules are, where they came from, and their relationship to low-dimensional topology. We then give a picture of what quantum tori are in Section \ref{s.introtori} and explain how they may be wielded as a powerful tool with which to study skein modules. Then in Section \ref{s.introresults} we give an outline of the content of this dissertation, including a discussion of our primary results and a conjecture on possible applications of some of our results towards topological quantum computing. All work in this dissertation is joint with L\^e \cite{LP2018,LP2019}.

Throughout the introduction we fix a commutative domain $R$ with a distinguished invertible element $q^{1/2}$ whose square $q$ is sometimes called the {\em quantum parameter}. We keep in mind the examples $R=\BZ[q^{\pm 1/2}]$ and $R=\BC$. Several algebras are defined that depend on $q$ and this is denoted with a subscript, but we will frequently suppress this subscript when the value of $q$ is understood.

\section{Kauffman bracket skein modules}\lbl{s.introkauffman}

\subsection{History}

Our story begins with the discovery of the Jones polynomial in 1985 \cite{Jo1985} by Vaughn Jones. This was the first new knot polynomial discovered since Alexander in 1928 \cite{Al1928}, and this surprising discovery landed Jones the Fields medal in 1990. Computation of the Jones polynomial involved methods of operator algebras that were unfamiliar to most topologists, and in the spirit of Conway's discovery of a skein relation to compute the closely related Conway-Alexander polynomial in 1969 \cite{Co1970}, Louis Kauffman uncovered what is now called the Kauffman bracket skein relation in 1987 \cite{Ka1987} given by $T=qT_+ + q^{-1}T_-$, depicted in Figure \ref{fig:figures/skein4}, to compute the closely related Kauffman bracket polynomial. Topologists were left with the unhappy situation that they possessed a new knot invariant but it was described entirely in terms of either operator algebras or diagrams drawn in the plane - no 3-dimensional description of the Jones polynomial yet existed. Atiyah challenged Witten to find such a description, and he succeeded in 1989 \cite{Wi1989} with the invention of Chern-Simons theory - the first topological quantum field theory. This achievement, among others, led to Witten sharing the 1990 Fields medal with Jones, as well as Drinfeld and Mori. This thread in scientific history has since spawned a thousand rivers of inquiry in the field of mathematical physics and physical mathematics, and our contribution here marks another inch towards understanding this story.

\FIGc{figures/skein4}{From left to right:  $T, T_+, T_-$.}{1.2cm}

Skein modules were introduced independently by Przytycki \cite{Pr1999} while attempting to study the invariants of colored knots and by Turaev \cite{Tu1990,Tu1991} as a deformation quantization of the Poisson algebra of loops on surfaces, or relatedly, as a geometric quantization of the Atiyah-Bott-Weil-Petersen-Goldman symplectic structure of the $SL_2\BC$ character variety.

In the spirit of the {\em analysis situs} of Leibniz, Przytycki has described skein modules as {\em algebra situs} - an algebraic topology based on knots. The relationship between manifolds and knots that can be embedded within them has many fruitful analogies with the relationship between rings and their modules. Here, we encodes topological and geometrical information about surfaces and 3-manifolds by forming a module generated by isotopy classes of knots embedded in the manifold quotiented by the Kauffman bracket skein relation in Figure \ref{fig:figures/skein4}. These knots can be thought of as functions on the $SL_2\BC$ character variety of the manifold, thus this makes skein modules a sort of K-theoretic approach to the $SL_2\BC$-character variety. A more physical way of thinking about skein modules would be that special elements known as Jones-Wenzl idempotents represent Temperley-Lieb-Jones anyons conjectured to exist in certain fractional quantum Hall effect systems, giving possible connections towards topological quantum computing that we explore in the last chapter.

Skein modules have been found to have connections with many important objects in classical and quantum topology, including but not limited to character varieties \cite{Bu1997,PS2000,Si2004,Ma2011}, the Jones polynomial \cite{Le2006}, (quantum) Teichm\"uller spaces \cite{BW2011,Le2017}, cluster algebras \cite{Mu2012}, topological quantum computing \cite{Ka2002,Wa2010}, and topological quantum field theories \cite{BHMV1995,BW2016}. They occupy a crossroads where representation-theoretic and diagrammatic aspects of quantum topology as well as classical topology intersect \cite{Tu1991}.

\subsection{Skein modules for marked 3-manifolds}

Let us first introduce Kauffman bracket skein modules for marked 3-manifolds. A {\em marked 3-manifold} is a pair $(M,\cN)$, where $M$ is an oriented  3-manifold with (possibly empty) boundary $\partial M$ and a 1-dimensional oriented submanifold $\cN \subset \partial M$ such that each of the finite number of connected components of $\cN$ is diffeomorphic to the  open interval $(0,1)$. 

An {\em $\cN$-tangle in $\MN$} is a compact 1-dimensional non-oriented submanifold $T$ of $M$ equipped with a normal vector field, called the {\em framing}, such that $\partial T= T \cap \cN$ and the framing at each boundary point of $T$ is a positive tangent vector of $\cN$. Two $\cN$-tangles are {\em $\cN$-isotopic} if they are isotopic in the class of $\cN$-tangles. Given a single component $\cN$-tangle $\al$, if $\al$ is diffeomorphic to $S^1$ then $\al$ is called an {\em $\cN$-knot}, otherwise $\al$ is called an {\em $\cN$-arc}.

The {\em Kaufman bracket skein module of $\MN$ at $q$}, $\cS\MN:=\cS_q\MN$, is the $R$-module freely spanned by $\cN$-isotopy classes of $\cN$-tangles modulo the local relations described in Figure \ref{fig:figures/skein2}.

\FIGc{figures/skein2}{Defining relations of the skein module. From left to right: skein relation, trivial knot relation, and trivial arc relation.}{1.5cm}

When $\cN=\emptyset$ we don't need the third relation (the trivial arc relation), and this is the Kauffman bracket skein module first found by Przytycki \cite{Pr1999} and Turaev \cite{Tu1990,Tu1991}. The skein relation and trivial knot relation were introduced by Kauffman \cite{Ka1987} in his study of the Jones polynomial.

An $\cN$-tangle $T$ is {\em essential} when no component of $T$ satisfies the trivial knot or trivial arc relation. The set of all essential $\cN$-tangles is a basis for $\cS\MN$ over $R$. 

The trivial arc relation was introduced by Muller in \cite{Mu2012} where he defined the skein module of marked surfaces (see Subsection \ref{s.introsurface}), which was then generalized to marked 3-manifolds by L\^e \cite{Le2017}. It turns out that the extension to include the marked set $\cN$ on the boundary $\partial M$ makes the study of the skein modules much easier both in technical and conceptual perspectives.

\subsection{Skein algebras of marked surfaces}\lbl{s.introsurface}

Throughout this work we deal with marked surfaces, which has a special case we call finite type surfaces. This distinction is inessential for most of our main results, which apply to both kinds of surfaces, but the type of surface we use to prove a given result is the context in which that result is most easily proven and the other case follows as a corollary. As the way we treat these objects has subtle but important differences, we use different language and notation when describing them despite the fact that the same language could be used for both types in some cases.

A {\em marked surface} is a pair $\SP$, where $\Sigma$ is an oriented compact surface with (possibly empty) boundary $\pS$ and a finite set $\cP\subset \pS$. Define an $R$-module $\cS\SP:= \cS_q\MN$, where $M = \Sigma \times (-1,1)$ and $\cN=\cP \times (-1,1)$. Given two $\cN$-tangles $T, T'$, define the product $T T'$ by stacking $T$ above $T'$. To be more precise, we isotope $T$ to lie in $\Sigma \times (0,1)$ and $T'$ to lie in $\Sigma \times (-1,0)$, and consider $TT'$ to be the union $T \cup T'$. If $\beta$ is an {\em unmarked boundary component of $\pS$}, i.e. $\beta \cap \cP=\emptyset$, then $\beta$ is a central element of $\cS\SP$. We write $\cH$ for set of all unmarked boundary components of $\pS$. $\SP$ is called {\em totally marked} if $\cH = \emptyset$, and this context is the one originally used by Muller in \cite{Mu2012}.

A {\em $\cP$-arc} is a smooth map $a: [0,1]\to \Sigma$ such that $a(0), a(1) \in \cP$ and $a$ maps $(0,1)$ injectively into $\Sigma \setminus \cP$. A {\em quasitriangulation} of $\SP$, or {\em $\cP$-quasitriangulation}, is a collection $\D$ of $\cP$-arcs which cut $\Sigma$ into triangles and holed monogons. Associated to a quasitriangulation $\D$ is a {\em vertex matrix} $P$, which is an anti-symmetric $\D \times \D$ matrix with integer entries which we will utilize to define the {\em Muller algebra}, a quantum torus into which we will embed the skein algebra of a marked surface (See Section \ref{s.introtori}).

The algebra $\cS\SP$ is closely related to quantum Teichm\"uller space (see the following subsection) and the quantum cluster algebra of the surface \cite{Mu2012}. Two important facts about this algebra which we make extensive use of are that it is an Ore domain (meaning that it has a division algebra) and that the set $B_\SP$ of isotopy classes of essential $\cP \times (-1,1)$-tangles forms an $R$-basis for $\cS\SP$.

\subsection{Skein algebras of finite type surfaces}

A {\em finite-type surface} $\fS$ is an oriented surface $\fS= \ofS \setminus \cV$ where $\ofS$ is an oriented closed connected surface and $\cV$ is a (possibly empty) finite set whose elements are called {\em punctures}. A {\em framed link} in $\fS$ is a disjoint finite collection of diffeomorphic copies of $S^1$ in $\fS \times (-1,1)$ equipped with a smooth normal vector field.

Our base ring for skein algebras of finite type surfaces will always be $R=\BC$. Given a nonzero complex number $\xi:=q$, the Kauffman bracket skein module $\cS:=\cS_\xi(\fS)$ is the $\BC$-vector space freely spanned by isotopy classes of framed links in $\fS \times (-1,1)$ modulo the Kauffman bracket skein relations depicted in the left box of Figure \ref{fig:figures/skein2} with quantum parameter $q$ set to $\xi$. Here the framing is pointing out of the page at the reader.

$\cS$ is made into a $\BC$-algebra by introducing a product as follows. Given two framed links $K,L$, we define their product $KL \in \cS$ by stacking $K$ above $L$ in $\fS \times (1,1)$. This product was first introduced by Turaev \cite{Tu1991} in connection with the Atiyah-Bott-Weil-Petersen-Goldman symplectic structure of the character variety. \no{The algebra $\cS$ is closely related to the quantum Teichm\"uller space of the surface via the quantum trace map of Bonahon and Wong \cite{BW2011}, see Subsection \ref{sec.introembed}.}

Given a finite type surface $\fS:=\ofS \setminus \cV$ with negative Euler characteristic and at least one puncture, an {\em ideal triangulaton} $\fT$ may be defined, which is a maximal collection of isotopy classes of smooth arcs $x:[0,1] \to \ofS$ such that $x(0),x(1) \in \cV$ and $x$ embeds $(0,1)$ into $\fS$. From this datum an antisymmetric matrix known as the {\em face matrix} $Q_\fT$ may be constructed, and the quantum torus $\BT(Q_\fT)$ is a version of the quantum Teichm\"uller space of the surface called a Chekhov-Fock algebra into which the skein algebra may embed via the quantum trace map of Bonahon and Wong \cite{BW2011} (see Subsection \ref{sec.introembed}). This is analogous to the Muller algebra embedding of the skein algebra of a marked surface, and in \cite{Le2017} L\^e works with a more general class of surfaces to show that these embeddings are essentially ``Fourier transforms'' of one another.

As was shown in \cite{PS2000}, a $\BC$-basis $B_\fS$ for $\cS$ is given by the collection of all {\em simple diagrams} on $\fS$, which are finite collections of disjoint diffeomorphic copies of $S^1$ in $\fS$, none of which bound a disk. Each simple diagram gives rise to a framed link with vertical framing. Other important properties of $\cS$ for us include that $\cS$ is a domain \cite{PS2018}, that $\cS$ is finitely generated as a module over its center when the quantum parameter is a root of unity \cite{FKL2017}, and that $\cS$ is a finitely generated $\BC$-algebra \cite{Bu1997}.

\begin{remark}
The skein algebras of both marked surfaces and finite type surfaces may be defined in terms of the skein module of a marked 3-manifold.
\end{remark}

\section{Quantum torus methods}\lbl{s.introtori}

Here we give an overview of our primary lens into the structure of skein modules: quantum tori. These are a sort of algebra of $q$-commuting Laurent polynomials.

The following notion will be useful throughout. Suppose $A$ is an $R$-algebra, not necessarily commutative. Two elements $x,y \in A$ are said to be {\em $q$-commuting} if there exists $\cC(x,y) \in \ZZ$ such that $xy=q^{\cC(x,y)}yx$. Suppose $x_1,x_2,\ldots,x_n \in A$ are pairwise $q$-commuting. Then the {\em Weyl normalization} of $\prod_i x_i$ is defined by
\be\lbl{e.introweyl}
[x_1x_2 \ldots x_n]:=q^{-\frac{1}{2}\sum_{i<j}\cC(x_i,x_j)}x_1x_2 \ldots x_n.
\ee
It is easy to show that the normalized product does not depend on the order, i.e. given an element $\sigma$ of the symmetric group on $n$ letters, $[x_1x_2 \ldots x_n] = [x_{\sigma(1)}x_{\sigma(2)} \ldots x_{\sigma(n)}]$.

\subsection{Quantum tori}

Let $I$ be a finite set. We write $\Mat(I \times I, \BZ)$ for the set of $I \times I$ matrices with entries in $\BZ$. That is, $U \in \Mat(I \times I,\BZ)$ is a function $U: I \times I \to \BZ$. We write $U_{ij}$ for $U(i,j)$. We may occasionally think of ordinary $p \times p$ matrices in this manner without further comment.

Let $U \in \Mat(I \times I,\BZ)$ be antisymmetric, i.e. $U_{ij}=-U_{ji}$. The {\em quantum torus associated to $U$ over $R$} is
\[
\BT_q(U;R):=R \la x_i^{\pm 1}, i \in I \ra / (x_ix_j = q^{U_{ij}}x_jx_i). 
\]
$\BT(U;R)$ is sometimes called the algebra of skew Laurent polynomials or twisted Laurent polynomials.

When $R$ is Noetherian, $\BT(U;R)$ is Noetherian, an Ore domain, and many other nice properties.

Let $\BZ^I$ be the set of all maps $\bk: I \to \BZ$. For $\bk \in \BZ^I$ we define the {\em normalized monomial} to be
\[
X^\bk := [\prod_{i \in I}x_i^{\bk(i)}].
\]
Here the brackets denote the Weyl normalization as in ~\eqref{e.introweyl}, and as such the ordering of the product does not matter.

We also work with an important class of subalgebras of quantum tori known as monomial subalgebras. Let $\BT:=\BT(U;\BC)$ be a quantum torus where $U$ is a $p \times p$ matrix for some positive integer $p$. Let $\La \subset \BZ^p$ be a \no{finitely generated} submonoid of $\BZ^p$. Then the {\em monomial subalgebra associated to $\La$}, $\BA(\La) \subset \BT$, is defined to be the $\BC$-subspace of $\BT$ spanned by $\{ X^\bk:\bk \in \La\}$.

\subsection{Embedding of skein algebra}\lbl{sec.introembed}

\no{In this subsection our base ring is $R=\BC$.}

In \cite{BW2011}, Bonahon and Wong opened a portal to the powerful quantum torus methods for the study of skein algebras of surfaces with their quantum trace map. Let $\fS:=\ofS \setminus \cV$ be a finite type surface with negative Euler characteristic and at least one puncture, and $\fT$ an ideal triangulaton with associated face matrix $Q_\fT$. The {\em quantum trace map} is an embedding $\varkappa_\fT: \cS_q(\fS) \to \BT_q(Q_\fT)$.

$\BT(Q_\fT)$ is known as a Chekhov-Fock algebra \cite{ChFo1999}, whose division algebra (a localized version of the algebra such that every nonzero element is invertible) is a version of quantum Teichm\"uller space that can be found as the quantization of the shear coordinates on the Teichm\"uller space of the surface. This fact, along with the fact that there are {\em transfer isomorphisms} between the Chekhov-Fock algebra associated to each ideal triangulation, allows one to think of $\cS(\fS)$ (or rather its division algebra) as a ``coordinate-free'' version of quantum Teichm\"uller space.

Working in the context of this embedding vastly simplifies many aspects of the study of skein algebras. One immediate computational simplification that is apparent is that resolving the skein relation on $n$ crossings has exponential complexity, but working inside of a quantum torus this computation is reduced to polynomial complexity. This suggests additional reasons to look in this direction to study skein-theoretic algorithms for topological quantum computing beyond what we mention in Subsection \ref{sec.intromaxcons}.

The quantum trace map is mentioned here for historical completeness, but we do not make direct use of it in this work (though many of our results could, in principle, be proven using this method). Instead, we make use of a quantization of Penner coordinates of decorated Teichm\"uller space\cite{Pe2012} known as the Muller algebra, which has the advantage of having simpler change-of-coordinate maps between quasitriangulations than the Chekhov-Fock algeras have between ideal triangulations.

Let $\SP$ be a marked surface with at least one marked point, and let $\D$ be a $\cP$-quasitriangulation. A $\D \times \D$ antisymmetric matrix $P:=P_\D$ known as the {\em vertex matrix} is formed from the combinatorial data of $\D$. Define the {\em Muller algebra} to be 
\be 
 \sX_q(\D) = R \la a^{\pm 1}, a \in \D \mid ab = q^{P_{a,b}}\,  ba\ra,
 \lbl{eq.pre1}
 \ee
and its {\em positive subalgebra} to be
\[
\sX_{q,+}(\D) = R \la a, a \in \D \mid ab = q^{P_{a,b}}\,  ba\ra.
\]
In the case where $\SP$ is totally marked, Muller \cite{Mu2012} defined an embedding $\vpD: \cS_q\SP \to  \sX_q(\D)$ known as the {\em skein coordinate map}, which may be thought of as a cousin of the quantum trace map. Indeed, the quantum trace map and skein coordinate map are related by a ``shear-to-skein'' map explained by L\^e in \cite{Le2017} (see Section \ref{sec.sheartoskein}). Again, $\cS\SP$ embeds as an essential subalgebra of $\sX(\D)$, and the exponential complexity of the resolution of skein relations in $\cS\SP$ is reduced to polynomial complexity in $\sX(\D)$. 

One of the results in this work is a generalization of Muller's skein coordinate map to the case where not every boundary component has a marked point, see Theorem \ref{thm.2} in the results section below.

Again, the apparent disconnect here between finite type and marked surfaces is an illusion - ``generalized marked surfaces'' may be defined for which both the quantum trace map and skein coordinate map exist, and this idea is explored in \cite{Le2017}, but we do not need to make use of such generality in our work here.

\subsection{Associated graded algebra}\lbl{sec.intrograded}

\no{In this subsection the base ring is $R=\BC$ and the quantum parameter $q=\xi$ is a root of unity.}

Besides the skein algebra embeddings described in Subsection \ref{sec.introembed}, we also make use of another quantum torus method in proving our main results. Namely, we filter our skein algebra with some appropriate filtration, note that the associated graded algebra is some subalgebra of a quantum torus, prove the result there, and then lift the result back to the skein algebra. This method is utilized to great effect in works such as \cite{AF2017,Mu2012,Le2015,Le2018}.

Let $\fS$ be a finite type surface with negative Euler characteristic and at least one puncture equipped with an ideal triangulation $\fT$. Then one may form an $\BN$-filtration $\{F_n\}$ on $\cS:=\cS(\fS)$ as follows. Given $x \in \cS$ we may write $x$ as a finite sum $x = \sum c_i K_i \in \cS$ with $c_i \in \BC$, $K_i \in B_\fS$. Then we have $x \in F_n$ if $\sum_{a \in \fT} \mu(a,K_i) \leq n$ for all $i$. Here $\mu(a,K_i)$ is the minimum of $|a \cap K_i|$ over all isotopic copies of $K_i$.

From $\{F_n\}$ we can form the associated graded algebra $\gr \cS$. We show that $\gr \cS$ is a finitely generated monomial subalgebra of a quantum torus, and from this we are able to prove results about $\gr \cS$ that are then lifted to results on $\cS$, as is the usual reason for working with an associated graded algebra.

\section{Original Results}\lbl{s.introresults}

In this section we summarize all original results in this work. Our most important results are the following.
\begin{enumerate}[(i)]
\item The existence of the Chebyshev-Frobenius homomorphism between skein modules of marked 3-manifolds (Subsection \ref{s.introcf}).
\item The image of the Chebyshev-Frobenius homomorphism consists of (skew-)transpent elements (Subsection \ref{s.introcenter}).
\item The construction of a surgery theory for skein algebras of marked surfaces (Subsection \ref{sec.introsurgery}).
\item Skein algebras of surfaces are maximal orders (Subsection \ref{s.introorder}).
\end{enumerate}

\subsection{Skein algebra embedding}

\begin{theorem} [See Theorem \ref{r.torus}]\lbl{thm.2}
Assume $\D$ is a quasitriangulation of a marked surface $\SP$. Then there is a natural algebra embedding 
\[
    \vpD: \cS_q\SP \embed \sX_q(\D)
\]
such that $\vpD(a)= a$ for all $a\in \D$. The image of $\vpD$ is sandwiched between $\sX_{+}(\D)$ and $\sX(\D)$. The algebra $\Sx\SP$ is an Ore domain, and $\vpD$ induces an isomorphism $ \tvD: \widetilde \Sx\SP \overset \cong \longrightarrow \tiX(\D)$ where $\widetilde \Sx\SP $ is the division algebra of $ \Sx\SP$ and $\tiX(\D)$ is the division algebra of $\sX(\D)$.
\end{theorem}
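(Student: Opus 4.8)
The plan is to isolate the one nontrivial task: constructing an injective algebra homomorphism $\vpD\colon\cS_q\SP\embed\sX_q(\D)$ with $\vpD(a)=a$ for every $a\in\D$. Granting that, the remaining assertions are formal. Indeed, since $\vpD$ is an algebra map fixing each $a\in\D$, its image contains the subalgebra of $\sX_q(\D)$ generated by $\{a:a\in\D\}$, which is exactly $\sX_{+}(\D)$; together with the obvious inclusion $\mathrm{im}(\vpD)\subseteq\sX_q(\D)$ this gives the ``sandwich.'' Each $a\in\D$ is a normal element of $\sX_q(\D)$ (it $q$-commutes, up to a unit, with all of $\sX_q(\D)$), hence normal in $\cS_q\SP$, so the multiplicative set $S$ generated by the arcs of $\D$ is an Ore set; since inverting $S$ already turns $\sX_{+}(\D)$ into $\sX_q(\D)$, the sandwich forces $(\cS_q\SP)[S^{-1}]=\sX_q(\D)$. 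Finally $\cS_q\SP$ is a domain because it embeds in the domain $\sX_q(\D)$, and a domain that admits an Ore set of normal elements whose localization is an Ore domain is itself an Ore domain with the same skew field of fractions; as $\sX_q(\D)$ is an Ore domain (a quantum torus over a commutative domain), we conclude $\cS_q\SP$ is an Ore domain and $\vpD$ induces $\tvD\colon\widetilde{\cS_q\SP}\overset{\cong}{\longrightarrow}\tiX(\D)$.

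\textbf{Construction of $\vpD$.} I would follow Muller's skein coordinate map for totally marked surfaces and extend it to allow holed monogons. Put a reduced $\cN$-tangle $T$ in general position with respect to $\D$ (each $a\in\D$ realizing $\mu(a,T)$), so that cutting $\Sigma$ along $\D$ chops $T$ into elementary pieces, each in a triangle or a holed monogon. On pieces inside triangles one uses Muller's local rule (edges go to themselves, corner pieces to Weyl-normalized monomials in the three edge variables); the definition of the vertex matrix $P_\D$ as a sum of local contributions at the marked points is exactly what makes the resulting assignment multiplicative, the global factor being a Weyl normalization. The new ingredient is a piece sitting in a holed monogon $\mu$ around an unmarked boundary $\beta$ with bounding arc $a$: here $\sX_q(\D)|_\mu=R[a^{\pm1}]$ is rank one, and one checks directly that $\cS_q(\mu)$ is commutative, generated by $a$ and the central curve $\beta$, with $\beta$ necessarily going to an explicit Laurent polynomial in $a$ of Chebyshev type, and that winding pieces are sent consistently into $R[a^{\pm1}]$. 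Assembling the local data defines $\vpD$ on reduced tangles; one then checks it respects the skein, trivial-knot and trivial-arc relations and is multiplicative under stacking (a local check, one triangle or one holed monogon at a time, plus Weyl-normalization bookkeeping controlled by $P_\D$). By construction $\vpD(a)=a$ and $\vpD$ lands in $\sX_q(\D)$.

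\textbf{Injectivity.} This I would get by a leading-term argument as in Muller's totally marked case: filter $\cS_q\SP$ by the total intersection number $\sum_{a\in\D}\mu(a,T)$ with $\D$ (the analogue of the filtration $\{F_n\}$ of the introduction), observe that the associated graded algebra embeds in a monomial subalgebra of $\sX_q(\D)$, and show that the filtration degrees of $\vpD(T)$ recover the intersection data $(\mu(a,T))_{a\in\D}$ (together with which boundary arcs occur in $T$), which pins down a reduced $T$ uniquely; hence $\vpD$ is injective. Alternatively one can deduce injectivity from the injectivity of the cutting homomorphism of $\cS_q\SP$ along the arcs of $\D$ composed with the injectivity of the triangle and holed-monogon base cases.

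\textbf{Expected main obstacle.} The crux is the holed-monogon case in the construction: correctly identifying $\cS_q(\mu)$ and the image of the unmarked boundary curve $\beta$ inside the rank-one torus $R[a^{\pm1}]$, and verifying compatibility with the vertex matrix of $\D$ at the marked point where $\mu$ is attached and with the gluing to the neighbouring triangle. A tempting shortcut—adjoining a marked point to each unmarked boundary component to reduce to Muller's theorem—fails directly, because a tangle winding around $\beta$ genuinely meets the new triangulation arcs, so its coordinates would involve the new edge variables rather than $a$ alone; the monogon really must be treated on its own. The remaining work (respecting the three defining relations, multiplicativity) is, as in Muller, a matter of organizing many local computations with the Weyl constants tracked through $P_\D$.
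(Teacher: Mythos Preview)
Your overall strategy diverges from the paper's, and the holed-monogon step contains a real mistake.

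The paper does \emph{not} build $\vpD$ by cutting along $\D$ and writing local state-sum formulas. It runs the construction backwards. First, the reordering relation gives $ab=q^{P(a,b)}ba$ in $\cS$ for $a,b\in\D$, so there is an obvious \emph{injective} $R[\cH]$-algebra map $\tau\colon\fX_+(\D)\to\cS$, $X_a\mapsto a$ (it sends the monomial basis into the essential-tangle basis). The key lemma is geometric: for any $x\in\cS$ there is a $\D$-monomial $\fm$ with $x\fm\in\tau(\fX_+(\D))$, proved using Muller's intersection-index inequalities $\mu(a,\,y\,a^{\mu(a,y)})=0$. This simultaneously shows that the multiplicative set $\fM$ of $\D$-monomials is a two-sided Ore set in $\cS$. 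Localizing the inclusion $\tau$ at $\fM$ yields an isomorphism $\fX(\D)=\fX_+(\D)\fM^{-1}\cong\cS\fM^{-1}$, and $\vpD$ is the composite $\cS\hookrightarrow\cS\fM^{-1}\xrightarrow{\ \cong\ }\fX(\D)$. Injectivity of the first arrow uses only that $\cS$ is a domain, which the paper imports from Muller's totally-marked theorem via functoriality (Lemma~\ref{s.markeddomain}); no leading-term filtration argument is needed, and no local analysis of triangles or monogons is ever performed.

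Your monogon analysis is where your direct construction fails. You assert that the local target at a holed monogon $\mu$ with monogon edge $a$ and unmarked boundary $\beta$ is $R[a^{\pm1}]$, and that $\beta$ must map to a Chebyshev-type Laurent polynomial in $a$. But in the paper's Muller algebra the ground ring is $R[\cH]$, not $R$: each unmarked boundary component is a separate central polynomial variable, so the correct local target is $R[\beta][a^{\pm1}]$ and $\vpD(\beta)=\beta$. Your proposed map into $R[a^{\pm1}]$ cannot be injective, since in $\cS_q(\mu)$ the elements $a$ and $\beta$ are algebraically independent (the essential tangles $a^{(m)}\cup\beta^{n}$ are distinct basis elements for all $m,n\ge 0$). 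The identity you may be recalling, expressing a core loop as a sum of edge monomials, belongs to the annulus with a \emph{marked} point on each boundary circle (cf.\ Lemma~\ref{r.markedannulus}), not to an unmarked hole bounded by a monogon. Once the coefficient ring is corrected to $R[\cH]$ your cutting program might be salvageable, but the paper's localization argument sidesteps all of this.
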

In the case where $\SP$ is totally marked, Theorem \ref{thm.2} was proved by Muller \cite{Mu2012}.

\subsection{Skein algebra of a marked surface is a domain}

In \cite{Mu2012}, Muller showed that the skein algebra of a totally marked surface $\SP$ is a domain via a mostly geometric argument that utilizes one non-geometric choice, namely an arbitrary total ordering on the set of all essential $\cP$-tangles. That the skein algebra of a marked surface is a domain is a simple corollary of this result via functoriality, see Lemma \ref{s.markeddomain}. However, in Appendix \ref{a.domain} we give an alternative purely geometric proof of this fact that utilizes a more algebro-geometric ordering on the set of essential $\cP$-tangles but is admittedly more laborious than the proof presented in \cite{Mu2012}.

\subsection{Chebyshev-Frobenius homomorphism}\lbl{s.introcf}

Let $\SP$ be a marked surface and let the base ring be $R=\BC$ for this subsection. Set the quantum parameter $q=\xi$, where $\xi$ is a primitive root of unity, let $N:=\ord(\xi^4)$ (i.e. the smallest $N$ such that $\xi^{4N}=1$), and let $\ve = \xi^{N^2}$ (this implies that $\ve = \pm 1, \pm i$.). When $\cP=\emptyset$, utilizing the quantum trace map Bonahon and Wong \cite{BW2016-2} constructed a remarkable map $\cS_\ve\SP \to \cS_\xi\SP$ called the Chebyshev homomorphism, which plays an important role in the theory of the skein algebra such as in the conjectural construction of a positive basis \cite{Th2014,Le2016}. This map is given by applying the $N$th Chebyshev polynomial of type one~\eqref{e.introfrob} to knots, which turn out to satisfy the skein relation. Satisfying the skein relation requires a number of difficult to understand ``miraculous cancellations''. One main result of this dissertation is to extend Bonahon and Wong's Chebyshev homomorphism to skein modules of marked 3-manifolds and to give a conceptual explanation of its existence from basic facts about $q$-commutative algebra stemming from the much simpler to understand Frobenius homomorphism~\eqref{e.introfrob} between quantum tori (see Subsection \ref{sec.introchebycom}).

Let $\MN$ be a marked 3-manifold. A 1-component $\cN$-tangle $T$ is  diffeomorphic to either the circle $S^1$ or the closed interval $[0,1]$. For a 1-component $\cN$-tangle $T$ and an integer $k\ge 0$, write $T^{(k)} \in \cS_\xi\MN$ for the {\em $k$th framed power of $T$} obtained by stacking $k$ copies of $T$ in a small neighborhood of $T$ along the direction of the framing of $T$. Given a polynomial $P(z) = \sum c_i z^i \in \BZ[z]$, the {\em threading} of $T$ by $P$ is given by $P^{\fr}(T) = \sum c_i T^{(i)} \in \cS_\xi\MN$. 

The Chebyshev polynomials of type one $T_n(z) \in \BZ[z]$ are defined recursively as
\be\lbl{e.introcheb}
T_0(z)=2, \ \ \ T_1(z)=z, \ \ \ T_n(z) = zT_{n-1}(z)-T_{n-2}(z), \ \ \forall n \geq 2.
\ee

The extension of Bonahon and Wong's result to marked 3-manifolds is the following.

\begin{theorem}\lbl{thm.1} (See Theorem \ref{t.ChebyshevFrobenius})

Suppose $(M,\cN)$ is a marked 3-manifold and $\xi$ is a complex root of unity. Let $N=\ord(\xi^4)$. Let $\ve =\xi^{N^2}$. Then there exists a unique $\BC$-linear map $\Phi_\xi: \cS_\ve(M,\cN) \to \cS_\xi(M,\cN)$ such that for any $\cN$-tangle $T = a_1 \cup \cdots \cup a_k \cup \al_1 \cup \cdots \cup \al_l$ where the $a_i$ are $\cN$-arcs and the $\al_i$ are $\cN$-knots,
\begin{align*}
\Phi_\xi(T) & = a_1^{(N)} \cup \cdots \cup a_k^{(N)} \cup (T_N)^{\fr}(\al_1) \cup \cdots \cup (T_N)^{\fr}(\al_l) \quad \text{in }\cS_\xi\MN \\
& : = \sum_{0\le j_1, \dots, j_l\le N} c_{j_1} \dots c_{j_l}  a_1^{(N)}
 \cup \dots \cup  a_k^{(N)} \cup \, \al_1^{(j_1)} \cup \cdots \cup \al_l^{(j_l)} \quad \text{in }\cS_\xi\MN,
\end{align*}
where $T_N(z) = \sum_ {j=0}^N c_j z^j$ are the Chebyshev polynomials of type one.
\end{theorem}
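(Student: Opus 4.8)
The plan is to reduce the theorem to a purely local statement about skein modules by exploiting functoriality, and then to prove that local statement by transporting it, via the embeddings $\vpD$ of Theorem~\ref{thm.2}, to the Frobenius homomorphism between quantum tori, which is elementary. First I would establish uniqueness: since the essential $\cN$-tangles form a basis of $\cS_\ve\MN$ and the formula prescribes $\Phi_\xi$ on every $\cN$-tangle (not just essential ones), it suffices to check that the prescribed values are compatible with the defining relations of $\cS_\ve\MN$ --- the skein relation, the trivial knot relation, and the trivial arc relation evaluated at $q=\ve$. Compatibility with the trivial arc relation is immediate because $\ve=\pm 1,\pm i$ so $-\ve^2-\ve^{-2}=-2\ve^{2}\cdots$ needs care, but in any case it reduces to the value $(-\ve^2-\ve^{-2})$ being sent correctly; compatibility with the trivial knot relation follows from $T_N(-\ve^2-\ve^{-2}) = -\xi^{2N}-\xi^{-2N}$, a standard Chebyshev identity at roots of unity. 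The real content is compatibility with the skein relation, i.e. the ``miraculous cancellations.''

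For the skein relation I would argue locally. The skein relation lives in a ball $B$ meeting the tangle in two strands; by naturality of $\cS$ under embeddings of marked $3$-manifolds it is enough to verify the identity $\Phi_\xi$ respects in a standard model --- a bigon or a square --- and the universal such model is essentially a marked surface cross an interval. So I would reduce to the case $\MN = \Sigma \times (-1,1)$ with $\SP$ a marked surface carrying a quasitriangulation $\D$, where Theorem~\ref{thm.2} gives embeddings $\vp_{\D,\ve}\colon \cS_\ve\SP \embed \sX_\ve(\D)$ and $\vp_{\D,\xi}\colon \cS_\xi\SP \embed \sX_\xi(\D)$. On the quantum-torus side there is the Frobenius homomorphism $\sX_\ve(\D)\to\sX_\xi(\D)$ sending $X^{\bk}\mapsto X^{N\bk}$ (the map~\eqref{e.introfrob} referenced in the text), which is manifestly an algebra homomorphism because raising $q$-commuting generators to the $N$th power multiplies the commutation exponent by $N^2$ and $\xi^{N^2}=\ve$. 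The key computation is then that these maps form a commuting square: $\vp_{\D,\xi}\circ\Phi_\xi = \mathrm{Fr}\circ\vp_{\D,\ve}$. Granting the square, $\Phi_\xi$ is forced to be well defined (it is the restriction of a genuine algebra map conjugated into the skein algebras), which gives both existence and the homomorphism property for free, and in particular the skein relation is automatically respected.

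The heart of the argument --- and the step I expect to be the main obstacle --- is verifying the commuting square on generators, i.e. computing $\vp_{\D,\xi}(T_N^{\fr}(\alpha))$ and $\vp_{\D,\xi}(a^{(N)})$ and matching them with $X^{N\bk}$. For an arc $a$ the skein coordinate map sends $a$ to a monomial (up to normalization), so $a^{(N)}$ goes to that monomial raised to the $N$th power, which is exactly the Frobenius image; this case is clean. For a knot $\alpha$, $\vpD(\alpha)$ is a sum of monomials, and one must show that applying $T_N$ to this sum and then $\vp_{\D,\xi}$ yields the same thing as applying Frobenius termwise to $\vp_{\D,\ve}(\alpha)$. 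This is precisely where the Chebyshev polynomial is needed: the identity $T_N(X + X^{-1}) = X^{N} + X^{-N}$ in a quantum torus (with appropriate Weyl normalization) is what makes $T_N$ the ``right'' threading polynomial, and one must check that $\vpD(\alpha)$ has, after a suitable change of variables adapted to the curve $\alpha$, the shape $Y + Y^{-1}$ plus lower-order terms so that this identity applies. Handling curves that wrap the triangulation in a complicated way, and keeping track of the Weyl normalizations and the factors of $\ve$ versus $\xi$, is the technical core; I would isolate it as a lemma computing $\vpD$ on framed powers, prove it first for curves meeting a single triangle and then bootstrap using the multiplicativity of $\vpD$ and an induction on geometric complexity (or on the filtration of Subsection~\ref{sec.intrograded}). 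Once that lemma is in hand, the commuting square and hence the theorem follow formally.
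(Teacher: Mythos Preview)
Your overall architecture matches the paper's: define the map $\hPhi_N$ on tangles, check it respects the defining relations of $\cS_\ve$, and do this by reducing (via functoriality) to surfaces where the Frobenius map $F_N$ on the Muller algebra is available and forms a commuting square with the skein coordinate embeddings. The arc case and the trivial-loop check are handled essentially as you say (the relevant Chebyshev identity is Lemma~\ref{r.unknot}(b)).

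The gap is in your treatment of knots. Two issues. First, the shape of $\vpD(\alpha)$ is not $Y+Y^{-1}+\text{lower}$; already in the simplest nontrivial case (the annulus with two marked points, Lemma~\ref{r.markedannulus}) one gets three terms $X+Y+Y^{-1}$ with $YX=\xi^4 XY$, so the classical DeMoivre identity is not enough --- you need the stronger identity $T_N(X+Y+Y^{-1})=X^N+Y^N+Y^{-N}$ for $q$-commuting $X,Y$ at a root of unity (Corollary~\ref{c.32}), which the paper isolates as Proposition~\ref{r.chebygen}. Second, and more seriously, your proposed ``induction on geometric complexity'' to force an arbitrary knot into this shape is not what is done and is unlikely to work directly: for a general knot $\alpha$, $\vpD(\alpha)$ has no reason to be a three-term expression in any change of variables. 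The paper avoids computing $\vpD(\alpha)$ for general $\alpha$ altogether. Instead (Proposition~\ref{r.knot}) it uses functoriality to reduce any homologically nontrivial knot to the annulus case, and then --- this is the key new ingredient --- uses the surgery theory of Chapter~\ref{c.surgery} (plugging a hole, Proposition~\ref{t.holetrick}) to convert a homologically trivial knot into a homologically nontrivial one in a modified surface, transport the identity there, and push it back. Your proposal does not anticipate the need for this surgery step, and without it the knot case does not close.
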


We call $\Phi_\xi$ the {\em Chebyshev-Frobenius homomorphism}. Our construction and proof are independent of the previous results of \cite{BW2016-2} which requires the quantum trace map of \cite{BW2011}, and \cite{Le2015} which gives a skein-theoretic approach to the existence of the Chebyshev homomorphism. This is true even for the case $\cN=\emptyset$. 

Note that when $T$ has only $\cN$-arc components, then $\Phi_\xi(T)$ is much simpler as it can be defined using monomials and no Chebyshev polynomials are involved. The main strategy we employ is to understand the Chebyshev-Frobenius homomorphism for this simpler case, then show that the $\cN$-knot components case can be reduced to this case. We summarize this strategy as follows.

Suppose $\SP$ is a {\em triangulable} marked surface, i.e. every boundary component of $\Sigma$ has at least one marked point and $\SP$ has a quasitriangulation. In this case a quasitriangulation is called  simply a {\em triangulation}. Let $\D$ be a triangulation of $\SP$.

Let $\xi$ be a non-zero complex number (not necessarily a root of unity), $N$ be a positive integer, and $\ve=\xi^{N^2}$. From the presentation \eqref{eq.pre1} of the Muller algebras $\sX_\ve (\D)$ and $\sX_\xi (\D)$, one sees that there is an algebra homomorphism, called the Frobenius homomorphism, uniquely defined by
\be\lbl{e.introfrob}
F_N : \sX_\ve (\D) \to \sX_\xi(\D), \quad\text{given by}\ F_N(a)= a^N \quad \forall a \in \D,
\ee
which is injective, see Proposition \ref{r.Frobenius}.

Identify $\cS_\nu \SP$ with a subset of $\sX_\nu(\D)$ for $\nu=\xi$ and $\nu=\ve$ via the embedding $\vpD$. Consider the following diagram. 
\be \lbl{dia.sx0}
\begin{tikzcd}
\cS_\ve \SP  \arrow[hookrightarrow]{r} \arrow[dashed,"?"]{d}& \sX_\ve(\D)  \arrow[d, "F_N"] \\
\cS_\xi \SP \arrow[hookrightarrow]{r} & \sX_\xi(\D)
\end{tikzcd}
\ee
We consider the following questions about $F_N$:

A. For what  $\xi \in \BC\setminus \{0\}$ and $N\in \BN$ does $F_N$  restrict to a map from $\cS_{\ve}\SP$ to $\cS_{\xi}\SP$ such that the restriction does not depend on the triangulation $\D$?

B.  If $F_N$ can restrict to such a map, can one define the restriction of $F_N$ onto $\cS_{\ve}\SP$ in an intrinsic way, not referring to any triangulation $\D$?

It turns out that the answer to Question A is that $\xi$ must be a root of unity, and $N$ must be the order of $\xi^4$, see Theorem \ref{thm.A}. Then Theorem \ref{thm.surface}, answering Question B, states that under these assumptions on $\xi$ and $N$, the restriction of $F_N$ onto $\cS_{\ve}\SP$ can be defined in an intrinsic way without referring to any triangulation. Namely, if $a$ is a $\cP$-arc, then $F_N(a)= a^N$, and if $\al$ is a $\cP$-knot, then $F_N(\al)= T_N(\al)$. From these results and  the functoriality of homomorphisms between skein modules we can prove Theorem \ref{thm.1}.

In Appendix \ref{app.simple}, we give a direct proof of the existence of the Chebyshev-Frobenius homomorphism for the disk with four marked points without utilizing the Muller algebra method. This gives additional geometric intuition about this homomorphism. One suggested exercise for the reader is to perform this task for the punctured torus making use of the quantum trace map, to convince oneself that this result also works for finite type surfaces.

\subsection{Surgery}\lbl{sec.introsurgery} One important consequence of allowing of unmarked boundary components for the skein coordinate map is that we can develop a surgery theory. In this subsection we require the base ring $R$ to be Noetherian.

Let $\SP$ be a marked surface. One can consider the embedding $\vpD: \cS\SP \embed \sX(\D)$ as a coordinate system of the skein algebra which depends on the quasitriangulation $\D$. A function on $\cS\SP$ defined through the coordinates makes sense only if it is independent of the coordinate systems. To help with this independence problem (specifically, Question B from the previous subsection) we develop a surgery theory of coordinates in Chapter \ref{c.surgery} which describes how the coordinates change under certain modifications of the marked surfaces. We consider two such modifications: one is to add a marked point and the other one is to plug a hole, (i.e. glue a disk to a boundary component with no marked points). Note that the second one changes the topology of the surface, and is one of the reasons why we want to extend Muller's result to allow unmarked boundary components. Besides being a crucial tool for proving the existence of the Chebyshev-Frobenius homomorphism of Theorem \ref{thm.1}, we think our surgery theory will find more applications elsewhere.

\subsection{Centrality and (skew-)transparency}\lbl{s.introcenter}

\no{With the help of Theorem \ref{thm.2} we are able to determine the center of $\cS_q\SP$ for generic $\xi$.}
\no{
\begin{theorem}[See Theorem \ref{thm.center1}] Suppose $\SP$ is a marked surface with at least one quasitriangulation and $q$ is not a root of unity. Then the center of $\cS\SP$ is the $\BC$-subalgebra generated by $z_\beta$ for each connected component $\beta$ of $\pS$. Here if $\beta \cap \cP=\emptyset$ then $z_\beta= \beta$, and if $\beta \cap \cP\neq \emptyset$ then $z_\beta$ is the product of all $\cP$-arcs lying in $\beta$.
\end{theorem}
}

Let $\SP$ be a marked surface and let the base ring be $R=\BC$.

The center of an algebra is important, for example, in questions about the representations of the algebra. When $\xi$ is a root of unity and $\cP=\emptyset$, the center of $\Sx(\Sigma, \emptyset)$ is determined in \cite{FKL2017} and is instrumental in proving the main result there, the unicity conjecture of Bonahon and Wong which we refine in this work (see Subsection \ref{sec.intromaxcons}).\no{ In a subsequent paper we will determine the center of $\Sx\SP$ for the case when $\xi$ is a root of unity.} In \cite{PS2018}, the center of a {\em reduced} skein algebra, which is a quotient of $\Sx\SP$, was determined for the case when $\xi$ is not a root of unity.

A notion more general than being central in the skein algebra of a surface that also makes sense for skein modules are {\em transparent} and {\em skew-transparent} elements of skein modules of marked 3-manifolds.  Informally, an element $x \in \cS_\xi\MN$ is transparent if passing a strand of an $\cN$-tangle $T$ through $x$ does not change the the value of the union $x \cup T$.  

We generalize the result in \cite{Le2015} for unmarked 3-manifolds to obtain the following theorem. 

\begin{theorem} (See Theorem \ref{r.mutransparent})
Suppose $\MN$ is a marked 3-manifold, $\xi$ is a root of unity, $N=\text{ord}(\xi^4)$, and $\ve=\xi^{N^2}$. Then $\xi^{2N} = \pm 1$.  Let $\Phi_\xi: \cS_\ve\MN \to \cS_\xi\MN$ be  the Chebyshev-Frobenius homomorphism. Then the image of $\Phi_\xi$ is transparent in the sense that if $T_1,T_2$ are $\cN$-isotopic $\cN$-tangles disjoint from another $\cN$-tangle $T$, then in $\cS_\xi\MN$ we have
$$ \Phi_\xi(T) \cup T_1 = \Phi_\xi(T) \cup T_2.$$
\end{theorem}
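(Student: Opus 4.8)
The plan is to reduce the assertion to a purely local computation and then dispatch separately the two kinds of components that occur in $\Phi_\xi(T)$. By Theorem~\ref{thm.1}, if $T = a_1\cup\cdots\cup a_k\cup\al_1\cup\cdots\cup\al_l$ with the $a_i$ arcs and the $\al_j$ knots, then $\Phi_\xi(T)$ is the disjoint union of the framed powers $a_i^{(N)}$ and the threadings $(T_N)^{\fr}(\al_j)$. An $\cN$-isotopy carrying $T_1$ to $T_2$ can be cut into finitely many elementary moves, each supported in a ball $B\subset M$ off of which $T_1$ and $T_2$ agree and inside of which a single strand $s$ of the moving tangle is pushed from one side of one component $C$ of $\Phi_\xi(T)$ to the other. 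Hence it suffices to prove two local statements: (a) if $C=a^{(N)}$, then in the skein module of a ball meeting $C$ in $N$ parallel co-oriented strands and $s$ in one strand, $C$ is transparent; and (b) if $C$ is a component of $(T_N)^{\fr}(\al)$, then in the skein module of a solid-torus neighborhood $V$ of $\al$ that meets $s$ in a properly embedded arc, the element $(T_N)^{\fr}(z)$, with $z$ the core of $V$, is transparent. In case (b) one must keep the whole sum $\sum_j c_j z^{(j)} = (T_N)^{\fr}(z)$ rather than its individual terms; the $\BC$-linearity of $\Phi_\xi(T)\cup(-)$ is used only to glue the local conclusions back together at the end.

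For the knot case (b), the neighborhood $V$ and everything inside it is unmarked, so the statement needed is exactly the transparency theorem of L\^e for ordinary $3$-manifolds \cite{Le2015}: the element $(T_N)^{\fr}(z)$ of $\cS_\xi(V)$ is fixed by pushing a strand through $V$ whenever $N=\ord(\xi^4)$. I would simply quote this. Its proof is the conceptual heart of the matter — the ``miraculous cancellations'' of Bonahon--Wong — and is carried out in \cite{Le2015} by expanding every crossing of $s$ with each $z^{(j)}$ via the skein relation and reorganizing the resulting sum using the Chebyshev recursion $T_n = zT_{n-1}-T_{n-2}$, so that it telescopes, the surviving remainder vanishing because $\xi^{4N}=1$. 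The only point to verify is that our situation genuinely reduces to L\^e's, and this is immediate: $\cN$ is disjoint from $\al$ and from $V$, so the marked structure of $(M,\cN)$ is invisible here, and the move supported in (a neighborhood of) $V$ is carried to $\cS_\xi\MN$ by the natural map from $\cS_\xi$ of that region.

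For the arc case (a), the ball $B$ is likewise unmarked, since the endpoints of $a$ on $\cN$ lie outside $B$, and the statement is the more elementary one that $N$ parallel strands form a transparent element of the skein module of a ball once $N=\ord(\xi^4)$. This I would prove directly: expand the $N$ crossings of $s$ with the bundle via the skein relation, pass to the Temperley--Lieb description of the ball's skein module, and observe that the discrepancy between ``$s$ over the bundle'' and ``$s$ under the bundle'' is governed by the half-twists $\sigma_1\cdots\sigma_N$ and $\sigma_1^{-1}\cdots\sigma_N^{-1}$ applied to a cabled strand; decomposing the cable into Jones--Wenzl idempotents and invoking the twist eigenvalues reduces the vanishing of this discrepancy to the identity $\xi^{4N}=1$, together with the trivial-loop value $-\xi^2-\xi^{-2}$.

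Granting (a) and (b), each component of $\Phi_\xi(T)$ is transparent, so pushing $s$ across $\Phi_\xi(T)$ component by component along the chosen isotopy yields $\Phi_\xi(T)\cup T_1=\Phi_\xi(T)\cup T_2$. I expect the main obstacle to be making the ``transparency is local'' reduction honest for tangles carrying both arc and knot components — isolating each elementary move inside a ball or solid torus of the correct type and controlling how the (possibly many-term) threadings behave under that move — while the single genuinely hard input, the knot-case cancellation, we are fortunate to be able to import verbatim from \cite{Le2015}.
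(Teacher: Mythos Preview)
Your reduction to local moves and your treatment of the knot case are exactly what the paper does: it also quotes \cite{Le2015} verbatim for the knot components and invokes functoriality to transport the conclusion into $\cS_\xi\MN$.

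The arc case, however, has a genuine gap. The local statement you want—that $N$ parallel strands in an \emph{unmarked} ball are transparent to a transversal strand once $\ord(\xi^4)=N$—is simply false. Take $N=2$ with $\xi^4=-1$: resolving the two crossings of $s$ with the pair of strands gives four planar tangles $R_{++},R_{+-},R_{-+},R_{--}$, all distinct and loop-free, and one finds
\[
(s\text{ over})-(s\text{ under})=(\xi^2-\xi^{-2})(R_{++}-R_{--}),
\qquad
(s\text{ over})+(s\text{ under})=2R_{+-}+2R_{-+},
\]
neither of which vanishes. The Jones--Wenzl argument you sketch cannot save this: the identity $1_N$ decomposes into \emph{all} $f_k$ with $k\le N$ and $k\equiv N\pmod 2$, and the braiding eigenvalues on these components are powers of $\xi^2$ with exponents ranging over an interval, so no single condition $\xi^{4N}=1$ makes them all trivial simultaneously.

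What makes the arc case work is precisely the structure you discarded when you localized to an unmarked ball. The paper takes the neighborhood $\Xi$ to contain the \emph{endpoints} of $a$ on $\cN$, so that $a^{(N)}$ consists of $N$ parallel arcs all terminating on the same marked interval. When you resolve the crossing of $s$ with the outermost copy, one smoothing produces an arc both of whose endpoints lie on that interval and which bounds a disk—hence vanishes by the trivial arc relation. Inductively every resolution collapses to a single surviving term with coefficient $\xi^{\pm 1}$, giving $(s\text{ over }a^{(N)})=\xi^{N}\cdot(\text{common tangle})$ and $(s\text{ under }a^{(N)})=\xi^{-N}\cdot(\text{same})$; these agree iff $\xi^{2N}=1$ and differ by sign iff $\xi^{2N}=-1$. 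So the marked structure is essential here, not incidental.
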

Note that since $\text{ord}(\xi^4)=N$, we have either $\xi^{2N}=1$ or $\xi^{2N}=-1$. When $\xi^{2N}=-1$, the corresponding result is that the image of $\Phi_\xi$ is {\em skew-transparent}, see 
Theorem \ref{r.mutransparent}. Theorem \ref{r.mutransparent} can be depicted pictorially as the identity in Figure \ref{fig:figures/transparent}.

\FIGc{figures/transparent}{Applying the Chevyshev-Frobenius homomorphism to an $\cN$-tangle makes it transparent or skew-transparent.}{1.5cm}

We find another application of the skein coordinate map in the following theorem about the center of the skein algebra of a marked surface when the quantum parameter $q$ is not a root of unity.

Let the base ring $R$ be a commutative domain (not necessarily Noetherian) with distinguished invertible element $q^{1/2}$ and fix a marked surface $\SP$. Let $\cH$ denote the set of all unmarked components in $\pS$ and $\cHd$ the set of all marked components.  If $\beta\in \cH$ let $z_\beta=\beta$ as an element of $\cS$. If $\beta\in \cHd$ let $z_\beta = \left [ \prod a \right] \in \cS$, where the product is over all boundary $\cP$-arcs in $\beta$. Here the brackets denote the Weyl normalization, see~\eqref{e.introweyl}.

\begin{theorem}(See Theorem \ref{thm.center1})
Suppose $\SP$ is a marked surface. Assume that $q$ is not a root of unity. Then the center $Z(\cS\SP)$ of $\cS\SP$ is the $R$-subalgebra generated by $\{z_\beta \mid  \beta \in\cH\cup \cHd\}$.
\end{theorem}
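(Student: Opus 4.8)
The plan is to prove the two inclusions separately, pushing the non‑trivial one through the quantum torus embedding $\vpD$ of Theorem~\ref{thm.2}. By the K\"unneth‑type identity $\cS(\Sigma_1\sqcup\Sigma_2)\cong\cS\SP_1\otimes_R\cS\SP_2$ one may assume $\Sigma$ connected, and I will assume $\SP$ admits a quasitriangulation $\D$ (the remaining connected cases --- spheres, closed surfaces of positive genus, disks and annuli with too few marked points --- have centres that one reads off directly or from standard results). The inclusion $R[z_\beta:\beta\in\cH\cup\cHd]\subseteq Z(\cS\SP)$ is the easy direction: each $\beta\in\cH$ is central in $\cS\SP$ by the remark in Section~\ref{s.introsurface}; and for $\beta\in\cHd$, since $\vpD$ is an algebra map fixing every arc it sends $z_\beta$ to the Weyl‑normalized monomial $X^{\bk_\beta}$, where $\bk_\beta\colon\D\to\ZZ$ is the indicator function of the set of boundary $\cP$‑arcs contained in $\beta$; one checks from the definition of $P_\D$ (equivalently: these arcs assemble into a loop isotopic into $\pS$) that $P_\D\bk_\beta=0$, so $X^{\bk_\beta}$ is central in $\sX(\D)$ and hence $z_\beta$ is central in $\cS\SP$. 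As the $z_\beta$ pairwise commute they generate a commutative $R$‑subalgebra of the centre.

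For the reverse inclusion, note first that $\vpD$ extends to an isomorphism of division algebras (Theorem~\ref{thm.2}) and that centrality of an element of a domain passes to any Ore localization; identifying $\cS\SP$ with $\vpD(\cS\SP)\subseteq\sX(\D)$, this gives
\[
Z(\cS\SP)=Z(\tiX(\D))\cap\cS\SP=\bigl(Z(\tiX(\D))\cap\sX(\D)\bigr)\cap\cS\SP=Z(\sX(\D))\cap\vpD(\cS\SP).
\]
Because $q$ is not a root of unity and $P_\D$ is antisymmetric, a monomial $X^{\bk}$ of $\sX(\D)=\BT_q(P_\D;R)$ commutes with every generator $a\in\D$ exactly when $P_\D\bk=0$, and comparing coefficients of an arbitrary element against the $R$‑basis of monomials shows $Z(\sX(\D))=\bigoplus_{\bk\in K}R\,X^{\bk}$, the group algebra of the free abelian group $K:=\{\bk\in\ZZ^{\D}:P_\D\bk=0\}$.

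The combinatorial core is the identity $K=\bigoplus_{\beta\in\cH\cup\cHd}\ZZ\,\bk_\beta$, where for marked $\beta$ the vector $\bk_\beta$ is as above and for unmarked $\beta$ it is the exponent vector of the monomial $\vpD(\beta)$, which (as computed in the surgery chapter) is supported on the arc $\mu_\beta$ bounding the holed monogon around $\beta$ with $\bk_\beta(\mu_\beta)>0$. Thus each $\beta$ has a \emph{private} arc $\delta_\beta\in\D$ --- a boundary $\cP$‑arc in $\beta$ if $\beta\in\cHd$, the monogon arc $\mu_\beta$ if $\beta\in\cH$ --- with $\bk_\beta(\delta_\beta)>0$ and $\bk_{\beta'}(\delta_\beta)=0$ for $\beta'\ne\beta$; in particular the $\bk_\beta$ are linearly independent, so $Z(\sX(\D))=R[(X^{\bk_\beta})^{\pm1}:\beta\in\cH\cup\cHd]$ is a Laurent polynomial ring in the algebraically independent central elements $\vpD(z_\beta)$. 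Given $x\in Z(\cS\SP)$ we may therefore write $\vpD(x)=\sum_{\bn}c_\bn\,\bigl[\prod_\beta\vpD(z_\beta)^{n_\beta}\bigr]$ over finitely many $\bn=(n_\beta)\in\ZZ^{\cH\cup\cHd}$, and linear independence of the $\bk_\beta$ makes the summands pairwise distinct monomials of $\sX(\D)$, so no cancellation occurs. Now invoke positivity: the construction behind Theorem~\ref{thm.2} places $\vpD(\cS\SP)$ inside the localization of $\sX_+(\D)$ at the flippable (non‑boundary, non‑monogon) arcs, so every element of $\vpD(\cS\SP)$ has nonnegative exponent on each $\delta_\beta$. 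Reading off the $\delta_\beta$‑exponent of $\vpD(x)$ term by term yields $n_\beta\,\bk_\beta(\delta_\beta)\ge0$, hence $n_\beta\ge0$, for every $\beta$; thus $\vpD(x)\in R[\vpD(z_\beta):\beta]$ and $x\in R[z_\beta:\beta]$, as desired.

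The main obstacle I expect is the combinatorial lemma $K=\bigoplus_\beta\ZZ\,\bk_\beta$ --- that the boundary classes exhaust the kernel of the vertex matrix --- together with its holed‑monogon companion (identifying $\vpD(\beta)$ for unmarked $\beta$, and the positivity of $\vpD(\cS\SP)$ on the monogon arcs $\mu_\beta$). Both of these are really statements about the precise shape of the image of $\vpD$, which is exactly the technical content developed in proving Theorem~\ref{thm.2} and in the surgery chapter; granting those, the centre‑theoretic bookkeeping above is routine.
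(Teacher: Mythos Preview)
Your overall strategy coincides with the paper's: embed $\cS$ into $\sX(\D)$, compute $Z(\sX(\D))$ as the span of monomials $X^\bk$ with $\bk\in\ker P_\D$, and intersect back with $\cS$ using a positivity constraint. But there is a genuine error in your treatment of unmarked boundary components. In the paper's setup (Subsection~\ref{sec.mulleralgebra}) the Muller algebra $\sX(\D)$ is a quantum torus \emph{over the coefficient ring} $R[\cH]$, so for $\beta\in\cH$ one has $\vpD(\beta)=\beta\in R[\cH]$, a scalar, not a monomial in the arc variables. There is no exponent vector $\bk_\beta\in\BZ^\D$ for unmarked $\beta$, no ``private monogon arc'', and the positivity you invoke on monogon edges is not supplied by Theorem~\ref{r.torus}: Remark~\ref{rem.Deltain} only places $\vpD(\cS)$ in $\fX_+(\D)\fM_\inn^{-1}$, where $\fM_\inn$ is generated by \emph{inner} edges, which include the monogon edges. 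The correct formulation is $Z(\sX(\D))=R[\cH][X^\bk:\bk\in\ker P_\D]$ (Lemma~\ref{l.xcenter}) with $\ker P_\D=\bigoplus_{\beta\in\cHd}\BZ\bk_\beta$ indexed only by \emph{marked} components; the unmarked $\beta$ are handled for free because $R[\cH]$ is already polynomial (not Laurent) in them. With this correction your positivity step survives, since for marked $\beta$ the private arc $\delta_\beta$ is a boundary edge and Remark~\ref{rem.Deltain} does give nonnegativity there.

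You rightly single out the identity $\ker P_\D=\bigoplus_{\beta\in\cHd}\BZ\bk_\beta$ as the crux and then grant it; your private-arc argument gives linear independence but not spanning. The paper actually proves this (Lemma~\ref{r.kerPZm}) by a nullity count: centrality of each $z_\beta$ is established directly in $\cS$ using the reordering relation (Lemma~\ref{l.zbetacentral}), giving $\bk_\beta\in\ker P_\D$ and hence $\mathrm{Null}(P_\D)\ge|\cHd|$; the reverse inequality is obtained by adding one marked point to each unmarked component (each addition raises the nullity by at least one) and invoking the totally marked case $\mathrm{Null}(P)=|\cHd|$ from \cite[Lemma~4.4(b)]{Le2017}. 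Then a short support argument upgrades the $\BQ$-basis to a $\BZ$-basis. This is the missing ingredient in your outline.
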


We prove this result by embedding $\cS\SP$ into a Muller algebra and studying the center there.

\subsection{Maximal order}\lbl{s.introorder}

Our last main result in the following.

\begin{theorem}\lbl{r.intromax}(See Theorem \ref{r.mainresult})
Let $\fS$ be a finite-type surface with at least one puncture, and let $q \in \BC^\times$ be a root of unity. Then $\cS_\xi(\fS)$ is a maximal order.
\end{theorem}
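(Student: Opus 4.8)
The plan is to run the classical ``associated graded'' argument: filter $\cS_\xi(\fS)$ so that the associated graded algebra is a quantum affine semigroup algebra, prove the maximal order property there, and then lift it back. First I would record the setup. By \cite{PS2018} $\cS := \cS_\xi(\fS)$ is a domain, by \cite{Bu1997} it is a finitely generated $\BC$-algebra, and by \cite{FKL2017} it is module-finite over its center $Z$; since $Z$ is then an affine commutative $\BC$-algebra it is Noetherian, hence so is $\cS$. Thus $\cS$ is an order in its Goldie quotient ring $Q := \operatorname{Frac}(\cS)$, a division algebra finite-dimensional over $\operatorname{Frac}(Z)$, so ``maximal order'' has its usual meaning. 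Fix an ideal triangulation $\fT$ and the $\BN$-filtration $\{F_n\}$ of Subsection \ref{sec.intrograded}; by the result quoted there, $\gr\cS$ is a finitely generated monomial subalgebra $\BA(\La) \subseteq \BT_\xi(U)$ of a quantum torus, where $\La \subseteq \BZ^p$ is the submonoid generated by the leading monomials of the edge classes of $\fT$ and of the remaining algebra generators.

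\textbf{Step: $\gr\cS$ is a maximal order.} I would first isolate a general lemma: if $\La \subseteq \BZ^p$ is a \emph{saturated} submonoid of its group $L := \BZ\La$, i.e. $\La = \Cone(\La) \cap L$, then $\BA(\La)$ is a maximal order. The argument mirrors the commutative normality criterion: write $\Cone(\La)$ as a finite intersection of rational half-spaces $H_1,\dots,H_r$, so that $\La = \bigcap_i (H_i \cap L)$ and hence $\BA(\La) = \bigcap_i \BA(H_i \cap L)$ inside $\BT_\xi(U)$; each $H_i \cap L \cong \BN \times \BZ^{p-1}$, so $\BA(H_i \cap L)$ is a quantum polynomial ring in one variable over a quantum torus, which at a root of unity (being module-finite over a regular central subring) is a maximal order; and a finite intersection of maximal orders sharing the same simple Artinian quotient ring, whose intersection is again an order, is a maximal order. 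Then I would verify that the monoid $\La$ coming from the skein filtration is saturated: this amounts to analyzing $\Cone(\La)$ in terms of the face matrix $Q_\fT$ and the admissibility (triangle-inequality) conditions on the edge coordinates, and checking that every lattice point of the cone is a genuine monomial of $\gr\cS$. At the classical specialization this is exactly the normality of the relevant toric degeneration of the $SL_2\BC$-character variety.

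\textbf{Step: lift to $\cS$.} Apply the lifting principle for filtered rings: if $A$ carries an exhaustive, separated $\BN$-filtration with $\gr A$ a Noetherian maximal order and a domain, then $A$ is a Noetherian maximal order. Concretely, given $x \in Q$ with $x\,\cS \subseteq \cS$ (and symmetrically on the right), one filters $x$ by the order of its leading term, passes to $\gr$, uses that $\gr\cS = \BA(\La)$ is a maximal order to force the leading term into $\gr\cS$ in non-positive degree, and inducts downward to conclude $x \in \cS$. This gives $O_\ell(I) = O_r(I) = \cS$ for every nonzero two-sided ideal $I$, i.e. $\cS$ is a maximal order.

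\textbf{Main obstacle.} I expect the combinatorial core of the second step --- the saturatedness of $\La$ --- to be where the real work lies: it requires an explicit description of the leading-monomial monoid of $\cS$ with respect to $\fT$, and is equivalent to a normality statement for a toric degeneration of the character variety. The general lemma in that step also needs care in the twisted PI setting (one must keep track of the center and confirm the intersection of the overrings is still an order), and one should double-check that the triangulation filtration is exhaustive, separated, and has $\gr\cS$ a Noetherian domain so that the lifting principle genuinely applies.
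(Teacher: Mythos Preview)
Your overall architecture matches the paper's exactly: filter $\cS_\xi(\fS)$ by intersection number with the edges of an ideal triangulation, identify $\gr\cS$ with a monomial subalgebra $\BA(\La)$ of a quantum torus, prove that $\BA(\La)$ is a maximal order, and lift. The paper packages the last step as Proposition~\ref{r.gradedorderlift}, whose hypotheses are precisely the product formula \eqref{e.productformula} you would extract from the skein relation, and verifies saturatedness/primitivity of $\La$ via the triangle-inequality description of edge coordinates (Proposition~\ref{r.skeinprimmonoid}), just as you anticipate.

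The genuine difference is in how $\BA(\La)$ is shown to be a maximal order. You propose the ``toric'' route: write $\Cone(\La)$ as a finite intersection of rational half-spaces, realize each $\BA(H_i\cap L)$ as a quantum polynomial extension of a quantum torus (hence a maximal order at a root of unity), and intersect. The paper instead gives a direct, self-contained argument (Proposition~\ref{r.ALamaximal}) via a noncommutative Ostrowski lemma on Newton polytopes: if $A\subset B\subset z^{-1}A$ with $b\in B\setminus A$, one finds a vertex of $\Newt(b)$ outside $\Cone(\La)$ (this is exactly where primitivity is used, Lemma~\ref{r.primcone}), and then $\Newt(b^k z)$ escapes $\Cone(\La)$ for large $k$, contradicting $b^k z\in A$. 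Your route is closer to the classical commutative picture and would generalize more readily to other quantum affine toric varieties; the paper's Newton-polygon route avoids the point you yourself flag as delicate, namely that ``a finite intersection of maximal orders with the same quotient ring is a maximal order'' is not a standard theorem in the noncommutative setting and would need a separate argument (e.g.\ reducing to the Azumaya locus, or tracking centers carefully). Both approaches ultimately hinge on the same combinatorial fact about $\La$.
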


A maximal order is a noncommutative generalization of an integrally closed domain. In particular, the center of any maximal order is integrally closed. There are several closely related definitions of maximal order (see e.g. \cite{MRS2001}), but here we use the following one which coincides with several other definitions in our specific case.

Let $A$ be an associative $\BC$-algebra that (i) is a domain, and (ii) is finitely generated as a module over its center. Let $Z$ be the center of $A$, and $\tZ$ the field of fractions of $Z$. Then $A$ is a {\em maximal order} if given any ring $B \subset A \otimes_Z \tZ$ such that $A \subset B \subset (z^{-1})A$ for some $0 \neq z \in Z$ then $A=B$. Here we emphasize that $(z^{-1})A$ is a set, not a localization of $A$. We note that under these conditions, $A \otimes_Z \tZ$ is always a division algebra. It is easy to show that for commutative rings this definition is equivalent to being an integrally closed domain.

When the Euler characteristic is negative, our strategy to prove Theorem \ref{r.intromax} is to use the filtration on $\cS:=\cS_\xi(\fS)$ defined in Subsection \ref{sec.intrograded} in terms of an ideal triangulation and work with the associated graded algebra $\gr \cS$. We first prove that $\gr \cS$, which is a finitely generated monomial subalgebra of a quantum torus, is a maximal order. We then apply Proposition \ref{r.gradedorderlift} which shows that, under our circumstances, when the associated graded algebra is a maximal order then so is the filtered algebra. Proposition \ref{r.gradedorderlift} is probably known to experts, but does not appear in the literature to our knowledge.

The special cases where the Euler characteristic of the surface is non-negative have simple direct proofs which we do not present here. Furthermore, we remark that it is straightforward to extend Theorem \ref{r.mainresult} to the reduced skein algebra as described by \cite{PS2018} and the skein algebra of marked surfaces as described in Subsection \ref{s.introsurface}. \no{and L\^e's stated skein algebra as described in \cite{Le2018}.}

\begin{remark}
We also believe it is true that the skein algebra of a closed surface is a maximal order. We do not have a proof of this at the time of publication, but the only tool needed to do so is a good $\BN^p$-filtration on the skein algebra of the closed surface. Our Proposition \ref{r.gradedorderlift}, which works for $\BN^p$-filtrations, is more than sufficient for the case of surfaces with at least one puncture because $\{F_n\}$ is merely an $\BN$-filtration, but we present it here nonetheless with the closed surface case in mind.
\end{remark}

In the course of proving Theorem \ref{r.mainresult}, we obtain another simple result.

\begin{proposition}[See Proposition \ref{r.ALamaximal}]
Let $p$ be a non-negative integer, and let $U$ be a $p \times p$ antisymmetric matrix with integer entries. Let $\La \subset \BZ^p$ be a finitely generated submonoid of $\BZ^p$ of rank $p$ that is primitive in its group completion. Then the monomial subalgebra $\BA(\La) \subset \BT(U;\BC)$ is a maximal order.
\end{proposition}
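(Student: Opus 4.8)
The plan is to reduce the maximal-order property to a local statement at the height-one primes of the center, exploiting that the center turns out to be a normal (hence integrally closed) affine semigroup ring. Throughout, assume $q$ is a root of unity, so that $A:=\BA(\La)$ — a Noetherian domain, being a subalgebra of the quantum torus $\BT:=\BT(U;\BC)$ — is module-finite over its center and the maximal-order condition is defined. First I would identify the center $Z$. Since $A$ is graded by $\BZ\La$ with nonzero homogeneous components the lines $\BC X^\bk$ ($\bk\in\La$), an element of $A$ is central iff each of its monomials is, and $X^\bk$ is central iff $U(\bk,\bl)\in N\BZ$ for all $\bl\in\BZ\La$, i.e.\ iff $\bk\in\La_0:=\La\cap L_0$, where $L_0=\{\bk\in\BZ^p: U(\bk,\bl)\in N\BZ\ \forall\,\bl\in\BZ\La\}$ is a sublattice containing $N\BZ^p$, hence of finite index and of rank $p$. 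Thus $Z=\BA(\La_0)=\BC[\La_0]$. Using that $\La$ is finitely generated, of rank $p$, and primitive — i.e.\ saturated, $\La=\Cone(\La)\cap\BZ\La$ — one checks that $\La_0$ is finitely generated (Gordan's lemma), of rank $p$, satisfies $\Cone(\La_0)=\Cone(\La)$, and is saturated in $\BZ\La_0$; so $Z$ is a normal affine semigroup ring, hence an integrally closed Noetherian (Krull) domain. In particular the hypotheses in the definition of maximal order hold, and $D:=A\ot_Z\tZ$, a domain finite-dimensional over the field $\tZ$, is a division algebra.

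Next comes the local-global reduction. Let $B$ be a ring with $A\subseteq B\subseteq(z^{-1})A$ for some $0\neq z\in Z$; then $B\subseteq D$, and since $(z^{-1})A$ is a finitely generated module over the Noetherian ring $Z$, so is $B$, and $B$ is $Z$-torsion-free. For each height-one prime $\mathfrak{p}$ of $Z$ we therefore have $B\subseteq B_\mathfrak{p}:=B\ot_Z Z_\mathfrak{p}\subseteq D$, and $B_\mathfrak{p}$ is a ring with $A_\mathfrak{p}\subseteq B_\mathfrak{p}\subseteq(z^{-1})A_\mathfrak{p}$ in $D$. So it suffices to prove: (i) $A=\bigcap_\mathfrak{p}A_\mathfrak{p}$, the intersection over all height-one primes of $Z$ taken in $D$ (equivalently, $A$ is a reflexive $Z$-module); and (ii) for every height-one prime $\mathfrak{p}$ of $Z$, the localization $A_\mathfrak{p}$, which has center $Z_\mathfrak{p}$, is a maximal order in $D$. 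Granting (i) and (ii), $B_\mathfrak{p}=A_\mathfrak{p}$ for every $\mathfrak{p}$, hence $B\subseteq\bigcap_\mathfrak{p}B_\mathfrak{p}=\bigcap_\mathfrak{p}A_\mathfrak{p}=A$, so $B=A$, proving the proposition.

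For (i): both sides are $\BZ\La$-graded $Z$-submodules of $D$ (the right side being the reflexive hull of $A$, hence graded), so it is enough to compare homogeneous components, i.e.\ to see that $X^\bk\notin A$ (meaning $\bk\in\BZ\La\setminus\La$) forces $X^\bk\notin A_\mathfrak{p}$ for some height-one $\mathfrak{p}$. By saturation $\bk\notin\Cone(\La)$, so $\nu_F(\bk)<0$ for the primitive inner normal $\nu_F$ of some facet $F$ of $\Cone(\La)$; this $F$ corresponds to a height-one monomial prime $\mathfrak{p}_F$ of $Z$, and (see below) $A_{\mathfrak{p}_F}$ is supported in non-negative $\nu_F$-degrees, so $X^\bk\notin A_{\mathfrak{p}_F}$. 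For (ii) I would split the height-one primes of $Z=\BC[\La_0]$ into the facet primes $\mathfrak{p}_F$ and the rest. If $\mathfrak{p}$ contains no monomial, a padding argument (add to any $\bk\in\BZ\La$ a suitably deep interior element of $\La_0$ and use saturation) shows every monomial $X^\bk$, $\bk\in\BZ\La$, becomes invertible in $A_\mathfrak{p}$; hence $A_\mathfrak{p}$ is a central localization of the quantum (Laurent) torus $\BT'\subseteq\BT$ spanned by $\{X^\bk:\bk\in\BZ\La\}$, which is Azumaya over its center and so a maximal order, whence $A_\mathfrak{p}$ is too. If $\mathfrak{p}=\mathfrak{p}_F$, the same kind of argument makes all monomials with $\nu_F=0$ invertible and yields an isomorphism $A_{\mathfrak{p}_F}\cong R[t;\sigma]$ where $R$ is a central localization of the rank-$(p-1)$ quantum torus in the directions along $F$ (a maximal order, as before), $t=X^{\bk_F}$ for a lattice point $\bk_F\in\La$ with $\nu_F(\bk_F)=1$, and $\sigma\in\operatorname{Aut}(R)$ is of finite order (conjugation by $t$ scales monomials by $N$-th roots of unity); in particular $A_{\mathfrak{p}_F}=\bigoplus_{d\ge 0}Rt^d$ has only non-negative $\nu_F$-degrees, as used in (i).

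The one genuinely nontrivial input that remains is the lemma that a skew polynomial extension $R[t;\sigma]$ of a Noetherian maximal order $R$ by an automorphism $\sigma$ (of finite order, in our situation) is again a maximal order — the skew analogue of ``$R$ integrally closed $\Rightarrow R[t]$ integrally closed''. This is where I expect the main work to lie; I would either cite it (it is close to results collected in \cite{MRS2001}) or prove it by one more application of the same local-global criterion, this time at the height-one primes of the center of $R[t;\sigma]$: those not containing $t$ are again handled by Azumaya-ness, and the single prime containing $t$ by a direct graded/filtered argument. Everything else — identifying the center, the Gordan and saturation bookkeeping for $\La_0$, the padding lemmas, and the behavior of the $\nu_F$-grading under localization — is routine toric semigroup algebra.
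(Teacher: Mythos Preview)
Your approach is sound but genuinely different from the paper's. The paper gives a direct, elementary argument: given a ring $\cB$ with $\cA\subset\cB\subset(z^{-1})\cA$ and $b\in\cB\setminus\cA$, it first localizes at all monomials to conclude $b\in\BT$ (using that $\BT$ is Azumaya, hence a maximal order), and then runs a Newton-polygon argument. The key lemma is an Ostrowski-type statement $\Newt(aa')=\Newt(a)+\Newt(a')$ in the quantum torus; primitiveness of $\La$ is then used to produce a vertex $\bv$ of $\Newt(b)$ lying outside $\Cone(\La)$, and since $b^mz\in\cA$ for all $m$ while $m\bv$ moves linearly away from the (closed) cone and $\Newt(z)$ is bounded, one reaches a contradiction for $m\gg 0$. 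Your local--global reduction via height-one primes of the normal affine semigroup ring $Z=\BC[\La_0]$ is the classical noncommutative-algebra route: more structural, importing reflexivity, the behavior of maximal orders under central localization, and a skew-polynomial lemma. It generalizes well but carries more prerequisites; the paper's proof is self-contained and uses only convex geometry plus the single input that $\BT$ itself is a maximal order.

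One small correction: $A_{\mathfrak{p}_F}$ is not literally of the form $R[t;\sigma]$, since localizing at $Z\setminus\mathfrak{p}_F$ inverts non-homogeneous elements and destroys the $\nu_F$-grading (already in the commutative case $A_{\mathfrak{p}_F}$ is a DVR, not a polynomial ring). What is true is that inverting only the degree-$0$ monomials of $Z$ yields $R_0[t;\sigma]$ with $R_0$ the rank-$(p-1)$ quantum torus along $F$, and $A_{\mathfrak{p}_F}$ is a further \emph{central} localization of this; since central localizations of maximal orders are maximal orders, your argument still goes through. Your use of the $\nu_F$-support in (i) is unaffected: if $X^\bk\in A_{\mathfrak{p}_F}$ then $sX^\bk\in A$ for some $s\in Z\setminus\mathfrak{p}_F$, forcing every monomial of $s$ to have $\nu_F\ge -\nu_F(\bk)$, hence $\nu_F(\bk)\ge 0$.
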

$\La$ is {\em primitive} in its group completion $\uLa$ if given $c\bk \in \La$ for some positive integer $c$ and $\bk \in \uLa$, then $\bk \in \La$ as well. The case where $\La = \BN^p$ was already known in \cite{DCP1993}, and $\La = \BZ^p$ was present there implicitly.

\subsection{Consequences of maximal order}\lbl{sec.intromaxcons}

That $\cS(\fS)$ is a maximal order has several important consequences:
\begin{enumerate}[(i)]
\item The variety of classical shadows $\YxiS$ is normal.
\item The $SL_2(\BC)$ character variety $\sX(\fS)$ is normal.
\item A refined unicity theorem.
\end{enumerate}

We write $\Sp(\cS_q(\fS))$ for the set of equivalence classes of finite-dimensional irreducible representation of $\SxiS$ over $\BC$. Let $\ZxiS$ be the center of $\SxiS$. $\Sp(\ZxiS)$ turns out to be an affine variety which we denote by $\YxiS$, called the {\em variety of classical shadows} in \cite{FKL2017}. Since the center of a maximal order is integrally closed, we have the following.

\begin{theorem}(See Theorem \ref{s.shadow})
For every finite type surface $\fS$ with at least one puncture and every root of unity $\xi$ the classical shadows variety $\YxiS$ is a normal affine variety.
\end{theorem}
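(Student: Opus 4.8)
The plan is to reduce the statement to Theorem~\ref{r.intromax} (that $\cS_\xi(\fS)$ is a maximal order) together with the general algebraic fact that the center of a maximal order is integrally closed. First I would recall the standing hypotheses under which $\YxiS$ is even defined: by \cite{FKL2017}, when $\xi$ is a root of unity, $\cS:=\SxiS$ is a domain and is finitely generated as a module over its center $\ZxiS$, so $\ZxiS$ is a finitely generated commutative $\BC$-algebra and $\YxiS := \Sp(\ZxiS)$ is an affine variety. Thus the entire content of the theorem is the single word \emph{normal}, i.e.\ that $\ZxiS$ is an integrally closed domain. (It is automatically a domain, being a subring of the domain $\cS$.)

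The key step is the implication ``maximal order $\Rightarrow$ center integrally closed.'' Here is the argument I would give. Let $A = \SxiS$, $Z = \ZxiS$, and let $\tZ = \mathrm{Frac}(Z)$. Suppose $\alpha \in \tZ$ is integral over $Z$. I want $\alpha \in Z$. Consider $B := A[\alpha] \subset A \otimes_Z \tZ$, the subring of $A\otimes_Z\tZ$ generated by $A$ and $\alpha$. Since $\alpha$ is central in $A\otimes_Z\tZ$ (it lies in $\tZ$) and integral over $Z$, the ring $A[\alpha]$ is a finitely generated $A$-module, say generated by $1,\alpha,\dots,\alpha^{m-1}$; writing $\alpha = z'/z$ with $z,z' \in Z$, we get $\alpha^j \in z^{-(m-1)}A$ for $0 \le j \le m-1$, hence $A \subset B \subset (z^{-(m-1)})^{-1}\!A$ — wait, more carefully, $B \subset (w^{-1})A$ where $w = z^{m-1} \in Z$, which is exactly the containment required in the definition of maximal order given in the excerpt (with the single element $w \in Z$). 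By Theorem~\ref{r.intromax}, $A$ is a maximal order, so $A = B$, i.e.\ $\alpha \in A$. But $\alpha \in \tZ$ commutes with everything in $A\otimes_Z\tZ$, in particular with all of $A$, so $\alpha$ lies in the center of $A$, which is $Z$. Hence $Z$ is integrally closed, and $\YxiS = \Sp(Z)$ is a normal affine variety.

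I do not anticipate a serious obstacle here — the real work is entirely in Theorem~\ref{r.intromax}, which is assumed. The one point requiring mild care is making sure the hypotheses of the maximal-order definition in the excerpt are met: namely that $A$ is a domain and finitely generated as a module over its center, both of which hold by \cite{PS2018} and \cite{FKL2017} respectively once $\xi$ is a root of unity and $\fS$ has at least one puncture; and that $A\otimes_Z \tZ$ is a division algebra (noted in the excerpt to be automatic under these conditions), which is what lets us view $B$ as a subring of it. I would also remark, for completeness, that this argument is just the standard ``maximal order $\Rightarrow$ integrally closed center'' lemma specialized to our setting, so one could alternatively cite \cite{MRS2001}; but since the excerpt has adopted a specific definition of maximal order, it is cleanest to spell out the two-line deduction above to confirm it yields integral closedness of the center.
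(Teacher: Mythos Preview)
Your proposal is correct and follows essentially the same route as the paper: deduce normality of $\YxiS$ from Theorem~\ref{r.mainresult} via the fact that the center of a maximal order is integrally closed. The paper simply cites this last fact (Proposition~5.1.10(b) of \cite{MRS2001}) rather than spelling out the two-line argument you give, and it invokes the Artin--Tate lemma explicitly to justify that $\ZxiS$ is a finitely generated $\BC$-algebra (the step you pass over with ``so'').
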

A short argument gives the following corollary.
\begin{corollary} For every finite type surface $\fS$, the $SL_2(\BC)$-character variety $\sX(\fS)$ is a normal affine variety.
\end{corollary}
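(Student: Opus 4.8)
The plan is to reduce the statement to the classical specialization of the skein algebra and then feed it into the maximal-order machinery already established. Recall that in this paper's normalization of the Kauffman bracket skein relation ($T=qT_++q^{-1}T_-$, so that $q$ plays the role of Kauffman's variable $A$) the value $q=-1$ is the ``classical'' point: $\cS_{-1}(\fS)$ is commutative, and by the theorem of Bullock and Przytycki--Sikora \cite{Bu1997,PS2000} it is isomorphic to the universal $SL_2(\BC)$-character ring $\BC[\sX(\fS)]$ up to nilpotents. Since $\cS_\xi(\fS)$ is a domain for every $\xi\in\BC^\times$ \cite{PS2018}, its specialization $\cS_{-1}(\fS)$ is in fact reduced, so $\cS_{-1}(\fS)\cong\BC[\sX(\fS)]$ on the nose; in particular $\sX(\fS)$ is irreducible, and it is affine because $\cS_{-1}(\fS)$ is a finitely generated $\BC$-algebra.

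Now assume $\fS$ has at least one puncture. Then $-1$ is a root of unity, so Theorem~\ref{r.intromax} applies (here $N=\ord((-1)^4)=1$ and $\ve=(-1)^{N^2}=-1$) and tells us that $\cS_{-1}(\fS)$ is a maximal order. Being commutative it is its own center, hence trivially module-finite over its center; and for a commutative domain the maximal-order condition is precisely integral closedness, as recorded after the definition of maximal order. Consequently $\BC[\sX(\fS)]\cong\cS_{-1}(\fS)$ is an integrally closed domain, i.e.\ $\sX(\fS)$ is a normal irreducible affine variety. Equivalently, this is just the $\xi=-1$ case of Theorem~\ref{s.shadow}, since $\mathscr Y_{-1}(\fS)=\Sp\big(Z(\cS_{-1}(\fS))\big)=\Sp\big(\cS_{-1}(\fS)\big)=\sX(\fS)$.

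For a closed finite-type surface ($\cV=\emptyset$), $\pi_1$ is no longer free and the triangulation-based proof of Theorem~\ref{r.intromax} is unavailable; here one instead invokes the classical normality of the representation variety $\Hom(\pi_1(\ofS),SL_2(\BC))$ (a complete intersection in $(SL_2(\BC))^{2g}$ cut out by the surface relator) together with the fact that a GIT quotient of a normal variety by a reductive group is normal. The only delicate point on the punctured side is the on-the-nose identification $\cS_{-1}(\fS)\cong\BC[\sX(\fS)]$, where the absence of nilpotents is supplied by \cite{PS2018}; and if one prefers to quote the general Theorem~\ref{r.mainresult} rather than its $\xi=-1$ instance, one should note that its proof (filter by an ideal triangulation, pass to the associated graded monomial subalgebra of a quantum torus, then lift) goes through verbatim at $q=-1$, nothing there having used that $q$ is generic. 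I expect the closed-surface input to be the only ingredient not already contained in the skein-theoretic development above.
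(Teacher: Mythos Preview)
Your argument is correct and follows the same route as the paper: specialize to $\xi=-1$, identify $\cS_{-1}(\fS)$ with the coordinate ring of $\sX(\fS)$, and invoke the maximal-order theorem (equivalently, Theorem~\ref{s.shadow} at $\xi=-1$) to conclude integral closedness. The paper's own proof is the one-line observation that at $\xi=-1$ the skein algebra is commutative and coincides with $\BC[\sX(\fS)]$, so $\YxiS=\sX(\fS)$ and normality follows from Theorem~\ref{s.shadow}.

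You are more careful than the paper on two points worth noting. First, you explicitly supply the reducedness (no nilpotents) needed to get $\cS_{-1}(\fS)\cong\BC[\sX(\fS)]$ on the nose, citing the domain result of \cite{PS2018}; the paper asserts this identification without comment. Second, you correctly flag that the closed-surface case is not covered by the triangulation-based maximal-order proof and propose the classical route (normality of the representation scheme plus GIT by a reductive group). The paper's chapter-level setting in fact assumes $|\cV|\ge 1$ (and even remarks elsewhere that the closed case of the maximal-order theorem is open), so its statement ``for every finite type surface'' is, strictly speaking, only substantiated here for punctured $\fS$; your added paragraph fills that gap honestly, albeit by importing an external ingredient.
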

This corollary was originally shown by Simpson in \cite{Si94-1,Si94-2} by a lengthy and difficult argument. That we are able to reobtain this result with our short argument is illustrative of the power of the methods of quantum topology for proving results about classical topology.

Another consequence of Theorem \ref{r.intromax} is a refinement of the unicity theorem first proved in \cite{FKL2017}.

Suppose $\xi$ is a root of unity.\no{ Let $N =\ord(\xi^4)$.} The {\em degree} $\deg(\SxiS)$ is defined to be the square of the dimension of the division algebra $\widetilde {\SxiS}$ over its center $\widetilde{\ZxiS}$, which is a field. The degree $\deg(\SxiS)$ was first calculated in \cite{FKL2019}.
Since $\SxiS$ is a maximal order, by applying a theorem of de Concini, Kac, and Procesi \cite{DKP93}, we get the following refinement of the unicity theorem of \cite{FKL2017}.
\begin{theorem}(See Theorem \ref{r.refinedunicity})
Suppose $\fS$ is a finite type surface of negative Euler characteristic with at least one puncture and $\xi$ is a root of unity.

(a) The variety of shadows $\YxiS$ parameterizes the $D(\fS, \xi)$-dimensional semisimple representations of $\SxiS$.

(b) The central character map $\chi: \Sp(\SxiS) \to \YxiS$ is surjective and each fiber consists of all those irreducible representations of $\SxiS$ which are irreducible components of the corresponding semisimple representation.  In particular, each irreducible representation of $\SxiS$ has dimension $\leq D(\fS, \xi)$. A point $a\in \YxiS$ is called generic if the dimension of the irreducible representation $\chi^{-1}(a)$ is $D(\fS, \xi)$.

(c) The set of generic points in  $\YxiS$ is a non-empty Zariski open dense subset of $\YxiS$.

(d) Let $\cU \subset \YxiS$ be the open and dense subset consisting of all points $u$ such that $|\chi^{-1}(u)| = 1$. Let $x \in \YxiS \setminus \cU$ and write $\chi^{-1}(x) = \{r_1,\ldots,r_s\}$. Then since $\sum_{i=1}^s \dim(r_i) = D(\fS,\xi)^2$, in particular we have that $s \leq D(\fS,\xi)$.
\end{theorem}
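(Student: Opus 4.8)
The plan is to deduce Theorem~\ref{r.refinedunicity} from the de Concini--Kac--Procesi theorem on maximal orders combined with the standard representation theory of affine PI algebras, with Theorem~\ref{r.mainresult} supplying the one nontrivial new hypothesis. First I would record the structural facts about $A:=\SxiS$: it is a domain, a finitely generated $\BC$-algebra, and by \cite{FKL2017} a finite module over its center $Z:=\ZxiS$; its PI-degree is $D(\fS,\xi)$, computed in \cite{FKL2019}. By Theorem~\ref{r.mainresult} it is a maximal order, so its center is integrally closed and, by Theorem~\ref{s.shadow}, $\YxiS=\Sp(Z)$ is a normal affine variety. The extra input the DKP theorem needs is a homological hypothesis on $A$ (Cohen--Macaulayness as a $Z$-module, or Auslander regularity of the relevant local rings); I would check this by passing to the associated graded algebra $\gr\cS$ of Subsection~\ref{sec.intrograded}, which is a finitely generated monomial subalgebra of a quantum torus attached to a normal affine monoid and is therefore Cohen--Macaulay, and then lifting the property from $\gr\cS$ to $\cS$. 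Granting these hypotheses, the DKP theorem yields the geometric heart of the argument: the Azumaya locus of $A$ in $\Sp(Z)$ equals the smooth locus of $\YxiS$.

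With this in hand, (a) and (b) follow from general PI theory. For each maximal ideal $\fm\subset Z$ the algebra $A/\fm A$ has $\BC$-dimension $D(\fS,\xi)^2$ (the function $\fm\mapsto\dim_\BC A/\fm A$ is constant because $A$ is a Cohen--Macaulay, in fact free, module of that rank over a polynomial central subalgebra); its semisimplification is therefore a semisimple representation of dimension $D(\fS,\xi)^2$ with central character $\fm$, and conversely such a representation determines its central character, so $\fm\mapsto A/\fm A$ identifies $\YxiS$ with the set of these semisimple representations, giving (a). Since $A$ is integral over $Z$, lying-over makes $\chi\colon\Sp(\SxiS)\to\YxiS$ surjective, and the fiber over $x\leftrightarrow\fm$ is precisely the set of simple $A/\fm A$-modules, i.e.\ the set $\{r_1,\dots,r_s\}$ of distinct irreducible constituents of the semisimple representation attached to $x$; the Artin--Procesi/Kaplansky bound for a PI algebra of PI-degree $D(\fS,\xi)$ gives $\dim r_i\le D(\fS,\xi)$, with equality forced, for a single $r_i$, exactly when $\fm$ lies in the Azumaya locus, which proves (b) and names the generic points.

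Part (c) is then immediate: the generic locus is the Azumaya locus, which by the DKP step is the smooth locus of the normal variety $\YxiS$, hence a nonempty Zariski-open dense subset; and the set $\cU$ of (d) contains it, so it is open dense as well. Part (d) follows by counting dimensions inside the $D(\fS,\xi)^2$-dimensional semisimple representation attached to $x$, which bounds its number of irreducible constituents. The main obstacle I expect is not any single one of these deductions but pinning down the precise form of the de Concini--Kac--Procesi theorem being invoked and verifying its hypotheses for $\SxiS$ --- above all the Cohen--Macaulay/homological condition, where the monomial associated-graded description of Subsection~\ref{sec.intrograded} is the right tool, together with making sure the ``$\dim_\BC A/\fm A=D(\fS,\xi)^2$ for all $\fm$'' input is genuinely in hand, since the counting in (a) and (d) rests on it.
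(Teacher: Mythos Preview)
Your overall strategy---verify the hypotheses of a de Concini--Kac--Procesi theorem and read off (a)--(d)---is exactly what the paper does, but you have loaded the DKP input with more hypotheses than the paper actually uses. The version invoked in the paper (Theorem~\ref{r.dCKP}) requires only that $A$ be a finitely generated $\BC$-algebra, a domain, finitely generated as a module over its center, and a maximal order; all four are already in hand from \cite{Bu1997}, Lemma~\ref{r.finitedomain}, \cite{FKL2017}, and Theorem~\ref{r.mainresult} respectively. The paper's entire proof is then the single sentence ``Since $\SxiS$ is a maximal order, by applying Theorem~\ref{r.dCKP}, we get the following.''

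In particular, no Cohen--Macaulay or Auslander-regularity hypothesis is needed, and the paper never invokes the stronger ``Azumaya locus $=$ smooth locus'' result. For part~(c) the theorem only claims the generic locus is nonempty, Zariski open, and dense: openness and nonemptiness come straight from Theorem~\ref{r.dCKP}(iii), and density is automatic because $Z_\xi(\fS)$ is a domain, so $\YxiS$ is irreducible and any nonempty open set is dense. Your route through the smooth locus would also work---the associated graded is a monomial algebra on a normal affine monoid, hence Cohen--Macaulay, and this lifts---but it purchases a sharper geometric statement than the theorem asks for. The obstacle you flagged as the main one, namely verifying the homological hypothesis, is therefore absent from the paper's argument; the remark preceding Theorem~\ref{r.dCKP} that a maximal order is automatically an algebra with trace in Procesi's sense is what replaces it.
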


Finally, the author presents a possible application of Theorem \ref{r.intromax} to topological quantum compiling in Subsection \ref{s.tqc}. Briefly stated, skein algebras of surfaces encode some aspects of the dynamics of non-abelian anyons in certain topological quantum computer architectures \cite{Fr1998,Ki2003,Ka2002,Wa2010}. An algorithm for the asymptotically optimal compilation for a single qubit for Fibonacci anyons, $SU(2)_k$ anyons, and others, is presented in \cite{KY2015} and this algorithm cruicially utilizes maximal orders in quaternion algebras. We conjecture in Subsection \ref{s.tqc} that their algorithm may be applied to representations of skein algebras of marked surfaces under certain circumstances. Other results in this dissertation, such as the (skew-)transparent image of the Chebyshev-Frobenius algorithm, and the use of quantum torus methods in general, may also find use in this context.

\subsection{Chebyshev polynomial for \texorpdfstring{$q$}{q}-commuting variables}\lbl{sec.introchebycom}
In the course of proving the existence of the Chebyshev-Frobenius homomorphism, we obtain the following simple but useful result given in Proposition \ref{r.chebygen} relating Chebyshev polynomials and $q$-commuting variables.

\begin{proposition}[See Proposition \ref{r.chebygen}]
Suppose $K,E$ are variables and $q$ an indeterminate such that $KE=q^2EK$ and $K$ is invertible. Then for any $n \ge 1$,
$$
T_n(K+K^{-1}+E) = K^n + K^{-n} + E^n + \sum_{r=1}^{n-1}\sum_{j=0}^{n-r} c(n,r,j)[E^rK^{n-2j-r}],
$$
where $c(n,r,j) \in \BZ[q^{\pm 1}]$ is given explicitly by  \eqref{eq.cnrj} as a ratio of $q$-quantum integers, and in fact $c(n,r,j) \in \BN[q^{\pm 1}]$. Besides, if $q^2$ is a root of unity of order $n$, then $c(n,r,j)=0$.  
\end{proposition}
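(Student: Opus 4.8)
The plan is to carry out everything inside the algebra generated by $K^{\pm1}$ and $E$ subject to $KE=q^2EK$, whose free $\BZ[q^{\pm1}]$-basis is the set of normalized monomials $[E^rK^s]$ ($r\ge0$, $s\in\BZ$), and to extract $T_n(X)$ for $X:=K+K^{-1}+E$ by pushing the Chebyshev recursion through this basis. First I would record, from $KE=q^2EK$ and the definition~\eqref{e.introweyl} of the Weyl normalization, the left‑multiplication rule
\[
X\,[E^rK^s]\;=\;q^{r}\,[E^rK^{s+1}]\;+\;q^{-r}\,[E^rK^{s-1}]\;+\;q^{-s}\,[E^{r+1}K^s],
\]
a one‑line computation. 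Writing $T_m(X)=\sum_{r\ge0,\,s}d_m(r,s)\,[E^rK^s]$, the recursion $T_n(X)=XT_{n-1}(X)-T_{n-2}(X)$ becomes the scalar recursion
\[
d_n(r,s)=q^{r}d_{n-1}(r,s-1)+q^{-r}d_{n-1}(r,s+1)+q^{-s}d_{n-1}(r-1,s)-d_{n-2}(r,s),
\]
with $d_0(0,0)=2$ and $d_1$ equal to $1$ at $(0,\pm1)$ and $(1,0)$ and zero elsewhere. An easy induction on $n$ then gives the support bound $d_n(r,s)=0$ unless $0\le r\le n$, $|s|\le n-r$, $s\equiv n-r\pmod 2$; comparing the extreme terms gives $d_n(0,\pm n)=d_n(n,0)=1$; and the $r=0$ slice of the recursion is exactly the scalar recursion for $T_n(K+K^{-1})$, so $d_n(0,s)=0$ for $|s|<n$. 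Putting $c(n,r,j):=d_n(r,\,n-2j-r)$ for $1\le r\le n-1$, $0\le j\le n-r$, this is precisely the displayed expansion, with the three corner coefficients equal to $1$.

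It remains to pin down $c(n,r,j)$ and to verify its two extra features. Re‑indexing the recursion above in the variables $(n,r,j)$ yields a three‑term recursion for $c(n,r,j)$ involving $c(n-1,r,j-1)$, $c(n-1,r,j)$, one term $c(n-1,r-1,\cdot)$, and a correction $c(n-2,r,\cdot)$, plus boundary values forced by the previous paragraph (e.g.\ $c(n,n-1,j)=[n]_q$ with $[n]_q=(q^n-q^{-n})/(q-q^{-1})$, matching the $n=2,3,4$ computations). I would then verify that the explicit expression~\eqref{eq.cnrj}, a ratio of products of quantum integers with a single factor $[n]_q$ in the numerator and with quantum integers of index $<n$ in the denominator, satisfies this recursion and these boundary values; this reduces to a contiguous ($q$-Pascal‑type) identity among Gaussian binomial coefficients, arranged so that the three shifted $c(n-1,\cdot)$ terms and the subtracted $c(n-2,\cdot)$ term collapse into the single $c(n,r,j)$. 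Granting~\eqref{eq.cnrj}, the last two assertions follow at once: rewriting~\eqref{eq.cnrj} so that the denominators cancel exhibits it as a product of Gaussian binomial coefficients (in $q$ and $q^2$), which lie in $\BN[q^{\pm1}]$, so $c(n,r,j)\in\BN[q^{\pm1}]$; and if $q^2$ has order $n$ then $q^{2n}=1$, hence $q^n=\pm1$, hence the numerator $q^n-q^{-n}$ of the factor $[n]_q$ vanishes while $q^{2m}\ne1$ for $m<n$ keeps the denominator nonzero, forcing $c(n,r,j)=0$. (For $n=1$ the sum in the statement is empty, so nothing is required.)

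The hard part will be the middle step: finding the correct closed form~\eqref{eq.cnrj} and checking it against the recursion. Because of the $-c(n-2,r,\cdot)$ correction the recursion is not termwise positivity‑preserving, so positivity cannot be read off from it directly and genuinely needs a manifestly positive closed form; assembling the right product of Gaussian binomials --- with the correct overall power of $q$ and the correct shifts in $r$ and $j$, including at the endpoints of the range $0\le j\le n-r$ --- is the one place that requires real bookkeeping. Everything before it (the multiplication rule, the support and corner analysis, the reduction to a scalar recursion) is routine, and everything after it (integrality, positivity, vanishing at $n$-th roots of unity) is an immediate consequence of the shape of~\eqref{eq.cnrj}.
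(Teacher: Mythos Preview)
Your plan is correct and is exactly the approach the paper takes: the paper's entire proof reads ``One can easily prove the proposition by induction on $n$,'' and what you have written is a careful unpacking of that induction (left-multiplication rule on the basis $[E^rK^s]$, the resulting scalar recursion for the coefficients, support/corner analysis, and then verification of the closed form~\eqref{eq.cnrj}). The positivity and root-of-unity vanishing are handled in the paper just as you describe, by reading them directly off the shape of~\eqref{eq.cnrj} (see the paper's Remark following the proposition and the proof of Corollary~\ref{c.32}).
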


In particular, if $q^2$ is a root of unity of order $N$, then
$$ 
T_N(K+K^{-1}+E) = K^N + K^{-N} + E^N.
$$
The above identity was first proven in \cite{BW2011}, as an important case of the calculation of the Chebyshev homomorphism. See also \cite{Le2015}. An algebraic proof of identities of this type is given in \cite{Bo2017}. Here the identity follows directly from Proposition \ref{r.chebygen}. The new feature of Proposition \ref{r.chebygen} is that it deals with generic  $q$ and may be useful in the study of the positivity of the skein algebra, see \cite{Th2014,Le2018-2}.

\section{Outline}

We give here a brief summary of each chapter in this dissertation.

\begin{enumerate}
    \setcounter{enumi}{+1}
    \item \textbf{Topology} We review the topological prerequisites to our results. We discuss finite-type surfaces, marked surfaces, marked 3-manifolds, $\cN$-tangles, simple diagrams, quasitriangulations, ideal triangulations, hyperbolic structures, and character varieties.
    \item \textbf{Algebra} We review the algebraic prerequisites to our results. This includes Chebyshev polynomials, filtered and graded algebras, division in noncommutative rings, maximal orders, and quantum tori.
    \item \textbf{Kauffman bracket skein modules} We explain basic aspects of Kauffman bracket skein modules of 3-manifolds and algebras of surfaces, and discuss how the skein algebra is a quantization of the character variety and the special role of the skein algebra at 4th roots of unity.
    \item \textbf{Quantum Teichm\"uller theory} We summarize a few aspects of quantum Teichm\"uller theory, most importantly we discuss the Muller algebra into which the skein algebra embeds.
    \item \textbf{Skein algebra embeddings} We introduce the quantum trace map of Bonahon and Wong and the skein coordinate map of Muller, as well as its generalization to the context of marked surfaces which are not totally marked.
    \item \textbf{Surgery theory} We introduce a surgery theory for Kauffman bracket skein algebras of marked surfaces, an important theoretical tool for our proof of the Chebyshev-Frobenius homomorphism.
    \item \textbf{Chebyshev-Frobenius homomorphism} We prove the existence of the Chebyshev-Frobenius homomorphism of marked 3-manifolds and discuss its consequences for marked and finite type surfaces.
    \item \textbf{Center of $\cS\SP$ and (skew-)transparency} We utilize the Chebvyshev-Frobenius homomorphism to identify (skew-)transparent elements in skein modules of marked 3-manifolds. We also utilize the Muller algebra to identify the center of $\cS\SP$ when $q$ is not a root of unity.
    \item \textbf{$\cS(\fS)$ is a maximal order} We prove that $\cS$ is a maximal order when and discuss the consequences of this result. We also present a conjecture on the relationship of this result to topological quantum compiling.
    \item \textbf{Appendix A: Geometric proof that $\cS$ is a domain} We present a purely geometric argument that the skein algebra of a marked surface is a domain.
    \item \textbf{Appendix B: Chebyshev-Frobenius homomorphism for the marked disk} We give a direct skein-theoretic proof of the existence of the Chebyshev-Frobenius homomorphism for the disk with four marked points.
\end{enumerate}

\chapter{Topology}

\section{Introduction}

In this dissertation we are concerned primarily with 2- and 3-dimensional manifold theory as it is revealed via the techniques of quantum topology. Our lens into 3-manifold theory will primarily be via the embedding of tangles and surfaces into 3-manifolds, thus we begin our discussion with a short review of relevant aspects of surface and 3-manifold topology.

Closed compact surfaces have been completely classified up to homeomorphism since the early 1900's with the Euler characteristic acting as a complete invariant. Similarly, surfaces with boundary and/or punctures are classified up to homeomorphism by their genus, the number of boundary components, and the number of punctures. Surfaces with infinite genus, number of boundary components, or number of punctures are beyond the scope of this dissertation and will not be discussed.

We begin in Section \ref{sec.surfacetypes} by fixing definitions for the various types of surfaces encountered in this work, namely finite type surfaces and marked surfaces. In Section \ref{sec.top3} we introduce marked 3-manifolds, the spaces upon which our primary object of study, Kauffman bracket skein modules, depends upon functorially. In Section \ref{sec.surfacetangles} we define the notion of tangles in surfaces and 3-manifolds, which are collections of knots and arcs considered up to isotopy that act as generators of skein modules. In Section \ref{sec.tangle3} we define combinatorial models for surfaces in the forms of ideal triangulations for finite type surfaces and quasitriangulations for marked surfaces. This will give us scaffolding to describe simple coordinate systems to describe tangles on surfaces. Lastly in Section \ref{s.charvariety}, knots will define a basis for the ring of regular functions on the $SL_2\CC$-character variety of the surface, of which we will understand the Kauffman bracket skein algebra to be a quanitzation. As we show in Chapter \ref{c.maxorder}, this noncommutative view on character varieties ultimately allows us to give a much shorter proof of the difficult result of Simpson \cite{Si94-1,Si94-2} that the character variety is normal - which is to say, its ring of regular functions is integrally closed. \no{Finally we end with a discussion on hyperbolic geometry of surfaces and Teichm\"uller spaces, which we will eventually quantize and then embed the skein algebra into. Quantum Teichm\"uller spaces are easy to study algebras known as quantum tori, and our main proof strategy is to work with these algebras instead of the skein algebras themselves. }

\section{Surfaces}\lbl{sec.surfacetypes}

A {\em marked surface} is a pair $\SP$, where $\Sigma$ is an oriented connected surface with (possibly empty) boundary $\pS$ and a possibly empty finite set $\cP\subset \pS$. A point in $\cP$ is called a {\em marked point}. A connected component of $\pS$ is {\em marked} if it has at least one marked point, otherwise it is {\em unmarked}. The set of all unmarked components is denoted by $\cH$ and the set of all marked components is denoted by $\cHd$. We call $\SP$ {\em totally marked} if $\cH=\emptyset$, i.e. every boundary component has at least one marked point. The marked points serve as endpoints for $\cP$-arcs, which will be found as elements of the skein algebra of a marked surface and used to construct quasitriangulations, see Subsection \ref{sec.quasitri}. Our marked surface is the same as the marked surface in \cite{Mu2012}, or a ciliated surface in \cite{FG2000}, or a bordered surface with punctures in \cite{FST2008} if one considers a boundary component without any marked points as a puncture.

A {\em finite type surface} $\fS$ is an oriented surface $\fS = \ofS \setminus \cV$ where $\ofS$ is an oriented closed connected surface and $\cV$ is a (possibly empty) finite set. A point $v \in \cV$ is called a {\em puncture}. The genus $g = g(\fS)$ and the number of punctures $p=|\cV|$ completely determines the diffeomorphism class of $\fS$, and so we write $\fS:=\fS_{g,p}$. We will utilize these punctures as endpoints for ideal triangulations, see Subsection \ref{sec.idealtri}. A finite type surface may also be defined as a marked surface with empty boundary (and thus also an empty marked set).

Our main results, namely the existence of the Chebyshev-Frobenius homomorphism and that skein algebras are maximal orders, apply to surfaces of either kind with no restriction. Thus the difference is ultimately immaterial and only a proof technique, where we utilize the type of surface for which the result is easiest to prove and derive the other as a corollary.

\no{

In this section we fix the definition of several types of surfaces we will employ throughout. We also clarify the difference between marked points and punctures on a surface.

We will primarily deal with 4 types of surfaces, the precise definitions of which follow. These are closed surfaces, punctured surfaces, and marked surfaces. Additional types are present as well, but they do not play a central role in that skein algebras over them are effectively treated as skein algebras of the other types.

For us, a {\em punctured marked surface} is a pair $\SP$, where $\Sigma$ is a compact oriented connected 2-dimensional smooth manifold, and $\cP \subset \Sigma$ is finite set of points. We call $\cP \subset \intS$ {\em punctures} and $\cP \subset \pS$ {\em marked points}. We write $\intS$ for the interior of $\Sigma$ Depending on the form of $\Sigma$ and where $\cP$ lies, we divide punctured marked surfaces into several overlapping subsets:
\begin{enumerate}
\item $\SP$ is a {\em closed surface} when $\pS = \emptyset$ and $\cP = \emptyset$.
\item $\SP$ is a {\em surface with boundary} when $\cP = \emptyset$.
\item $\SP$ is a {\em finite type surface} when $\pS = \emptyset$.
\item $\SP$ is a {\em punctured surface} when $\cP \subset \intS$.
\item $\SP$ is a {\em marked surface} when $\cP \subset \pS$.
\end{enumerate}

We illustrate the set inclusions of these surface types with the Hasse diagram in Figure \ref{fig:figures/surfacehasse}.

\begin{figure}\lbl{fig:figures/surfacehasse}
\centering
\includegraphics[width=6.6cm]{figures/surfacehasse}
\caption{Hasse diagram of the various types of surfaces.}
\end{figure}

Our main results will place no restrictions on $\SP$ - i.e. they are about punctured marked surfaces. However at various points we may restrict to one of the above classes (or a subset of them, e.g. triangulable marked surfaces), depending on the circumstances.

Given a punctured marked surface $\SP$, we denote by $\cH$ the set of boundary components $\beta$ of $\Sigma$ such that $\beta \cap \cP = \emptyset$. We call these boundary components {\em unmarked}. We denote by $\cHd$ the set of boundary components $\beta$ of $\Sigma$ such that $\beta \cap \cP \neq \emptyset$. We call these boundary components {\em marked}.

As a notation of convenience, when working with a marked surface where every boundary component is marked, we will sometimes call the marked set $\overline{\cP}$ to draw attention to this fact when we can. Furthermore, we will frequently write punctured surfaces as $\ofSP$, since we will occasionally wish to work with the non-compact surface $\fS = \ofS \setminus \cP$.

Given a punctured marked surface $\ofSP$ we will sometimes want to construct the {\em associated marked surface} $\SP$. For each $p \in \cP$, let $D_p \subset \ofS$ be a small closed disk with interior $\mathring{D}_p$ such that $p \in \partial D_p$. Then we define a surface with boundary $\Sigma = \ofS \setminus (\cup_p \mathring{D}_p)$, and define the marked set to be $\cP$.

}

\no{

In this section we fix the definition of several types of surfaces we will employ throughout. We also clarify the difference between marked points and punctures on a surface.

\red{I think this is pretty much backwards - I should add things over time, not remove them, otherwise the inclusions of sets goes in the wrong direction}

A {\em marked bordered surface with punctures} $\SP$ consists of an compact oriented connected 2-dimensional topological manifold with boundary $\Sigma$, whose (possibly empty) boundary we denote $\pS$, and a possibly empty set of {\em marked points} $\cP \subset \pS$ and a possibly empty set of {\em punctures} $\cP \subset \cSigma$, where $\cSigma = \Sigma \backslash \pS$. This is known as a bordered surface with marked points in \cite{FST2008}.

Given a marked bordered surface with punctures $\SP$, we denote by $\cH$ the set of boundary components $\beta$ of $\Sigma$ such that $\beta \cap \cP = \emptyset$. We call these boundary components {\em unmarked}. We denote by $\cHd$ the set of boundary components $\beta$ of $\Sigma$ such that $\beta \cap \cP \neq \emptyset$. We call these boundary components {\em marked}.

A {\em marked bordered surface} $\SP$ is a marked bordered surface with punctures such that $\cP \subset \partial \Sigma$. That is to say, there are no punctures. This is known as a marked surface in \cite{Mu2012}, or a ciliated surface in \cite{FG2000}.

A {\em totally marked bordered surface} $\SP$ is a marked bordered surface such that every component of $\pS$ is marked. This is called a totally marked surface in \cite{Mu2012}.

A {\em finite type surface} $\SP$ is a marked bordered surface with punctures such that $\pS = \emptyset$. Thus, all marked points are necessarily punctures.

A {\em closed surface} $\SP$ is a finite type surface such that $\cP = \emptyset$. That is to say, it has no boundary components and no punctures.

For our purposes it will generally be safe to consider marked points and punctures as being the same thing. In the few occasions where this will give us trouble we will draw attention to it. The following construction allows us to handle punctured bordered surfaces as if they were bordered surfaces.

Let $\SP$ be a marked bordered surface with punctures. Then we construct an {\em associated marked bordered surface} $\tStP$ as follows. For every $p \in \cP \cap \cSigma$, choose a small closed disk $D_p$ with $p \in \partial D_p$. Then we define $\widetilde{\Sigma} = \Sigma \backslash (\{D_p\}_{p \in \cP \cap \cSigma})$ and $\widetilde{\cP} := \cP$, the distinction between $\cP$ and $\widetilde{\cP}$ being that all punctures are now marked points on the boundary.

}

\section{3-manifolds}\lbl{sec.top3}

Our primary object of study, the Kauffman bracket skein module, is generated by framed tangles embedded in marked 3-manifolds. In the case of Kauffman bracket skein algebras of surfaces, the associated 3-manifold is a cylinder over the surface. Here we introduce these objects.

\subsection{Marked 3-manifolds}

A {\em marked 3-manifold $\MN$} is a pair consisting of an oriented connected 3-manifold $M$  with (possibly empty) boundary $\pM$ and a 1-dimensional oriented submanifold $\cN \subset \pM$ such that $\cN$ consists of a finite number of connected components, each of which is diffeomorphic to the interval $(0,1)$.

\subsection{Cylinders over surfaces}\lbl{sec.cylinders}

To every marked surface $\SP$ we associate the marked 3-manifold $(\Sigma \times (-1,1), \cP \times (-1,1))$. To every finite type surface $\fS$ we associate the marked 3-manifold $(\fS \times (-1,1), \emptyset)$. Skein algebras of surfaces will be defined as the skein module of the associated marked 3-manifold equipped with a product structure we are able to define due the presence of a vertical direction.

It is important for technical reasons that we use $(-1,1)$ instead of $[-1,1]$, as the inclusion of a boundary on the interval complicates some matters. Nonetheless, we expect our results hold when we use the closed interval instead.

\section{1-dimensional submanifolds of surfaces}\lbl{sec.surfacetangles}

In this section we introduce a class of 1-dimensional submanifolds in marked and finite type surfaces considered up to isotopy, which will form the generating set for Kauffman bracket skein algebras of surfaces as well as building materials for quasitriangulations of marked surfaces and ideal triangulations of finite type surfaces. We also define the geometric intersection index, an important technical tool.

Throughout this section we fix a marked surface $\SP$ and a finite type surface $\fS = \ofS \setminus \cV$.

\subsection{Homotopy vs. isotopy}

One bugbear eternally looming over topologists is the distinction between homotopy and isotopy. Luckily for us, these two things coincide in the case of surfaces.

\begin{theorem}\lbl{r.homoisiso}
Let $\Sigma$ be a second-countable connected orientable surface. If two homeomorphisms $f_1,f_2: \Sigma \to \Sigma$ are homotopic, then they are isotopic.
\end{theorem}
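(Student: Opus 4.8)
The plan is to reduce to the case where one of the two maps is the identity and then construct an isotopy ``cell by cell'', using two classical inputs: (i) homotopic essential simple closed curves (and properly embedded arcs) on a surface are ambient isotopic (the bigon criterion), and (ii) a homeomorphism of a disk that is the identity on the boundary is isotopic rel boundary to the identity (Alexander's trick). This is in essence Epstein's theorem (Epstein, \emph{Curves on 2-manifolds and isotopies}, Acta Math.\ 115 (1966)); in an expository thesis I would cite Epstein and Farb--Margalit's ``Alexander method'' and present only the reduction and the two inputs, but here is how the argument goes. First the reduction: it suffices to show that a homeomorphism $f\colon \Sigma \to \Sigma$ homotopic to $\id_\Sigma$ is isotopic to $\id_\Sigma$. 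Given homotopic homeomorphisms $f_1, f_2$, the composite $g = f_2^{-1} \circ f_1$ is homotopic to $\id_\Sigma$ (pre- and post-composition by a fixed homeomorphism preserve homotopy classes); if $g_t$ is an isotopy from $\id_\Sigma$ to $g$, then $f_2 \circ g_t$ is an isotopy from $f_2$ to $f_1$. Note also that such an $f$ is automatically orientation-preserving.

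Next one disposes by hand of the finitely many orientable surfaces of non-negative Euler characteristic: $S^2$ and $\RR^2$ (orientation-preserving homeomorphism group connected), the open annulus $S^1 \times \RR$, and the torus $T^2$ (homotopy and isotopy classes of self-homeomorphisms are both classified by the induced automorphism of $\pi_1 \cong \ZZ^2$, i.e.\ by $GL_2(\ZZ)$, so the two notions agree). Second-countability is used here to exclude pathologies like the long line. For the remaining case $\chi(\Sigma) < 0$, choose a locally finite collection $\Gamma = \{c_i\}$ of disjoint, pairwise non-isotopic, essential simple closed curves and properly embedded arcs whose complement $\Sigma \setminus \Gamma$ is a disjoint union of open disks: e.g.\ (the union over an ideal triangulation of) arcs in the finite-type case, or a pants decomposition together with enough arcs in the infinite-type case, chosen to exhaust $\Sigma$. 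Since $f$ is homotopic to $\id$, each $f(c_i)$ is homotopic to $c_i$; by input (i) and an induction over the components of $\Gamma$ (using local finiteness), an ambient isotopy brings $f$ to a homeomorphism carrying $\Gamma$ onto $\Gamma$. Because $f$ preserves orientation and is homotopic to the identity, it fixes each component of $\Gamma$ and its restriction to each circle is homotopic, hence isotopic, to the identity, so after a further ambient isotopy $f$ is the identity on a neighborhood of $\Gamma$.

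Then $f$ restricts on each complementary disk $D$ to a homeomorphism fixing $\partial D$ pointwise, which by input (ii) (Alexander's trick) is isotopic rel $\partial D$ to $\id_D$. These isotopies are constant near the disk boundaries, so they glue — over a locally finite collection of disks this is a well-defined, continuous ambient isotopy — carrying $f$ to $\id_\Sigma$. Concatenating all the isotopies produced along the way yields an isotopy from the original $f$ to $\id_\Sigma$, proving the reduced statement and hence the theorem.

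The main obstacle is the passage from ``$f$ maps $\Gamma$ onto $\Gamma$'' to ``$f$ is the identity near $\Gamma$'': one must simultaneously straighten each curve and arc, kill the rotation or translation $f$ may induce along each of them (possible precisely because $f \simeq \id$ forces the restriction to each circle to be isotopic to the identity), and perform all of this by ambient isotopies that are compatible on overlaps. In the non-compact (infinite-type) setting this forces one to exhaust $\Sigma$ by compact subsurfaces and pass to a limit, taking care that the sequence of isotopies stays locally finite and converges — this bookkeeping, rather than any single conceptual point, is where the real work lies.
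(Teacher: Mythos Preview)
The paper does not actually prove this theorem: immediately after the statement it simply records that the closed case is due to Baer and the non-compact/boundary case to Epstein, and moves on. Your sketch is precisely the Epstein/Alexander-method argument you yourself name, so in spirit you are reproducing what the cited references do rather than offering a different route. For the paper's purposes a bare citation suffices, and your version would serve well as an expository expansion.

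One genuine gap worth flagging: your filling system $\Gamma$ is required to be \emph{disjoint} and to cut $\Sigma$ into disks, and your examples (ideal-triangulation arcs, pants decomposition plus arcs) all presuppose boundary or punctures. On a closed surface of genus $g \ge 2$ no disjoint collection of simple closed curves cuts into disks---a maximal such collection yields pairs of pants---so the closed higher-genus case (exactly Baer's case) is not covered by your argument as written. The standard fix is either to allow $\Gamma$ to be a filling system with intersections and straighten curves one at a time, or to first isotope $f$ to fix a single non-separating curve, cut along it to obtain a surface with boundary, and then run your argument there. Either repair is routine, but as stated your ``$\chi(\Sigma)<0$'' paragraph silently assumes $\partial\Sigma \neq \emptyset$ or $\cV \neq \emptyset$.
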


This result was first proven for closed surfaces by Baer \cite{Ba1927,Ba1928}, and then for non-compact surfaces with boundary by Epstein in \cite{Ep1966}. Thus, from here on out we will only refer to isotopy classes of 1-dimensional submanifolds of surfaces.

\subsection{\texorpdfstring{$\cP$}{P}-tangles}

A {\em $\cP$-tangle} is an immersion $T: C \to \Sigma$, where $C$ is a compact non-oriented 1-dimensional manifold, such that
\begin{enumerate}[(i)]
\item the restriction of $T$ onto the interior of $C$ is an embedding into $\Sigma \setminus \cP$, and
\item $T(\partial C) \subset \cP$.
\end{enumerate}
The image of a connected component of $C$ is called a {\em component} of $T$. When $T$ is homeomorphic to $S^1$, we call $T$ a {\em $\cP$-knot}. When $T$ is homeomorphic to a disjoint union of $S^1$'s, we call $T$ a {\em $\cP$-link}. When $C$ is homeomorphic to $[0,1]$, we call $T$ a {\em $\cP$-arc}.

Two $\cP$-tangles are {\em $\cP$-isotopic} if they are isotopic through the class of $\cP$-tangles.

For a ring $R$, we will write $\cT\SP$ for the free $R$-module of $\cP$-tangles considered up to isotopy. The ring $R$ under consideration will always be made clear.

\begin{remark}
We emphasize here that our tangles are not actually ``tangled'' due to the embedding requirement. We use this terminology because later on we will define $\cN$-tangles of marked 3-manifolds $\MN$ which may actually be ``tangled''. The Kauffman bracket skein algebra of a surface is defined in terms of a marked 3-manifold obtained by taking the cylinder over the surface and tangles inside of this cylinder, and these tangles will be generated by the ``unentangled tangles'' drawn on the surface.
\end{remark}

A $\cP$-arc $x$ is called a {\em boundary arc}, or $x$ is {\em boundary}, if it is $\cP$-isotopic to a $\cP$-arc contained in $\pS$.
A $\cP$-arc $x$ is called an {\em inner arc}, or $x$ is {\em inner}, if it is not a boundary arc.
 
A $\cP$-tangle $T \subset \Sigma$ is {\em essential} if it does not have a component bounding a disk in $\Sigma$; when such a component is homeomorphic to $S^1$ it is called a {\em smooth trivial knot} in $\Sigma \setminus \cP$, else it is a $\cP$-arc bounding a disk in $\Sigma$. By convention, the empty set is considered an essential $\cP$-tangle that is $\cP$-isotopic only to itself. We write $B_\SP$ for the set of all essential $\cP$-tangles considered up to isotopy.

\subsection{Loops and ideal arcs on finite-type surfaces}\lbl{sec.loopsfinitetype}

A {\em loop} in $\fS$ is an unoriented submanifold diffeomorphic to $S^1$. Given a smooth map $x: [0,1] \to \ofS$ such that $x(0),x(1) \in \cV$ and $x$ embeds $(0,1)$ into $\fS$ then the image of $x$ in $\fS$ is an {\em ideal arc}. Isotopies of ideal arcs are always considered to be within the class of ideal arcs, and similarly for loops.

A loop is {\em trivial} if it bounds a disk in $\fS$. A {\em simple diagram} is a disjoint union of non-trivial loops. We always identify a simple diagram with its isotopy class. We write $B_\fS$ for the set of all isotopy classes of simple diagrams in $\fS$. We note that $B_\fS = B_{(\fS,\emptyset)}$, where on the right hand side we use the notation of marked surfaces.

\subsection{Geometric intersection index}\lbl{sec.intersectionindex}

Fix a marked surface $\SP$. Given two $\cP$-tangles $S,T$, the {\em geometric intersection index} $\mu(S,T)$ is the minimum of $|(S' \setminus \cP) \cap (T' \setminus \cP)|$ where $S',T'$ ranges over all $S',T'$ such that $S'$ is $\cP$-isotopic to $S$ and $T'$ is $\cP$-isotopic to $T$. We say that $S$ and $T$ are {\em taut} if the number of intersection points of $S$ and $T$ in $\Sigma \setminus \cP$ is equal to $\mu(S,T)$. 

One application of Theorem \ref{r.homoisiso} that we will make use of is the following lemma.

\begin{lemma}[\cite{FHS1982}]\lbl{l.FHS} Let $x_1,x_2,\ldots,x_n$ be a finite collection of essential $\cP$-tangles, then there are essential $\cP$-tangles $x_1',x_2',\ldots,x_n'$ such that,

\begin{enumerate}[(i)]
\item for all $i$, $x_i'$ is $\cP$-isotopic to $x_i$, and
\item for all $i$ and $j$, $x_i'$ and $x_j'$ are taut.
\end{enumerate}
\end{lemma}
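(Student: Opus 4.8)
The plan is to realize all the tangles simultaneously by geodesics for a fixed auxiliary hyperbolic metric, and then separate any mutually parallel components by one small final perturbation. First I would dispose of the finitely many marked surfaces with $\chi(\Sigma)\ge 0$ (disks with a few marked points, the annulus with or without marked points, and so on) directly: each such surface carries only finitely many $\cP$-isotopy classes of essential $\cP$-tangle components, and a configuration realizing every geometric intersection index can be exhibited by hand (for a polygon one puts all inner arcs in ``convex position'' and pushes boundary arcs into a collar of $\pS$).

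For the main case $\chi(\Sigma)<0$, I would fix a complete finite-area hyperbolic metric on the interior of $\Sigma$ in which every unmarked boundary component is a closed geodesic (or a cusp) and every marked boundary component, with its marked points removed, becomes a finite union of complete geodesics running out to cusps; concretely one may collapse each marked point to a puncture, take such a metric on the resulting punctured surface, and reinstate a thin collar of $\pS$. Then each essential $\cP$-knot is freely homotopic to a unique simple closed geodesic, each essential inner $\cP$-arc is homotopic, rel its asymptotic behavior at the marked points, to a unique geodesic arc, and each boundary $\cP$-arc is pushed into a collar of $\pS$ so as to be disjoint from every geodesic. Two distinct geodesics for a fixed hyperbolic metric are automatically taut: a bigon between them would lift to an immersed disk in $\mathbb{H}^2$ bounded by two geodesic segments with the same endpoints, which is impossible, so by the bigon criterion (using Theorem \ref{r.homoisiso} to pass freely between homotopy and isotopy) they meet in exactly $\mu$ points; and the geodesic representative of an essential curve is again essential, since no geodesic bounds an embedded disk. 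The only remaining defect is that two components which are $\cP$-isotopic to one another --- parallel loops, parallel inner arcs, or several boundary arcs of one tangle --- share a single geodesic representative and so fail to be embedded disjointly. I would repair this with a last isotopy: for each maximal family of mutually parallel components, replace the common geodesic $\gamma$ by that many disjoint parallel copies inside a sufficiently thin embedded annular (for loops) or rectangular (for arcs) neighborhood $N$ of $\gamma$; if $N$ is thin enough it meets every other geodesic in a disjoint union of trivial arcs, so the parallel copies remain taut against all the other curves and against each other, and components coming from a single tangle $x_i$, being pairwise non-crossing, end up pairwise disjoint.

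The step I expect to be the main obstacle is organizing this last perturbation coherently: choosing all the thin neighborhoods small enough and mutually compatible so that every ``taut'' relation valid for the geodesics survives, and, in particular, handling families of arcs that share a marked point (their geodesics are asymptotic into a common cusp, so the thinning must be performed simultaneously inside a horoball neighborhood of that point, fanning the parallel arcs out without creating new crossings). An alternative route, which sidesteps hyperbolic geometry and is essentially the argument of \cite{FHS1982}, is purely combinatorial: put all components in general position and repeatedly remove innermost bigons, innermost half-bigons at marked points, and (ruled out by essentialness) monogons by isotopies supported near these disks; each move strictly decreases the total crossing number without increasing any pairwise count, so the process terminates, and absence of all such disks is equivalent --- again via the bigon criterion --- to every pair being taut. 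I would ultimately present whichever is cleaner to make rigorous in the marked-surface setting, likely leaning on the combinatorial surgery argument for the bookkeeping near marked points.
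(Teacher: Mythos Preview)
The paper does not prove this lemma at all: it is stated with a citation to \cite{FHS1982} and used as a black box, with no argument given in the text. So there is no ``paper's own proof'' to compare against.

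That said, your proposal is a correct outline of the standard arguments. The hyperbolic-geodesic approach works and your identification of the main bookkeeping issue (separating parallel copies and handling arcs asymptotic to the same cusp) is accurate; this is routine but does require care. The combinatorial alternative you sketch at the end --- iteratively removing innermost bigons and half-bigons until none remain, then invoking the bigon criterion --- is indeed closer in spirit to the Freedman--Hass--Scott argument and is probably the cleaner route in the marked-surface setting, since it avoids the cusp-neighborhood gymnastics entirely. Either would be acceptable as a written-out proof; for a thesis citing the result, what you have written is already more than the paper provides.
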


Suppose $x$ is an ideal arc or simple diagram in $\fS$, and $\alpha$ is a simple diagram in $\fS$. The {\em geometric intersection index} $\mu(x,\alpha)$ is the minimum of $|x' \cap \alpha'|$ where $x'$ ranges over all ideal arcs/simple diagrams isotopic to $x$ and $\alpha$' ranges over all simple diagrams isotopic to $\alpha$.

\section{Tangles in 3-manifolds}\lbl{sec.tangle3}

Fix a marked 3-manifold $\MN$.

{\em An $\cN$-tangle $T$} in $M$ consists of a compact non-oriented 1-dimensional submanifold of $M$ equipped with a normal vector field
such that $T \cap \cN = \partial T$ and at a boundary point in $\partial T \cap \cN$, the normal vector is tangent to $\cN$ and agrees with the orientation of $\cN$. Here a normal vector field is a vector field which is not collinear with the tangent space of $T$ at any point. This vector field is called the {\em framing} of $T$, and the vectors are called {\em framing vectors} of $T$. Two $\cN$-tangles are {\em $\cN$-isotopic} if they are isotopic through the class of $\cN$-tangles. The empty set is also considered a $\cN$-tangle which is $\cN$-isotopic only to itself.

For a ring $R$, we will write $\cT\MN$ for the free $R$-module of $\cN$-tangles considered up to $\cN$-isotopy. The ring $R$ under consideration will always be made clear.

Given a finite type surface $\fS$, we consider the cylinder $\fS \times (-1,1)$ to be a marked 3-manifold $(\fS \times (-1,1),\emptyset)$, and call the $\emptyset$-tangles {\em framed links}. A simple diagram $\alpha$ in $\fS$ defines a unique isotopy class of framed link in $\fS \times (-1,1)$ with vertical framing, which we will also denote as $\alpha$.

\section{Triangulations}\lbl{sec.triquasitri}

The introduction of combinatorial structures to topological spaces is an extremely powerful technique to make computation of topological invariants tractable. We make extensive use of this strategy in our main results. This strategy is occasionally at odds with clarity with regards to the topological reasons that a given result is true, however in our case the additional combinatorial structures are highly illuminating rather than obscuring.

Here we will introduce ideal triangulations of finite type surfaces and quasitriangulations of marked surfaces. These structures, while not intrinsic to the surface, are used to construct easy to understand algebras called quantum tori into which we will either embed the Kauffman bracket skein algebra in the case of the Chebyshev-Frobenius homomorphism, or find a quantum torus as an associated graded algebra in the case of the maximal order result. This vastly simplifies our algebraic workload.

\subsection{Ideal triangulations}\lbl{sec.idealtri}

Fix a finite type surface $\fS = \fS_{g,p}$ with negative Euler characteristic. One may parameterize the set $B_\fS$ of simple diagrams of $\fS$ by embedding it in the free abelian group $\BZ^r$, where $r=6g-6+3p$. This embedding depends on a {\em coordinate datum}. In this subsection we describe the coordinate embedding for finite type surfaces with at least one puncture.

We assume for this subsection that $p \geq 1$ and that $\fS$ has negative Euler characteristic $\chi(\fS) = 2-2g-p$ and we set $r:=6g-6+3p = -3\chi(\fS)$. A {\em coordinate datum} of $\fS$ is an {\em ordered ideal triangulation}, which is any sequence $(e_1,\ldots,e_r)$ of disjoint ideal arcs in $\fS$ such that no two are isotopic. It is well known that an ordered ideal triangulation always exists when $\chi(\fS)<0$ and $p \geq 1$ \cite{Pe2012}. The components of the disconnected surface $\fS \setminus \{e_1,\ldots,e_r\}$ are called the {\em ideal triangles} of $(\fS,\D)$. Given an ideal triangle $\fT$ of $(\fS,\D)$, the distinct ideal arcs $a,b,c \in \{e_1,\ldots,e_r\}$ such that $\fT \cup \{a,b,c\}$ is a connected topological space are called the {\em edges of $\fT$}.

\begin{remark}\lbl{r.idealselffold}
    For our purposes, we may assume that there are no ``self-folded triangles'' (i.e. one for which two edges coincide), since it is well-known that if an (ordered) ideal triangulation exists then one without self-folded triangles exists as well \cite{Pe2012}.
\end{remark}

Fix an ordered ideal triangulation $\fT = (e_1,\ldots,e_r)$ of $\fS$. We define the {\em edge coordinate map} $E_\fT$ with respect to $\fT$ by

\be\lbl{e.edgecoords}
E_\fT: B_\fS \to \BN^r, \ \ \ \ E_\fT(L) := (\mu(e_i,L))_{i=1}^r.
\ee
It is well-known that $E_\fT$ is injective (see e.g. \cite{Pe2012}).\no{ We write $\La := E_\fT(B_\fS)$ and $\uLa$ for the group generated by $\La$.}

\subsection{Quasitriangulations}\lbl{sec.quasitri}

Fix a marked surface $\SP$ for the remainder of this section. A triangulation of $\SP$ is a special case of a quasitriangulation, and so we begin by describing what it means for a marked surface to be quasitriangulable. In all but a small number of simple cases, a marked surface can be obtained by gluing together a collection of triangles and holed monogons along edges. Such a decomposition is called a quasitriangulation. We now give a formal definition. For additional details see \cite{Pe2012}.

$\SP$ is called {\em quasitriangulable} if 
\begin{enumerate}[(i)]
\item there is at least one marked point, and
\item $\SP$ is not a disk with less than two marked points, or an annulus with one marked point.
\end{enumerate}

A {\em quasitriangulation} $\D$ of a quasitriangulable marked bordered surface $\SP$, also called a {\em $\cP$-quasitriangulation} of $\Sigma$, is a collection of $\cP$-arcs such that
\begin{enumerate}[(i)]
\item[(i)] no two $\cP$-arcs in $\D$ intersect in $\Sigma \setminus \cP$ and no two are $\cP$-isotopic, and
\item[(ii)] $\D$ is maximal amongst all collections of $\cP$-arcs with the above property, meaning that it is not a proper subset of any other such collection.
\end{enumerate}

An element of $\D$ is also called an {\em edge} of the $\cP$-quasitriangulation $\D$. Let $\D_\bd$ be the set of all {\em boundary edges}, i.e. edges which are boundary $\cP$-arcs. The complement $\D_\inn:= \D\setminus \D_\bd$ is the set of all {\em inner edges}. 
Then $\D_\inn$ cuts $\Sigma$ into {\em $\cP$-triangles} (or just {\em triangles} when $\cP$ is clear) and {\em holed monogons}. A $\cP$-triangle is a smooth map $\tau: \sigma \to \Sigma$ from a regular triangle (in the standard plane) to $\Sigma$ such that (i) the restriction of $\tau$ to the interior of $\sigma$ is a diffeomorphism onto its image, and (ii) the restriction of $\tau$ onto each edge of $\sigma$ is a $\cP$-arc, called an edge of $\tau$. Here a holed monogon is a region in $\Sigma$ bounded by an unmarked component of $\pS$ and a $\cP$-arc, see Figure 2.1 and \cite{Pe2012} for a more precise definition.

\begin{figure}\lbl{fig:figures/hole}
\centering
\includegraphics[width=4cm]{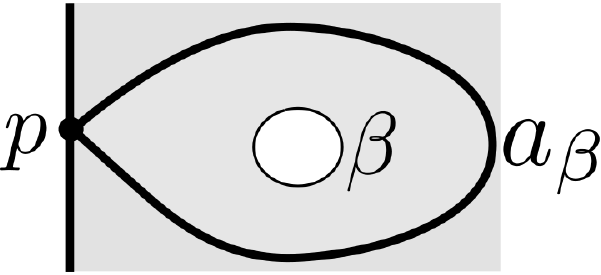}
\caption{The region bounded by and including $\beta$ and $a_\beta$ is an example of a holed monogon.}
\end{figure}

For an unmarked component $\beta\in \cH$ let $a_\beta\in \D$ be the only edge on the boundary of the monogon containing $\beta$. We call $a_\beta$ the {\em monogon edge} corresponding to $\beta$, see Figure 2.1. We denote by $\D_\mon \subset \D$ the set of all monogon edges.

\subsection{Triangulations}\lbl{sec.triangulations}

The situation simplifies if $\SP$ is totally marked and quasitriangulable, i.e. $\cH = \emptyset$, and $\SP$ is not a disk with $\le 2$ marked points. Then $\D_\mon = \emptyset$, and instead of ``quasitriangulable" and ``$\cP$-quasitriangulation'' we use the terminology {\em triangulable} and {\em $\cP$-triangulation}. Thus every triangulable surface is totally marked. A $\cP$-triangulation $\D$ cuts $\Sigma$ into $\cP$-triangles, meaning that the closure of each connected component of $\Sigma \setminus \cup_{a \in \D}$ has the structure of a $\cP$-triangle. We denote by $\cF(\D)$ the set of all triangles of the triangulation $\D$.

\begin{remark}\lbl{r.noselffold}
Since $\cP \subset \pS$, we cannot have self-folded triangles (one for which two edges coincide).
\end{remark}

\no{
\section{Fundamental group}

An invariant of central importance in algebraic topology is the fundamental group. The use of knots to study algebraic topology was called ``algebra situs'' by Przytycki \cite{Pr1999} in the spirit of Leibniz, and so skein modules are considered one of the invariants of central importance in this algebra situs. In some strict sense, one may think of the Kauffman bracket skein module of a 3-manifold as quantization of its fundamental group. This story of the skein module as a quantization of the fundamental group will be developed over the course of the next few chapters and will begin in this section.

The origin of this story lies in the $SL_2\CC$-character variety of the 3-manifold. This important structure contains a wealth of geometric and topological information about the manifold. It contains its Teichm\"uller space, it encodes information about \red{slopes of embedded surfaces}, its regular functions includes the phase space for classical Chern-Simons theory for $G=SU(2),SL_2\RR$ and $SL_2\CC$, it comes equipped with natural Poisson and symplectic structures, and is the source of many important conjectures.
}

\no{
\section{Hyperbolic structures}\lbl{sec.hyperbolic}

\subsection{Shear coordinates}

\subsection{Penner coordinates}

\subsection{Teichm\"uller space}

In this section we review hyperbolic structures on surfaces.

Hyperbolic geometry plays a surprisingly central role in low-dimensional topology. This particular geometry is a very robust bridge, which allows one to prove topological facts using geometrical techniques and vice versa. This is also the source of much of the effectiveness of combinatorial techniques in low-dimensional topology. Difficult facts about things such as lengths of geodesics boil down to combinatorics of triangulations.

Ultimately, hyperbolic geometry is the proof engine for our main results. The royal road linking Kauffman bracket skein algebras and hyperbolic geometry was found by Bonahon and Wong in \cite{BW2011}, showing us that the Kauffman bracket skein algebra of a finite type surface may be embedded into a quantization of Teichm\"uller space obtained by deforming the shear coordinates on Teichm\"uller space using the quantum trace map of Bonahon and Wong defined in Section \ref{sec.quantumtrace}. Muller then found another embedding of the Kauffman bracket skein algebra into a quantization of Teichm\"uller space obtained by deforming the Penner coordinates \cite{Mu2012}via the skein coordinate map defined in Section \ref{sec.torus}. L\^e then showed that one may move freely between these two coordinate systems \cite{Le2017}, revealing the division algebra of the Kauffman bracket skein algebra as a coordinate-free version of quantum Teichm\"uller space via the shear-to-skein map defined in Section \ref{sec.sheartoskein}.

We make use of these technologies extensively in the proof of our main results. While our results rely entirely on quantum aspects of hyperbolic geometry, understanding these results requires a foundation in classical hyperbolic geometry, which we review here.

To simplify matters, throughout this section we only consider finite type surfaces. For additional details on hyperbolic geometry of more general surfaces, see e.g. \cite{Pe2012}. 
}

\section{Character varieties}\lbl{s.charvariety}

Throughout this section we fix an oriented 3-manifold $M$. Let $\pi_1(M)$ be the fundamental group and let $G$ be an algebraic reductive Lie group. Then the {\em $G$-character variety} of $M$ is a space of equivalence classes of group homomorphisms
\[
X(M,G):=\Hom(\pi_1(M),G)//G.
\]
In general $\Hom(\pi_1(M),G)/G$ is neither separated nor an algebraic variety, so we denote with the double slash the quotient in the sense of geometric invariant theory. The $G$-character variety may be defined for manifolds in general, but in this work we are only concerned with 3-manifolds (which often are cylinders over surfaces).

We are focused on the case $G=SL_2\CC$, however $G=SU(2)$ and $G=SL_2\RR$ have ancillary roles to play.

\subsection{Trace functions}\lbl{sec.gtracefun}

Given a representation $\rho: \pi_1(M) \to G$ where $G$ is a group, the {\em character} of the representation is the composition $\chi_\rho: \tr \circ \rho$. The ring of regular functions on $X(M,G)$ is called the {\em ring of $G$-characters of $M$}, denoted by $\cR(M,G)$.

For each $\gamma \in \pi_1(M)$ there is a function $\tr_\gamma: X(M,G) \to \CC$ given by $\chi_\rho \mapsto \chi_\rho(\gamma)$. This is known as a {\em trace function}, and as we shall learn, such trace functions form a basis for $\cR(M,G)$ for certain choices of $G$.

Let $\vec{K} \subset \Sigma$ be an embedded oriented 1-manifold homeomorphic to $S^1$. Each $\vec{K}$ determines a unique trace function $\tr_\gamma \in \cR(M,G)$ as follows. Choose $\gamma \in \pi_1(M)$ such that $\gamma$ is freely homotopic to an embedding of $\vec{K}$. Then because traces are invariant under conjugation, we may define $\chi_\rho(\vec{K})=\chi_\rho(\gamma)$. 

\subsection{\texorpdfstring{$SL_2\CC$}{SL2C}-character variety}\lbl{sec.topcharvar}

Recall that the following trace identity, known as the fully polarized Cayley-Hamilton identity, holds for all $A,B \in SL_2\CC$:
\begin{equation}\lbl{e.sl2traceid}
\tr(AB) + \tr(AB^{-1}) = \tr(A)\tr(B)
\end{equation}
Equation \eqref{e.sl2traceid} plays a crucial role for us - indeed, the Kauffman bracket skein relation is a quantization of this relation \cite{Bu1997}.

Another important fact is that $\tr(A)=\tr(A^{-1})$ for all $A \in SL_2\CC$. This gives us the following. We saw in Subsection \ref{sec.gtracefun} that oriented knots give well-defined trace functions on $X(M,G)$. For $G=SL_2\CC$ (and $SU(2),SL_2\RR$), $\tr(A)=\tr(A^{-1})$ tells us that our knots do not need an orientation - thus, to every unoriented knot $K$ there is a $\gamma \in \pi_1(M)$ such that $\chi_\rho(K)=\chi_\rho(\gamma)$. Conversely, given $\gamma \in \pi_1(M)$, there is a (non-unique) $K$ such that $\chi_\rho(K)=\chi_\rho(\gamma)$.

More generally, let $L$ be a link in $M$ (e.g. a framed link but with no framing) with components $K_1,\ldots,K_n$, and $\rho \in \Hom(\pi_1(M),G)/G$. Then we may define
\begin{equation}
\tr_{L,G}([\rho]) = \prod_i (-\tr(\rho(K_i)).
\end{equation}
For the case $G=SL_2\CC$ the Kauffman bracket skein module at quantum parameter $q=-1$ is actually an algebra and is isomorphic to this ring \cite{Bu1997} (see Subsection \ref{sec.kauffmanroots}).

$X(M,SL_2\CC)$ contains several important substructures. The real points of $X(M,SL_2\CC)$, that is,
\[
X(M,SL_2\CC)|_\RR := \{ [\rho] \in X(M,SL_2\CC): \tr(\rho(\gamma)) \in \RR \text{ for all }\gamma \in \pi_1(M)\}
\]
has two non-disjoint components: $X(M,SL_2\RR)$ and $X(M,SU(2))$. 

If $M$ is closed, $X(M,SL_2\RR)$ has finitely many connected components, one of which is the Teichm\"uller space of the surface \no{(see Section \ref{sec.hyperbolic})}. In Section \ref{sec.chekhovfock} we will quantize this component as a Chekhov-Fock algebra and embed the Kauffman bracket skein algebra into it via the quantum trace map of Bonahon and Wong.

Also if $M$ is closed, $X(M,SU(2))$ is the classical phase space for the original topological quantum field theory, $SU(2)$ Chern-Simons theory \cite{Wi1989}. $X(M,SU(2))$ is a singular symplectic manifold whose geometric quantization is then the phase space for quantum $SU(2)$ Chern-Simons theory. This fact points towards numerous deep connections between the Kauffman bracket skein algebra of $M$ and quantum Chern-Simons theory, which has relevance to e.g. the Volume Conjecture \cite{DS2011} and topological quantum computing (see e.g. \cite{Wa2010} and our conjecture in Subsection \ref{s.tqc}).

\no{
If $K$ is a knot in $S^3$ and $M=S^3 \backslash V(K)$ (where $V(K)$ is an open tubular neighborhood of $K$), then $X(M,SL_2\CC)$ is frequently an affine algebraic curve whose ideal points correspond to surfaces in $M$. $X(M,SL_2\CC)$ has many conjectures relating it to asymptotics of quantum invariants such as the colored Jones polynomial, see \red{...}
}

\chapter{Algebra}

In this chapter we review some essential algebraic background for our main results. In Section \ref{sec.algterms} we fix definitions for some standard algebraic terms. In Section \ref{sec.algchebyshev} we introduce the Chebyshev polynomials of the first kind and investigate a generalization of the DeMoivre identity to $q$-commuting variables. In Section \ref{sec.filtergrade} we review the theory of $\BN^p$-filtered and -graded algebras. In Seciton \ref{sec.ncdivision} we review the theory of division in noncommutative algebras, focusing on the all-important Ore condition. In Section \ref{sec.algreps} we briefly discuss representation theory of noncommutative algebras and the unicity theorem, a powerful result about representation varieties which we refine for skein algebras of surfaces in this dissertation. In Section \ref{sec.maxorderintro} we introduce maximal orders, which are a noncommutative generalization of integrally closed commutative domains. In Section \ref{sec.abstracttori} we define quantum tori, the most important algebraic structure we utilize to understand skein algebras and modules in this dissertation. Here we prove several important structural results about quantum tori, including defining the Frobenius homomorphism that ultimately descends to the Chebyshev-Frobenius homomorphism of skein modules, as well as showing that finitely-generated monomial subalgebras are maximal orders, which ultimately is used to show that skein algebras of surfaces are maximal orders.

Throughout the chapter we fix a commutative Noetherian domain $R$ containing a distinguished invertible element $q^{1/2}$ and a positive integer $p$. When we say {\em algebra} we mean an associative $R$-algebra unless otherwise specified.

\section{Original results}

The chapter contains a few original results. The first is Proposition \ref{r.chebygen}, which is a generalized DeMoivre's identity that handles $q$-commuting variables. Second is Proposition \ref{r.gradedorderlift}, which show that for certain filtered algebras such that the associated graded algebra is a maximal order, so too is the filtered algebra. Lastly is \ref{r.ALamaximal}, which shows that finitely-generated monomial subalgebras of quantum tori are maximal orders.

\section{Basic definitions}\lbl{sec.algterms}

\subsection{Common terminology}

Let $S$ be a ring. The {\em center} $Z(S)$ of $S$ is
\[
Z(S) = \{s \in S: rs=sr \text{ for all } r \in S\}.    
\]
A {\em regular element} of $S$ is any element $x\in S$ such that $xy\neq 0$ and  $yx\neq 0$ for all non-zero $y\in S$. If every $x\in S \setminus \{0\}$ is regular, we call $S$ a {\em domain}.

Given a root of unity $\xi \in \Cx$, the {\em order} $\ord(\xi)$ of $\xi$ is the smallest positive integer $N$ such that $\xi^N = 1$.

A {\em central simple algebra} $A$ over a field $K$ is a finite-dimensional associative $K$-algebra which is simple and such that the center of $A$ is precisely $K$.

\subsection{Weyl Normalization}\lbl{sec.weylnormalization}

Suppose $A$ is an algebra, not necessarily commutative. Two elements $x,y \in A$ are said to be {\em $q$-commuting} if there exists $\cC(x,y) \in \ZZ$ such that $xy=q^{\cC(x,y)}yx$. Suppose $x_1,x_2,\ldots,x_n \in A$ are pairwise $q$-commuting. Then the {\em Weyl normalization} of $\prod_i x_i$ is defined by
\[
[x_1x_2 \ldots x_n]:=q^{-\frac{1}{2}\sum_{i<j}C(x_i,x_j)}x_1x_2 \ldots x_n.
\]
It is easy to show that the normalized product does not depend on the order, i.e. given an element $\sigma$ of the symmetric group on $n$ letters, $[x_1x_2 \ldots x_n] = [x_{\sigma(1)}x_{\sigma(2)} \ldots x_{\sigma(n)}]$.

\subsection{Submonoids of \texorpdfstring{$\BZ^p$}{Zp}}\lbl{s.submonoids}

Recall that a set $\La$ equipped with an associative binary operation that has a neutral element is called a {\em monoid}. Throughout this dissertation, we will consider $\BZ^p$, $\BN^p$ to be monoids under componentwise addition.

Let $\La$ be a submonoid of $\BZ^p$. 
We denote by $\uLa$ the {\em group generated by $\La$}. This is given by
\[
\uLa = \La - \La = \{ \bk-\bk': \bk,\bk' \in \La\}.
\]

The {\em rank} of $\La$ is by definition the rank of $\uLa$, which will be at most $p$. 

The {\em cone} of $\La$ is the $\BR_{\ge 0}$-span of $\Lambda$ in $\BR^p= \BZ^p \otimes \BR$, i.e.	
\[
\Cone(\La) = \left\{ \sum_{i=1}^n c_i \bk_i: c_i \in \BR_{\geq 0}, \bk_i \in \La \right\}.
\]
\begin{remark}
$\Cone(\La)$ is closed if and only if $\La$ is finitely generated.
\end{remark}

\section{Chebyshev polynomials}\lbl{sec.algchebyshev}

The Chebyshev polynomials of the first kind $T_n(z) \in \BZ[z]$ are defined recursively by
\begin{equation}\lbl{e.chebyshev}
T_0 =2, \ \ T_1(z) = z, \ \ T_n(z) = zT_{n-1}(z)-T_{n-2}(z) \text{ for } n \geq 2.  
\end{equation}

It is easy to show that for any invertible element $X$ in a ring,
\begin{align}\lbl{e.demoivre}
T_n(X + X^{-1})& = X^n + X^{-n}.
\end{align}
This is known as {\em DeMoivre's identity}.

We want to generalize~\eqref{e.demoivre} and calculate $T_n(X+ X^{-1} +Y)$, where $Y$ is a new variable which $q$-commutes with $X$.

For $n\in \BZ$ and $k \in \BN$, we define the quantum integer $[n]_q$ and the quantum binomial coefficient $\qbinom nk_q$, which are elements of $\BZ[q^{\pm 1}]$, by
\[
[n]_q:= \frac{q^n - q^{-n}}{q - q^{-1}}, \quad   \qbinom nk_q = \prod_{j=1}^k \frac{[n-j+1]_q}{[j]_q}.
\]

\begin{proposition}\lbl{r.chebygen}
Suppose $X$ and $Y$ are variables such that $XY= q^2 YX$ and $X$ is invertible.
Then for any $n \ge 1$,
\be
\lbl{eq.38}
T_n(X+ X^{-1} +Y) = X^n + X^{-n} + Y^n + \sum_{r=1}^{n-1}\sum_{j=0}^{n-r}  c(n,r,j) \left[ Y^r X^{n-2j-r}\right]
\ee
where $c(n,r,j) \in \BZ[q^{\pm 1}]$ and is given by
\be
\lbl{eq.cnrj}
 c(n,r,j)= \frac{[n]_q}{[r]_q} \qbinom{n-j-1}{r-1}_q \qbinom{r+j-1}{r-1}_q.
 \ee
\end{proposition}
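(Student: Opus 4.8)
The plan is to prove \eqref{eq.38} by strong induction on $n$, using the defining recursion $T_n(z) = zT_{n-1}(z) - T_{n-2}(z)$ applied with $z = X + X^{-1} + Y$.

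The first ingredient is a short list of multiplication rules inside the quantum plane. From $XY = q^2 YX$ one gets $X^a Y^b = q^{2ab} Y^b X^a$, and directly from the definition of the Weyl normalization one finds $[Y^r X^s] = q^{rs} Y^r X^s$; combining these gives
\[
X\,[Y^r X^s] = q^{r}\,[Y^r X^{s+1}], \qquad X^{-1}[Y^r X^s] = q^{-r}[Y^r X^{s-1}], \qquad Y\,[Y^r X^s] = q^{-s}[Y^{r+1} X^s].
\]
Hence left multiplication by $X$, $X^{-1}$, and $Y$ sends each normalized monomial $[Y^r X^m]$ to a scalar multiple of another one, so every expression encountered in the induction stays in the $R$-span of the $[Y^r X^m]$ and the argument becomes pure bookkeeping of coefficients.

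After checking the base cases $n = 1$ and $n = 2$ directly (the case $n=2$ using $XY + YX = (q + q^{-1})[YX]$ and the analogous identity with $X$ replaced by $X^{-1}$, and recalling $T_0 = 2$), the inductive step is to substitute the expansions \eqref{eq.38} for $T_{n-1}$ and $T_{n-2}$ into $T_n = (X + X^{-1} + Y)T_{n-1} - T_{n-2}$ and expand using the three rules. Collecting the coefficient of a fixed $[Y^r X^{n-2j-r}]$, one sees that: the top powers $X^{\pm n}$ and $Y^n$ acquire coefficient $1$; the spurious terms $X^{\pm(n-2)}$ cancel against the matching terms of $-T_{n-2}$; and for $1 \le r \le n-1$, $0 \le j \le n - r$ the coefficient equals
\[
q^{r}\,c(n-1,r,j) + q^{-r}\,c(n-1,r,j-1) + q^{-(n-2j-r)}\,c(n-1,r-1,j) - c(n-2,r,j-1),
\]
where one sets $c(m,r,j) = 0$ whenever $(r,j)$ lies outside the index range of \eqref{eq.38}, with the single proviso $c(m,0,0) = c(m,0,m) = c(m,m,0) = 1$ (these encode the pure powers $X^{\pm m}$ and $Y^m$ of $T_m$, which feed into the coefficients with $r \in \{1, m-1\}$ through the $Y$-rule). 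Thus the proposition reduces to the purely combinatorial claim that the closed form \eqref{eq.cnrj} satisfies this four-term recursion, together with the stated boundary values.

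Verifying that recursion is the crux, and I expect it to be the main obstacle, even though it is only a finite identity among $q$-integers. My approach would be to rewrite \eqref{eq.cnrj} in terms of $q$-factorials, put all four terms over a common denominator of the form $[r]_q!\,[j]_q!\,[n-r-j]_q!$ up to an overall power of $q$, and reduce the resulting polynomial identity to repeated use of $[a+b]_q = q^{b}[a]_q + q^{-a}[b]_q$, the relation underlying the $q$-Pascal identities; the symmetry $c(n,r,j) = c(n,r,n-r-j)$ evident from \eqref{eq.cnrj} (mirroring the $X \leftrightarrow X^{-1}$ symmetry of $z$) can be used to shorten this. Finally, the integrality $c(n,r,j) \in \BZ[q^{\pm 1}]$ follows at no extra cost: the quantities defined by the four-term recursion together with the base values manifestly lie in $\BZ[q^{\pm 1}]$, and since the recursion determines them uniquely they agree with \eqref{eq.cnrj}; as a further sanity check one has the special value $c(n,r,0) = \qbinom{n}{r}_q$.
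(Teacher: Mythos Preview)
Your approach is correct and is exactly the one the paper takes: the paper's entire proof reads ``One can easily prove the proposition by induction on $n$.'' Your write-up already supplies far more detail than that---the multiplication rules, the base cases, and the four-term recursion for the coefficients are all set up correctly---so there is nothing to add.
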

Here $[Y^aX^b]$ is the Weyl normalization of of $Y^aX^b$, see Subsection \ref{sec.weylnormalization}.

\begin{proof}
One can easily prove the proposition by induction on $n$.
\end{proof}
\begin{corollary}[\cite{BW2011,Bo2017}]\lbl{c.32}
Suppose $q^2$ is a root of unity of order $n$, then
$$
T_n(X+ X^{-1} +Y) = X^n + X^{-n} + Y^n.
$$
\end{corollary}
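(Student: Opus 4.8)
The plan is to read the corollary off Proposition \ref{r.chebygen}: under the hypothesis that $q^2$ is a root of unity of order $n$, every coefficient $c(n,r,j)$ occurring in the double sum of \eqref{eq.38} vanishes, so only the three ``corner'' monomials $X^n$, $X^{-n}$, $Y^n$ survive, giving exactly the asserted identity.

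To carry this out I would first observe that, by Proposition \ref{r.chebygen}, the formula \eqref{eq.cnrj} is an identity in $\BZ[q^{\pm1}]$; equivalently,
\[
c(n,r,j)\,[r]_q \;=\; [n]_q\,\qbinom{n-j-1}{r-1}_q\qbinom{r+j-1}{r-1}_q
\]
holds in $\BZ[q^{\pm1}]$, hence remains valid after specializing $q$ to our root of unity. The case $n=1$ is trivial: then $q^2=1$, the sum in \eqref{eq.38} is empty, and the statement reduces to $T_1(X+X^{-1}+Y)=X+X^{-1}+Y$. So assume $n\ge 2$; then $q\neq\pm1$, so $q-q^{-1}\neq 0$, and from $[m]_q = \dfrac{q^{-m}\bigl((q^2)^m-1\bigr)}{q-q^{-1}}$ we read off $[n]_q=0$ (because $q^2$ has order $n$) while $[r]_q\neq 0$ for each $1\le r\le n-1$ (because $(q^2)^r\neq 1$ in that range). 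Feeding $[n]_q=0$ into the displayed identity gives $c(n,r,j)\,[r]_q=0$, and cancelling the nonzero factor $[r]_q$ yields $c(n,r,j)=0$ for every $(r,j)$ appearing in \eqref{eq.38}. Substituting back into \eqref{eq.38} leaves precisely $T_n(X+X^{-1}+Y)=X^n+X^{-n}+Y^n$.

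There is no real obstacle here; the only points needing attention are the degenerate case $n=1$ and the remark that it is exactly the non-vanishing of the denominators $[r]_q$ for $1\le r\le n-1$ that upgrades the evident relation ``$[n]_q\cdot(\text{stuff})=0$'' to the conclusion ``$c(n,r,j)=0$''. (Should one want a proof bypassing the explicit formula \eqref{eq.cnrj}, one could instead rerun the induction that proves Proposition \ref{r.chebygen} and note that every coefficient generated in the ``bulk'' sum along the way is a multiple of $[n]_q=0$; but quoting \ref{r.chebygen} directly is shorter.)
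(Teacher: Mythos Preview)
Your proof is correct and follows essentially the same argument as the paper: both deduce $c(n,r,j)=0$ from $[n]_q=0$ and $[r]_q\neq 0$ for $1\le r\le n-1$, then read off the result from \eqref{eq.38}. You are slightly more careful in clearing the denominator in \eqref{eq.cnrj} before specializing and in handling $n=1$ separately, but the substance is identical.
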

\begin{proof}
When $q^2$ is a root of unity of order $N$, then $[N]_q=0$ but $[r]_q \neq 0$ for any $ 1\le r \le N-1$. Equation \eqref{eq.cnrj} shows that $c(N,r,j)=0$ for all $ 1\le r \le N-1$. From \eqref{eq.38} we get the corollary.
\end{proof}

\begin{remark} \lbl{rem.34}
If $n,k\in \BN$, then $\qbinom nk_q\in \BN[q^{\pm 1}]$, the set of Laurent polynomials with non-negative integer coefficients. 
From \eqref{eq.cnrj} it follows that  $c(n,r,j) \in \BN[q^{\pm 1}]$.
\end{remark}

\section{Filtered and graded algebras}\lbl{sec.filtergrade}

Consider $\BN^p$ as a totally ordered monoid under the lexicographic ordering.

\begin{remark}\lbl{r.dccorder}
The order structure on $\BN^p$ satisfies a {\em descending chain condition}. That is, let $a_1 \geq a_2 \geq ...$ be an infinite descending chain in $\BN^p$. Then there exists some $k \in \BN$ such that $a_k = a_l$ for all $l \geq k$.
\end{remark}

An {\em $\BN^p$-filtered algebra}, or just {\em filtered algebra}, is an algebra $A$ with a family $\{F_\bm, \ \bm \in \BN^p \}$ of free $R$-modules such that
\begin{enumerate}[(i)]
\item for each $\bm,\bn \in \BN^p$, $F_\bm F_\bn \subseteq F_{\bm + \bn}$,
\item for $\bm < \bn$, $F_\bm \subseteq F_\bn$, and
\item $\bigcup_{\bm \in \BN^p} F_\bm = A$.
\end{enumerate}
Given nonzero $a \in A$, we define the {\em degree} of $a$, written $\deg a$, to be the minimum $\bm$ such that $a \in F_\bm$.

An {\em $\BN^p$-graded algebra}, or just {\em graded algebra}, is an algebra $\cA$ together with a family $\{\cA_\bm, \ \bm \in \BN^p\}$ of free $R$-modules of $\cA$ such that
\begin{enumerate}[(i)]
\item $\cA_\bm \cA_\bn \subseteq \cA_{\bm+\bn}$, and
\item $\cA = \bigoplus_\bm \cA_\bm$ as an abelian group.
\end{enumerate}
The family $\{\cA_\bm\}$ is called a {\em grading} of $\cA$, and we call $a \in \cA_\bm$ a {\em homogeneous element}. $\cA$ is also a filtered algebra, with filtration $\{F_\bm\}$ given by $F_\bm = \sum_{\bn \leq \bm} \cA_\bn$. Given homogeneous elements $a,b$, $\deg ab = \deg a + \deg b$ if $ab \neq 0$.

Let $A$ be a filtered algebra with filtration $\{F_\bm\}$. The {\em associated graded algebra} $\gr A$ of $A$ is constructed as follows. For $\bm \in \BN^p$, we define the following subset of $A$:
\[
F_{< \bm} = \bigcup_{\bn \in \BN^p: \bn < \bm} F_\bn.
\]
Set $A_\bm = F_\bm / F_{< \bm}$ and $\cA = \bigoplus A_\bm$ as an abelian group. Then $\gr A$ has $\cA$ as its additive group. Given $a \in F_\bm \setminus F_{<\bm}$, we write $\bar{a} = a + F_{<\bm} \in \cA_\bm$ and call this map $A \to \gr A$ the {\em bar mapping}. Given $b \in F_{\bn} \setminus F_{<\bn}$, we define $\bar{a}\bar{b}$ to be $ab + F_{<\bm+\bn}$. This makes $\gr A$ into an algebra.
\be\lbl{r.goodfiltration}
\bar{a}\bar{b} = \left\{ \begin{array}{ll}
\overline{ab} & \text{if } \deg ab = \bm+\bn \\
0 & \text{otherwise} \end{array} \right.
\ee
We say that $\{F_\bm\}$ is a {\em good filtration} if $\bar{a}\bar{b} = \overline{ab}$ for all $a,b \in A$. Note that $\overline{a+b} \neq \bar{a}+\bar{b}$ in general, so the bar mapping is not a ring homomorphism.

\section{Division in noncommutative rings}\lbl{sec.ncdivision}

\subsection{Division algebra}

An algebra $A$ is called a {\em division algebra} if every nonzero element of $A$ is a unit. The following proposition is well-known.
\begin{proposition}\lbl{r.divisionalgebras}
Let $A$ be a (possibly non-commutative) domain which is finitely-generated as a module over its center $Z$. Let $\tZ$ be the field of fractions of $Z$. Then $A \otimes_Z \tZ$ is a division algebra.
\end{proposition}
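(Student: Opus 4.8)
The plan is to realize $A \otimes_Z \tZ$ as a localization of $A$ at a central multiplicative set and then invoke the classical fact that a finite-dimensional algebra over a field which is a domain is automatically a division algebra.

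First I would set $S = Z \setminus \{0\}$, a multiplicatively closed subset of the center $Z$. Since $S$ lies in the center, the Ore conditions are satisfied trivially, so the localization $S^{-1}A$ exists; moreover, because $A$ is a domain, every element of $S$ is a regular element of $A$, so the localization map $A \to S^{-1}A$ is injective and nothing collapses. A routine check identifies $S^{-1}A$ with the base change $A \otimes_Z \tZ$ (both satisfy the same universal property with respect to $R$-algebra maps out of $A$ that invert $S$), so from now on I work with $B := S^{-1}A$.

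Next I would verify that $B$ is again a domain. A typical element of $B$ has the form $s^{-1}a$ with $s \in S$ and $a \in A$; since $s$ is central we may clear denominators: if $(s^{-1}a)(t^{-1}b) = 0$ then $ab = 0$ in $A$ after multiplying by the central units $s$ and $t$, whence $a = 0$ or $b = 0$ because $A$ is a domain. I would then observe that $B$ is finite-dimensional over the field $\tZ$: if $a_1, \dots, a_n$ generate $A$ as a $Z$-module, their images generate $B$ as a $\tZ$-module, so $\dim_{\tZ} B \le n < \infty$.

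Finally, the key step: any finite-dimensional algebra $B$ over a field $\tZ$ that is a domain is a division algebra. Given $0 \neq b \in B$, left multiplication $L_b : B \to B$, $x \mapsto bx$, is $\tZ$-linear and injective (as $B$ is a domain), hence surjective since $\dim_{\tZ} B < \infty$; thus there is $c$ with $bc = 1$. Similarly right multiplication yields $c'$ with $c'b = 1$, and then $c' = c'(bc) = (c'b)c = c$ shows $b$ is a two-sided unit. Hence every nonzero element of $B = A \otimes_Z \tZ$ is invertible, which is the claim. I do not anticipate a serious obstacle; the only points needing a little care are the identification $S^{-1}A \cong A \otimes_Z \tZ$ and the fact that localizing at the central regular set $S$ preserves the domain property, both of which are standard.
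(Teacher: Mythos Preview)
Your proof is correct and takes a more elementary route than the paper's. The paper invokes Posner's theorem to conclude that $A \otimes_Z \tZ$ is a central simple algebra, then applies the Wedderburn structure theorem to write it as $M_n(S)$ for a division ring $S$, and finally uses the domain property to force $n=1$. Your argument bypasses all of this by directly using the classical fact that a finite-dimensional domain over a field is a division algebra, via the injective-linear-map-is-surjective trick. What your approach buys is self-containment: no PI theory, no structure theorems, just linear algebra over $\tZ$. What the paper's approach buys is the additional information that $A \otimes_Z \tZ$ is \emph{central} simple over $\tZ$ (i.e., its center is exactly $\tZ$), which is used elsewhere in the paper (e.g., in the discussion of degree in Subsection~\ref{sec.maxorderrep}); your argument does not directly establish this. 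For the bare statement of the proposition, however, your proof is cleaner.
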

\begin{proof}
$A \otimes_Z \tZ$ is a central simple algebra by Posner's theorem, see Theorem 13.6.5 \cite{MRS2001}. Then by the Wedderburn structure theorem, there exists a unique positive integer $n$ and ring $S$ where all nonzero elements are units such that $A \otimes_Z \tZ$ is isomorphic as an algebra to the algebra of $n \times n$ matrices with entries in $S$. Since $A$ is a domain, so is $A \otimes_Z \tZ$, and thus $n=1$. Therefore $A \otimes_Z \tZ$ is a division algebra.
\end{proof}

\subsection{Ore condition}\lbl{sec.orecondition}

A multiplicatively closed subset $\fU$ of a ring $S$ satisfies the {\em right Ore condition} if, for each $s \in S$, $u \in \fU$, there exists $s' \in S$, $u' \in \fU$ such that $su'=us'$. The {\em left Ore condition} is defined analogously. If $\fU$ satisfies the left and right Ore conditions, it is called a {\em two-sided Ore set}.

For a multiplicatively closed subset $\fU\subset S$ consisting of regular elements of $S$, a ring $E$ is called a {\em ring of fractions for $S$ with respect to $\fU$} if $S$ is a subring of $E$ such that
\begin{enumerate}[(i)]
\item every $u\in \fU$ is invertible in $E$,
\item every $e\in E$ has presentation $ e= s u^{-1} = (u')^{-1}(s') $ for $s,s'\in D $ and $u,u'\in \fU$.
\end{enumerate}
Then $S$ has a ring of fractions with respect to $\fU$ if and only if $\fU$ is a two-sided Ore set, and in this case the left Ore localization $\fU^{-1}S$ and the right Ore localization $S\fU^{-1}$ are the same and are isomorphic to $E$, see \cite[Theorem 6.2]{GW2004}. If $S$ is a domain and $\fU=S \setminus \{0\}$ is a two-sided Ore set, then $S$ is called an {\em Ore domain}, and $\fU^{-1}S = S\fU^{-1}$ is a division algebra, called the {\em division algebra of $S$}.

\begin{proposition}\lbl{r.sandwich0}  Suppose $\fU$ is a two-sided Ore set of a ring $S$ and $S \subset S'\subset S\fU^{-1}$, where $S'$ is a subring of $S\fU^{-1}$.
\begin{enumerate}[(a)]
\item The set $\fU$ is a two-sided Ore set of $S'$ and $S'\fU^{-1}= S \fU^{-1}$.
\item If $S$ is an Ore domain then so is $S'$, and both have the same division algebra.
\end{enumerate}
\end{proposition}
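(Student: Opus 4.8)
The plan is to reduce both parts to the characterization recorded just above the statement (from \cite[Theorem 6.2]{GW2004}): if $\fV$ is a multiplicatively closed set of regular elements of a ring $T$, then $\fV$ is a two-sided Ore set of $T$ if and only if $T$ admits a ring of fractions with respect to $\fV$, and such a ring of fractions is unique up to a canonical isomorphism fixing $T$. Since we write $S\fU^{-1}$, we may assume $\fU$ consists of regular elements of $S$. The single conceptual observation driving everything is that, under the sandwich hypothesis $S\subseteq S'\subseteq S\fU^{-1}$, the ring $E:=S\fU^{-1}$ can serve as the Ore localization of $S'$ as well, so it suffices to check that $E$ \emph{is} a ring of fractions for $S'$ with respect to $\fU$.

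For \textbf{(a)} I would first note that $\fU$ consists of regular elements of $S'$: each $u\in\fU$ is invertible in $E$, so a relation $us=0$ or $su=0$ with $s\in S'\subseteq E$ can be multiplied by $u^{-1}\in E$ to force $s=0$. Then $S'$ is a subring of $E$ containing $S$, and the two defining conditions of a ring of fractions hold for $E$ over $S'$: condition (i) (every $u\in\fU$ is invertible in $E$) holds because $E$ is already a ring of fractions for $S$ with respect to $\fU$; condition (ii) (every $e\in E$ has presentations $e=su^{-1}=(u')^{-1}s'$ with $s,s'\in S'$, $u,u'\in\fU$) holds because $E$ has such presentations with $s,s'\in S\subseteq S'$. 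Hence by \cite[Theorem 6.2]{GW2004}, $\fU$ is a two-sided Ore set of $S'$ and $S'\fU^{-1}$ is canonically isomorphic to $E$ via a map fixing $S'$, i.e. $S'\fU^{-1}=S\fU^{-1}$. (Alternatively, one can verify the Ore condition by hand: given $s\in S'$ and $u\in\fU$, write the element $u^{-1}s$ of $S\fU^{-1}$ as a right fraction $bw^{-1}$ with $b\in S$, $w\in\fU$; then $sw=ub$ gives the right Ore condition with $u'=w\in\fU$ and $s'=b\in S\subseteq S'$, and the left condition follows symmetrically.)

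For \textbf{(b)} I would apply \textbf{(a)} with $\fU=S\setminus\{0\}$, which is a two-sided Ore set of $S$ with $D:=S\fU^{-1}$ the division algebra of $S$. Part \textbf{(a)} gives that $\fU$ is a two-sided Ore set of $S'$ with $S'\fU^{-1}=D$; in particular $S'$ is a subring of $D$, so every nonzero element of $S'$ is invertible in $D$ and hence regular in $S'$, making $S'$ a domain. It then remains to upgrade from $\fU$ to $\fU':=S'\setminus\{0\}$. I claim $D$ is a ring of fractions for $S'$ with respect to $\fU'$: condition (i) is immediate since $D$ is a division algebra, and condition (ii) holds because every $e\in D$ already has presentations $e=su^{-1}=(u')^{-1}s'$ with $s,s'\in S\subseteq S'$ and $u,u'\in\fU\subseteq\fU'$. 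By \cite[Theorem 6.2]{GW2004} again, $\fU'=S'\setminus\{0\}$ is a two-sided Ore set of $S'$ and $S'(\fU')^{-1}=D$, i.e. $S'$ is an Ore domain with the same division algebra as $S$.

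The computations here are entirely routine; the content is purely in invoking the ring-of-fractions criterion correctly. The only points needing mild care, and where I expect the (minor) friction to lie, are the bookkeeping around regularity of $\fU$ and $\fU'$ inside $S'$, and making explicit that the fraction presentations valid over $S$ are a fortiori valid over $S'$ — this last point is exactly what lets us use the uniqueness clause of \cite[Theorem 6.2]{GW2004} to get the honest equality $S'\fU^{-1}=S\fU^{-1}$ rather than merely an abstract isomorphism.
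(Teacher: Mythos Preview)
Your proposal is correct and follows essentially the same approach as the paper: both parts are reduced to the ring-of-fractions characterization from \cite[Theorem~6.2]{GW2004}, with part~(b) handled by applying part~(a) to $\fU'=S\setminus\{0\}$ and then noting that the resulting division ring already serves as a ring of fractions for $S'$ with respect to $S'\setminus\{0\}$. The paper's proof is extremely terse (one sentence per part), while you have spelled out the regularity checks and the upgrade from $\fU$ to $\fU'=S'\setminus\{0\}$ more carefully, which is a welcome clarification rather than a different method.
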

\begin{proof}
\begin{enumerate}[(a)]
\item Since $S\fU^{-1}$ is also a ring of fractions for $S'$ with respect to $\fU$, one has that $\fU$ is a two-sided Ore set of $S'$ and $S'\fU^{-1}= S \fU^{-1}$.
\item Let $\fU'=S\setminus \{0\}$. Since $S\subset S' \subset S\fU^{-1} \subset S(\fU')^{-1}$, the result follows from (a).
\end{enumerate}
\end{proof}

\subsection{Weak generation}\lbl{sec.weakgeneration}

A subset $D$ of an algebra $A$ is said to {\em weakly generate} $A$ if $A$ is generated as an algebra by $D$ and the inverses of all invertible elements in $D$.
For example, if $\fU \subset A$ is a two-sided Ore subset then $A$ weakly generates its Ore localization $D \fU^{-1}$.  
Clearly an algebra homomorphism $A \to A'$ is totally determined by its values on a set weakly generating $A$.

\section{Representation theory}\lbl{sec.algreps}

\subsection{Unicity theorem}

The unicity theorem was proven in 2017 by Frohman, Kania-Bartoszynska, and L\^e  \cite{FKL2017} and is a powerful result about the representations of prime affine algebras that are finite over their center. A ring $S$ is {\em prime} if all $a,b \in S$ satisfy the following condition: if $asb=0$ for all $s \in S$ then $a=0$ or $b=0$. $S$ is an {\em affine $C$-algebra}, where $C$ is a commutative ring, if $C \subset Z(S)$ and $S$ is finitely generated as a $C$-algebra. This theorem applies to Kauffman bracket skein algebras of surfaces, and Theorem \ref{r.refinedunicity} is a refinement of this theorem, but we cite it in this chapter as it is more general.

\begin{theorem}\lbl{r.unicity1}\cite{FKL2017}
Let $k$ be an algebraically closed field and $R$ be a prime affine $k$-algebra. Suppose $R$ is generated as a module over its center $Z(R)$ by a finite set of $r$ elements.
\begin{enumerate}[(i)]
\item Every element of $\Hom_{k-\text{Alg}}(Z(R),k)$ is the central character of at least one irreducible $k$-representation and at most $r$ non-equivalent irreducible $k$-representations.
\item Every irreducible $k$-representation of $R$ has dimension $\leq N$, which is the PI degree of $R$, and also the square root of the rank of $R$ over $Z(R)$.
\item There exists a Zariski open and dense subset of the form $V_c$ of $\Hom_{k-\text{Alg}}(Z(R),k)$, where $0 \neq c \in Z(R)$, such that every $\tau \in V_c$ is the central character of a unique (up to equivalence) irreducible $k$-representation $\rho_\tau$. Moreover all representations $\rho_\tau$ with $\tau \in V_c$ have dimension $N$.
\end{enumerate}
\end{theorem}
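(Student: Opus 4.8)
The plan is to recover Theorem \ref{r.unicity1} from the standard structure theory of prime affine PI algebras; the argument rests on three inputs — the Artin--Tate lemma, Posner's theorem together with Kaplansky's theorem on primitive PI algebras, and the theory of the Azumaya locus — and I would organize it part by part.

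First I would set up the ambient geometry. Since $R$ is affine over $k$ and finite over its center $Z:=Z(R)$, the Artin--Tate lemma gives that $Z$ is itself an affine $k$-algebra, so $\Hom_{k\text{-Alg}}(Z,k)$ is the set of closed points of the affine variety $\Sp(Z)$, and the words ``Zariski open'' and ``dense'' acquire meaning; note also that $Z$ is a domain because $R$ is prime, so $\Sp(Z)$ is irreducible. Being finite over $Z$, the ring $R$ is PI; let $N$ be its PI degree. By Posner's theorem, $R\otimes_Z\tZ$ (with $\tZ$ the fraction field of $Z$) is a central simple $\tZ$-algebra, necessarily of degree $N$, hence of $\tZ$-dimension $N^2$; this $N^2$ is precisely the rank of $R$ over $Z$, which is the identification $N=\sqrt{\mathrm{rank}}$.

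For part (ii): given an irreducible $k$-representation $\rho\colon R\to\mathrm{End}_k(V)$, the image $R/\ker\rho$ is a primitive PI algebra of PI degree $\le N$, so by Kaplansky's theorem it is a central simple algebra of dimension $\le N^2$ over its center; that center is the image of $Z$, a field finitely generated over $k$ by $k$-valued central characters, hence equal to $k$, and since $k$ is algebraically closed $R/\ker\rho\cong M_d(k)$ with $d\le N$, so $\dim_k V=d\le N$. For the existence half of part (i), each irreducible factors through $R/\fm R$ where $\fm$ is the central character, and conversely finiteness of $R$ over $Z$ gives going-up for $Z\subset R$, so $\Sp(R)\to\Sp(Z)$ is surjective and every $\fm\in\Hom_{k\text{-Alg}}(Z,k)$ is contracted from a maximal two-sided ideal of $R$, hence is the central character of at least one irreducible. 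For the ``at most $r$'' bound, if $R$ is spanned by $r$ elements over $Z$ then $R/\fm R$ is spanned by $\le r$ elements over $Z/\fm=k$, so $\dim_k(R/\fm R)\le r$; the number of pairwise non-equivalent irreducibles with central character $\fm$ equals the number of simple factors of the semisimplification $\prod_i M_{n_i}(k)$ of $R/\fm R$, which is at most $\dim_k(R/\fm R)\le r$.

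Part (iii) is the crux, and I would handle it through the Azumaya locus. The set $\mathcal A\subset\Sp(Z)$ of maximal ideals $\fm$ with $R/\fm R\cong M_N(k)$ is Zariski open and nonempty, the latter because the generic fibre $R\otimes_Z\tZ$ is central simple of degree $N$. Concretely, I would produce a nonzero $c\in Z$ with $R[c^{-1}]$ an Azumaya $Z[c^{-1}]$-algebra of constant rank $N^2$: evaluating a Formanek central polynomial for $M_N$ on $R$ yields elements of $Z$ that are not all zero (they would all vanish only if the PI degree were $<N$), and Artin's characterization of Azumaya algebras lets one pick $c$ inverting the obstruction. Then $V_c=\{\fm\in\Sp(Z):c\notin\fm\}$ is open and (by irreducibility of $\Sp(Z)$) dense, and for $\fm\in V_c$ the fibre $R/\fm R\cong M_N(k)$ has a unique simple module, of dimension $N$, which is the asserted $\rho_\tau$. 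I expect the main obstacle to be exactly this final step — producing $c$ and proving nonemptiness and openness of the Azumaya locus; the rest is bookkeeping once Artin--Tate, Posner, and Kaplansky are available.
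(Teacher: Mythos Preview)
The paper does not prove this theorem; it merely quotes it from \cite{FKL2017} as background for the refined unicity theorem in Chapter~\ref{c.maxorder}. So there is no ``paper's own proof'' to compare against.

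That said, your sketch is correct and is essentially the argument that appears in \cite{FKL2017}. The Artin--Tate lemma to make $Z$ affine, Posner's theorem to identify the PI degree with the square root of the generic rank, Kaplansky's theorem to bound the dimension of irreducibles, going-up for surjectivity of the central character map, the elementary bound $\dim_k(R/\fm R)\le r$ to cap the fiber count, and finally the Azumaya locus cut out by a nonzero evaluation of a Formanek central polynomial --- this is exactly the standard route, and you have the logic in the right order. The one place you correctly flag as the crux, namely producing $c\in Z$ with $R[c^{-1}]$ Azumaya of rank $N^2$, is handled in \cite{FKL2017} just as you suggest, via Artin's characterization of Azumaya algebras in terms of central polynomials.
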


\section{Maximal orders}\lbl{sec.maxorderintro}

Maximal orders are noncommutative generalization of integrally closed commutative domains. One of the nice properties of this fact is that they have a well-behaved representation theory, as we illustrate in Subsection \ref{sec.maxorderrep}. There are a number of slightly different notions of maximal order in the literature, most of which are equivalent in the context of our results, but see e.g. \cite{MRS2001} for more detail.

Let $A$ be an algebra that is finitely generated as a module over its center $Z$ and also a domain, and write $\tZ$ for the field of fractions of $Z$. Then $\tA:=A \otimes_Z \tZ$ is a division algebra (Proposition \ref{r.divisionalgebras}). We say $A$ is a {\em maximal order} if for any subring $B \subset \tA$ such that $A \subset B \subset (z^{-1})A$ for some nonzero $z \in Z$, we have that $A=B$. Here we write $(z^{-1})A$ instead of $z^{-1}A$ to emphasize that it is not a localization, it is merely a set, i.e.
\be\lbl{e.notlocal}
(z^{-1})A = \{ z^{-1} A \mid a\in A\}.
\ee

 We will use the following well-known fact, see \cite{MRS2001}, Proposition 5.1.10(b).

\begin{proposition}
Let $A$ be an algebra that finitely-generated as a module over its  center $Z$ and also a domain. If $A$ is a maximal order then the center of $A$ is integrally closed.
\end{proposition}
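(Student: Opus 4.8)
The plan is to prove the contrapositive-free direct statement: under the hypotheses, any element $w$ of the fraction field $\tZ$ that is integral over $Z$ must already lie in $Z$. The one nontrivial input I would invoke is the same one underlying Proposition \ref{r.divisionalgebras}: by Posner's theorem $\tA := A \otimes_Z \tZ$ is a central simple algebra over $\tZ$, so $\tZ$ is precisely the center of $\tA$. In particular every element of $\tZ$ commutes with every element of $A$, and $A \cap \tZ = Z$ (any element of $A$ lying in $\tZ$ is central in $A$).

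First I would take $w \in \tZ$ integral over $Z$, fix a monic relation $w^n + c_{n-1}w^{n-1} + \cdots + c_0 = 0$ with $c_i \in Z$, and form the subring $B := A[w] \subseteq \tA$. Since $w$ is central in $\tA$, the reduction afforded by the monic relation gives $B = A + Aw + \cdots + Aw^{n-1}$, so $B$ is indeed a subring containing $A$. Next I would produce the denominator: the finitely generated $Z$-submodule $Z[w] = Z + Zw + \cdots + Zw^{n-1}$ of $\tZ$ satisfies $z\,Z[w] \subseteq Z$ for some nonzero $z \in Z$ (clear a common denominator of the generators, using that $Z$ is a domain). Because $z$ is central, $z\,(Aw^i) = A\,(zw^i) \subseteq AZ \subseteq A$, hence $zB \subseteq A$, i.e.
\[
A \subseteq B \subseteq (z^{-1})A .
\]

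Now I would apply the defining property of a maximal order to the subring $B$: it forces $A = B$, and in particular $w \in A$. Since $w \in \tZ$ commutes with all of $A$, we get $w \in Z(A) = Z$. As $w$ was an arbitrary element of $\tZ$ integral over $Z$, this shows $Z$ is integrally closed in its fraction field, which is the claim. (If one wants to quote the paper's own framework, this is exactly checking that the "test ring" $B$ in the maximal-order definition can be built from an integral element.)

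I do not expect a genuine obstacle here — the argument is essentially bookkeeping. The two points that merit a line of care are: (a) justifying that $\tZ$ is the center of $\tA$, so that $B = A[w]$ is actually a ring and the eventual conclusion $w\in A$ upgrades to $w\in Z$; and (b) the elementary observation that a finitely generated $Z$-submodule of $\tZ$ lies inside $(z^{-1})Z$ for some nonzero $z\in Z$. Both are standard, so the main "work" is simply assembling them in the right order.
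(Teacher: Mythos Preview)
Your argument is correct and is the standard proof. The paper itself does not prove this proposition; it simply cites \cite{MRS2001}, Proposition 5.1.10(b). Your write-up is exactly the classical argument behind that reference: build $B=A[w]$ from an integral element $w\in\tZ$, bound it as $A\subseteq B\subseteq (z^{-1})A$ by clearing denominators, apply maximality, and conclude $w\in A\cap\tZ=Z$.

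One small remark: you invoke Posner's theorem to identify $\tZ$ as the center of $\tA$, but you actually use only the inclusion $\tZ\subseteq Z(\tA)$ and the consequence $A\cap\tZ=Z$. Both of these are elementary from the tensor construction $\tA=A\otimes_Z\tZ$ together with $A$ being a domain, so Posner is not strictly needed. This does not affect correctness, only economy.
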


\subsection{Maximal orders and filtrations}\lbl{sec.maxorderfilter}

The following proposition is likely known to experts but does not appear in the literature to our knowledge.
\begin{proposition}\lbl{r.gradedorderlift}
Let  $\La \subset \BZ^r$ be a finitely-generated submonoid that is primitive in its group completion $\uLa$. Let $A$ be an associative $\BC$-algebra  which is finitely-generated as a module over its center $Z$ and  as a $\BC$-vector space $A$ has basis $S=\{s_\bm\mid  \bm \in \La\}$. Assume there exists a monoid homomorphism $\deg: \La \to \BN^p$ such that
\be\lbl{e.productformula}
s_\bm \cdot s_{\bm'} = q^{\cC(\bm,\bm')} s_{\bm+\bm'} + F_{<\deg(\bm)+\deg(\bm')},
\ee
where $q \in \BC^\times$ is a root of unity, $\cC(\bm,\bm') \in \BZ$, and for $\bn \in \BN^p$, $F_{<\bn}$ is the $\BC$-span of $\{s_\bm \in S: \deg(\bm) < \deg(\bn)\}$. Here $\BN^p$ is equipped with the lexicographic order. Then $A$ is a domain and
  a maximal order.
\end{proposition}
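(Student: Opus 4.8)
The plan is to pass to the associated graded algebra, prove both assertions there, and lift them back to $A$. Order $\BN^p$ lexicographically and filter $A$ by declaring $F_\bn$ to be the $\BC$-span of those $s_\bm$ with $\deg(\bm)\le\bn$, and $F_{<\bn}$ the span of those with $\deg(\bm)<\bn$; since $\deg$ is a monoid homomorphism, $F_\bn F_{\bn'}\subseteq F_{\bn+\bn'}$ by~\eqref{e.productformula}, the filtration is exhaustive, and by Remark \ref{r.dccorder} every nonzero element of $A$ has a well-defined degree. In $\gr A$ the classes $\bar s_\bm$ form a $\BC$-basis and~\eqref{e.productformula} becomes $\bar s_\bm\bar s_{\bm'}=q^{\cC(\bm,\bm')}\bar s_{\bm+\bm'}$. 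From this $\gr A$ is a domain: if $x=\sum_{\bm\in I}c_\bm\bar s_\bm$ and $y=\sum_{\bm'\in J}d_{\bm'}\bar s_{\bm'}$ are nonzero, with $I,J\subset\La$ finite and all coefficients nonzero, choose a total order on $\BZ^r$ compatible with addition and let $\bm_\ast,\bm'_\ast$ be the largest elements of $I,J$; then the coefficient of $\bar s_{\bm_\ast+\bm'_\ast}$ in $xy$ is $c_{\bm_\ast}d_{\bm'_\ast}q^{\cC(\bm_\ast,\bm'_\ast)}\ne0$. Hence $\deg$ is additive on $A\setminus\{0\}$ and $A$ itself is a domain, since for nonzero $a,b\in A$ with leading terms $\bar a,\bar b$ we get $\bar a\bar b\ne0$ in $\gr A$, so $ab$ has a nonzero component in degree $\deg a+\deg b$.

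Next I would identify $\gr A$ with a monomial subalgebra of a quantum torus and invoke Proposition \ref{r.ALamaximal}. Associativity of $\gr A$ forces the commutation scalars $q^{\cC(\bm,\bm')-\cC(\bm',\bm)}$ to form an antisymmetric bicharacter of $\uLa$ valued in the finite group generated by $q$, hence equal to $q^{U(\bm,\bm')}$ for an antisymmetric integer matrix $U$ on $\uLa$; a standard rescaling argument (the residual symmetric $2$-cocycle is a coboundary because $\BC^\times$ is divisible) then makes $\bar s_\bm\mapsto(\text{scalar})\,X^\bm$ an isomorphism $\gr A\cong\BA(\La)\subset\BT(U;\BC)$. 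Now $A$ is an affine $\BC$-algebra — the $s_{\bg_i}$ attached to a finite monoid-generating set $\bg_1,\dots,\bg_k$ of $\La$ generate it, by induction on degree using~\eqref{e.productformula} — so $Z$ is affine by Artin--Tate, and therefore $A$ and $\gr A$ are Noetherian; moreover $\gr A$ is finitely generated over its center because $A$ is. Since $\La$ is finitely generated, of full rank in $\uLa$, and primitive in $\uLa$, Proposition \ref{r.ALamaximal} (applied with ambient lattice $\uLa$) shows $\BA(\La)\cong\gr A$ is a maximal order.

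Finally I would lift maximality to $A$. Let $\tA=(Z\setminus\{0\})^{-1}A$, a division algebra by Proposition \ref{r.divisionalgebras}, extend the filtration to $\tA$ by $\deg(as^{-1}):=\deg a-\deg s\in\BZ^p$ (well-defined since $\deg$ is additive), and note that $\gr\tA$ is a central localization of $\gr A$ sitting inside $\widetilde{\gr A}$, while $\gr A$ itself is supported in $\BN^p$-degrees. Given a subring $B$ with $A\subseteq B\subseteq(z^{-1})A$ for some $0\ne z\in Z$, filter $B$ by $F_\bn(B):=B\cap F_\bn(\tA)$; passing to leading terms gives $\gr A\subseteq\gr B\subseteq(\bar z^{-1})\gr A$ inside $\widetilde{\gr A}$, where $\bar z$ is the nonzero central leading term of $z$, so since $\gr A$ is a maximal order we get $\gr B=\gr A$. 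The remaining point — that $\gr B=\gr A$ forces $B=A$ — is the one I expect to require the most care, since this implication fails for general $\BZ^p$-filtrations; it works here because $\gr B=\gr A$ is supported in $\BN^p$-degrees, so every nonzero $b\in B$ has $\deg b\in\BN^p$, and $\BN^p$ is well-ordered by the lexicographic order (Remark \ref{r.dccorder}), whence the standard downward recursion — repeatedly subtract from $b$ an element of $A$ of the same degree and leading term, which exists since $\gr A=\gr B$ — terminates and writes $b$ as an element of $A$. Hence $B=A$, and $A$ is both a domain and a maximal order.
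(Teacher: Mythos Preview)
Your argument is correct and follows the same architecture as the paper: filter $A$, identify $\gr A$ with a monomial subalgebra $\BA(\La)$ of a quantum torus, invoke Proposition~\ref{r.ALamaximal}, and lift by a descending-chain argument on $\BN^p$. The only substantive difference is in how the lifting step is organized. The paper avoids extending the filtration to $\tA$: instead it multiplies through by $z$ to obtain $zA\subset D:=zB\subset A$, pushes this chain through the bar map, and then must check by hand (Lemma~\ref{r.zcDRalg}) that the resulting set $\bar z^{-1}\cD$ is actually a subring before applying maximality of $\gr A$. Your route---extending $\deg$ to $\tA$ and filtering $B$ by restriction---makes $\gr B$ a ring for free, at the cost of verifying that $\deg(as^{-1})$ is well-defined and additive. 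Both arrive at the same final recursion.

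One small correction: your justification ``$\gr A$ is finitely generated over its center because $A$ is'' is not valid in general---finite generation over the center does not automatically pass to associated graded algebras. The conclusion is nevertheless true here, and the paper obtains it directly from the identification $\gr A\cong\BA(\La)$: since $q$ has finite order $N$, each $X^{N\bk}$ is central, and writing any $\bk\in\La$ as $\sum r_i\bg_i + N\bk'$ with $0\le r_i<N$ (where the $\bg_i$ generate $\La$) exhibits $\BA(\La)$ as a finite module over the central subalgebra $\BA(N\La)$. This is implicit in Proposition~\ref{r.ALamaximal} in any case, so nothing in your outline actually breaks.
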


\begin{proof}
For $\bn \in \BN^p$, let $F_\bn$ be the $\BC$-span of $\{s_\bm  \mid  \deg(\bm) \leq \bn\} \subset A$. Then $A$ is an $\BN^p$-filtered algebra with filtration $\{F_\bn\}$, and we write $\cA:=\gr A$ for the associated graded algebra. We also write $Z$ for the center of $A$, $\tZ$ for the field of fractions of $Z$, $\tA:=A \otimes_Z \tZ$, $\cZ$ for the center of $\cA$, $\tcZ$ for the field of fractions of $\cZ$, and $\tcA:=\cA \otimes_\cZ \tcZ$.

It follows directly from  ~\eqref{r.goodfiltration}, ~\eqref{e.productformula} that $\{F_\bn\}$ is a good filtration, i.e. that the bar mapping preserves multiplication. 

We see from ~\eqref{e.productformula} that for all $s_\bm, s_{\bm'} \in S$, $\overline{s_\bm} \overline{s_{\bm'}} = q^{\cQ(\bm,\bm')}\overline{s_{\bm'}}\overline{s_{\bm}}$, where $Q(\bm,\bm')=\cC(\bm,\bm')-\cC(\bm',\bm)$. This implies that $Q$ is antisymmetric - that is, $Q(\bm,\bm')=-Q(\bm',\bm)$. Then by construction $\cA$ is a monomial subalgebra of a quantum torus associated to an $r \times r$ antisymmetric matrix constructed from $Q$ given by a finitely-generated primitive submonoid of $\BZ^r$, and thus (i) is a domain, (ii) is finitely generated over $\cZ$, and (iii) is a maximal order by Proposition \ref{r.ALamaximal}.

It is well-known that if the associated graded algebra $\cA$ is a domain, then $A$ is a domain.

Suppose $B$ is a subring of $\tA$ such that there exists some $z \in Z$ where $A \subset B \subset (z^{-1})A$. Then $zA \subset zB \subset A$. We will write $D:=zB$ (which we emphasize is merely a set). Now consider the following chain of inclusions under the bar mapping: $\overline{zA} \subset \bar{D} \subset \bar{A} (\subset \tcA)$. Since the bar mapping preserves multiplication we have that
\be\lbl{e.barinclusions}
\bar{A} \subset (\bar{z})^{-1} \bar{D} \subset (\bar{z})^{-1}\bar{A}
\ee
where we emphasize that these are inclusions of sets, and not localizations of any sort. It also follows that $(\bar{z})^{-1}= \overline{(z^{-1})}$, so we will just write $\bar{z}^{-1}$. Consider the $\BC$-vector subspace of $\tcA$ generated by each of the sets in ~\eqref{e.barinclusions}. It is clear that $\BC \cdot \bar{A} = \cA$ by how the associated graded algebra is constructed, and furthermore that $\BC \cdot (\bar{z}^{-1}) \bar{D} = (\bar{z}^{-1}) (\BC \cdot \bar{D})$ and $\BC \cdot (\bar{z}^{-1}) \bar{A} = (\bar{z}^{-1}) \cA$ since $\bar{z}^{-1} \in \tcA$. In that vein, we write $\cD := \BC \cdot \bar{D}$. Thus we have that 
\[
\cA \subset (\bar{z}^{-1}) \cD \subset (\bar{z}^{-1})\cA.
\]

\begin{lemma}\lbl{r.zcDRalg}
$\bar{z}^{-1}\cD$ is a ring.
\end{lemma}

\begin{proof}
Let $d \in \bar{z}^{-1}\cD$. Then we may write $d = \bar{z}^{-1}\overline{zb}$ for some $b \in B$. Again because $\{F_\bm\}$ is a good filtration, the bar mapping preserves multiplication, so we have that $d = \bar{z}^{-1}\bar{z}\bar{b} = \bar{b}$.

Let $d_1,d_2 \in \bar{z}^{-1}\cD$. That $d_1+d_2 \in \bar{z}^{-1}\cD$ follows immediately from the fact that $\bar{z}^{-1}\cD$ is a $\BC$-vector space, so we only need to show that $d_1d_2 \in \bar{z}^{-1}\cD$. Let $b_1,b_2 \in B$ such that $d_1=\overline{b_1}$, $d_2 = \overline{b_2}$. Then because the bar mapping preserves multiplication, $d_1d_2 = \bar{z}^{-1}\bar{z}\overline{b_1}\overline{b_2} = \bar{z}^{-1} \overline{zb_1b_2} \in \bar{z}^{-1}\cD$.
\end{proof}

Since $\cA$ is a maximal order and $\bar{z}^{-1}\cD$ is a ring by Lemma \ref{r.zcDRalg}, we then have that $\cA = \bar{z}^{-1} \cD$, or equivalently, that $\bar{z}\cA = \cD$.

We now wish to show that $zA=D$. We already know that $zA \subset D$ by assumption.

So let $x \in D$. Since $D \subset A$, we may write $x$ uniquely as a linear combination of elements of $S$. Let $\bm_x = \max_{\bm \in \supp(x)}\{\bm\}$. Then
\[
x = \sum_{a_i \in S_{\bm_x}} c_ia_i + \text{lower degree terms}, \ \ \ \ c_i \in \BC,
\]
and so
\[
\bar{x} = \sum_{a_i \in S_{\bm_x}} c_i\bar{a_i}.
\]
Since $\bar{z}\bar{A} = \bar{D}$, $\bar{x} = \overline{(za)}$ for some $a \in A$.

Consider $y = x - za$. Then $x = za+y$, and since $\bar{x} = \overline{(za)}$, we have that $\deg(\bar{y}) < \deg(\bar{x})$. Since $x \in D$ and $za \in D$, we have that $y \in D$ as well. If $y \in zA$ we are done, otherwise we can repeat the above process to write $y$ as a sum of an element of $zA$ and an element of $D$ of degree lower that $y$. Since the order structure on $\BN^p$ satisfies a descending chain condition (see Remark \ref{r.dccorder}), the recursive process will eventually terminate with the remainder an element of $zA$ or $\BC \subset A$, showing that $x \in zA$.

Thus $zA = D$ and so $A = z^{-1}D = B$ so $A$ is a maximal order.
\end{proof}

\subsection{Representation theory of maximal orders}\lbl{sec.maxorderrep}

Maximal orders have a well-behaved representation theory in comparison to most classes of noncommutative algebras, and we illustrate that in this subsection.

Suppose $A$ is a $\BC$-algebra with center $Z$. We assume that $A$ is a domain,  finitely generated as a $\BC$-algebra, and finitely generated as a $Z$-module. Let $\Sp(A)$ denote the set of all equivalence classes of finite dimensional irreducible representations of $A$ over $\BC$, so that if $A$ is commutative then $\Sp(A)=\Hom_{\BC-alg}(A,\BC)$ is the maximal spectrum, which comes equipped with the Zariski topology. By Schur's lemma we have the central character map
$$ \chi : \Sp(A) \to \Sp(Z),$$
defined so that if 
$ \rho: A \to M_n(\BC)$ 
is an irreducible representation then 
$\rho(a) = \chi(\rho)(a) \, \id$. 

Let $\tZ$ be the field of fractions of $Z$ and $\tA:= A \ot_Z \tZ$ be the division algebra of $A$. 
As a vector space over $\tZ$ the dimension of $\tA$ is $d^2$, a perfect square, and $d$ is called the {\em degree of $A$}.

If $A$ is a maximal order then it is an algebra with trace in the sense of Procesi \cite{Pr1976} and we have the following.
\begin{theorem}[\cite{DKP93}] \lbl{r.dCKP}
Let $A$ be a finitely generated $\BC$-algebra which is finitely generated as a module over its center $Z$. Assume that $A$ is a domain and a maximal order. Let $d$ be the degree of $A$.

(i) The points of $\Sp(Z)$ parameterize equivalence classes of $d$-dimensional semisimple representations of $A$.

(ii) The central character map $\chi: \Sp(A) \to \Sp(Z)$ is surjective and each fiber consists of all those irreducible representations of $A$ which are irreducible components of the corresponding semisimple representation. In particular, each irreducible representation of $A$ has dimension at most $d$.

(iii) The set
$$ \Omega_A=\{ a\in \Sp(Z) \mid \chi^{-1}(a) \ \text{has dimension}\ d \}$$
is a non-empty Zariski open set.

\end{theorem}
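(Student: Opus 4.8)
The plan is to recognise $A$ as a Cayley--Hamilton algebra of degree $d$ (an ``algebra with trace'' in the sense of Procesi \cite{Pr1976}) and then run the geometric invariant theory of its representation scheme; the decisive input supplied by the maximal-order hypothesis is that the ring of $\mathrm{GL}_d$-invariants is \emph{exactly} $Z$, so that the associated GIT quotient is precisely $\Sp(Z)$.

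First I would apply Posner's theorem (Proposition \ref{r.divisionalgebras}) to see that $\tA = A\otimes_Z\tZ$ is central simple over $\tZ$ of $\tZ$-dimension $d^2$, hence carries a reduced trace $\mathrm{trd}\colon\tA\to\tZ$. Since $A$ is a maximal order its center $Z$ is integrally closed (as noted above), and as $A$ is integral over $Z$ we get $\mathrm{trd}(A)\subseteq Z$; thus $(A,\mathrm{trd})$ is Cayley--Hamilton of degree $d$. Next I would introduce the affine scheme $\mathrm{Rep}^{\mathrm{tr}}_d(A)$ of trace-preserving homomorphisms $A\to M_d(\BC)$, with its conjugation action of $\mathrm{GL}_d$, under which closed orbits correspond to semisimple representations. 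By Procesi's theory the ring of invariants is generated by the trace functions $\rho\mapsto\mathrm{tr}\,\rho(a)$; because $\rho$ is trace-preserving this function is $\rho\mapsto\rho\bigl(\mathrm{trd}(a)\bigr)$ with $\mathrm{trd}(a)\in Z$, and using the degree-$d$ Cayley--Hamilton identities to reduce all word-traces one concludes that the natural map $Z\to\BC[\mathrm{Rep}^{\mathrm{tr}}_d(A)]^{\mathrm{GL}_d}$ is an isomorphism, so $\mathrm{Rep}^{\mathrm{tr}}_d(A)/\!/\mathrm{GL}_d\cong\Sp(Z)$. This is assertion (i): closed points of $\Sp(Z)$ correspond to closed $\mathrm{GL}_d$-orbits, i.e.\ to isomorphism classes of $d$-dimensional semisimple representations, compatibly with central characters. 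For (iii) I would invoke the Artin--Procesi criterion: the Azumaya locus $U\subseteq\Sp(Z)$ is Zariski open, and it is non-empty because the generic point lies in it ($\tA$ being central simple of degree $d$); at a closed point $a\in U$ one has $A/\fm_aA\cong M_d(\BC)$ since $\mathrm{Br}(\BC)=0$, so $\chi^{-1}(a)$ consists of a single $d$-dimensional irreducible representation. Hence $\Omega_A\supseteq U$, while $\Omega_A$ itself is open by upper semicontinuity of the fibre dimensions of the representation scheme. Finally, for (ii): surjectivity of $\chi$ follows from (i) since the semisimple representation $\rho_a$ attached to a point $a$ has central character $a$, and the irreducible summands of $\rho_a$ are irreducibles lying over $a$; conversely any irreducible $\rho$ with $\chi(\rho)=a$ factors through the finite-dimensional Cayley--Hamilton algebra $A/\fm_aA$, whose degree-$d$ trace structure forces its distinguished semisimple representation to be $\rho_a$, so $\rho$ is one of the summands of $\rho_a$ and in particular $\dim\rho\le d$.

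The step I expect to be the genuine obstacle is proving that $\BC[\mathrm{Rep}^{\mathrm{tr}}_d(A)]^{\mathrm{GL}_d}$ \emph{equals} $Z$ rather than merely being finite over it --- equivalently, that $\mathrm{Rep}^{\mathrm{tr}}_d(A)/\!/\mathrm{GL}_d\to\Sp(Z)$ is an isomorphism and not just a finite surjection. This is exactly where maximality of the order is indispensable: it makes $Z$ integrally closed and $A$ a reflexive $Z$-module, so the trace ring cannot properly contain $Z$; for a non-maximal order the ring of invariants would be an intermediate ring (typically a partial integral closure), and the clean parameterisations in (i)--(iii) would break down. The remaining ingredients --- Posner's theorem, the Artin--Procesi description of Azumaya algebras, triviality of $\mathrm{Br}(\BC)$, and semicontinuity of fibre dimensions --- are all standard.
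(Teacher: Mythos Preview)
The paper does not prove this theorem at all: it is stated as a quotation from \cite{DKP93}, preceded only by the sentence ``If $A$ is a maximal order then it is an algebra with trace in the sense of Procesi \cite{Pr1976} and we have the following.'' There is no argument to compare against.

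Your sketch is a reasonable outline of the proof one finds in the cited source. The recognition of $(A,\mathrm{trd})$ as a Cayley--Hamilton algebra of degree $d$ via the reduced trace, the passage to the trace-preserving representation scheme $\mathrm{Rep}^{\mathrm{tr}}_d(A)$ and its GIT quotient by $\mathrm{GL}_d$, and the Artin--Procesi identification of the Azumaya locus are exactly the ingredients De Concini--Kac--Procesi use. You have also correctly isolated the heart of the matter: that the invariant ring $\BC[\mathrm{Rep}^{\mathrm{tr}}_d(A)]^{\mathrm{GL}_d}$ coincides with $Z$ rather than a finite extension of it, and that this is where the maximal-order hypothesis (via integral closure of $Z$ and the containment $\mathrm{trd}(A)\subseteq Z$) does the real work. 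One minor refinement: the inclusion $\Omega_A \supseteq U$ together with the fact that over any point outside the Azumaya locus the fibre algebra $A/\fm_a A$ cannot have a $d$-dimensional irreducible gives $\Omega_A = U$ directly, so the separate semicontinuity appeal is not needed.
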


\section{Quantum tori}\lbl{sec.abstracttori}

In this section we survey the basics of quantum tori and present the Frobenius homomorphism of quantum tori and its relationship with Chebyshev polynomials.
 
Throughout this section we presume that $R$ has a distinguished invertible element $q^{1/2}$. The reader should have in mind the example $R=\ZZ[q^{\pm 1/2}]$. We also fix a finite set $I$ for the remainder of the chapter.

\subsection{Abstract quantum torus}\lbl{sec.aqt}

Denote by $\Mat(I \times I, \BZ)$ the set of all $I\times I$ matrices with entries in $\BZ$, i.e. $U \in \Mat(I \times I, \BZ)$ is a function $U: I \times I \to \BZ$. We write $U_{ij}$ for $U(i,j)$. Occasionally we will use the same notation for a $|I| \times |I|$ matrix with integer values when the underlying set is totally ordered.

Let $U \in \Mat(I \times I, \BZ)$ be antisymmetric, i.e. $U_{ij}= - U_{ji}$. 
Define the {\em quantum torus over $R$ associated to $U$ with basis variables $x_i, i \in I$} by
\begin{align*}
\BT(U;R):= R\la x_i^{\pm 1} , i\in I\ra /(x_i x_j = q^{U_{ij}} x_j x_i).
\end{align*}
When $R$ is fixed, we write $\BT(U)$ instead of $\BT(U;R)$. $\BT(U;R)$ is sometimes called the algebra of skew Laurent polynomials or twisted Laurent polynomials.

Let $\BT_+(U)\subset \BT(U)$ be the $R$-subalgebra generated by $x_i, i\in I$. We call $\BT_+(U)$ the {\em positive quantum torus over $R$ associated to $U$ with basis variables $x_i, i \in I$}. $\BT_+(U)$ is also sometimes called a quantum plane.

Let $\BZ^I$ be the set of all maps $\bk: I \to \BZ$. For $\bk \in \BZ^I$ define the {\em normalized monomial} $X^\bk$ using the Weyl normalization:
\[
X^\bk := \left[ \prod_{i \in I} x_i^{\bk(i)} \right].
\]
The set $\{X^\bk \mid \bk \in \BZ^I\}$ is an $R$-basis of $\BT(U)$, i.e. we have the direct decomposition
\begin{equation}\lbl{eq.grading}
\BT(U) = \bigoplus_{\bk \in \BZ^I} R \cdot X^\bk.
\end{equation}
Similarly, $\BT_+(U)$ is  free over $R$ with basis $\{X^\bk \mid \bk \in \BN^I\}$.

Define an anti-symmetric $\BZ$-bilinear form on $\BZ^I$ by
\[
\langle \bk, \bn \rangle_U:= \sum_{i,j\in I} U_{ij}\, \bk(i)\bn(j).
\]
The following well-known fact follows easily from the definition:

For $\bk,\bn \in \BZ^I$, one has
\begin{equation}\lbl{e.normalizedtorus}
X^\bk X^\bn = q^{\frac{1}{2} \langle \bk, \bn \rangle_U} X^{\bk+\bn}  = q^{\langle \bk,\bn \rangle_U}X^{\bn}X^{\bk},
\end{equation}

In particular, for $n \in \BZ$ and $\bk \in \BZ^I$, one has 
\begin{equation}
\lbl{eq.power}
(X^\bk)^n = X^{n\bk}.
\end{equation}
The first identity of \eqref{e.normalizedtorus} shows that the decomposition \eqref{eq.grading} is a $\BZ^I$-grading of the $R$-algebra $\BT(U)$.

\subsection{Monomial subalgebras}\lbl{sec.monoidsubalgebra}

For a subset $\La \subset \BZ^p$ let $\BA(\La):= \BA_q(U,\La)\subset \BT$ be the  $\BC$-subspace of $\BT$ spanned by $X^\bk, \bk \in \Lambda$. From \eqref{e.normalizedtorus} it is easy to see that $\BA(\La)$ is a subalgebra if and only if $\La$ is a submonoid of $\BZ^p$. In that case we call $\BA(\La)$ a {\em monomial subalgebra of $\BT$}.
 If $\La$ is a subgroup of $\BZ^p$, then $\BA(\La)$ is isomorphic to a quantum torus.

\subsection{Quantum tori at a root of unity}

The following fact is more or less well-known.

\begin{proposition}\lbl{r.qtorusmaximal} Assume that $\xi \in \BC^\times$ is a root of unity and let $U \in \Mat(I \times, I, \BZ)$ be antisymmetric. Then $\BT(U)$ is finitely generated over its center and is a maximal order.
\end{proposition}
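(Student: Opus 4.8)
The plan is to compute the center of $\BT:=\BT(U;\BC)$ explicitly, read off from it both the finite generation over the center and a finite grading, and then run a short averaging argument — essentially the case $\La=\BZ^p$ of Proposition~\ref{r.ALamaximal} — to get the maximal‑order property. Set $N:=\ord(\xi)$ and $p:=|I|$.

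First I would identify the center. By the $\BZ^I$‑grading~\eqref{eq.grading} and the rule~\eqref{e.normalizedtorus}, a normalized monomial $X^\bk$ is central iff $q^{\la\bk,\bn\ra_U}=1$ for all $\bn\in\BZ^I$, i.e.\ iff $N\mid\la\bk,\bn\ra_U$ for all $\bn$; and by $\BC$‑linear independence of the $X^\bk$ an arbitrary element is central iff every monomial occurring in it is. Hence the center is $Z=\BA(\La_0)$ with
\[
\La_0 := \{\bk\in\BZ^I \mid \la\bk,\bn\ra_U\in N\BZ \text{ for all }\bn\in\BZ^I\}.
\]
Since $N\BZ^I\subseteq\La_0\subseteq\BZ^I$, the subgroup $\La_0$ has finite index and full rank $p$; write $G:=\BZ^I/\La_0$, a finite abelian group, and $s:=|G|$. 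For $\bk,\bk'\in\La_0$ one has $\la\bk,\bk'\ra_U\in N\BZ$, so the monomials $X^\bk$ ($\bk\in\La_0$) pairwise commute; choosing a $\BZ$‑basis $\bg_1,\dots,\bg_p$ of $\La_0$, the units $X^{\bg_1},\dots,X^{\bg_p}$ pairwise commute and, by~\eqref{eq.power} and~\eqref{e.normalizedtorus}, every $X^\bk$ with $\bk\in\La_0$ is a scalar multiple of a monomial in them, so $Z\cong\BC[(X^{\bg_1})^{\pm1},\dots,(X^{\bg_p})^{\pm1}]$ is a Laurent polynomial ring — in particular a Noetherian integrally closed domain.

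Next I would fix coset representatives $\bk_1=0,\bk_2,\dots,\bk_s\in\BZ^I$ for $G$; since $\la\bk_g,\bk_g\ra_U=0$ each $X^{\bk_g}$ is a unit with inverse $X^{-\bk_g}$, and writing $\bk=\bk_g+\bl$ with $\bl\in\La_0$ gives $X^\bk\in ZX^{\bk_g}$, whence $\BT=\bigoplus_{g\in G}ZX^{\bk_g}$. This shows $\BT$ is finitely generated over its center and exhibits it as a strongly $G$‑graded algebra with degree‑$0$ part $Z$; being a Noetherian domain finitely generated over $Z$, $\BT$ has division algebra $\tA:=\BT\otimes_Z\tZ$ (Proposition~\ref{r.divisionalgebras}), which inherits the grading $\tA=\bigoplus_{g\in G}\tZ X^{\bk_g}$ with $\BT\subseteq\tA$ a graded subring. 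Now let $B$ be a subring of $\tA$ with $\BT\subseteq B\subseteq(z^{-1})\BT$ for some $0\ne z\in Z$. Put $\psi(g,h):=q^{\la\bk_g,\bk_h\ra_U}$; this is a well‑defined pairing $G\times G\to\Cx$ (changing a representative by an element of $\La_0$ changes the exponent by a multiple of $N$), bilinear, and non‑degenerate (because $\psi(g,-)\equiv1$ forces $\bk_g\in\La_0$). Writing $b\in B$ uniquely as $b=\sum_g f_gX^{\bk_g}$ with $f_g\in\tZ$, conjugation by the unit $X^{\bk_h}\in\BT\subseteq B$ gives $X^{-\bk_h}bX^{\bk_h}=\sum_g\psi(g,h)f_gX^{\bk_g}\in B$, and averaging over $h$ against $\psi(g_0,-)^{-1}$ (legitimate since $s\in\Cx$; it uses orthogonality of the characters of $G$) yields $f_{g_0}X^{\bk_{g_0}}\in B$ for every $g_0$. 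Hence $B=\bigoplus_g(B\cap\tZ X^{\bk_g})$; writing $B\cap\tZ X^{\bk_g}=J_gX^{\bk_g}$ with $J_g$ a $Z$‑submodule of $\tZ$, we have $Z\subseteq J_g\subseteq z^{-1}Z$. For $g=0$, $J_0=B\cap\tZ$ is a subring of $\tZ$ that is finitely generated as a $Z$‑module (it sits inside $z^{-1}Z$ and $Z$ is Noetherian), hence integral over $Z$, so $J_0=Z$ as $Z$ is integrally closed. For arbitrary $g$, right‑multiplying $J_gX^{\bk_g}$ by the unit $X^{-\bk_g}\in\BT\subseteq B$ lands in $B\cap\tZ=J_0=Z$, forcing $J_g\subseteq Z$, so $J_g=Z$. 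Therefore $B=\bigoplus_g ZX^{\bk_g}=\BT$, and $\BT$ is a maximal order.

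The argument has no single deep step, but two places demand care: pinning down $Z$ precisely enough to know it is a genuine Noetherian integrally closed ring (not merely ``torus‑like''), and verifying that a sandwiched subring $B$ is forced to respect the finite grading by $G$ — the averaging trick for the latter needs $|G|$ invertible, which is automatic since the base ring is $\BC$. Alternatively one can simply quote Proposition~\ref{r.ALamaximal} with $\La=\BZ^p$, or observe that this special case is implicit in \cite{DCP1993}.
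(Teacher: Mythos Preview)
Your argument is correct and takes a genuinely different route from the paper. Both proofs begin the same way: identify the center as $\BA(\La_0)$ with $\La_0=\{\bk:\la\bk,\bn\ra_U\in N\BZ\ \forall\bn\}$, observe $N\BZ^I\subseteq\La_0$ so $\BT$ is module-finite over $Z$, and note $Z$ is a Laurent polynomial ring, hence integrally closed. From there the paper simply quotes two external facts: that $\BT$ is Azumaya over its center (\cite{DKP93}) and that an Azumaya algebra with integrally closed center is a maximal order (\cite{Sa99}). You instead give a self-contained elementary argument: the finite $G=\BZ^I/\La_0$ grading, the nondegenerate bicharacter $\psi$, and character orthogonality force any sandwiched subring $B$ to be $G$-graded, after which integral closedness of $Z$ handles the degree-zero piece and multiplication by the units $X^{-\bk_g}$ handles the rest. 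Your approach buys independence from the Azumaya machinery and makes the role of the root-of-unity hypothesis (finiteness of $G$, invertibility of $|G|$ in $\BC$) completely transparent; the paper's approach is shorter but relies on heavier imported results.

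One small caveat: your closing remark that one could ``simply quote Proposition~\ref{r.ALamaximal} with $\La=\BZ^p$'' is circular in this paper, since the proof of Proposition~\ref{r.ALamaximal} itself invokes Proposition~\ref{r.qtorusmaximal} (to conclude $\BT\cong\cB\fM^{-1}$). The reference to \cite{DCP1993} is fine, and indeed is essentially what the paper does.
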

\begin{proof} Suppose $\ord(\xi)= N$. Let 
\[
\Gamma_N:= \{ \bk \in \BZ^p \mid \la \bk, \bn \ra_U \in N \BZ \ \forall \bn \in \BZ^p \}.
\]

From \eqref{e.normalizedtorus} it is easy to see that the center of $\BT$ is $\BA(\Gamma_N)$. Since $N\bk \in \Gamma_N$ for all $\bk\in \BZ^p$, we see that $\BT$ is finitely-generated over the center $\BA(\Gamma_N)$.

Since $\Gamma_N$ is isomorphic to a free abelian group, $\BA(\Gamma_N)$ is isomorphic to the ring of Laurent polynomial in several variables, and hence is integrally closed.

An $R$-algebra $A$ is an {\em Azumaya algebra} over $R$ if $A$ is free and finite rank as an $R$-module such that $A \otimes_R A^{\text{op}}$ is isomorphic to $\text{End}_R(A)$ via the map $a\otimes b \mapsto (x \mapsto axb)$. Here, $A^{\text{op}}$ is the opposite algebra of $A$.

 Proposition 2.3 of \cite{DKP93} states that $\BT$ is an Azumaya algebra over its center when $\xi$ is a root of unity. By Theorem 9.4 of 
 \cite{Sa99}, if $A$ is an Azumaya algebra and its center is integrally closed, then $A$ is a maximal order. 
\end{proof}

\subsection{Two-sided Ore domain}

 Both $\BT(U;R)$ and $\BT_+(U;R)$ are two-sided Noetherian domains, see \cite[Chapter 2]{GW2004}. As any two-sided Noetherian domain is a two-sided Ore domain (see \cite[Corollary 6.7]{GW2004}), both $\BT(U;R)$ and $\BT_+(U;R)$ are two-sided Ore domains.

Corollary \ref{r.sandwich} follows immediately from Proposition \ref{r.sandwich0}.
\begin{corollary} \lbl{r.sandwich}  Suppose  $\BT_+(U)\subset S\subset \BT(U)$, where $S$ is a subring of $\BT(U)$. Then $S$ is an Ore domain and the embedding $ S \hookrightarrow \BT(U)$ induces an algebra isomorphism from the division algebra of $S$ to that of $\BT(U)$.
\end{corollary}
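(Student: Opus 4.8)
The plan is to deduce this directly from Proposition \ref{r.sandwich0} by realizing $\BT(U)$ as a two-sided Ore localization of the positive quantum torus $\BT_+(U)$ inside the latter's division algebra.

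First I would recall from the preceding paragraph that $\BT_+(U)=\BT_+(U;R)$ is a two-sided Noetherian domain, hence a two-sided Ore domain; write $\widetilde{\BT}_+$ for its division algebra. Next I would pin down the relevant Ore set: let $\fU\subset \BT_+(U)$ be the multiplicative subset generated by the basis variables $x_i$, $i\in I$. The defining relations $x_ix_j=q^{U_{ij}}x_jx_i$ show that each $x_i$ is normal in $\BT_+(U)$ — concretely $x_iX^\bk = q^{\langle e_i,\bk\rangle_U}X^\bk x_i$ on monomials — so $\fU$ is a two-sided Ore set, and the Ore localization obtained by adjoining the $x_i^{-1}$ is exactly $\BT(U)$ (compare the explicit $R$-bases $\{X^\bk:\bk\in\BN^I\}$ and $\{X^\bk:\bk\in\BZ^I\}$ recorded in Subsection \ref{sec.aqt}). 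Since $\fU\subset \BT_+(U)\setminus\{0\}$, the hypothesis $\BT_+(U)\subset S\subset \BT(U)$ puts us in the situation
\[
\BT_+(U)\subset S\subset \BT(U)=\BT_+(U)\fU^{-1}\subset \widetilde{\BT}_+ .
\]

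Then I would apply Proposition \ref{r.sandwich0}(b) with $\BT_+(U)$ in the role of $S$, $\fU$ in the role of the Ore set, and our $S$ in the role of $S'$: since $\BT_+(U)$ is an Ore domain, this gives at once that $S$ is an Ore domain and that its division algebra is $\widetilde{\BT}_+$. Applying the same proposition once more, now with $\BT(U)$ in the role of $S'$, shows that the division algebra of $\BT(U)$ is also $\widetilde{\BT}_+$. Combining the two identifications, the inclusion $S\hookrightarrow\BT(U)$ extends to the identity isomorphism $\widetilde{\BT}_+\to\widetilde{\BT}_+$ of division algebras, which is the assertion.

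There is essentially no obstacle: the content is carried entirely by Proposition \ref{r.sandwich0}, and the only hand computation is the harmless identity $\BT_+(U)\fU^{-1}=\BT(U)$, which follows from normality of the generators $x_i$ together with the description of the $R$-bases of the two algebras. (If one wished to avoid even this, one could instead take the full Ore set $\fU'=\BT_+(U)\setminus\{0\}$, so that $\BT_+(U)(\fU')^{-1}=\widetilde{\BT}_+$ and the chain $\BT_+(U)\subset S\subset \BT(U)\subset\widetilde{\BT}_+$ holds once one checks $\BT(U)\subset\widetilde{\BT}_+$, and apply Proposition \ref{r.sandwich0} in that form.)
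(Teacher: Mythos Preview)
Your proposal is correct and follows essentially the same route as the paper, which simply states that the corollary ``follows immediately from Proposition \ref{r.sandwich0}.'' You have merely filled in the detail the paper leaves implicit, namely identifying $\BT(U)=\BT_+(U)\fU^{-1}$ for the Ore set $\fU$ generated by the $x_i$ so that Proposition \ref{r.sandwich0}(b) applies directly.
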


\subsection{Reflection anti-involution}\lbl{sec.reflection}

Suppose $U \in \Mat(I \times I, \BZ)$ is antisymmetric. The following is easy to prove, see \cite{Le2017}. 

\begin{proposition}\lbl{r.reflection}   Assume that there is a $\BZ$-algebra homomorphism $\eta: R \to R$ such that $\eta(q^{1/2})= q^{-1/2}$ and $\eta^2= \id$, the identity map.
There exists a unique $\BZ$-linear isomorphism $\heta: \BT(U) \to \BT(U)$ such that $\heta(r X^\bk )= \eta(r) X^\bk$ for all $r\in R$ and $\bk\in \BZ^I$, which is an anti-homomorphism, i.e. $\heta(ab) = \heta(b) \heta(a)$. In addition, $\heta^2=\id$.
\end{proposition}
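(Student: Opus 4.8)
The plan is to write the map down explicitly on the monomial basis and verify the claimed properties by direct computation, since the statement essentially prescribes $\heta$ on a spanning set. For uniqueness, recall from~\eqref{eq.grading} that $\{X^\bk \mid \bk \in \BZ^I\}$ is an $R$-basis of $\BT(U)$; thus the prescription $\heta(r X^\bk) = \eta(r) X^\bk$ pins down $\heta$ on a $\BZ$-spanning set of $\BT(U)$, so at most one $\BZ$-linear map can satisfy it. For existence, I would simply define $\heta$ by $\BZ$-linearly extending $r X^\bk \mapsto \eta(r) X^\bk$; this is well defined because $\BT(U) = \bigoplus_{\bk \in \BZ^I} R\cdot X^\bk$ and $\eta$ is additive.

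The real content is the anti-homomorphism property. By $\BZ$-bilinearity of the multiplication map and of $(a,b)\mapsto \heta(b)\heta(a)$, it suffices to check $\heta\bigl((rX^\bk)(sX^\bn)\bigr) = \heta(sX^\bn)\,\heta(rX^\bk)$ for $r,s\in R$ and $\bk,\bn\in\BZ^I$. Using that $R$ is central in $\BT(U)$, that $\eta$ is a ring homomorphism, and the first identity of~\eqref{e.normalizedtorus}, the left side equals $\eta\bigl(rs\, q^{\frac12\langle\bk,\bn\rangle_U}\bigr) X^{\bk+\bn} = \eta(r)\eta(s)\, q^{-\frac12\langle\bk,\bn\rangle_U} X^{\bk+\bn}$ (here $\eta(q^{1/2})=q^{-1/2}$ is used), while the right side equals $\eta(s)\eta(r)\, X^\bn X^\bk = \eta(r)\eta(s)\, q^{\frac12\langle\bn,\bk\rangle_U} X^{\bk+\bn}$. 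These agree because $R$ is commutative and the form $\langle\cdot,\cdot\rangle_U$ is antisymmetric, so $\langle\bn,\bk\rangle_U = -\langle\bk,\bn\rangle_U$; this is precisely where antisymmetry of $U$ is used. That one identity is essentially the whole proposition.

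Finally, for any $r\in R$ and $\bk\in\BZ^I$ one has $\heta^2(rX^\bk) = \heta(\eta(r)X^\bk) = \eta^2(r)X^\bk = rX^\bk$ since $\eta^2=\id$, so by $\BZ$-linearity $\heta^2 = \id$ on all of $\BT(U)$; in particular $\heta$ is its own two-sided inverse and hence a $\BZ$-linear isomorphism, which completes the proof. I do not anticipate any genuine obstacle: the only point requiring care is bookkeeping the sign of the exponent of $q^{1/2}$, and keeping track of the fact that the bilinear pairing is antisymmetric so the two $q$-prefactors coincide.
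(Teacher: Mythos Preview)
Your proof is correct and is exactly the standard argument: the paper itself does not give a proof, simply noting that the proposition is easy and referring to \cite{Le2017}, and your computation on the monomial basis using~\eqref{e.normalizedtorus} and antisymmetry of $\langle\cdot,\cdot\rangle_U$ is the intended verification.
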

We call $\heta$ the {\em reflection anti-involution}.

An element $z \in \BT(U)$ is {\em reflection invariant} if $\heta(z)=z$. From the definition, we see that $\heta(X^\bk)=X^\bk$ for all $\bk \in \BZ^I$.

\no{
\red{The below is currently unused}. Assume that $q \in R$ is an indeterminate.
\begin{lemma}\lbl{r.qtorusreflection}
Suppose $z\in \BT(U)$ is reflection-invariant and
\be\lbl{e.qtorusreflection}
z = \sum_{j=1}^m q^{r_j} X^{\bk_j},
\ee
where $r_n \in \BQ$ and $\bk_j$ are pairwise distinct. Then all $r_j=0$, i.e. $z = \sum_{j=1}^m X^{\bk_j}$.
\end{lemma}
\begin{proof}
Applying $\heta$ to~\eqref{e.qtorusreflection}, we have $z = \sum_{j=1}^m q^{-r_j}X^{\bk_j}$. Since the $\bk_j$ are pairwise distinct, the presentation of $z$ as an $R$-linear combination of $X^{\bk_j}$ is unique. Hence we must have $q^{r_j}=q^{-r_j}$, or $r_j=0$.
\end{proof}
}

\subsection{Frobenius homomorphism}

\begin{proposition} \lbl{r.Frobenius}
Suppose $U \in \Mat(I \times I, \BZ)$ is an antisymmetric matrix. There is a unique algebra homomorphism, called the {\em Frobenius homomorphism},
\[
F_n: \BT(p^2U) \to \BT(U)
\]
such that $F_p(x_i) = x_i^p$. Furthermore, $F_p$ is injective.
\end{proposition}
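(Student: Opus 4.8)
The plan is to build $F_p$ from the defining presentation of $\BT(p^2U)$ and then read off injectivity from the action of $F_p$ on the normalized monomial basis. To keep things unambiguous I would write $y_i^{\pm 1}$, $i\in I$, for the generators of $\BT(p^2U)$, so that $y_iy_j = q^{p^2U_{ij}}y_jy_i$, reserving $x_i^{\pm 1}$ for the generators of $\BT(U)$; the asserted map is $y_i\mapsto x_i^p$.

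First I would check that $y_i\mapsto x_i^p$ respects the relations. In $\BT(U)$ the element $x_i^p$ is a unit, with inverse $x_i^{-p}$, and commuting the $p$ copies of $x_i$ past the $p$ copies of $x_j$ contributes the factor $q^{U_{ij}}$ exactly $p^2$ times, so $x_i^p x_j^p = q^{p^2U_{ij}} x_j^p x_i^p$. These are precisely the defining relations of $\BT(p^2U)$ under $y_i\mapsto x_i^p$, so the universal property of an algebra presented by generators and relations produces an algebra homomorphism $F_p\colon \BT(p^2U)\to\BT(U)$ with $F_p(y_i)=x_i^p$ (and necessarily $F_p(y_i^{-1})=x_i^{-p}$). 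Uniqueness of any homomorphism with $F_p(y_i)=x_i^p$ is automatic, since $\{y_i^{\pm1}\}$ generates $\BT(p^2U)$ as an algebra and the image of $y_i^{-1}$ is forced.

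Next I would compute $F_p$ on the Weyl-normalized monomials. From $y_i^{\bk(i)}y_j^{\bk(j)} = q^{p^2U_{ij}\bk(i)\bk(j)} y_j^{\bk(j)}y_i^{\bk(i)}$ one gets $Y^\bk = [\prod_i y_i^{\bk(i)}] = q^{-\frac12\sum_{i<j}p^2U_{ij}\bk(i)\bk(j)}\prod_i y_i^{\bk(i)}$, and since $F_p$ is multiplicative, $F_p(Y^\bk) = q^{-\frac12\sum_{i<j}p^2U_{ij}\bk(i)\bk(j)}\prod_i x_i^{p\bk(i)}$. Because $x_i^{p\bk(i)}x_j^{p\bk(j)} = q^{p^2U_{ij}\bk(i)\bk(j)} x_j^{p\bk(j)}x_i^{p\bk(i)}$ as well, the identical normalization identity gives $X^{p\bk} = q^{-\frac12\sum_{i<j}p^2U_{ij}\bk(i)\bk(j)}\prod_i x_i^{p\bk(i)}$, hence $F_p(Y^\bk)=X^{p\bk}$. (Alternatively, using \eqref{e.normalizedtorus} and \eqref{eq.power} one checks directly that the multiplicative map $y_i\mapsto x_i^p$ sends $X^\bk$ to $X^{p\bk}$, the $q$-exponents matching because $\langle p\bk,p\bn\rangle_U = p^2\langle\bk,\bn\rangle_U = \langle\bk,\bn\rangle_{p^2U}$.)

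Injectivity then falls out: $\{Y^\bk : \bk\in\BZ^I\}$ is an $R$-basis of $\BT(p^2U)$ and $\{X^\bn : \bn\in\BZ^I\}$ an $R$-basis of $\BT(U)$, and $F_p$ carries $Y^\bk$ to $X^{p\bk}$, where $\bk\mapsto p\bk$ is an injection of $\BZ^I$ into itself; so $F_p\bigl(\sum_\bk c_\bk Y^\bk\bigr) = \sum_\bk c_\bk X^{p\bk} = 0$ forces every $c_\bk=0$. I do not expect a genuine obstacle here; the only point needing care is the bookkeeping of $q$-exponents in the Weyl normalization, namely verifying that the prefactor attached to $Y^\bk$ coincides with the one attached to $X^{p\bk}$, so that $F_p(Y^\bk)$ equals $X^{p\bk}$ on the nose rather than merely up to a power of $q$ (which would still give injectivity, but the clean formula $F_p(Y^\bk)=X^{p\bk}$ is worth recording).
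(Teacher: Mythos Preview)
Your proposal is correct and follows essentially the same approach as the paper: verify the defining relations are preserved to get existence, deduce uniqueness from the fact that the generators weakly generate, and then observe $F_p(Y^\bk)=X^{p\bk}$ so that $F_p$ sends an $R$-basis injectively into an $R$-basis. The paper is simply terser, invoking \eqref{eq.power} for the identity $F_p(X^\bk)=X^{p\bk}$ rather than expanding the Weyl normalization.
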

\begin{proof} Since the $x_i$ weakly generate $\BT(p^2U)$, the uniqueness is clear. If we define $F_p$ on the generators $x_i$ by $F_p(x_i)= x_i^p$, it is easy to check that the defining relations are respected by $F_p$. Hence $F_p$ gives a well-defined algebra map. 

By \eqref{eq.power} one has $F_p(X^\bk)= X^{p \bk}$. This shows $F_p$ maps the $R$-basis 
$\{ X^\bk \mid \bk \in \BZ^I\}$ of $\BT(p^2U)$ injectively onto a subset of an $R$-basis of $\BT(U)$. Hence $F_p$ is injective.
\end{proof}

\subsection{Maximal order}

Suppose $\Lambda$ is a submonoid of $\BZ^p$ which generates the subgroup $\uLa$.
We say that $\La$ is {\em primitive in $\uLa$} if, given $c\bk \in \La$ for some positive integer $c$ and $\bk \in \uLa$, then $\bk \in \La$ as well.

The following is a main technical lemma.

\begin{proposition}\lbl{r.ALamaximal}
Suppose $U$ is a $p \times p$ anti-symmetric matrix with integer entries and $\xi$ is a primitive root of unity of order $N$. Let $\La \subset \BZ^p$ be a submonoid such that (i) $\Cone(\La)$ is closed, and (ii) $\La$ is primitive in its group completion $\uLa$. Then $\BA_q(U,\La)$ is a maximal order.
\end{proposition}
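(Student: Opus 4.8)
The plan is to reduce the statement about the monomial subalgebra $\BA := \BA_q(U,\La)$ to the already-established case of quantum tori (Proposition~\ref{r.qtorusmaximal}) together with the structure of its center. First I would set up notation: let $\uLa$ be the group completion of $\La$, which is free abelian of some rank $\le p$; since $\Cone(\La)$ is closed, $\La$ is finitely generated, so $\BA$ is a finitely generated $\BC$-algebra. Let $N = \ord(\xi)$ and let $\Gamma_N \subset \uLa$ be the sublattice of $\bk \in \uLa$ with $\la \bk, \bn\ra_U \in N\BZ$ for all $\bn \in \uLa$; by the same computation as in Proposition~\ref{r.qtorusmaximal} the center $Z$ of $\BA$ is the span of $\{X^{\bk} : \bk \in \La \cap \Gamma_N\}$. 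The key preliminary point I would establish is that $\BA$ is module-finite over $Z$: since $N\bk \in \Gamma_N$ for every $\bk \in \uLa$, and $\La$ is finitely generated with generators $\bk_1,\dots,\bk_m$, every $X^{\bk}$ with $\bk \in \La$ is (up to a unit of $\BC$) a product of a monomial from $Z$ and one of the finitely many monomials $X^{\sum \epsilon_i \bk_i}$ with $\epsilon_i \in \{0,\dots,N-1\}$ that actually lie in $\La$ (here primitivity of $\La$ in $\uLa$ is what guarantees that if $N\bk \in \La$ for $\bk \in \uLa$ then $\bk \in \La$, so the argument stays inside $\La$). Hence $\BA$ is a domain, module-finite over $Z$, and Proposition~\ref{r.divisionalgebras} applies, giving $\tA := \BA \otimes_Z \tZ$ a division algebra.

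Next I would compute $Z$ more precisely and show it is integrally closed. The semigroup $\La \cap \Gamma_N$ is itself a finitely generated submonoid of the lattice $\uLa \cap \Gamma_N$, and I claim it is primitive in the latter: if $c\bk \in \La \cap \Gamma_N$ for a positive integer $c$ and $\bk \in \uLa \cap \Gamma_N$, then primitivity of $\La$ in $\uLa$ forces $\bk \in \La$, hence $\bk \in \La \cap \Gamma_N$. Moreover $\la\cdot,\cdot\ra_U$ restricted to $\Gamma_N$ is $N$ times an integer-valued form, so on $\Gamma_N$ the monomials $X^{\bk}$ genuinely commute (after absorbing the $q^{1/2}$-normalization, using that $q^{N} = 1$ kills the skew part), i.e. $Z \cong \BC[\La \cap \Gamma_N]$, the ordinary (commutative) semigroup algebra. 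A commutative affine semigroup algebra $\BC[S]$ is integrally closed (normal) precisely when $S$ is saturated in its group completion, which is exactly the primitivity condition I just verified. So $Z$ is a normal Noetherian domain.

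For the maximal-order conclusion itself, I would run the strategy of Proposition~\ref{r.qtorusmaximal}: exhibit $\BA$ as an Azumaya algebra over a suitable localization, or more directly, compare $\BA$ with the full quantum torus $\BT := \BT(U)$ (when $\La$ has full rank $p$) or with the quantum torus on the sublattice $\uLa$ (in general). Concretely, let $\BT' := \BA(\uLa)$, which by the remark in Subsection~\ref{sec.monoidsubalgebra} is isomorphic to a quantum torus and hence, by Proposition~\ref{r.qtorusmaximal}, is a maximal order with the same division algebra $\tA$ as $\BA$ (they share the fraction field of the center once one inverts enough central monomials, since every generator of $\uLa$ has an $N$th power landing in $\La \cap \Gamma_N$). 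Now suppose $B$ is a ring with $\BA \subset B \subset (z^{-1})\BA$ for some nonzero $z \in Z$. Then $B \subset \tA = \widetilde{\BT'}$, and I would argue that $B$ localizes to a ring sandwiched between $\BT'$ and $(z^{-1})\BT'$: namely $\BT' = \BA[\{X^{\bk}\}^{-1}]$ for the finitely many central-unit monomials needed, and applying the same localization to $B$ gives $\BT' \subset B[\dots^{-1}] \subset (z^{-1})\BT'$, forcing $B[\dots^{-1}] = \BT'$ by maximality of $\BT'$. It then remains to descend: an element $b \in B$, now known to lie in $\BT'$, is supported on finitely many $X^{\bk}$, $\bk \in \uLa$; multiplying $b$ by a high power of a central monomial whose exponent has strictly positive pairing in every needed direction pushes its support into $\La$, i.e. into $\BA$, and since that central monomial is invertible in $B$ already (being a product of elements of $\BA \subset B$ after clearing) and a non-zero-divisor, one recovers $b \in \BA$. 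Hence $B = \BA$.

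I expect the main obstacle to be the descent step — making rigorous that an element of $\BT'$ lying in the overring $B$ can be multiplied into $\BA$ by a central \emph{unit of $B$}, which requires carefully choosing a central monomial $X^{\bk_0}$ with $\bk_0 \in \La \cap \Gamma_N$ whose support-shifting moves every monomial appearing in $b$ back into $\Cone(\La) \cap \uLa = \La$ (here closedness of $\Cone(\La)$ and primitivity together are exactly what make "$\bk + n\bk_0 \in \La$ for $n \gg 0$" true for the relevant $\bk \in \uLa$), and verifying $X^{\bk_0}$ is already invertible in $B$ rather than merely in the localization. A clean alternative, which I would present if the direct argument gets unwieldy, is to invoke the same two ingredients as in Proposition~\ref{r.qtorusmaximal}: show $\BA$ is Azumaya over the localization $Z[X^{-N\bk_1},\dots,X^{-N\bk_m}]$ (it becomes a twisted matrix algebra there, exactly as $\BT$ does over its center), note this localized center is still normal, apply Theorem~9.4 of~\cite{Sa99} to get that the localization of $\BA$ is a maximal order, and then use that $\BA$ is the intersection of this maximal order with $Z$ inside $\tA$ — an intersection of a maximal order with an integrally closed central subring is again a maximal order — closing the argument.
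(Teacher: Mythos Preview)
Your setup is sound and matches the paper's: reduce to $\uLa$ (a quantum torus), show $\cB\fM^{-1}=\BT'$ by maximality of $\BT'$, hence any $b\in\cB$ already lies in $\BT'$. The gap is exactly where you flag it --- the descent from $b\in\BT'\cap\cB$ to $b\in\BA$ --- and neither of your proposed fixes closes it. In your first route you want a central monomial $X^{n\bk_0}$ with $X^{n\bk_0}b\in\BA$ and $X^{n\bk_0}$ invertible in $\cB$; but $\cB\subset(z^{-1})\BA$, and monomials of $\BA$ are generically not units there, so you cannot cancel. Your second route (``intersection of a maximal order with an integrally closed central subring is again a maximal order'') is not a standard statement, and no obvious interpretation of it recovers $\BA$ from $\BT'$ and $Z$: the subset $\{x\in\BT':\supp(x)\subset\La\}$ is not describable as such an intersection without already knowing the answer.

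The paper closes the gap with a completely different idea that you are missing: exploit that $\cB$ is a \emph{ring}, so $b^m\in\cB$ and hence $b^mz\in\BA$ for \emph{every} $m\ge1$. One proves a noncommutative Ostrowski lemma, namely $\Newt(aa')=\Newt(a)+\Newt(a')$ (Minkowski sum) in $\BT'$, so $\Newt(b^m z)=m\,\Newt(b)+\Newt(z)$. Primitivity of $\La$ in $\uLa$ is used to show that if $\supp(b)\not\subset\La$ then some \emph{vertex} $\bv$ of $\Newt(b)$ lies outside $\Cone(\La)$; since $\Cone(\La)$ is closed, $d(m\bv,\Cone(\La))\to\infty$, so for large $m$ the translated polytope $m\,\Newt(b)+\Newt(z)$ escapes $\Cone(\La)$, contradicting $b^mz\in\BA$. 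The two ingredients your descent lacks are (i) passing to powers $b^m$ with the fixed central $z$, and (ii) the Newton-polygon/Minkowski-sum control on supports. (Incidentally, your parenthetical invoking primitivity for the module-finiteness of $\BA$ over $Z$ is unnecessary: writing $\bk=\sum a_i\bk_i$ with $a_i\ge0$ and reducing $a_i$ mod $N$ stays inside $\La$ automatically.)
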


\begin{proof}
Since $\BA(\uLa)$ is a quantum torus, without loss of generality we can assume that $\uLa=\BZ^p$. Thus $\BA(\uLa)= \BT_\xi(U)$ which will be abbreviated as $\BT$.

As $\{ X^\bk \mid \bk\in \BZ^p\}$ is a $\BC$-basis of $\BT$, every non-zero $a\in \BT$ has a unique presentation
\be
a =\sum_{\bk \in \supp(a)} c_\bk X^\bk, \quad 0 \neq c_\bk \in \BC,
\ee
where $\supp(a) \subset \BZ^p$ is a finite non-empty set. The Newton polygon $\Newt(a) $ of $a$ is defined to be the convex hull of $\supp(a)$ in $\BR^{p} = \BZ^p \otimes \BR$, that is,

\[
\Newt(a) = \left\{ \sum_{\bk \in \supp(a)} \al_\bk \bk : \al_\bk \in \BR_{\geq 0} \text{ for all } \bk \in \supp(a), \ \sum_{\bk \in \supp(a)} \al_\bk = 1 \right\}.
\]

We recall that given $C,C' \subset \BR^p$, the {\em Minkowski sum} $C+C'$ is
\[
C+C' = \{c+c': c \in C, c' \in C'\} \subset \BR^p.
\]

We have the following analog of the Ostrowski lemma \cite{Os21}.
\begin{lemma}\lbl{r.ostrov}
Let $a,a' \in \BT$ be non-zero. Then $\Newt(aa')$ equals the Minkowski sum $\Newt(a) + \Newt(a')$. In particular, if $\bv$ is a vertex of $\Newt(a)$, then $k\bv \in \supp(a^k)$ for all positive integers $k$.
\end{lemma}

The proof is almost the same as the one presented in commutative case, Proposition 19.4 of \cite{Gr07}.

\begin{proof}
Let $C \subset \BR^p$ be compact and convex. A hyperplane $H \subset \BR^p$ is a {\em support hyperplane} of $C$ if $H \cap \partial C \neq \emptyset$ (where $\partial C$ is the boundary of $C$) and $C$ is contained in one of the two closed halfspaces determined by $H$. We write $H^-$ for the halfspace containing $C$, and $H^+$ for the other one. Given $\bw \in H \cap \partial C$, we represent $H$ in the form $H = \{\by \in \BR^p: \bu \cdot \by = \bu \cdot \bw\}$, where $\bu \in \BR^p$ is a unit length vector pointing into $H^+$ called the {\em exterior normal vector} and $\cdot$ is the standard Euclidean dot product. By Theorem 4.1 of \cite{Gr07}, for each unit length vector $\bu \in \BR^p$ there is a unique support hyperplane of $C$ with exterior normal vector $\bu$ and so we may use the notation $H_C(\bu)$ to represent that hyperplane unambiguously. Given a support hyperplane $H_C(\bu)$ of $C$, the intersection $H_C(\bu) \cap C$ is called a {\em support set} of $C$
. By convexity, it is easy to show that vertices of $C$ live in singleton support sets of $C$, and conversely that every singleton support set of $C$ consists of a vertex of $C$.

By~\eqref{eq.grading}, we may write $a,a'$ uniquely as
\[
a = \sum_{\bk \in \supp(a)} c_\bk X^\bk \ \ \ \ a' = \sum_{\bk' \in \supp(a')} c_{\bk '}X^{\bk '}, \ \ c_\bk,c_{\bk '} \in \BC.
\]
Then with~\eqref{e.normalizedtorus} we have
\be\lbl{e.newt}
aa' = \sum_{(\bk,\bk') \in (\supp(a),\supp(a'))} c_\bk c_{\bk '}X^\bk X^{\bk '} = \sum_{(\bk,\bk')} c_\bk c_{\bk '}\xi^{\frac{1}{2}\la \bk, \bk' \ra_U} X^{\bk + \bk'}.
\ee
Write $C:=\Newt(a)$, $C':=\Newt(a')$, $D:=C+C'$, and $E:=\Newt(aa')$. Since each monomial of $aa'$ is a sum of products of a monomial of $a$ and a monomial of $a'$, $E \subset D$ follows immediately. To show that $D \subset E$, it suffices to show that each vertex $\bv$ of $D$ is contained in $E$.

Let $\bu \in \BR^p$ be a unit length vector. Then Lemma 6.1 of \cite{Gr07} states that
\be\lbl{e.supportset}
D \cap H_D(\bu) = C \cap H_C(\bu) + C' \cap H_{C'}(\bu).
\ee

Let $\bv$ be a vertex of $D$, and $\bu \in \BR^p$ such that $D \cap H_D(\bu)=\{\bv\}$. Then since $\{\bv\}$ is a singleton, by~\eqref{e.supportset} so too must be $C \cap H_C(\bu)=:\{\bx\}$ and $C' \cap H_{C'}(\bu)=:\{\by\}$. Then $\bv = \bx+\by$ and this is the only possible representation of $\bv$ as a sum of a point of $C$ and a point of $C'$. Correspondingly, we have that the product $c_\bx c_\by \xi^{\frac{1}{2} \la \bx,\by \ra_U}X^{\bx+\by}$ that appears in the right hand side of~\eqref{e.newt} will not be canceled by any other term since no other term will have the factor $X^{\bx+\by}$. Thus $D \subset E$.

\end{proof}

Now we continue with the proof of Proposition \ref{r.ALamaximal}.

We will write $\cA:= \BA_\xi(U,\La)$, $\cZ$ for the center of $\cA$, and $\tcZ$ for the field of fractions of $\cZ$. Then $\tBT =\cA \otimes_\cZ \tcZ$ is the division algebra of $\BT$.

Suppose there is a subring $\cB$ of $\tcA$ and an element $z \in \cZ$ such that $\cA \subset \cB \subset (z^{-1})\cA$. We recall that $(z^{-1})\cA$ is merely a set, not a localization of $\cA$, see ~\eqref{e.notlocal}.

Assume there exists some $b \in \cB \setminus \cA$. We claim that $b \in \BT $. Let $\fM \subset \cA$ be the multiplicative set generated by $\{X^{\bk}, \bk \in \La\}$. Then $\fM$ is a two-sided Ore subset of $\cA$ and $\cB$ by $\xi$-commutativity, and so we have that
\[
\cA\fM^{-1} \subset \cB \fM^{-1} \subset (z^{-1}) \cA \fM^{-1}.
\]
Clearly $\cA\fM^{-1} \cong \BT $. It is easy to show that $z \in Z(\BT)$. By Proposition \ref{r.qtorusmaximal}, $\BT$ is a maximal order, and so we have that $\BT  \cong \cB\fM^{-1}$. Thus $b\in \cB \subset \cB\fM^{-1} =\BT$. This completes the proof of the claim.

Since $b$ is an algebra, $b^m \in \cB$ for $m$ a positive integer, and
\be\lbl{e.btzinA}
b^mz \in \cA \ \ \ \ \text{for all positive integers m.}
\ee

Now since $b \neq 0$, the support $\supp(b)$ is defined. We have that $b \notin \cA$ by assumption, and so $\supp(b) \not\subset \La$.

\no{
Let $\Cone(\La)$ be the $\BR_{\ge 0}$-span of $\Lambda$, i.e. 
\[
\Cone(\La) = \left\{ \sum_{i=1}^n c_i \bk_i: c_i \in \BR_{\geq 0}, \bk_i \in \La \right\}.
\]
}

We will use the fact that $\La$ is primitive in the following.

\begin{lemma}\lbl{r.primcone}  There exists some vertex $\bv \in \Newt(b)$ such that $\bv \notin \Cone(\La)$.
\end{lemma}
\begin{proof} Assume the contrary that all vertices $\bv_1, \dots, \bv_r$ of $\Newt(b)$ are in $\Cone(\Lambda)$.
As $\supp(b) \not \subset \Lambda$ there is a point $\bv\in \supp(b) \setminus \Lambda$. The point $\bv$, being in the convex hull of the vertices of $\Newt(b)$, can be presented as a convex linear combination of the vertices. As $\bv$ and $\bv_i$ are integer points, the coefficients of the linear combination can be chosen to be rational.
Thus there are non-negative integers $c_1,\ldots,c_r$  with positive sum $\sum c_i$ such that
$\bv = (\sum c_i \bv_i)/(\sum c_i)$. Then
\be\lbl{e.primlat}
\left( \sum_{i=1}^r c_i \right) \bv = \sum_{i=1}^r c_i \bv_i.
\ee
Since the right hand side of~\eqref{e.primlat} is in $\La$, the primitiveness implies that $\bv\in \Lambda$, a contradiction.
\end{proof}

Given $\bv \in \BR^p$, define the distance function
\[
d(\bv,\Cone(\La)) = \inf_{\by \in \Cone(\La)} \{\|\by-\bv\|\},
\]
where $\| \cdot \|$ is the Euclidean norm. We have that $d(k\bv,\Cone(\La)) = k\, d(\bv,\Cone(\La))$. Since $\Cone(\La)$ is closed and $\bv \notin \Cone(\La)$, $\lim_{k \to \infty} d(k \bv,\Cone(\La)) = \infty$.

Thus there exists some large $k_1$ such that $k_1\bv + \Newt(z)$ does not intersect $\Cone(\La)$. Hence $\Newt(b^{k_1}z)=k_1\Newt(b)+\Newt(z)$ (by Lemma \ref{r.ostrov}) cannot be a subset of $\Cone(\La)$. Since $b^{k_1}z \in \cA$ if and only if $\Newt(b^{k_1}z) \subset \Cone(\La)$, this means that $b^{k_1}z \notin \cA$, a contradiction.

Thus $\cA=\cB$, concluding the proof of the proposition.
\end{proof}

\chapter{Kauffman bracket skein modules}

In this chapter we introduce our primary objects of study: the Kauffman bracket skein modules of marked 3-manifolds and Kauffman bracket skein algebras of surfaces. Our goal here is to give a focused picture of the current state of the art of the structure of these modules as they relate to our primary results and is by no means exhaustive.

We do this by highlighting several aspects.

The first is the relationship between skein algebras of marked surfaces to skein modules of marked 3-manifolds in Sections \ref{s.def00} and \ref{s.skeinsurface}. Namely, we will construct skein algebras of surfaces by taking a cylinder over the surface to obtain a marked 3-manifold, define the skein module for the marked 3-manifold, and then adding in the algebra structure by hand (though we note that this algebra structure also arises naturally by less artificial means). The resulting algebra is isomorphic to what would be obtained by working directly with multicurves drawn on the surface, and we show that our approach is equivalent to this, but has the advantage of allowing us to use the same formalism for both surfaces and 3-manifolds. In these sections we also show that the skein algebra of a surface is a domain, introduce the important technical tool of the reflection anti-involution, and prove the existence of the division algebra of the skein algebra.

\no{
In Section \ref{sec.quantizationcharacter} we review the construction of the skein module as a deformation quantization of the $SL_2\BC$-character variety along the Goldman-Weil-Petersson Poisson form. Among the wisdom revealed by this construction is the fact that the transformation from multicurves embedded in a surface to framed tangles embedded in a thickened surface is precisely the outcome of this quantization procedure - indeed, one can think of the distance between the bottom and top strand of a crossing as being proportional to $e^\hbar$. 
}

Understanding the skein module as a quantization of the $SL_2\BC$-character variety helps in illuminating the special role played by the skein module at 4th roots of unity, discussed in Section \ref{s.rootsof1}, which serve to describe characteristics of the $SL_2\BC$-character variety and also serve as the domain for the Chebyshev-Frobenius homomorphism. The enormous advantage of the quantum point of view on the character variety will ultimately be illustrated by a compact proof of the fact that character variety is normal, a consequence of the results of Chapter \ref{c.maxorder}.

\no{

The tractability of the representation theory of skein algebras of surfaces is then illustrated with the unicity theorem, which in short shows that the set of characters corresponding to a single irreducible representation forms a Zariski open dense subset of the character variety.

Next we introduce the miraculous quantum trace map of Bonahon and Wong, which embeds the difficult to understand skein algebra of a punctured marked surface into a Chekhov-Fock algebra, which is a type of easy-to-understand quantum torus corresponding to quantization of the shear coordinates of Teichm\"uller space.

Side-by-side with the quantum trace map is Muller's skein coordinate map, which is another embedding of the skein algebra of a marked surface into a quantum torus corresponding to a quantization of the Penner coordinates of Teichm\"uller space. This embedding turns out to have transfer isomorphisms between quantum tori corresponding to different quasitriangulations that are easier to work with that the ones for the Chekhov-Fock algebra.

Finally we look at the shear-to-skein map of L\^e, giving the direct connection between Chekhov-Fock algebra and Muller algebra associated to a triangulable surface as well as giving a simplified construction for the quantum trace map.

}

Throughout the chapter we fix a commutative Noetherian domain $R$ with a distinguished invertible element $q^{1/2}$ and a $\BZ$-algebra involution $\eta:R\to R$ such that $\eta(q^{1/2})= q^{-1/2}$.

\section{Skein modules of marked 3-manifolds}\lbl{s.def00}

In this section we define the Kauffman bracket skein module of marked 3-manifolds, closely following \cite{Le2017}. Throughout this section we fix a marked 3-manifold $\MN$.

Recall that $\cT\MN$ is the $R$-module freely spanned by the $\cN$-isotopy classes of $\cN$-tangles in $M$. The {\em Kauffman bracket skein module}  $\cS\MN$ is the quotient $\cS\MN= \cT\MN/\Rel_q$ where $\Rel_q$ is the $R$-submodule of $\cT\MN$ spanned by the skein relation elements, the trivial loop relation elements, and the trivial arc relation elements, where
\begin{enumerate}[(i)]
 \item  $R$ is identified with the  $R$-submodule of $\cT\MN$ spanned by the empty tangle, via $c \to c\cdot \emptyset$,
 \item  a {\em skein relation element} is any element of the form  $T  -q T_+ - q^{-1} T_-$, where $T ,T_+,T_-$  are $\cN$-tangles  identical everywhere except in a ball in which they look like Figure \ref{fig:figures/skein1},

 \FIGc{figures/skein1}{From left to right:  $T, T_+, T_-$.}{1.2cm}
\item a {\em trivial loop relation element} is any element of the form $\beta + q^2 + q^{-2}$, where $\beta$ is a trivial knot, i.e. a loop bounding a disk in $M$ with framing perpendicular to the disk.

\item   a {\em trivial arc relation element} is any $\cN$-tangle $T$ containing an $\cN$-arc $a$ for which there exists an arc $b \subset \cN$ such that $a \cup b$ bounds a continuous disk $D$ in $M$ such that $D \cap (T \setminus a) = \emptyset$. This situation is depicted in Figure \ref{fig:figures/trivialarc}.

\FIGc{figures/trivialarc}{The gray shaded area is a region of $\partial M$, the striped region is a continuous disk with boundary $a \cup b$, and $b \subset \cN$.}{1.5cm}
\end{enumerate}

\no{\red{These relation elements are also depicted in Figure \ref{fig:figures/skein} in the Introduction.}}

The following was first shown in \cite[Proposition 3.1]{Le2017}. We reproduce the proof here for the convenience of the reader.
\begin{proposition}\lbl{r.reorderingrel}
In $\cS\MN$ the {\em reordering relation} depicted in Figure \ref{fig:figures/boundary} holds.
\end{proposition}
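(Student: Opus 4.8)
The plan is to reduce the reordering relation to a local statement and resolve a single crossing. The two $\cN$-tangles related in Figure~\ref{fig:figures/boundary} coincide outside a small ball $B$ which meets $\partial M$ in a disk and meets $\cN$ in a single subarc, so it suffices to verify the identity inside $B$, where we see two strands running down to two nearby points of one component of $\cN$. Sliding one of these endpoints past the other along $\cN$ (permissible, since endpoints may move within $\cN$ during an $\cN$-isotopy) realizes one side of Figure~\ref{fig:figures/boundary} as an $\cN$-tangle $T_\times$ carrying exactly one crossing inside $B$.

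Next I would apply the Kauffman bracket skein relation $T_\times = q\,(T_\times)_+ + q^{-1}(T_\times)_-$ at that crossing. One of the two smoothings reconnects each strand to an endpoint on $\cN$ while interchanging their order along $\cN$, and this is precisely the ``reordered'' configuration on the other side of Figure~\ref{fig:figures/boundary}. The other smoothing joins the two $\cN$-endpoints to each other by a short arc $a\subset B$; together with the subarc $b\subset\cN$ lying between those endpoints, $a$ bounds a disk $D\subset B$, and since $B$ is small and disjoint from the rest of the tangle we have $D\cap(T'\setminus a)=\emptyset$, where $T'$ is this smoothed tangle. Hence $T'$ contains a trivial arc and is $0$ in $\cS\MN$. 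Therefore $T_\times$ equals $q^{\pm 1}$ times the reordered tangle, which is exactly the reordering relation; the sign of the exponent is fixed by the orientation of $\cN$ and the over/under convention in Figure~\ref{fig:figures/boundary}.

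There is little difficulty here beyond bookkeeping, but the points that do need care are: that the surviving smoothing is genuinely the reordered configuration and not, say, a turnback on the far side of $B$; that the disk $D$ cut off by the vanishing smoothing really is disjoint from $T'\setminus a$, which is exactly where the localization to the small ball $B$ is used; and that the power of $q$ coming out of the single application of the skein relation matches the one drawn in Figure~\ref{fig:figures/boundary}. It is also worth noting explicitly that the two endpoints involved lie on the \emph{same} component of $\cN$ — automatic, since both sit inside the tiny arc $B\cap\cN$ — so that $a\cup b$ is indeed a circle bounding the disk $D$ and the trivial arc relation genuinely applies.
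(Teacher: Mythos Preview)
Your argument is correct and follows essentially the same route as the paper's proof: an $\cN$-isotopy to produce a single crossing, one application of the skein relation, and the observation that one of the two smoothings dies by the trivial arc relation while the other is the reordered tangle. The paper presents exactly these three steps pictorially, with considerably less commentary on the bookkeeping you (rightly) flag.
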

\FIGc{figures/boundary}{Reordering relation: Here $\cN$ is perpendicular to the page and its perpendicular projection onto the page (and in the shaded disk) is the bullet denoted by $\cN$. The vector of orientation of $\cN$ is pointing to the reader. There are two strands of the tangle coming to $\cN$ near $N$, with the lower one being depicted by the broken line}{2cm}

\begin{proof}
The proof is given in Figure \ref{fig:figures/reordering}. Here the first identity is an isotopy, the second is the skein relation, the third follows from the trivial arc relation.
\FIGc{figures/reordering}{Proof of Proposition \ref{r.reorderingrel}}{1.8cm}
\end{proof}

\begin{remark} Muller \cite{Mu2012} introduced Kauffman bracket skein modules for marked surfaces. Here we use a generalization of Muller's construction to marked 3-manifolds, introduced in \cite{Le2017}. \end{remark}

\subsection{Functoriality}\lbl{sec.func}

By a {\em morphism} $f: \MN \to (M',\cN')$ between marked 3-manifolds we mean an orientation-preserving embedding $f: M \embed M'$ such that $f$ restricts to an orientation preserving embedding on $\cN$. Such a morphism induces an $R$-module homomorphism $f_*: \cS\MN \to \cS(M',\cN')$ by $f_*(T)=T$ for any $\cN$-tangle $T$.

Given marked 3-manifolds $(M_i, \cN_i)$, $i=1,\dots, k$, such that $M_i\subset M, \cN_i \subset \cN$, and the $M_i$ are pairwise disjoint, then there is a unique $R$-linear map, called the {\em union map}
\[
\text{Union}:\prod_{i=1}^k \cS(M_i, \cN_i) \to \cS(M,\cN),
\]
such that if $T_i$ is an $\cN_i$-tangle in $M_i$ for each $i$, then
\be\lbl{e.unionmap}
\text{Union} (T_1,\dots, T_k) = T_1 \cup \dots \cup T_k.
\ee
For $x_i\in \cS(M_i,\cN_i)$ we also denote $\text{Union} (x_1,\dots, x_k)$ by $x_1 \cup \dots \cup x_k$.

\section{Skein algebras of surfaces}\lbl{s.skeinsurface}

In this section we will define skein algebras of finite type and marked surfaces by describing them as a skein module of a thickened surface with an algebra structure coming from stacking in the vertical direction. Throughout this section we fix a marked surface $\SP$ and a finite type surface $\fS$.

\subsection{Skein modules of marked surfaces}

Let $M$ be the cylinder over $\Sigma$ and $\cN$ the cylinder over $\cP$ as described in Subsection \ref{sec.cylinders}, i.e. $M=\Sigma \times (-1,1)$ and $\cN=\cP \times (-1,1)$. We consider $\MN$ as a marked 3-manifold, where the orientation on each component of $\cN$ is given by the natural orientation of $(-1,1)$. We will consider $\Sigma$ as a subset of $M$ by identifying $\Sigma$ with $\Sigma \times \{0\}$. There is a vertical projection map $\pr:M \to \Sigma$ which maps $(x,t)$ to $x$. We define $\cS\SP:= \cS\MN$.

\no{We define $\cS\SP:= \cS\MN$. Since $\SP$ is fixed, we will use the notation $\cS:=\cS\SP$ for the remainder of this subsection.}

An $\cN$-tangle $T$ in $M$ is said to have {\em vertical framing} if the framing vector at every point $p\in T$ is vertical, i.e. it is tangent to $p \times (-1,1)$ and has direction agreeing with the positive orientation of $(-1,1)$. 
 
 Suppose $T \subset \Sigma$ is a $\cP$-tangle. Technically $T$ may not be an $\cN$-tangle in $M$ since several strands of $T$ may meet at the same point in $\cP$, which is forbidden in the definition of an $\cN$-tangle. We modify $T$ in a small neighborhood of each point $p\in \cP$ by vertically moving the strands of $T$ in that neighborhood, to get an $\cN$-tangle $T'$ in $M$ as follows:

Equip $T$ with the vertical framing. If at a marked point $p$ there are $k=k_p$ strands $a_1, a_2, \dots, a_k$ of $T$ (in a small neighborhood of $p$) incident to $p$ and ordered in clockwise order, then we $\cN$-isotope these strands vertically so that $a_1$ is above $a_2$, $a_2$ is above $a_3$, and so on, see Figure \ref{fig:figures/simul}. The resulting $T'$ is an $\cN$-tangle whose $\cN$-isotopy class only depends on the $\cP$-isotopy class of $T$. Define $T$ as an element in $\cS\SP$ by
\begin{equation}\lbl{e.simul}
   T: = q^{\frac 14 \sum_{p\in \cP} k_p(k_p-1)} T' \in \cS.
\end{equation}
 
  \FIGc{figures/simul}{Left: There are 3 strands $a_1, a_2, a_3$ of $T$ coming to $p$, ordered clockwise. Right: The corresponding strands $a'_1, a'_2, a'_3$ of $T'$, with $a'_1$ above $a'_2$, and $a'_2$ above $a'_3$. Arrowed edges are part of the boundary, not part of the $\cP$-tangles}{2cm}

The factor which is a power of $q$ on the right hand side is introduced so that $T$ is invariant under the reflection involution, see Subsection \ref{sec.involution} below.

Recall that we write $B_\SP$ for the set of all $\cP$-isotopy classes of essential $\cP$-tangles in $\SP$. The following was first shown in \cite[Lemma 4.1]{Mu2012}.
\begin{lemma}\lbl{r.markedbasis}
$B_\SP$ is a basis of the free $R$-module $\cS\SP$.
\end{lemma}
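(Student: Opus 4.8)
The plan is to prove the two halves separately: that $B_\SP$ spans $\cS\SP$ over $R$ (routine) and that $B_\SP$ is $R$-linearly independent (the substantive point). For spanning I would isotope a given $\cN$-tangle $T$ in $M=\Sigma\times(-1,1)$ into general position with respect to the vertical projection $\pr\colon M\to\Sigma$, so that $T$ carries horizontal (blackboard) framing and $\pr(T)$ is a $\cP$-tangle diagram on $\Sigma$ with finitely many transverse double points and with the strands near each marked point stacked vertically in clockwise order. Induct on the number $c$ of double points: if $c>0$, the skein relation at one crossing gives $T\equiv qT_++q^{-1}T_-$ with $T_\pm$ having $c-1$ crossings; if $c=0$, then $T$ is the image under \eqref{e.simul} of a genuine $\cP$-tangle drawn on $\Sigma$, and deleting trivial loop components via the trivial loop relation (each contributing $-q^2-q^{-2}$) and using the trivial arc relation to kill any term containing a trivial $\cP$-arc leaves a scalar multiple of the class of an essential $\cP$-tangle. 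Running the induction, $T$ lies in the $R$-span of $B_\SP$, so there is a natural surjection $\pi\colon\bigoplus_{T\in B_\SP}R\cdot T\twoheadrightarrow\cS\SP$.

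For linear independence I would construct an explicit $R$-linear left inverse of $\pi$. Define a map $\Psi\colon\cT\MN\to\bigoplus_{T\in B_\SP}R$ on a generating $\cN$-tangle by choosing a generic vertical projection and forming the Kauffman bracket state sum: sum over all $\{+,-\}$-smoothings of the crossings, weighted by $q$ or $q^{-1}$ accordingly; in each resulting crossingless $\cP$-tangle discard trivial loop components with weight $-q^2-q^{-2}$, set the term to $0$ if it contains a trivial $\cP$-arc, and insert the power of $q$ dictated by \eqref{e.simul} at each marked point so that the clockwise-stacking normalization is respected; finally record the remaining essential $\cP$-tangle as the corresponding basis vector. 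By construction $\Psi$ annihilates every skein relation element, every trivial loop relation element, and every trivial arc relation element, so once $\Psi$ is known to be well defined it descends to $\bar\Psi\colon\cS\SP\to\bigoplus_{T\in B_\SP}R$; and comparing definitions directly gives $\bar\Psi\circ\pi=\mathrm{id}$. Hence $\pi$ is injective, and together with surjectivity this shows $B_\SP$ is an $R$-basis.

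The crux, and the main obstacle, is showing that $\Psi$ is independent of the chosen generic projection. Any two generic projections of $\cN$-isotopic $\cN$-tangles in $\Sigma\times(-1,1)$ are connected by a finite sequence of local moves: Reidemeister II and III in the interior of $\Sigma$, the framed Reidemeister I move cancelling a positive curl against a negative one, and a move transposing the vertical order of two strand-ends incident to a common marked point. For Reidemeister II and III this is exactly Kauffman's classical computation that the bracket is a regular-isotopy invariant; for the curl-pair cancellation the state sum is multiplied by $(-q^{3})(-q^{-3})=1$; and for the transposition at a marked point the reordering relation of Proposition \ref{r.reorderingrel} produces a factor $q^{\pm1}$, which is cancelled precisely by the change in the normalization exponent of \eqref{e.simul}. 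This last compatibility near $\cN$ is the one genuinely new ingredient beyond the classical argument; one also needs a Reidemeister-type theorem asserting that the above moves generate $\cN$-isotopy of $\cN$-tangles in a thickened marked surface, which follows from standard transversality in the interior together with a normal-form analysis in a neighbourhood of $\cN$.
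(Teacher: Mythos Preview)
The paper gives no proof of this lemma; it simply attributes the result to \cite[Lemma 4.1]{Mu2012} and moves on. Your sketch is the standard argument, and essentially what Muller carries out: spanning by induction on crossing number, linear independence by building a Kauffman-bracket state-sum evaluation $\Psi$ and checking it is invariant under a Reidemeister-type calculus for diagrams on $\Sigma$.

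One point to tighten. ``Transposing the vertical order of two strand-ends at a marked point'' is not an $\cN$-isotopy move: throughout an $\cN$-isotopy each $T_t$ must be a $1$-submanifold of $M$, so two endpoints on the same component of $\cN$ can never coincide and therefore cannot exchange heights. Correspondingly, the reordering identity of Proposition~\ref{r.reorderingrel} is not an isotopy fact but a \emph{consequence} of the skein and trivial-arc relations (that is exactly how it is proved there), so once your $\Psi$ annihilates those relation elements it automatically respects reordering; it should not appear on your list of moves to check. The genuine boundary phenomenon is rather that an isotopy can change the \emph{clockwise} order in which two strands approach $p$, at the cost of creating or removing an honest crossing just inside $\Sigma$ near $p$; invariance of the state sum under this is then the same local skein/trivial-arc computation that underlies Proposition~\ref{r.reorderingrel}, now read at the level of diagrams. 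With that correction your outline is sound, with the Reidemeister-type classification of $\cN$-isotopies you already flag being the remaining technical input.
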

We call $B_\SP$ the {\em preferred basis} of $\cS$.

For $0\neq x\in \cS$ one has the finite presentation 
\[
x= \sum _{i \in I} c_i x_i,  \quad   c_i \in R\setminus \{0\}, \ x_i \in B_\SP,
\]
and we define the {\em support} of $x$ to be the set $\supp(x)= \{ x_i \mid i\in I\}$. For $z\in B_\SP$ define
\begin{equation}
 \mu(z,x)=   \max_{x_i \in \supp (x) } \mu (z, x_i).
\end{equation}
Here $\mu(z,x_i)$ is the geometric intersection index defined in Subsection \ref{sec.intersectionindex}.
 
\begin{remark}
Equation \eqref{e.simul} describes the isomorphism between Muller's definition of the skein algebra of a totally marked surface $\SP$ in terms of multicurves of knots and arcs in $\SP$ and our definition of a skein algebra of a marked surface $\SP$ in terms of $\cP \times (-1,1)$-tangles in $(\Sigma \times (-1,1), \cP \times (-1,1))$.
\end{remark}

\subsection{Skein modules of finite type surfaces}

Recall that a framed link $L$ in $\fS \times (-1,1)$ consists of a compact 1-dimensional non-oriented submanifold of $\fS \times (-1,1)$ equipped with a smooth normal vector field such that each connected component of $L$ is diffeomorphic to $S^1$. By convention, the empty set is considered to be a framed link with no components that is isotopic only to itself. A framed link $L$ has {\em vertical framing} if the framing vector at every point $p \in L$ is vertical, i.e. is tangent to $p \times (-1,1)$ and has direction agreeing with the positive orientation of $(-1,1)$.

The {\em Kauffman bracket skein module} of $\fS$ at $q$, denoted by $\cS_q(\fS)$, is the free $R$-module spanned by all isotopy classes of framed links in $\fS \times (-1,1)$ subject to the {\em skein relation} $T=qT_++q^{-1}T_-$ with $T,T_+,T_-$ depicted in Figure \ref{fig:figures/skein1}.

A simple diagram $\alpha$ in $\fS$ defines a unique isotopy class of framed link in $\fS \times (-1,1)$ with vertical framing, which we will also denote as $\alpha$. Recall that $B_\fS$ denotes the set of all isotopy classes of simple diagrams in $\fS$.

\begin{lemma}\cite{PS2000}
The set $B_\fS$ is a basis of $\cS(\fS)$ over $R$.
\end{lemma}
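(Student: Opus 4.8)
The plan is to prove separately that $B_\fS$ spans $\cS(\fS)$ over $R$ and that it is $R$-linearly independent; the second is the substantive half. \emph{Spanning.} Given a framed link $L\subset\fS\times(-1,1)$, isotope it into general position for the vertical projection $\pr\colon\fS\times(-1,1)\to\fS$, so that $\pr|_L$ is an immersion with finitely many transverse double points and the framing is vertical away from small balls about those points; this presents $L$ by a link diagram $D$ on $\fS$ with some crossing number $c(D)$. If $c(D)>0$, apply the skein relation $T=qT_++q^{-1}T_-$ at one crossing; each of $T_+,T_-$ has strictly fewer crossings, so by induction on $c(D)$ the class of $L$ is an $R$-combination of crossingless diagrams, i.e.\ of disjoint unions of embedded loops on $\fS$ with vertical framing. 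An innermost disk-bounding loop in such a union can be deleted using the trivial-loop relation $\beta=-(q^2+q^{-2})$; iterating removes all trivial components and leaves a simple diagram, so $B_\fS$ spans $\cS(\fS)$.

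\emph{Linear independence.} I would first dispose of the finite-type surfaces with $\chi(\fS)\ge 0$ and the closed surfaces separately, these being either elementary ($\cS(\fS_{0,0})=\cS(\fS_{0,1})=R$, $\cS(\fS_{0,2})=R[z]$) or handled by the same method adapted to a pants decomposition; then treat $\fS=\fS_{g,p}$ with $\chi(\fS)<0$ and $p\ge 1$. Fix an ordered ideal triangulation $\fT=(e_1,\dots,e_r)$, $r=6g-6+3p$, and recall from Subsection~\ref{sec.idealtri} that $E_\fT\colon B_\fS\to\BN^r$, $E_\fT(\alpha)=(\mu(e_i,\alpha))_{i=1}^r$, is injective. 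For each $\bk\in\BN^r$ I would build an $R$-linear functional $\phi_\bk$ on the free module $\cT(\fS)$ of framed links: present a link $L$ by a diagram on $\fS$, expand it by the Kauffman state sum so that each term is a crossingless diagram weighted by a monomial in $q^{\pm 1}$, delete the disk-bounding loops of each term with the usual weight $-(q^2+q^{-2})$ apiece, isotope the surviving simple diagram to minimal position with respect to $\fT$, and let $\phi_\bk(L)$ be the sum of the weights of those terms whose simple diagram $\alpha$ satisfies $E_\fT(\alpha)=\bk$. One checks $\phi_\bk$ annihilates $\Rel_q$: the two branches of a crossing resolution are exactly the two state-sum smoothings, and a disk-bounding loop contributes precisely the weight demanded by the trivial-loop relation. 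Hence $\phi_\bk$ descends to $\cS(\fS)$, and since a simple diagram's only expansion term is itself, $\phi_\bk(\alpha)=1$ if $E_\fT(\alpha)=\bk$ and $0$ otherwise. Therefore, if $\sum_i c_i\alpha_i=0$ with the $\alpha_i\in B_\fS$ distinct, the vectors $E_\fT(\alpha_i)$ are distinct by injectivity, and applying $\phi_{E_\fT(\alpha_j)}$ forces $c_j=0$ for every $j$. The verification that $\phi_\bk$ is genuinely well defined on $\cS(\fS)=\cT(\fS)/\Rel_q$, in particular invariant under the surface isotopies used to present $L$, is the move-by-move analysis carried out by Przytycki and Sikora \cite{PS2000}, which I would cite rather than reproduce.

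\emph{Main obstacle.} The spanning step is routine; the difficulty concentrates entirely in the well-definedness of the functionals $\phi_\bk$ on the quotient, because the intersection-number filtration on $\cS(\fS)$ cannot be invoked here — it presupposes the very basis property under proof — so one must argue directly from the defining relations, as in \cite{PS2000}. For surfaces with at least one puncture there is an alternative route to independence: the quantum trace map $\varkappa_\fT\colon\cS(\fS)\hookrightarrow\BT_q(Q_\fT)$ of Bonahon--Wong \cite{BW2011} is an embedding carrying simple diagrams to $R$-linearly independent elements of a quantum torus, from which independence of $B_\fS$ follows immediately.
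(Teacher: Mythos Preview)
The paper does not prove this lemma at all: it is stated with a bare citation to \cite{PS2000} and no argument is given. So there is nothing to compare against, and your proposal already does more than the paper does.

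Your outline is reasonable and is essentially the standard argument. The spanning step is correct and routine. For independence, the construction of the functionals $\phi_\bk$ via the state sum is the right idea, and you are honest that the real content---invariance under the framed Reidemeister moves on $\fS$---is exactly what must be checked and what \cite{PS2000} (building on Przytycki's earlier work) supplies. One caution: the alternative route you mention, deducing independence from injectivity of the quantum trace map $\varkappa_\fT$, is circular in this context. The proof that $\varkappa_\fT$ (or the Muller skein-coordinate map used later in this paper) is an embedding relies on knowing that simple diagrams form a basis in the first place; see for instance the use of the preferred basis in the proof of Theorem~\ref{r.torus}. So that shortcut is not available if you are trying to establish the lemma from scratch.
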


\subsection{Algebra structure}

We define the $R$-algebra product structure in $\cS\SP$ as follows. For $\cN$-tangles $T_1, T_2$ in $(M,\cN)=(\Sigma \times (-1,1), \cP \times (-1,1))$ define the product $T_1 T_2$ as the result of stacking $T_1$ atop $T_2$ using the cylinder structure of $\MN$. More precisely, this means the following. Let $\iota_1: M \embed M$ be the embedding $\iota_1(x,t)= (x, \frac{t+1}2)$ and $\iota_2: M \embed M$ be the embedding $\iota_2(x,t)= (x, \frac{t-1}2)$. Then $T_1 T_2:= \iota_1(T_1) \cup \iota_2(T_2)$. This product makes $\cS\SP$ an $R$-algebra, which is non-commutative in general.

The product structure on $\cS(\fS)$ is defined identically. Here, since $\cP = \emptyset$, all $\emptyset$-tangles in $(\fS \times (-1,1),\emptyset)$ are the same as framed links.

\no{
We endow $\cS(\fS)$ with an $R$-algebra structure similarly. Let $L_1,L_2$ be two framed links in $\fS \times (-1,1)$. We define the product $L_1L_2$ by first isotoping $L_1$ into $\fS \times (0,1)$ and $L_2$ into $\fS \times (-1,0)$ and then taking the union of the two. Then $\cS(\fS)$ is an $R$-algebra which is typically non-commutative.
}
 
\subsection{Reflection anti-involution}\lbl{sec.involution}

Let $\heta: \cS\SP \to \cS\SP$ be the bar homomorphism of \cite{Mu2012}, i.e. the $\BZ$-algebra anti-homomorphism defined by
\begin{enumerate}[(i)]
\item $\heta(x) = \eta(x)$ if $x\in R$, and
\item if $T$ is an $\cN$-tangle with framing $v$ then $\heta(T)$ is $\refl(T)$ with the framing $-\refl(v)$, where $\refl$ is  the reflection which maps $(x,t)\to (x, -t)$ in $\Sigma \times (-1,1).$\end{enumerate}
It is clear that $\heta$ is an anti-involution. An element $z\in \cS$ is {\em reflection invariant} if $\heta(z)=z$.

The prefactor on the right hand side of \eqref{e.simul} was introduced so that every $\cP$-tangle $T$ is reflection invariant as an element of $\cS$. The preferred basis $B_\SP$ consists of reflection invariant elements.

Suppose $T$ is a $\cP$-tangle with components $x_1,\dots, x_k$. By the reordering relation (see Figure \ref{fig:figures/boundary}), any two components $x_i, x_j$ are $q$-commuting as elements of $\cS$ (and note that $x_ix_j=x_jx_i$ if at least one is a $\cP$-knot), and
\[
T= [x_1 x_2 \dots x_k]  \quad \text{in }\ \cS,
\]
where on the right hand side we use the Weyl normalization, see Subsection \ref{sec.weylnormalization}.

\subsection{Functoriality}\lbl{sec.surfunc}

Let $(\Sigma',\cP')$ be a marked surface such that $\Sigma'\subset \Sigma$ and $\cP'\subset \cP$. The morphism $\iota: (\Sigma',\cP') \embed \SP$ given by the natural embedding induces an $R$-algebra homomorphism $\iota_*:\cS(\Sigma', \cP')\to \cS\SP$.

\begin{proposition}\lbl{r.func}
Suppose $\cP'\subset \cP$. Then $\iota_*:\cS(\Sigma, \cP')\to \cS\SP$ is injective.
\end{proposition}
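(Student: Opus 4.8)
The plan is to exploit the fact that both $\cS(\Sigma,\cP')$ and $\cS\SP$ are free $R$-modules with the ``essential tangle'' preferred bases $B_\SPp$ and $B_\SP$ of Lemma \ref{r.markedbasis}, and to show that $\iota_*$ carries $B_\SPp$ bijectively onto a subset of $B_\SP$. Since a module homomorphism that sends a basis of its source to a linearly independent subset of its target is injective, this will finish the proof, and will moreover identify $\cS(\Sigma,\cP')$ with the $R$-span of those essential $\cP$-tangles all of whose endpoints lie in $\cP'$.

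First I would record the elementary observations that make the comparison of bases work. A $\cP'$-tangle $T$ is also a $\cP$-tangle once we regard its endpoints, which lie in $\cP'$, as lying in $\cP$; and $T$ is essential as a $\cP'$-tangle if and only if it is essential as a $\cP$-tangle, since the conditions ``a knot component bounds a disk in $\Sigma$'' and ``an arc component is trivial'' do not refer to the marked set. (For the trivial arc condition in the $3$-manifold picture one uses that $\cP'\times(-1,1)$ and $\cP\times(-1,1)$ are disjoint unions of intervals, so any arc $b$ inside $\cP\times(-1,1)$ whose endpoints lie on $\cP'\times(-1,1)$ already lies inside $\cP'\times(-1,1)$.) Next I would check that $\iota_*$ sends the preferred basis element $[T]\in\cS(\Sigma,\cP')$ to the preferred basis element of $\cS\SP$ determined by the same underlying tangle, on the nose: the vertical reordering in \eqref{e.simul} only occurs at points where several strands of $T$ meet, and those points all lie in $\cP'$, so the resulting $\cN$-tangle is literally the same, and the prefactor $q^{\frac14\sum_p k_p(k_p-1)}$ is unchanged because $k_p=0$ for every $p\in\cP\setminus\cP'$. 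Thus, on preferred bases, $\iota_*$ is nothing but ``regard a $\cP'$-tangle as a $\cP$-tangle,'' which by the previous observation lands in $B_\SP$.

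The one genuinely topological point — and the step I expect to be the main obstacle, although it is short — is that this assignment is injective on isotopy classes: two essential $\cP'$-tangles that are $\cP$-isotopic are already $\cP'$-isotopic. This holds because $\cP$ is a finite, hence discrete, subset of $\pS$: along any $\cP$-isotopy the image of each boundary point of the tangle traces a continuous path in $\cP$, hence is constant, so the isotopy fixes all endpoints and in particular keeps them in $\cP'$; meanwhile throughout the isotopy the interior of the tangle embeds into $\Sigma\setminus\cP\subseteq\Sigma\setminus\cP'$, so every intermediate tangle is a $\cP'$-tangle, and the isotopy is a $\cP'$-isotopy. (In the three-dimensional formulation one argues identically with $\cP\times(-1,1)$ in place of $\cP$, using that its connected components are intervals and hence that boundary points cannot jump between them.) Combining the three steps, $\iota_*$ maps the basis $B_\SPp$ injectively into the basis $B_\SP$, so $\iota_*$ is injective.
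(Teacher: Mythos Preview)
Your argument is correct and follows exactly the paper's approach: the paper's proof is the single sentence ``This is because the preferred basis $B_{(\Sigma,\cP')}$ is a subset of $B_\SP$,'' and your write-up simply unpacks what that sentence means (essentiality is independent of the marked set, the normalization prefactor in \eqref{e.simul} is unchanged, and $\cP$-isotopic $\cP'$-tangles are already $\cP'$-isotopic because $\cP$ is discrete). Nothing further is needed.
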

\begin{proof}This is because the preferred basis $B_{(\Sigma,\cP')}$ is a subset of $B_\SP$.
\end{proof}

\subsection{Domain}

The following was first shown in \cite{PS2018}, with special cases shown in \cite{Bu1997,PS2000}.
\begin{lemma}\lbl{r.finitedomain}
Let $\fS$ be a finite type surface. Then $\cS(\fS)$ is a domain.
\end{lemma}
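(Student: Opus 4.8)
The plan is to organize the proof around the Euler characteristic of $\fS=\ofS\setminus\cV$. The representative case — where $\chi(\fS)<0$ and $\cV\neq\emptyset$ — is handled by the associated graded machinery set up above; the finitely many surfaces of non-negative Euler characteristic are dispatched by inspection; and the closed surfaces of negative Euler characteristic, which are the genuine difficulty, are handled by replacing the ideal triangulation with a filling family of curves and re-running the argument, which is the geometric heart of \cite{PS2018}.

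For the main case, fix $\fS$ with $\chi(\fS)<0$ and at least one puncture, together with an ideal triangulation $\fT$ (available by Subsection~\ref{sec.idealtri}, and by Remark~\ref{r.idealselffold} with no self-folded triangles). Equip $\cS:=\cS(\fS)$ with the $\BN$-filtration $\{F_n\}$ of Subsection~\ref{sec.intrograded}, where $F_n$ is the $R$-span of the simple diagrams $K\in B_\fS$ with $\sum_{a\in\fT}\mu(a,K)\le n$. As recalled there, the associated graded algebra $\gr\cS$ is a finitely generated monomial subalgebra of a quantum torus $\BT(U;R)$; this identification is purely combinatorial and does not use the value of $q$, so it holds over our Noetherian domain $R$. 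A quantum torus over a Noetherian ring is a domain, and a subalgebra of a domain is a domain, hence $\gr\cS$ is a domain. The passage back to $\cS$ is the usual one: if $x,y\in\cS$ are nonzero with $\deg x=d_x$, $\deg y=d_y$, then the leading images $\bar x,\bar y\in\gr\cS$ are nonzero, so $\bar x\,\bar y\neq 0$, which by the definition of multiplication in $\gr\cS$ forces $xy\notin F_{<d_x+d_y}$; in particular $xy\neq 0$. (Where available, the Bonahon--Wong quantum trace map $\varkappa_\fT\colon\cS_q(\fS)\embed\BT_q(Q_\fT;R)$ of Subsection~\ref{sec.introembed} exhibits $\cS(\fS)$ as a subalgebra of a quantum torus outright, which is faster; I avoid leaning on it only because it is not otherwise used here.)

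For the surfaces with $\chi(\fS)\ge 0$ there are only four diffeomorphism types. On the sphere and on the plane every loop bounds a disk, so $\cS(\fS)=R$. On the cylinder $\cS(\fS)=R[z]$ with $z$ the core curve, a polynomial ring. On the torus, $\cS(T^2)$ is, by the well-known product-to-sum formula, isomorphic to the subalgebra of the two-variable quantum torus ($XY=q^2YX$) fixed by $X\mapsto X^{-1},\,Y\mapsto Y^{-1}$, hence a subalgebra of a domain. All four are domains.

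The remaining, and hardest, case is a closed surface $\fS$ with $\chi(\fS)<0$, where neither an ideal triangulation nor the quantum trace map is at hand. The plan is to pick a \emph{filling} family $\Gamma=\{c_1,\dots,c_m\}$ of essential simple closed curves on $\fS$ and filter $\cS(\fS)$ by $F_n=\mathrm{span}\{K\in B_\fS:\sum_i\mu(c_i,K)\le n\}$, then argue as in the main case. The obstruction is that the tuple $\big(\mu(c_i,\cdot)\big)_i$ need not determine a multicurve, so $\gr\cS$ is not visibly a monomial algebra; one must instead prove directly that, for taut multicurves $\alpha,\beta$, the product $\bar\alpha\,\bar\beta$ in $\gr\cS$ equals a unit (a signed power of $q$) times the class of the single multicurve obtained from the taut representative of $\alpha\cup\beta$ — i.e. that resolving all crossings of $\alpha\cup\beta$ produces a unique term of top $\Gamma$-complexity whose coefficient does not vanish. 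This no-cancellation statement is precisely the leading-term analysis carried out in \cite{PS2018} (and is the closed-surface analogue of the totally-marked-surface computation reproduced in Appendix~\ref{a.domain}); granting it, the filtered-to-graded lifting from the main case finishes the proof. I expect this step — identifying the surviving leading multicurve and its unit coefficient for a filling family rather than a triangulation — to be the main obstacle.
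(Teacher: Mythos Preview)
The paper does not actually prove this lemma: it simply records it as a known result, citing \cite{PS2018} (with earlier special cases in \cite{Bu1997,PS2000}), and moves on. So there is no ``paper's proof'' to compare against beyond the citation.

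Your outline is correct and is, in fact, a faithful sketch of the \cite{PS2018} argument. The punctured negative-Euler-characteristic case via the ideal-triangulation filtration and associated graded is exactly the mechanism the paper later exploits for the maximal order result (Subsection~\ref{sec.skeinfiltration} and Proposition~\ref{r.skeinproduct}); you have correctly isolated the closed hyperbolic case as the genuine difficulty and correctly identified the required input --- the leading-term/no-cancellation analysis for a filling system of curves --- as the content of \cite{PS2018}. Your handling of the $\chi\ge 0$ surfaces is fine. In short: you have supplied what the paper merely cites, and your dependency on \cite{PS2018} is confined to precisely the step that paper was written to address.
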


Muller \cite{Mu2012} showed that $\cS\SP$ is a domain in the case where $\SP$ is totally marked. Extending this result to the case with unmarked boundary components is a simple application of functoriality.

\begin{lemma}\lbl{s.markeddomain}
Let $\SP$ be a marked surface. Then $\cS\SP$ is a domain.
\end{lemma}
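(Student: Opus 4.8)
The plan is to reduce the statement to Muller's theorem for totally marked surfaces, using only the functoriality embedding of Proposition \ref{r.func}. First I would dispose of the degenerate case $\pS = \emptyset$: since $\cP \subset \pS$ this forces $\cP = \emptyset$, so $\Sigma$ is a closed surface, which is in particular a finite type surface, and $\cS\SP$ is a domain by Lemma \ref{r.finitedomain}.

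Assume from now on that $\pS \neq \emptyset$, and recall that $\cH$ denotes the set of unmarked boundary components of $\SP$. I would build a new marked surface on the same underlying $\Sigma$ by choosing, for each $\beta \in \cH$, a point $p_\beta \in \beta$, and setting $\cP' = \cP \cup \{p_\beta : \beta \in \cH\}$. This $\cP'$ is finite (there are finitely many boundary components), it contains $\cP$, and by construction every boundary component of $\Sigma$ meets $\cP'$, so $(\Sigma, \cP')$ is a totally marked surface.

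Applying Proposition \ref{r.func} with $\cP$ playing the role of the smaller marked set and $\cP'$ the larger one, the natural inclusion induces an injective $R$-algebra homomorphism $\iota_* : \cS\SP \hookrightarrow \cS(\Sigma, \cP')$. By Muller's theorem, recalled just above the lemma, $\cS(\Sigma, \cP')$ is a domain because $(\Sigma, \cP')$ is totally marked. A subring of a domain is a domain: if $a, b \in \cS\SP$ satisfy $ab = 0$, then $\iota_*(a)\iota_*(b) = \iota_*(ab) = 0$ in $\cS(\Sigma, \cP')$, hence $\iota_*(a) = 0$ or $\iota_*(b) = 0$, and injectivity gives $a = 0$ or $b = 0$. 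Therefore $\cS\SP$ is a domain.

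There is essentially no hard step here: the real content sits in Proposition \ref{r.func} (itself immediate from the inclusion of preferred bases $B_{(\Sigma,\cP)} \subset B_{(\Sigma,\cP')}$, Lemma \ref{r.markedbasis}) and in Muller's result, both already available. The only point that warrants a moment's care is checking that the enlarged surface $(\Sigma, \cP')$ genuinely falls under the hypotheses of Muller's theorem — i.e.\ that being totally marked is all that is required, with no auxiliary triangulability assumption — but this is exactly the form in which that result was quoted above, so it applies to $(\Sigma, \cP')$ without further argument.
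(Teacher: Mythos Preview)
Your proof is correct and follows essentially the same approach as the paper: enlarge $\cP$ to $\cP'$ so that $(\Sigma,\cP')$ is totally marked, embed $\cS\SP \hookrightarrow \cS(\Sigma,\cP')$ via Proposition~\ref{r.func}, and invoke Muller's result. The paper's proof is even terser, omitting your separate treatment of the closed case $\pS=\emptyset$; since such a surface is vacuously totally marked (no boundary components, hence $\cH=\emptyset$), the paper implicitly subsumes it under Muller's result as stated, so your extra case split via Lemma~\ref{r.finitedomain} is harmless but not strictly needed.
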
 
\begin{proof}
Let $\cP' \supset \cP$ be a larger set of marked points such that $(\Sigma,\cP')$ is totally marked. By Proposition \ref{r.func}, $\cS\SP$ embeds into $\cS (\Sigma,\cP')$ which is a domain by Muller's result. Hence $\cS\SP$ is a domain.
\end{proof}
Muller's proof that $\cS\SP$ is a domain when $\SP$ is totally marked depends on an arbitrary choice of total ordering on the set of $\cP$-knots that is shown not to matter. We give an alternate (though more difficult) proof of this result for the case where $\SP$ is a marked surface in Appendix \ref{a.domain} that does not utilize any arbitrary choice.

\subsection{Division algebra}\lbl{r.skeindivisionalg}

Let $\SP$ be a marked surface. It is shown in the proof of Theorem \ref{r.torus} that $\SP$ is an Ore domain, and so it has a division algebra which we denote $\tcS\SP$.

Let $\fS$ be a finite type surface. It is shown in \cite{FKL2017} that $\cS(\fS)$ is finitely generated as a module over its center, and by Lemma \ref{r.finitedomain}, $\cS(\fS)$ is a domain. Write $Z$ for the center of $\cS(\fS)$ and $\tZ$ for the field of fractions of $Z$. Then by Proposition \ref{r.divisionalgebras}, $\cS(\fS) \otimes_Z \tZ$ is a division algebra, which we will also denote as $\tcS(\fS)$ since a division algebra is unique when it exists by Proposition \ref{r.sandwich0}.

\subsection{\texorpdfstring{$D$}{D}-monomials}\lbl{s.dmonomials}

Suppose $\D$ is a $\cP$-quasitriangulation of a quasitriangulable marked surface $\SP$. Given a subset $D\subset \D$, a {\em $D$-monomial} is an element in $\cS$ of the form $\D^\bn$, where $\bn \in \BN^\D$ has that $\bn(a)=0$ if $a\not \in D$.

\no{
\subsection{Quantization of character variety}\lbl{s.quantizationcharacter}

Consider a marked 3-manifold $\MN$ with empty marked set: $\cN = \emptyset$, which we will just abbreviate as $M$. Consider the base ring $R=\BC$ and let $\xi \in \BC^\times$. In this section we will understand the Kauffman bracket skein module $\cS_\xi(M):=\cS_\xi(M,\emptyset)$ for $M$ over $\BC$ with quantum parameter $q$ set to $\xi$ and how it can be seen as a quantization of the $SL_2\BC$-character variety of $M$, denoted $\cR(M)$. We will abbreviate $\cS:=\cS_\xi(M)$.}

\section{Roots of unity}\lbl{s.rootsof1}

In this section we explore the structure and meaning of the skein algebra at different roots of unity. For a more detailed discussion see \cite{Si2004,Ma2011}.

When the quantum parameter $q$ is a root of unity the skein module behaves in a distinct fashion from the case where $q$ has infinite order. Furthermore, the behavior and meaning of the skein algebra has a lot of variance within possible choices of the root of unity. Our main results - the existence of the Chebyshev-Frobenius homomorphism, and the skein algebra being a maximal order - are only applicable when $q$ is a root of unity.

Throughout this section we fix an unmarked 3-manifold $(M,\emptyset)$, though in certain places we may assume that $\MN$ is a thickened surface. We write $\cS_q:=\cS_q(M,\emptyset)$. By convention, when $q$ is a complex number, we will write it as a Greek letter such as $\xi,\ve,\zeta,$ or $\nu$.

\subsection{\texorpdfstring{$\cS$}{S} at roots of unity}\lbl{sec.kauffmanroots}

Let $M:=(M,\emptyset)$ be a marked 3-manifold. The first skein module to be understood was when the quantum parameter $\xi$ was equal to $-1$. Under these conditions, the skein relation in $\cS(M)$ no longer sees crossings, see Figure \ref{fig:figures/plusone}.

\FIGc{figures/plusone}{Crossings don't matter when $\xi=-1$. This also means that $\cS_{-1}(M)$ has an algebra structure.}{3cm}
Under these conditions, the union map ~\eqref{e.unionmap} is a well-defined product. Bullock showed \cite{Bu1997} that, modulo its nilradical, $\cS_{-1}(M)$ is naturally isomorphic to the ring of regular functions $\cR(M,SL_2\BC)$ on $SL_2\BC$-character variety $X(M,SL_2\BC):=X(M)$ of $M$ (see Subsection \ref{sec.topcharvar}). This is one way to understand that $\cS(\fS)$ is a quantization of the $SL_2\BC$-character variety. The nilradical was later shown to be trivial \cite{PS2000,CM2012}. As discussed in Subsection \ref{sec.topcharvar}, unoriented knots in $M$ can be used to define trace functions on $X(M)$. To be more precise, given a link $L$ (e.g. a framed link with no framing) in $M$ with components $K_i$, the trace function $\tr_L: X(M) \to \BC$ associated to $L$ is given by
\[
\tr_L([\rho]) = \prod_i (-\tr(\rho(K_i)).
\]

It is easy to see that when $\xi=1$, crossings may also be ignored in the skein relation. It is then easy to find an isomorphism between $\cS_1$ and $\cS_{-1}$, however it is not natural.

The structure of $\cS_\xi$ when $\xi=\pm i$ is less straightforward than when $\xi = \pm 1$, is still closely related to the character variety.

This case was first understood by Sikora \cite{Si2004} in the case where $M$ is a cylinder over a finite type surface or certain submanifolds of a rational homology sphere. Additional insights were found by March\'e \cite{Ma2011} several years later.

\no{
Sikora found
}
\no{

\section{Unicity theorem}

The unicity theorem \cite{FKL2017} was presented in more general terms in Theorem \ref{r.unicity1}. We present here the specialization of this important theorem to the Kauffman bracket skein algebra of a finite type surface here. This first requires the introduction of the notion of the variety of classical shadows.

Let $\SP$ be a finite type surface, $R=\BC$ the base ring and $\xi \in \BC^\times$ a root of unity of order $n$, $m=\ord(\xi^4)$, and $\ve = \xi^{m^2}$. Note that an irreducible representation $\rho: \cS_\xi\SP \to M_d(\BC)$ defines a central character $\chi_\rho: Z(\cS_\xi\SP) \to \BC$ given by the trace. \red{finish later?}

\begin{theorem}\cite{FKL2017}
Let $\SP$ be a finite type surface and $\zeta$ a root of unity. There is a Zariski open dense subset $U$ of the variety of classical shadows $\mathscr{Y}_\zeta\SP$ such that each point of $U$ is the classical shadow of a unique (up to equivalence) irreducible representation of $\cS_\zeta\SP$. All irreducible representations with classical shadows in $U$ have the same dimension $N$ which is equal to the square root of the rank of $\cS_\zeta\SP$ over $Z(\cS_\zeta\SP))$. If a classical shadow is not in $U$, then it has at most $r$ non-equivalent irreducible representations, and each has dimension $\leq N$. Here $r$ is a constant depending on the surface $\SP$ and the root $\zeta$.
\end{theorem}

}

\chapter{Quantum Teichm\"uller theory}

In this chapter we quickly review some basic aspects of quantum Teichm\"uller theory. We present only aspects of this theory directly relevant to our work, for more background see e.g. \cite{Ka1998,Liu2009,ChFo1999,Pe2012,Le2017}.

In Section \ref{sec.qttheory} we give a brief overview of quantum Teichm\"uller theory and define the Chekhov-Fock algebra (a version of quantum Teichm\"uller space) associated to a triangulable marked surface equipped with a triangulation. The Chekhov-Fock algebra is thought of as a quantization of the shear coordinates of the Teichm\"uller space of the surface. In Section \ref{sec.qtmuller} we discuss the Muller algebra associated to a quasitriangulation of a quasitriangulable marked surface. The Muller algebra may be thought of as a quantization of the Penner coordinates of decorated Teichm\"uller space. In Chapter \ref{c.embed} we will show how to embed the skein algebra of a finite type or marked surface into a Chekhov-Fock algebra or a Muller algebra, respectively. Lastly, in Section \ref{sec.sheartoskein}, we briefly explain the connection between these two embeddings via the shear-to-skein map of L\^e.

\no{
\section{Quantum invariants}

\subsection{Overview}

\subsection{Relationship with classical invariants}
}
\no{
\section{Deformation quantization}
}

\section{Chekhov-Fock algebra}\lbl{sec.qttheory}

Quantum Teichm\"uller theory is a wide topic with origins in 3D quantum gravity. Considering a 3-manifold with boundary as a spacetime, the classical phase space of Einstein gravity on this manifold is the Teichm\"uller space of its 2D boundary according to Verlinde and Verlinde \cite{VeVe1989} and Witten \cite{Wi1990}. This can be understood as corresponding to the idea that lengths of geodesics are observables in Einstein gravity - this then is sometimes known as Liouville field theory because the Einstein field equations reduce to Liouville's equations here. Unlike 4D Einstein gravity which has no clear quantization, 3D quantum gravity has several candidate quantizations arising from the quantization of Teichm\"uller space.

The first two versions of quantum Teichm\"uller space were constructed by Chekhov and Fock \cite{ChFo1999} and Kashaev \cite{Ka1998}. They are closely related, and considered to be quantizations of the shear coordinates of Teichm\"uller space. More recently, the Muller algebra \cite{Mu2012} is considered to be a quantization of the Penner coordinates of decorate Teichm\"uller space. The relationship between these quantizations was investigated by L\^e \cite{Le2017} and we review this in Section \ref{sec.sheartoskein}. In this section we review the Chekhov-Fock construction.

One of the reasons for introducing the Chekhov-Fock algebra here is that it is the target of Bonahon and Wong's quantum trace map \cite{BW2011}, which L\^e used to show that the skew field of the skein algebra of a surface can be thought of as a coordinate-free version of quantum Teichm\"uller space of that surface \cite{Le2017}. We review this material because while we only utilize the skein coordinate map and the Muller algebra in the course of the proofs of many of our results, those results could in principle have also been proven using the quantum trace map and Chekhov-Fock algebras.

Throughout this section we fix a triangulable marked surface $\SP$ and a $\cP$-triangulation $\D$ (see Subsection \ref{sec.triangulations}).

\no{
\subsection{Deformation quantization of shear coordinates}
}

\subsection{Face matrix}\lbl{sec.facematrix}

In the case that $\SP$ is a triangulable marked surface, we may define the quantum Teichm\"uller space of $\SP$ in terms of the \emph{face matrix} $Q_\D$ associated to a $\cP$-triangulation $\D$. This will be be a quantum torus defined from $Q_\D$ known as a Chekhov-Fock algebra. The Chekhov-Fock algebra may also be defined for finite type surfaces, but we do not include a discussion of that here since we do not utilize it, but see e.g. \cite{Le2017} for a discussion of this case.

Let $\tau \in \cF(\D)$. We may assume that $\tau$ is not self-folded (see Remark \ref{r.noselffold}). Thus $\tau$ has three distinct edges which we denote $a,b,c$ in counterclockwise order. Then we define an anti-symmetric matrix $Q_\tau \in \text{Mat}(\D \times \D,\BZ)$ as follows.
\[
Q_\tau(a,b) = Q_\tau(b,c) = Q_\tau(c,a) = 1
\]
\[
Q_\tau(e,e')=0 \ \text{ if one of } e,e' \text{ is not in }\{a,b,c\}.
\]
In other words, $Q_\tau \in \text{Mat}(\D \times \D, \BZ)$ is the 0-extension of the following $\{a,b,c\}\times\{a,b,c\}$ matrix
\[
\left( \begin{array}{ccc}
0 & 1 & -1 \\
-1 & 0 & 1 \\
1 & -1 & 0 \end{array} \right).
\]

Define the \emph{face matrix} $Q=Q_\D \in \text{Mat}(\D \times \D, \BZ)$ by
\[
Q= \sum_{\tau \in \cF(\D)}Q_\tau.
\]

\begin{remark}
Our $Q$ is the same as $Q^\D$ of \cite{Mu2012} or the same as $-B$ of \cite{FST2008}, and is also known as the signed adjacency matrix. We say ``face matrix'' to emphasize the duality with ``vertex matrix''. This duality, and the ``Fourier transform'' between the Chekhov-Fock algebra and the Muller algebra is elaborated on in Subsection \ref{sec.sheartoskein}.
\end{remark}

\subsection{Chekhov-Fock algebra associated to a triangulable marked surface}\lbl{sec.chekhovfock}

In this subsection we assume that $\SP$ is either a triangulable marked surface with triangulation $\D$, or a finite type surface with an ideal triangulation $\D$. Let $Q=Q_\D$ be the face matrix associated to $\D$.\no{(see Subsection \ref{sec.facematrix})} Let $\fY_q(\D)$ be the quantum torus $\BT(-2Q)$ with basis variables $Z_a$, $a \in \D$, i.e.
$$
\fY(\D)=R \langle Z_a^{\pm 1}, a \in \D \rangle/(Z_aZ_b = q^{-2Q(a,b)}Z_bZ_a).
$$
This is known as (one version of) the \emph{Chekhov-Fock algebra} associated to $\D$. This is the same as $\fY^{(2)}(\D)$ given in \cite{Le2017}.

\section{Muller algebra}\lbl{sec.qtmuller}

In this section we introduce the Muller algebra associated to a quasitriangulation $\D$ of a quasitriangulable marked surface $\SP$. This construction is another quantum torus that, in some cases, could be considered to be a quantization of the Penner coordinates on decorated Teichm\"uller space \cite{Pe2012,Le2017}. Again, we will ultimately embed the skein algebra of a marked surface into this algebra in order to simplify the study of the skein algebra. This tactic effectively reduces the exponential complexity of evaluating skein relations to polynomial complexity.

Throughout this section we fix a quasitriangulable marked surface $\SP$.

\subsection{Vertex matrix}\lbl{sec.vmatrix}

Suppose $a$ and $b$ are $\cP$-arcs which do not intersect in $\Sigma \setminus \cP$.  We define a number $P(a,b)\in \BZ$ as follows.  Removing an interior point of $a$ from $a$, we get two {\em half-edges} of $a$, each of which is incident to exactly one vertex in $\cP$. Similarly, removing an interior point of $b$ from $b$, we get two half-edges of $b$. Suppose $a'$ is a half-edge of $a$ and $b'$ is a half-edge of $b$, and $p\in \cP$. If one of $a', b'$  is not incident to $p$, set $P_p(a',b')=0$. If both $a', b'$ are incident to $p$, define
$P_p(a',b')$ as in Figure \ref{fig:figures/vmatrix2}, i.e.
\[
P_p(a',b')= \begin{cases}
1  &\text{if $a'$ is clockwise to $b'$ (at vertex $p$)}\\
-1  &\text{if $a' $ is counter-clockwise to $b'$ (at vertex $p$)}.
\end{cases}
\]
\FIGc{figures/vmatrix2}{$P_p(a',b')=1$ for the left case, and $P_p(a',b')=-1$ for the right one. Here the shaded area is part of $\Sigma$, and the arrow edge is part of a boundary edge. There might be other half-edges incident to $p$, and they maybe inside and outside the angle between $a'$ and $b'$. }{2.5cm}

Now define 
\[
P(a,b)= \sum P_p(a',b'),
\]
where the sum is over all $p\in \cP$, all half-edges $a'$ of $a$, and all half-edges $b'$ of $b$. 

Suppose $\D$ is a $\cP$-quasitriangulation. Two distinct $a, b \in \D$ do not intersect in $\Sigma \setminus \cP$, hence we can define $P(a,b)$.

Let $P_\D \in \Mat(\D \times \D, \BZ)$, called the {\em vertex matrix} of $\D$, be the anti-symmetric $\D\times \D$ matrix defined by $P_\D(a,b)= P(a,b)$, with 0 on the diagonal. 

\begin{remark}
The vertex matrix was introduced in \cite{Mu2012}, where it is called the orientation matrix.
\end{remark}

\subsection{Muller algebra associated to a quasitriangulable marked surface}\lbl{sec.mulleralgebra}

Recall that an unmarked component is a connected component of $\pS$ not containing any marked points, and we write $\cH$ for the set of unmarked components. 
Note that two distinct elements of $\cH$ are not $\cP$-isotopic since otherwise $\Sigma$ is an annulus with $\cP=\emptyset$, which is ruled out since $\SP$ is quasitriangulable. For $\bk \in\BN^\cH$, define the following element of $\cT\SP$: 
\[
\cH^\bk:= \prod_{\beta \in \cH} \beta^{\bk(\beta)} \in \cT\SP.
\]

Let $\D$ be a quasitriangulation of $\SP$ and $P \in \Mat(\D \times \D, \BZ)$ the associated vertex matrix. Let $\fX(\D)$ be the quantum torus over $R[\cH]$ associated to $P$ with basis variables $X_a$, $a \in \D$.
That is,
\begin{align*}
\fX(\D) & = R[\cH]\langle x_a^{\pm 1}, a \in \D\ra
/(x_ax_b=q^{P(a,b)}x_bx_a).
\end{align*}
We call $\fX(\D)$ the {\em Muller algebra} associated to $\SP$ and $\D$.

\begin{remark}
  By definition, each $a\in \Delta$ is a $\cP$-arc, and will be considered as an element of the skein algebra $\cS\SP$. We will see from the reordering relation (Figure \ref{fig:figures/boundary}) that for each pair of $\cP$-arcs $a,b\in \D$,
\begin{equation}\lbl{eq.35}
ab = q^{P(a,b)} ba.
\end{equation}
\no{where $P\in \Mat(\D\times \D,\BZ)$ is the vertex matrix. (see Subsection \ref{sec.vmatrix}).}
\end{remark}

As a free $R[\cH]$-module, $\XD$ has a basis given by $\{X^\bn \mid \bn \in \BZ^\D\}$, where $X^\bn$ is a normalized monomial as defined in Subsection \ref{sec.aqt}. As a free $R$-module, $\XD$ has a basis given by $\{\cH^\bk \, X^\bn \mid \bk\in \BN^\cH, \bn \in \BZ^\D\}$.

Let $\fX_+(\D)$ be the $R[\cH]$-subalgebra of $\XD$ generated by $X_a, a\in \D$. Then $\fX_+(\D)$ is a free $R[\cH]$-module with a basis given by $\{ X^\bn \mid \bn \in \BN^\D\}$ and a free $R$-module with preferred basis $B_{\D,+}:=\{\cH^\bk \, X^\bn \mid \bk\in \BN^\cH, \bn \in \BN^\D\}$. Furthermore, $\fX_+(\D)$ has the following presentation as an algebra over $R[\cH]$: 
\[
\fX_+(\D)= R[\cH]\la X_a , a\in \D\ra /( X_a X_b = q^{P(a,b)}X_b X_a).
\]

The involution $\eta:R \to R$ of Proposition \ref{r.reflection} extends to an involution $\eta: R[\cH] \to R[\cH]$ by $\eta(rx)= \eta(r) x$ for all $r\in R$ and $x = \cH^\bk$ for all $\bk \in \BN^\cH$. As explained in Subsection \ref{sec.reflection}, $\eta$ extends to 
an anti-involution $\heta:\XD \to \XD$ so that $\heta(x)=\eta(x)$ for $x\in R[\cH]$ and $\heta(X^\bk)= X^\bk$.

\begin{remark}
In Subsection \ref{sec.flipandtransfer} we will investigate the behavior of the transfer isomorphisms between Muller algebras induced by a change of quasitriangulation - i.e. a change of coordinate map. These maps turn out to be simpler than the transfer isomorphisms for the Chekhov-Fock algebra, which is one main reason why we choose to use the Muller algebra formalism instead.
\end{remark}

\section{Shear-to-skein map}\lbl{sec.sheartoskein}

Let $\SP$ be a triangulable marked surface and $\D$ a $\cP$-triangulation with face matrix $Q$ (note that $\cH = \emptyset$ in this case, and that there are no monogon edges). Write $H$ for the $\D_\inn \times \D$ submatrix of $Q$. In \cite{Le2017}, the {\em shear-to-skein map}
\[
\psi_\D: \cY(\D) \to \fX(\D)
\]
is defined, where $\cY(\D)$ is the Chekhov-Fock algebra associated to $\D$ and $\fX(\D)$ is the Muller algebra associated to $\SP$ and $\D$. It is an injective \no{multiplicatively linear}$R$-algebra homomorphism defined uniquely by
\be\lbl{e.sheartoskein}
\psi_\D(Z^\bk) = X^{\bk H}.
\ee
Here $H$ is the $\oD \times \D$ submatrix of $Q$. Bonahon has pointed out that this relationship between the Chekhov-Fock algebra and the Muller algebra corresponds to the classical relationship between the shear and Penner coordinates of Teichm\"uller space. From this one may derive a duality between the face matrix and vertex matrix, see \cite{Le2017}.

\chapter{Skein algebra embeddings}\lbl{c.embed}

In this chapter we introduce a powerful technical tool in the study of skein algebras: embeddings of the skein algebra into quantum tori. This technique has revolutionized the study of skein algebras, making many previously intractable questions answerable by studying simple properties of $q$-commuting Laurent polynomial algebras.

This strategy was first discovered by Bonahon and Wong in \cite{BW2011} with the introduction of the miraculous quantum trace map, which we review in Section \ref{sec.quantumtrace}. This map embeds the complicated skein algebra of a triangulable marked surface into a Chekhov-Fock algebra introduced in Section \ref{sec.chekhovfock}, which is a type of quantum torus corresponding to quantization of the shear coordinates of Teichm\"uller space and was the first instance in which the exponential complexity of resolving skein relations was reduced to the polynomial complexity of multiplying $q$-commuting Laurent polynomials. This opened a new avenue of attack for understanding skein algebras and all of our methods in this dissertation were inspired by this method.

While we do not make use of the quantum trace map in proving the existence of the Chebyshev-Frobenius homomorphism in Chapter \ref{c.cf}, L\^e and the author had previously utilized it to prove its existence in earlier drafts, and it is the more natural method to use for finite type surfaces.

\no{
This strategy was first discovered by Bonahon and Wong in \cite{BW2011}, in which they found an embedding of the Kauffman bracket skein algebra of a finite type surface in the Chekhov-Fock version of quantum Teichm\"uller space discussed in Section \ref{sec.qttheory}. We review in Section \ref{sec.quantumtrace} the definition of the quantum trace map.

Muller expanded this strategy with an easier-to-study embedding of the skein algebra of totally marked surfaces into the Muller algebra, a quantum torus defined in terms of an anti-symmetric matrix also defined by a triangulation much like the Chekhov-Fock algebra.
}

Side-by-side with the quantum trace map is Muller's skein coordinate map introduced in \cite{Mu2012}, which is another embedding of the skein algebra of a totally marked surface into a quantum torus corresponding to a quantization of the Penner coordinates of Teichm\"uller space. This embedding turns out to have transfer isomorphisms between quantum tori corresponding to different quasitriangulations that are easier to work with that the ones for the Chekhov-Fock algebra, which was ultimately why we chose to work with this quantization. In Section \ref{sec.torus} we generalize the Muller algebra to allow for marked surfaces equipped with a quasitriangulation, and introduce a further generalization in Chapter \ref{c.surgery} which allows one to define a surgery theory.

\section{Quantum trace map}\lbl{sec.quantumtrace}

The skein algebra of a finite type surface $\cS(\fS)$ can be found as an essential subalgebra of the Chekhov-Fock algebra associated to an ideal triangulation of $\fS$. This is via the Bonahon-Wong quantum trace map $\Trq^\D$ (see \cite{BW2011}[Proposition 29]). $\Trq^\D$ is very difficult to work with in practice. In \cite{Le2017}[Section 6], making use of the ``Fourier transform'' relationship between the Muller algebra and the Chekhov-Fock algebra of a triangulation, L\^e develops a map $\varkappa_\D: \cS \to \fY(\D)$ equal to $\Trq^\D$ but with a simple combinatorial definition in terms of ``$\D$-simple'' knots. We describe the minimum ingredients necessary to define $\varkappa_\D$ here.

Let $\SP$ be a triangulable marked surface, and suppose $\D$ is a triangulation of $\SP$. Let $x$ be a $\cP$-arc or $\cP$-knot. We say that $x$ is $\D${\em -simple} if $\mu(x,a)\leq 1$ for all $a \in \D$.

Suppose $x$ is $\D$-simple. After a $\cP$-isotopy we can assume that $x$ is $\D${\em -taut}, i.e. $\mu(x,a)$ is equal to the number of internal common points of $x$ and $a$, for all $a \in \D$. Let $\cE(x,\D)$ be the set of all edges $e$ in $\D$ such that $\mu(x,e) \neq 0$, and $\cF(x,\D)$ be the set of all triangles $\tau$ of $\D$ intersecting the interior of $x$.

A {\em coloring of }$(x,\D)$ is a map $C \in \BZ^{\circD}$ such that $C(e) = 0$ if $e \notin \cE(x,\D)$ and $C(e) \in \{1,-1\}$ if $e \in \cE(x,\D)$. A coloring $C$ of $(x,\D)$ is said to be {\em admissable} if for any triangle $\tau \in \cF(x,\D)$ intersecting $x$ at two edges $a$ and $b$, with notations of edges as in Figure \ref{fig:figures/nonadmissable}, one has that the tuple $(C(a),C(b))$ is not equal to $(-1,1)$. Denote by $\Col(\alpha,\D)$ the set of all admissable colorings of $\alpha$.

\FIGc{figures/nonadmissable}{Non-admissable case: $C(a)=-1$, $C(b)=1$.}{2cm}

In the proof of Theorem 6.8 of \cite{Le2017}, a map denoted $\varkappa_\D: \cS_q\SP \to \fY_q(\D)$ is given. It is defined such that for any $\D$-simple knot $\alpha$,
\begin{equation}\lbl{e.varkappa}
\varkappa_\D(\alpha) = \sum_{C \in \Col(\alpha,\D)}\left[ \prod_{a \in \D} Z_a^{C(a)} \right] \in \fY_q(\D),
\end{equation}
where the brackets denote Weyl normalization. It is shown that this map is equal to $\Trq^\D$.

\section{Skein coordinate map}\lbl{sec.torus}

Throughout this section we fix a quasitriangulable marked surface $\SP$ and write $\cS:=\cS\SP$.

The following extends a result of Muller \cite[Theorem 6.14]{Mu2012} for {\em totally} marked surfaces to the case of marked surfaces.

\begin{theorem}\lbl{r.torus}
Suppose the marked surface $\SP$ has a quasitriangulation $\D$
\begin{enumerate}[(a)]
\item There is a unique $R[\cH]$-algebra embedding $\vpD: \cS \embed \fX(\D)$ such that for all $a \in \D$, 
\begin{equation}\lbl{eq.vpD}
\vpD(a) = X_a.
\end{equation}

\item If we identify $\cS$ with its image under $\vpD$ then $\cS$ is sandwiched between $\fX_+(\D)$ and $\fX(\D)$, i.e.
\begin{equation}\lbl{eq.incl2}
\fX_+(\D) \subset \cS \subset \XD.
\end{equation}
Consequently $\cS$ is a two-sided Ore domain, and $\vpD$ induces an $R[\cH]$-algebra isomorphism 
\[
\tvpD: \widetilde \cS\SP \overset \cong \longrightarrow \widetilde \fX(\D),
\]
 where  $\widetilde \cS\SP$ and $ \widetilde \fX(\D)$ are the division algebras of $\cS\SP$ and $\fX(\D)$, respectively.

\item Furthermore, $\vpD$ is reflection invariant, i.e. $\vpD$ commutes with $\heta$.
\end{enumerate}
\end{theorem}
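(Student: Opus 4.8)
The plan is to reduce Theorem~\ref{r.torus} to Muller's theorem for totally marked surfaces, after first peeling off the formal parts. Note that the base ring $R[\cH]$ is Noetherian, being a polynomial ring in the finitely many variables $\cH$ over the Noetherian domain $R$, so $\fX_+(\D)$ and $\fX(\D)$ are Noetherian Ore domains; hence once the sandwich $\fX_+(\D)\subseteq\vpD(\cS)\subseteq\fX(\D)$ in part (b) is known, Corollary~\ref{r.sandwich} (applied with base ring $R[\cH]$ and matrix $U=P_\D$) immediately yields that $\cS=\cS\SP$ is an Ore domain and that $\vpD$ induces an isomorphism $\tvpD$ of division algebras. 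For (c): $\heta$ is an anti-involution of both $\cS$ and $\fX(\D)$ restricting to $\eta$ on $R[\cH]$ and fixing every $\cP$-arc $a\in\D$ and every $X_a$; so $\heta\circ\vpD\circ\heta$ -- the composite of two anti-homomorphisms with a homomorphism, hence a homomorphism -- is an $R[\cH]$-algebra map $\cS\to\fX(\D)$ with $a\mapsto\heta(\vpD(\heta(a)))=\heta(X_a)=X_a$ for all $a\in\D$, so it equals $\vpD$ by the uniqueness in (a), i.e. $\vpD\circ\heta=\heta\circ\vpD$. Thus the real task is to produce an injective $R[\cH]$-algebra homomorphism $\vpD\colon\cS\embed\fX(\D)$ with $\vpD(a)=X_a$ whose image lies between $\fX_+(\D)$ and $\fX(\D)$; the uniqueness in (a) then follows because, as for totally marked surfaces, $\cS$ is weakly generated (Subsection~\ref{sec.weakgeneration}) by $\D$, every element of $\cS$ being a Laurent polynomial in the $\D$-arcs with the boundary arcs invertible.

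For the existence and the sandwich I would bootstrap from Muller's theorem \cite{Mu2012}. Enlarge $\cP$ to $\cP'\supseteq\cP$ by adding one marked point on each unmarked component $\beta\in\cH$; then $(\Sigma,\cP')$ is totally marked and -- apart from finitely many small surfaces (low-complexity disks and annuli), checked directly -- triangulable, and $\D$ extends to a triangulation $\D'$ of $(\Sigma,\cP')$ containing every arc of $\D$: capping $\beta$ makes the monogon edge $a_\beta$ into an ordinary edge, adds one boundary arc on $\beta$, and splits the former holed monogon into two triangles with two new inner arcs. By Muller, $\vpDp\colon\cS\SPp\embed\fX(\D')$ exists, is reflection invariant, and $\fX_+(\D')\subseteq\vpDp(\cS\SPp)\subseteq\fX(\D')$; by Proposition~\ref{r.func} the inclusion $\cP\subset\cP'$ induces an embedding $\cS\SP\embed\cS\SPp$; composing gives an injective $R$-algebra map $j:=\vpDp|_{\cS\SP}\colon\cS\SP\embed\fX(\D')$ with $j(a)=X_a$ for all $a\in\D$.

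It then remains to identify $j(\cS\SP)$ with a copy of $\fX(\D)$ inside $\fX(\D')$. The key local input is a boundary (``peripheral'') skein relation in the holed monogon: for $\beta\in\cH$, the central loop $z_\beta=\beta$ of $\cS\SP$, seen in $\cS\SPp$, is a loop parallel to the capped component $\beta$; it meets $\D'$ only in the two new inner arcs of $\beta$'s former holed monogon, so it is $\D'$-simple and $\vpDp(z_\beta)$ is a short sum of monomials in just those two variables -- which I would compute by resolving the loop in the monogon. Hence $j(\cS\SP)$ lies in the $R$-subalgebra $\cA'\subseteq\fX(\D')$ generated by $\{X_a:a\in\D\}$ together with the family $\{\vpDp(z_\beta):\beta\in\cH\}$, the latter central in $\cA'$ since each $z_\beta$ is central in $\cS\SP$. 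Comparing the vertex matrix $P_\D$ with the corresponding block of $P_{\D'}$ -- equivalently, reading off the $q$-commutation constants of the $X_a$, $a\in\D$, from the reordering relation (Proposition~\ref{r.reorderingrel} and \eqref{eq.35}) -- and using that the central elements $\vpDp(z_\beta)$ are algebraically independent, one obtains an isomorphism $\cA'\overset{\cong}{\longrightarrow}\fX(\D)$ with $X_a\mapsto X_a$ and $\vpDp(z_\beta)\mapsto\beta\in R[\cH]$. Then $\vpD$ is the composite of $j$ with this isomorphism: an injective $R[\cH]$-algebra map with $\vpD(a)=X_a$, and the sandwich transfers at once, since $\fX_+(\D)=R[\cH]\langle X_a:a\in\D\rangle$ is hit by $\vpD$ (which is $R[\cH]$-linear) and $\vpD(\cS)\subseteq\fX(\D)$ by construction. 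Together with the formal reductions above, this proves (a), (b) and (c).

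The main obstacle is the holed-monogon analysis: establishing the precise boundary skein relation expressing $z_\beta$ through the arcs created by capping $\beta$, and checking that the algebra $\cA'$ it cuts out is a faithful copy of $\fX(\D)$ with the $z_\beta$ serving as the coefficients $\cH$. This is in essence a by-hand special case of the ``add a marked point / plug a hole'' surgery developed in Chapter~\ref{c.surgery} (which is built only later, hence cannot be cited here). Secondary points are the separate treatment of the exceptional small surfaces where $(\Sigma,\cP')$ fails to be triangulable, and the weak generation of $\cS$ by $\D$ needed for uniqueness in (a), which -- as in \cite{Mu2012} -- comes from identifying $\cS$ with the localization of $\fX_+(\D)$ at the boundary-arc monomials. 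Alternatively one could bypass the reduction and extend Muller's geometric state-sum construction of $\vpD$ directly, the holed monogons being the only genuinely new feature; the route above has the advantage of treating Muller's ``miraculous'' multiplicativity argument as a black box.
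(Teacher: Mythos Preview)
Your reduction-to-Muller strategy is different from the paper's direct adaptation of Muller's proof, and it has a real gap precisely at the step you treat as automatic. You write ``Hence $j(\cS\SP)$ lies in the $R$-subalgebra $\cA'\subseteq\fX(\D')$ generated by $\{X_a:a\in\D\}$ together with $\{\vpDp(z_\beta)\}$'', but nothing before that ``Hence'' justifies it. All you have established at that point is where the \emph{generators} $a\in\D$ and $z_\beta$ land under $j$; you have not shown that $\cS\SP$ is (weakly) generated by $\D\cup\cH$, which is exactly what is needed for the containment. A general $\cP$-arc or $\cP$-knot that threads through a holed monogon will, under $\vpDp$, involve inverses of the \emph{new} inner edges $d,e\in\D'\setminus\D$ of that monogon, and you give no argument why such an expression must lie in the algebra generated by the old $X_a$'s and the three-term elements $\vpDp(z_\beta)$. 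You later call the weak generation a ``secondary point'' needed only for uniqueness and say it ``comes from identifying $\cS$ with the localization of $\fX_+(\D)$''---but that identification \emph{is} the theorem, so this is circular.

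The paper avoids this circularity by proving the geometric Lemma~\ref{r.monomial} directly in $\cS\SP$ (using Muller's intersection estimates \eqref{eq.mu1}--\eqref{eq.mu2}, valid for arbitrary marked surfaces): for every $x\in\cS$ there is a $\D_\inn$-monomial $\fm$ with $x\fm\in\fX_+(\D)$. This is the substantive input; everything else is Ore localization. If you want to salvage your reduction, you still need an analogue of Lemma~\ref{r.monomial} to get weak generation before you can push $\cS\SP$ into $\cA'$---and once you have that lemma, the detour through $(\Sigma,\cP')$ and the holed-monogon identity is no longer needed. Two smaller points: your $\cA'$ as written does not contain $X_a^{-1}$ for $a\in\D$, so it cannot be isomorphic to $\fX(\D)$ (only to $\fX_+(\D)$ at best); and your argument for (c) via uniqueness from (a) is clean and is a slight streamlining of the paper's generator-by-generator check.
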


\begin{proof} The proof is a modification of Muller's proof for the totally marked surface case of Muller. To further simplify the proof, we will use Muller's result for the totally marked surface case.

(a) We first prove a couple lemmas.  
   
Relation \eqref{eq.35} shows that there is a unique $R[\cH]$-algebra homomorphism $\tau: \fX_+(\D) \to \cS$ defined by $\tau(X_a)=a$. Then for $\bn \in \BZ^\D$,
\[
\tau(X^\bn)= \D^\bn := \left [  \prod_{a\in \D} a^{\bn(a)} \right].
\]

Note that $\tau$ is injective since $\tau$ maps the preferred $R$-basis $B_{\D,+}$ of $\fX_+(\D)$ bijectively onto a subset of the preferred $R$-basis $B_{\SP}$ of $\cS$. We will identify $\fX_+(\D)$ with its image under $\tau$.

Recall that $\D=\D_\inn \cup \D_\bd$, where $\D_\inn$ is the set of inner edges and $\D_\bd$ is the set of boundary edges. Recall from Subsection \ref{s.dmonomials} the notion of $D$-monomial for $D \subset \D$.
\begin{lemma}\lbl{r.monomial} %
Let $x\in \cS$.
\begin{enumerate}[(i)]
\item If $D\subset \D$ there is an $D$-monomial $\fm$ such that $\mu(a, x \fm)=0$ for all $a\in D$.

\item There is an $\D_\inn$-monomial $\fm$ such that $ x\fm \in \fX_+(\D)$.
\end{enumerate}
\end{lemma}
\begin{proof}
(i) The following two facts are respectively \cite[Lemma 4.7(3)]{Mu2012}  and \cite[Corollary 4.13]{Mu2012}:
\begin{align}
&\mu(a, yz) \le \mu(a,y) + \mu(a,z) \quad \text{for all} \ a\in \D,\ y,z\in \cS
\lbl{eq.mu1}\\
&\mu(a, y\,  a^{\mu(a,y)})=0  \quad \text{for all} \ a\in \D,\ y\in \cS. \lbl{eq.mu2}
\end{align}
These results are formulated and proved for general marked surfaces in \cite{Mu2012}, not just totally marked surfaces. Besides, since any two edges in $\D$ have intersection index 0, we have
\begin{equation}\lbl{eq.mu3}
\mu(a, \fm) = 0 \quad \text{for all $\D$-monomials $\fm$ and all} \ a\in \D.
\end{equation}

Let $\bn\in \BZ^\D$ be given by $\bn(a)= \mu(a,x)$ for $a\in D$ and $\bn(a)= 0$ for $a\not\in D$. %
Then $\fm = \D^\bn$ is a $D$-monomial. Suppose $a\in D$.
By taking out the factors $a$ in $\fm$ and using~\eqref{e.normalizedtorus}, we have
\[
\fm = q^{k/2} a^{\mu(a,x)} \, \fm'
\]
where $\fm'$ is another $D$-monomial and $k\in \BZ$. Using \eqref{eq.mu2} and then \eqref{eq.mu1} and \eqref{eq.mu3}, we have
\[
\mu(a, x \fm) \le \mu(a, x a^{\mu(a,x)}) + \mu (a, \fm') =0,
\]
which proves $\mu(a, x \fm)=0$ for all $a\in D$.

(ii) Choose $\fm$ of part (a) with $D=\D_\inn$. Let $x_i \in \supp(x\fm)$. Clearly $\mu(a,x_i)=0$ for all $a \in \D_\bd$.
 Since $\mu(a,x\fm)=0$ for all $a\in \D_\inn$, one can find a $\cP$-tangle $x_i'$ which is $\cP$-isotopic to $x_i$ such that $x_i$ and $a$ are taut (see Lemma~\ref{l.FHS}), so that $x_i'\cap a=\emptyset$ in $\Sigma \setminus \cP$ for each $a\in \D$. The maximality in the definition of quasitriangulation shows that each component of $x_i'$ is $\cP$-isotopic to one in $\D \cup \cH$. It follows that $x_i=\cH^\bk \D^\bn$ in $\cS$ for some $\bk\in \BN^\cH$ and $\bn \in \BN^\D$. This implies $x\fm \in \fX_+(\D)$.
\end{proof}

\begin{lemma}\lbl{l.oresubset} The multiplicative set $\fM$ generated by  $\D$-monomials is a 2-sided Ore subset of $\cS$. Similarly the multiplicative set $\fM_\inn$ generated by $\D_\inn$-monomials is a 2-sided Ore subset of $\cS$.
\end{lemma}
\begin{proof}

By definition, $\fM$ is right Ore if for every  $x\in \cS$ and every $ u \in \fM$, one has $x \fM \cap u\cS \neq \emptyset$.

 By \eqref{e.normalizedtorus}, one has $u=q^{k/2} \D^\bn$ for some $k\in \BZ$, $\bn\in \BN^\D$. 
By Lemma \ref{r.monomial}, there is a $\D$-monomial $\fm$ such that $x\fm \in \fX_+(\D)$. Since $B_{\D,+}=\{ \cH^\bk \D^\bn \mid \bk \in \BN^\cH, \bn \in \BN^\D \}$ is the preferred $R$-basis of $\fX_+(\D)$, we have a finite sum presentation $x \fm= \sum_{i\in I} c_i  \cH^{\bk_i} \D^{\bn_i}$ with $ c_i \in R$. It follows that
\begin{align*}
 x \fM \ni x \fm u &= q^{k/2} \sum_{i\in I} c_i \cH^{\bk_i} \D^{\bn_i} \D^\bn= q^{k/2} \sum_{i\in I} c_i q^{\la \bn_i, \bn\rangle_P}\cH^{\bk_i} \D^\bn \D^{\bn_i}   \\
 &= q^{k/2}  \D^\bn \sum_{i\in I} c_i q^{\la \bn_i, \bn\rangle_P}\cH^{\bk_i} \D^{\bn_i} = u \sum_{i\in I} c_i q^{\la \bn_i, \bn\rangle_P}\cH^{\bk_i} \D^{\bn_i} \in u \cS,
\end{align*}
where the second equality follows from \eqref{e.normalizedtorus}. This proves $\fM$ is right Ore. Since the reflection anti-involution $\heta$ reverses the order of the multiplication and fixes each $\D$-monomial, $\fM$ is also left Ore. The proof that $\fM_\inn$ is Ore is identical, replacing $\fM$ by $\fM_\inn$ everywhere.
\end{proof}
Let us prove Theorem \ref{r.torus}(a). Since $\cS$ is a domain, the natural map $\cS \to \cS\fM^{-1}$, where $\cS\fM^{-1}$ is the Ore localization of $\cS$ at $\fM$, is injective.  Since Ore localization is flat, the inclusion $\tau :\fX_+(\D) \embed \cS$ induces an inclusion
\begin{equation}\lbl{eq.iso1}
  \tilde{\tau}:\fX_+(\D) \fM^{-1} \embed  \cS\fM^{-1}.
\end{equation}
 Note that $\fX_+(\D) \fM^{-1}= \fX(\D)$.
Let us prove $\tilde{\tau}$ is surjective.  After identifying $\fX_+(\D) \fM^{-1}$ as a subset of $\cS\fM^{-1}$ via $\tilde{\tau}$, it is enough to show that $\cS\subset \fX_+(\D) \fM^{-1}$.  This is guaranteed by Lemma \ref{r.monomial}, thus $\tilde{\tau}$ is an isomorphism.

Let $\vpD$ be the restriction of $\tilde{\tau}^{-1}$ onto $\cS$, then we have an embedding of $R[\cH]$-algebras $\vpD: \cS \embed \fX(\D)$ such that $\vpD \circ \tau$ is the identity on $\fX_+(\D)$. Any element $x\in\cS$ can be presented as $y \fm^{-1}$ with $y \in \fX_+(\D), \fm \in \fM$. This shows $\fX_+(\D)$ weakly generates $\cS$, and thus the uniqueness of $\vpD$ is clear.
 This proves (a).
 
(b) Inclusion \eqref{eq.incl2} follows from \eqref{eq.vpD}, and part (b) follows from Corollary \ref{r.sandwich}.

(c)  Let us prove that $\vpD$ is reflection invariant, i.e. for every $x \in \cS$ , we have
\begin{equation}\lbl{eq.4d}
  \vpD(\heta(x))= \heta(\vpD(x)).
\end{equation}
  Identity \eqref{eq.4d} clearly holds for the case when $x \in \D \cup \cH$. Hence it holds for $x$ in the $R$-algebra generated by $\D \cup \cH$, which is $\fX_+(\D) \subset \cS$. Since every element $x \in \cS$ can be presented in the form $y \fm^{-1}$, where $y \in \fX_+(\D), \fm \in \fM$, we also have \eqref{eq.4d} for $x$. This completes the proof of Theorem~\ref{r.torus}. \end{proof}

\begin{remark}\lbl{rem.Deltain} Lemma \ref{r.monomial} shows that $\vpD(\cS)$ lies in $\fX_+(\D) (\fM_\inn) ^{-1}$.
\end{remark}

\subsection{Flip and transfer}\lbl{sec.flipandtransfer}
For a quasitriangulation $\D$ of $\SP$, the map $\vpD: \cS\embed \sX(\D)$ will be called the {\em skein coordinate map}. We wish to understand how these coordinates change under change of quasitriangulation.
 
Let us first introduce the notion of a flip of a $\cP$-quasitriangulation.
\FIGc{figures/mutation}{Flip $a \to a^*$. Case 1}{3cm}
\FIGc{figures/quasiflip}{Flip $a \to a^*$. Case 2}{2.5cm}

Suppose $\D$ is a quasitriangulation of $\SP$ and $a$ is an inner edge in $\D$. The {\em flip of $\D$ at $a$} is a new quasitriangulation given by $\D' := \D \setminus \{a\} \cup \{ a^*\}$, where $a^*$ is the only $\cP$-arc not $\cP$-isotopic to $a$ such that $\D'$ is a quasitriangulation.
There are two cases:
\begin{itemize}
\item[\em Case 1.] $a$ is the common edge of two distinct triangles, see Figure \ref{fig:figures/mutation}.
\item[\em Case 2.] $a$ is the common edge of a holed monogon and a triangle, see Figure \ref{fig:figures/quasiflip}.
\end{itemize}
 In both cases, $a^*$ is depicted in Figures \ref{fig:figures/mutation} and \ref{fig:figures/quasiflip}.

Any two $\cP$-quasitriangulations are related by a sequence of flips, see eg \cite{Pe2012}, where a flip of case 2 is called a quasi-flip. If $\SP$ is a totally marked surface, then there is no flip of case 2. 

Suppose $\D, \D'$ are two quasitriangulations of $\SP$. Let
\[
\Theta_{\D,\D'}:= \tilde{\vp}_{\D'} \circ (\tvpD)^{-1}:  \tiX(\D)  \to \tiX(\D').
\]
By Theorem \ref{r.torus}, $\Theta_{\D,\D'}$ is an $R[\cH]$-algebra isomorphism from $\tiX(\D)$ to $\tiX(\D')$. We call $\Theta_{\D,\D'}$ the {\em transfer isomorphism from $\D$ to $\D'$}.

\begin{proposition} \lbl{r.42}
\begin{enumerate}[(a)]
\item The transfer isomorphism $\Theta_{\D,\D'}$ is natural. This means
\[
\Theta_{\D, \D}= \Id, \quad \Theta_{\D, \D''} = \Theta_{\D', \D''} \circ \Theta_{\D, \D'}.
\]

\item The skein coordinate maps $\vpD: \cS \embed \tiX(\D)$ commute with the transfer isomorphisms, i.e.
\[
\vp_{\D'} = \Theta_{\D,\D'} \circ \vpD.
\]

\item Suppose $\D'$ is obtained from $\D$ by a flip at an edge $a$, with $a$ replaced by $a^*$ as in  Figure \ref{fig:figures/mutation} (Case 1) or Figure \ref{fig:figures/quasiflip} (Case 2).
Identify $\cS$ as a subset of $\XD$ and $\tiX(\D')$. Then, with notations of edges as in Figure \ref{fig:figures/mutation} or Figure \ref{fig:figures/quasiflip}, we have 
\begin{align}\lbl{eq.aa0}
\Theta_{\Delta,\Delta'}(u)&= u \quad \text{ for} \ u\in \D \setminus \{a\},\\
\lbl{eq.aa1}
\Theta_{\Delta,\Delta'}(a) &=
 \begin{cases}
 [ce(a^*)^{-1}] + [ bd(a^*)^{-1}] \quad &\text{in Case 1}\\
 [b^2 (a^*)^{-1}] + [c^2 (a^*)^{-1}] + \beta [bc(a^*)^{-1}] &\text{in Case 2}.
 \end{cases}
\end{align}
\end{enumerate}
\end{proposition}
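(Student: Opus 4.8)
The plan is the following. Parts (a) and (b) are purely formal. For (a), $\Theta_{\D,\D'}=\tilde{\vp}_{\D'}\circ(\tvpD)^{-1}$ is a composition of two algebra isomorphisms through $\tcS\SP$, so $\Theta_{\D,\D}=\tvpD\circ(\tvpD)^{-1}=\Id$, and
\[
\Theta_{\D',\D''}\circ\Theta_{\D,\D'} \;=\; \tilde{\vp}_{\D''}\circ(\tilde{\vp}_{\D'})^{-1}\circ\tilde{\vp}_{\D'}\circ(\tvpD)^{-1} \;=\; \tilde{\vp}_{\D''}\circ(\tvpD)^{-1} \;=\; \Theta_{\D,\D''}.
\]
For (b), Theorem \ref{r.torus}(b) tells us that $\tvpD$ is the division-algebra isomorphism \emph{induced} by $\vpD$; that is, the square whose horizontal arrows are $\vpD,\tvpD$ and whose vertical arrows are the canonical inclusions $\cS\hookrightarrow\tcS\SP$ and $\fX(\D)\hookrightarrow\tiX(\D)$ commutes. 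Hence $(\tvpD)^{-1}\circ\vpD$ equals the inclusion $\cS\hookrightarrow\tcS\SP$, and post-composing with $\tilde{\vp}_{\D'}$ (and using the analogous commuting square for $\D'$) yields $\Theta_{\D,\D'}\circ\vpD=\tilde{\vp}_{\D'}|_{\cS}=\vpDp$.

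For (c), observe first that $\D\setminus\{a\}\subseteq\D\cap\D'$: every $u\in\D\setminus\{a\}$ is a common edge, so $\vpD(u)=X_u=\vpDp(u)$, and under the identification of $\cS$ with its image in $\fX(\D)$ this reads $\Theta_{\D,\D'}(u)=\vpDp(u)=u$, which is \eqref{eq.aa0}. By (b), $\Theta_{\D,\D'}(a)=\vpDp(a)$, so \eqref{eq.aa1} amounts to expressing the $\cP$-arc $a$, regarded as an element of $\cS\subset\fX(\D')$, in the coordinates attached to $\D'$. Since $a^*$ is an edge of $\D'$ it is invertible in $\fX(\D')$, so right-multiplying the claimed identity by $a^*=\vpDp(a^*)$ converts it into an equality of genuine elements of $\cS$: in Case 1, $a\cdot a^*=q^{m_1}[ce]+q^{m_2}[bd]$ for integers $m_1,m_2$ forced by the Weyl normalizations appearing in \eqref{eq.aa1}, and similarly in Case 2 with a third summand equal to a scalar multiple of $\beta$. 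Each of these identities involves only the $\cP$-arcs contained in the quadrilateral bounded by $b,c,d,e$ (Case 1), resp.\ in the union of the triangle $(a,b,c)$ and the holed monogon (Case 2) incident to $a$ and $a^*$; passing to a closed regular neighborhood of that region, which is an embedded subsurface, we reduce to checking the identity in the skein algebra of that small surface, since every skein relation used is supported there.

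The required local identity is obtained by resolving the crossings between $a$ and $a^*$ with the Kauffman bracket skein relation $T=qT_++q^{-1}T_-$. In Case 1, $\mu(a,a^*)=1$, and the two smoothings of the single crossing give the $\cP$-tangles $[bd]$ and $[ce]$ (no trivial-arc terms arise, since the sides of the quadrilateral join distinct marked points); isolating $a$ then reproduces Muller's quantum Ptolemy relation \cite[Theorem 6.14]{Mu2012}, whose proof is local and applies here verbatim. In Case 2, one of the two regions adjacent to $a$ is a holed monogon with boundary loop $\beta$, so after resolving the crossings of $a$ and $a^*$ and simplifying by the defining relations of $\cS$ near the monogon vertex (the trivial-arc and reordering relations, Figure \ref{fig:figures/boundary}) one is left with exactly three $\cP$-tangles: one carrying a loop $\cP$-isotopic to $\beta$, which after moving $\beta$ to the front contributes $\beta\,[bc(a^*)^{-1}]$, and two more contributing $[b^2(a^*)^{-1}]$ and $[c^2(a^*)^{-1}]$. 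The main obstacle is precisely this Case 2 computation: one must pin down each resolved diagram up to $\cP$-isotopy — in particular identify which resolution carries the boundary loop $\beta$ and which yield $b,c$ doubled — and then track the $q$-powers coming from the framing prefactor \eqref{e.simul}, the reordering relation, and the Weyl normalization \eqref{e.normalizedtorus} carefully enough that the three coefficients come out exactly as $1$, $1$, and $\beta$ and the monomials match \eqref{eq.aa1}. Once the Case 2 identity is verified in the skein algebra of the triangle-plus-monogon, parts (a)--(c) are complete.
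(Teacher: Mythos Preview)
Your approach is essentially the same as the paper's. The paper likewise dispatches (a), (b), and \eqref{eq.aa0} as formal consequences of the definition, cites the literature for Case~1, and for Case~2 reduces $\Theta_{\D,\D'}(a)=\tilde\vp_{\D'}(a)$ to the skein identity $aa^*=q^2b^2+q^{-2}c^2+\beta bc$ in $\cS$, then right-multiplies by $(a^*)^{-1}$ and checks that the resulting $q$-powers match the Weyl normalizations; the only difference is that the paper simply states this skein identity, whereas you describe how to obtain it by resolving the crossings of $a\cdot a^*$.
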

\begin{proof}
Parts (a) and (b) follow right away from the definition. Identity \eqref{eq.aa0} is obvious from the definition. 
Case 1 of \eqref{eq.aa1} is proven in \cite[Proposition 5.4]{Le2017}.

For case 2 of \eqref{eq.aa1}, we have that $\Theta_{\Delta,\Delta'}(a) = \tilde{\vp}_{\D'}(a)$. To compute this, we note that in $\cS$, $aa^*=q^2b^2+q^{-2}c^2+\beta bc$. Then
\begin{align*}
\tilde{\vp}_{\D'}(aa^*) & = \tilde{\vp}_{\D'}(q^2b^2) + \tilde{\vp}_{\D'}(q^{-2}c^2) + \tilde{\vp}_{\D'}(\beta bc), \\
\tilde{\vp}_{\D'}(a)\tilde{\vp}_{\D'}(a^*) &= q^2b^2+q^{-2}c^2+\beta bc, \\
\tilde{\vp}_{\D'}(a)a^* &=  q^2b^2+q^{-2}c^2+\beta bc
\end{align*}
Multiply both sides on the right by $(a^*)^{-1}$ and note that the $q$ factors agree with Weyl normalization. Therefore,
\[
\tilde{\vp}_{\D'}(a) = q^2b^2(a^*)^{-1} + q^{-2}c^2(a^*)^{-1} + \beta bc(a^*)^{-1} = [b^2(a^*)^{-1}] + [c^2(a^*)^{-1}] + \beta [bc(a^*)^{-1}].
\]
\end{proof}

\chapter{Surgery theory}\lbl{c.surgery}

In this chapter we introduce a new surgery theory for skein algebras of marked surfaces via embedding in a modified version of the Muller algebra we call the surgery algebra. The inclusion of unmarked boundary components in the theory allows us to describe how the skein coordinates change under modifications of surfaces. We consider two possible modifications: adding a marked point and plugging an unmarked boundary component with a disk. The results of this chapter will be used in the proof of the existence of the Chebyshev-Frobenius homomorphism in Chapter \ref{c.cf}, particularly for Proposition \ref{r.knot}.

In Section \ref{sec.suralg} we introduce the surgery algebra, a subalgebra of the Muller algebra which is still large enough for the skein algebra to embed into but which replaces the inverse of monogon edges with the flip of that monogon edge. In Section \ref{sec.addmarked} we study the simple case of adding a marked point and the associated transition maps between surgery algebras. In Section \ref{sec.holeplug} we discuss the more complicated case of plugging an unmarked boundary component with a disk and associated transition maps between surgery algebras.

Throughout this chapter, a quasitriangulable marked surface $\SP$ is fixed and we write $\cS:=\cS\SP$.

\section{Surgery algebra}\lbl{sec.suralg}

Let $\D$ be a quasitriangulation of $\SP$. 
Identify $\cS$ as a subset of $\XD$ using the skein coordinate map $\vD$. 
Recall that $\D_\mon$ is the set of all monogon edges. Let $\Dess:= \D \setminus \D_\mon$.

Suppose $\beta\in \cH$ is an unmarked component whose monogon edge is $a_\beta$. Let $\Sigma'$ be the result of gluing a disk to $\Sigma$ along $\beta$. We will say that $(\Sigma', \cP)$ is obtained from $\SP$ by {\em plugging 
 the unmarked $\beta$}.   Then $a_\beta$ becomes 0 in $\cS(\Sigma', \cP)$, while it is invertible in $\XD$. For this reason we want to find a subalgebra $\fXsurD$ of $\XD$ in which $a_\beta$ is not invertible, but we still have $\cS \subset \fXsurD$.

For $a\in \D_\mon$ choose a $\cP$-arc $a^*$ such that $\D\setminus \{a\} \cup \{a^*\}$ is a new quasitriangulation, ie it is the result of the flip of $\D$ at $a$. Let $\D_\mon^*:= \{ a^* \mid a \in \D_\mon\}$. 
For $a\in \D_\mon$ let $(a^*)^*=a$.

The {\em surgery algebra}  $\fXsurD$ is the $R[\cH]$-subalgebra of $\sX(\D)$ generated by $a^{\pm 1}$ with $a\in \Dess$, and  all $a\in  \D_\mon \cup \D_\mon^*$. Thus in $\fXsurD$ we don't have $a^{-1}$ (for  $a\in \D_\mon$) but we do have $a^*$, which will suffice in many applications. With the intention to replace $a^{-1}$ by $a^*$, we introduce the following definition: for $a\in \D$ and $k\in \BZ$ define
$$ a^{\{k\}} = \begin{cases} (a^*)^{-k} \qquad & \text{if  $a\in \D_\mon$ and $k <0$ }\\
a^k & \text{in all other cases.}\end{cases}
$$
For $\bk\in \BZ^\D$ define
$$ \D^{\{\bk\}} := \left [ \prod_{a\in \D} a^{\{\bk(a)\}}   \right].$$

\begin{proposition}\lbl{p.ztox}

(a)  As an $R[\cH]$-algebra, $\fXsurD$ is 
generated by $\D\cup \D_\mon^*$ and $a^{-1}$ for $a\in
\Dess$,  
subject to the following relations: 
 \begin{align}
xy &= q^{P(x,y)}yx \quad &\text{if}& \  (x,y) \neq (a, a^*) \ \text{for all } \ a\in (\D_\mon \cup \D_\mon^*)   \lbl{eq.PP} \\
a a^*& = q^2b^2+q^{-2}c^2+\beta bc, &\text{if}& \ a\in (\D_\mon \cup \D_\mon^*).
 \lbl{e.zrelations}
 \end{align}
Here, for  the case where $a \in \D_\mon \cup \D_\mon^*$, we denote by $\beta$  the unmarked boundary component surrounded by $a$, and the edges $b,c$  are  as in Figure \ref{fig:figures/quasiflip}.

(b) The skein algebra $\cS$ is a subset of $\fXsurD$ for any quasitriangulation $\D$.

(c) As an $R[\cH]$-module, $\fXsurD$ is free with basis
$$\BDs:= \{ \D^{\{\bk\}} \mid \bk \in \BZ^\D\}.$$
\end{proposition}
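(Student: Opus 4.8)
The plan is to prove (a) and (c) together — via an induction that simultaneously handles a mild generalization — and then to read off (b) from Theorem \ref{r.torus}. For a subset $\cM\subseteq\D_\mon$, let $\fXsur(\D;\cM)$ be the $R[\cH]$-subalgebra of $\sX(\D)$ generated by $a^{\pm1}$ for $a\in\D\setminus\cM$ and by $a,a^*$ for $a\in\cM$, and define $\D^{\{\bk\}}_\cM$ by the rule in the statement with $\D_\mon$ replaced by $\cM$; the proposition is the case $\cM=\D_\mon$, and I induct on $|\cM|$. The relations \eqref{eq.PP} hold because every element of $\D\cup\D_\mon^*$ is a $\cP$-arc, a flip changes only one edge so $a^*$ meets no other member of $\D\cup\D_\mon^*$ except $a$, and non-intersecting $\cP$-arcs $q$-commute with exponent $P$ by the reordering relation as in \eqref{eq.35}; the relations \eqref{e.zrelations} are the skein resolution of the single crossing of $a$ with $a^*$, i.e.\ the identity $aa^*=q^2b^2+q^{-2}c^2+\beta bc$ already computed in the proof of Proposition \ref{r.42}. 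Using \eqref{eq.PP} to sort any word in the generators, and \eqref{e.zrelations} to eliminate every remaining product $aa^*$ or $a^*a$ (rewriting it in the edges $b,c$, which strictly lowers the number of such mixed pairs), shows that the abstract $R[\cH]$-algebra $A$ presented by these generators and relations is $R[\cH]$-spanned by $\{\D^{\{\bk\}}_\cM:\bk\in\BZ^\D\}$; the tautological surjection $A\twoheadrightarrow\fXsur(\D;\cM)$ sends this set onto the analogous subset of $\fXsur(\D;\cM)$.

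The heart of the matter is the $R[\cH]$-linear independence of $\{\D^{\{\bk\}}_\cM\}$ inside $\sX(\D)$; granting this, the spanning set is a basis (proving the stated (c)), and the kernel of $A\to\fXsur(\D;\cM)$ is zero (proving (a)). For $\cM=\emptyset$ this is the standard monomial basis of $\sX(\D)$. For the inductive step, fix $a_0\in\cM$; the triangle abutting the holed monogon bounded by $a_0$ has its other two edges $b,c$ in $\Dess$, because a triangle of a quasitriangulation contains at most one monogon edge (a monogon edge being a based loop). By Proposition \ref{r.42}(c), $\vpD(a_0^*)=[X_b^2X_{a_0}^{-1}]+[X_c^2X_{a_0}^{-1}]+\beta[X_bX_cX_{a_0}^{-1}]$, so $a_0^*$ is redundant once $X_{a_0}^{-1}$ is available; since $X_{a_0}$ is normal, $\fXsur(\D;\cM)[X_{a_0}^{-1}]=\fXsur(\D;\cM\setminus\{a_0\})$ is an Ore localization, and by the inductive hypothesis it is $R[\cH]$-free on $\{\D^{\{\bl\}}_{\cM\setminus\{a_0\}}\}$. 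Substituting $\vpD(a_0^*)^{-\bk(a_0)}$ for $a_0^{\{\bk(a_0)\}}$ whenever $\bk(a_0)<0$ gives
\[
\D^{\{\bk\}}_\cM = u_\bk\,\D^{\{f(\bk)\}}_{\cM\setminus\{a_0\}} + (\text{terms of strictly smaller }b\text{-exponent}),
\]
with $u_\bk$ a power of $q^{1/2}$ and $f$ the bijection of $\BZ^\D$ sending $\bk$ to the vector agreeing with $\bk$ except that $\bk(b)$ is replaced by $\bk(b)-2\bk(a_0)$ (the identity when $\bk(a_0)\ge0$). Since the leading term strictly maximizes the $b$-exponent among all terms, a vanishing finite $R[\cH]$-combination of the $\D^{\{\bk\}}_\cM$, expanded in the free family $\{\D^{\{\bl\}}_{\cM\setminus\{a_0\}}\}$, would have a nonzero coefficient at the $\prec$-maximal $f(\bk)$ occurring, a contradiction; this closes the induction.

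For (b): by Theorem \ref{r.torus} one has $\fX_+(\D)\subseteq\cS\subseteq\sX(\D)$, and $\cS$ is generated over $R[\cH]$ by $\cP$-arcs and $\cP$-knots (an essential $\cP$-tangle being the Weyl-normalized product of its components), so it suffices to show each such curve lies in $\fXsurD$. I would induct on $\sum_{e\in\D}\mu(e,\cdot)$: if the curve meets an essential edge $e\in\Dess$, then multiplying by $e$ and resolving a crossing writes it as $e^{-1}$ times a sum of $\cP$-tangles of strictly smaller complexity, and $e^{-1}\in\fXsurD$; if it meets no essential edge, then (the holed monogons being disjoint annuli and each triangle carrying at most one monogon edge) it is isotopic into a single holed monogon or a single triangle, hence isotopic to a member of $\D\cup\D_\mon^*\cup\cH$, which lies in $\fXsurD$.

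The main obstacle is the linear-independence step: getting the triangular change of basis exactly right, in particular confirming that the triangle edges $b,c$ really are essential (so that substituting powers of $\vpD(a_0^*)$ leaves untouched the exponents tracked in the induction) and that $f$ is a bijection of exponent vectors. The underlying combinatorial facts about quasitriangulations used in both (a) and (b) — disjointness of holed monogons, at most one monogon edge per triangle, and the local shape of a flip at a monogon edge — are routine but must be pinned down from the definitions.
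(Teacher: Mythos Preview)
Your approach to (a)/(c) is genuinely different from the paper's: you attempt a purely algebraic triangular change-of-basis after Ore-localizing at one monogon variable, while the paper proves linear independence of $\BDps$ by observing that each $\D^{\{\bk\}}$ is (the Weyl normalization of) an honest essential $\cP$-tangle---the arcs involved are pairwise disjoint since one never uses $a$ and $a^*$ simultaneously---and distinct $\bk$'s give distinct elements of the preferred basis $B_{\SP}$ of $\cS$. This geometric argument is short and, crucially, does not require knowing anything about the edges $b,c$ adjacent to a monogon edge.

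Your argument, by contrast, hinges on the claim that ``a triangle of a quasitriangulation contains at most one monogon edge'', and this claim is false. Take $\Sigma$ a pair of pants with one marked point on one boundary component and the other two boundary components $\beta_1,\beta_2$ unmarked. An Euler-characteristic count gives $e_{\mathrm{inn}}=2$ and exactly one triangle; the two inner edges are the monogon edges $a_1,a_2$, and the unique triangle has edges $a_1,a_2,d$ with $d$ the boundary arc. Hence $b=a_2\in\D_\mon$, not in $\Dess$. Now in your inductive step with $\cM=\{a_1,a_2\}$ and $a_0=a_1$, whenever $\bk(a_2)<0$ the factor $a_2^{\{\bk(a_2)\}}=(a_2^*)^{-\bk(a_2)}$ meets the positive powers $X_{a_2}^{i}$ coming from the expansion of $(a_1^*)^m$; the product $(a_2^*)^{\ell}\,X_{a_2}^{\,i}$ is not an $\cM\setminus\{a_0\}$-monomial, and reducing it via \eqref{e.zrelations} destroys the triangular structure you need. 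No ordering of the monogon edges avoids this, since whichever one you remove first, the other remains in $\cM$ and sits on the same triangle. Your parenthetical justification (``a monogon edge being a based loop'') does not rule this out: when all three corners of the triangle map to the same marked point, all three edges are based loops.

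Your sketch for (b) inherits the same problem. The base case ``meets no edge of $\Dess$'' does not force the curve into a single triangle or holed monogon; after cutting along $\Dess\cap\D_{\mathrm{inn}}$ one obtains triangles and \emph{eyes} (triangle glued to its holed monogon), and in the pair-of-pants example above there is nothing to cut at all, so the piece is the whole surface, which carries infinitely many non-isotopic essential $\cP$-arcs. The paper handles (b) via Lemma~\ref{r.monomial} and Lemma~\ref{r.sinczp}, landing in $\fXsurDp$ rather than claiming each component is literally an edge; you should compare with that route rather than the triangle/holed-monogon dichotomy.
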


It should be noted that $P(x,y)$  in \eqref{eq.PP} is well-defined  since $x,y$ do not intersect in $\Sigma \setminus \cP$.

\begin{proof} (a) Let us redefine $\fXsurD$ so that it has the presentation given in the proposition. Recall that $\cS\subset \XD$, hence $a^*\in \XD$. Define an  $R[\cH]$-algebra homomorphism $\tsur: \fXsurD \to \XD$ by $\tsur(a)= a$ for all $a\in \D \cup \D_\mon^*$. Since all the defining relations of $\fXsurD$ are satisfied in $\XD$, the homomorphism $\tsur$ is well-defined. To prove (a) we need   to show that $\tsur$ is injective. 

 Let $\fXsurDp \subset \fXsurD$ be the $R[\cH]$-algebra generated by all $a\in \D \cup \D_\mon^*$.

\begin{lemma}\lbl{r.sinczp}
Let $T \subset \cS$ be an essential $\cP$-tangle such that $\mu(T,b)=0$ for all $b \in \Dess$. Then $T \in \fXsurDp$.
\end{lemma}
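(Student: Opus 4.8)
The plan is to put $T$ in a position compatible with the sub-triangulation $\Dess$ and then identify its components directly as generators of $\fXsurDp$. First, by Lemma~\ref{l.FHS} I would replace $T$ by a $\cP$-isotopic copy that is simultaneously taut with every edge of $\Dess$; since $\mu(T,b)=0$ for each $b\in\Dess$, tautness forces $T\cap b=\emptyset$ in $\Sigma\setminus\cP$ for all $b\in\Dess$, so (this representative of) $T$ lies in $\Sigma\setminus\bigcup_{b\in\Dess}b$.

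Second, I would analyze this complementary surface. Because $\D_\inn$ cuts $\Sigma$ into triangles and holed monogons, while $\Dess\cap\D_\inn=\D_\inn\setminus\D_\mon$, each connected component of $\Sigma\setminus\bigcup_{b\in\Dess}b$ is a single triangle of $\D$ with the holed monogons glued back along whichever of its edges are monogon edges; an orientability and Euler characteristic count shows a triangle carries at most two monogon edges, so each such region is an annulus or a pair of pants. Working inside one such region, each component $x_i$ of $T$ is an embedded $\cP$-knot or $\cP$-arc whose endpoints, if any, lie at the marked points on that region's boundary. The key claim to verify is that, up to $\cP$-isotopy, $x_i$ is either (i) trivial, i.e.\ bounds a disk, (ii) an edge $a\in\D$, (iii) a flipped monogon edge $a^*\in\D_\mon^*$, or (iv) an unmarked boundary component $\beta\in\cH$. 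Case (i) cannot occur because $T$ is essential, and in the remaining cases $x_i$ is one of the $R[\cH]$-algebra generators of $\fXsurDp$ (recall $\fXsurDp$ is generated over $R[\cH]$, which already contains every $\beta\in\cH$, by $\D\cup\D_\mon^*$); hence $x_i\in\fXsurDp$.

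Third, writing $T=x_1\cup\dots\cup x_m$ for its components, the reordering relation (Figure~\ref{fig:figures/boundary}) gives $T=[x_1x_2\cdots x_m]=q^{-\frac12\sum_{i<j}\cC(x_i,x_j)}\,x_1x_2\cdots x_m$, a unit of $R$ times a product of elements of the subalgebra $\fXsurDp$, whence $T\in\fXsurDp$.

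The main obstacle is the component classification in the second step. A component of $T$ is permitted to meet monogon edges, since those are not in $\Dess$, and so may wind through one or more holed monogons and cross monogon edges several times; one must rule out any ``exotic'' essential $\cP$-arc or $\cP$-loop in the annular and pair-of-pants regions. For $\cP$-arcs this is controlled by the maximality of the quasitriangulation (an essential $\cP$-arc disjoint from every edge of $\D$ would enlarge $\D$), and for $\cP$-loops by the fact that a loop disjoint from every edge of $\D$ sits inside a holed monogon and is therefore $\cP$-isotopic to the corresponding $\beta\in\cH$; the components that genuinely cross monogon edges are analyzed by the explicit local picture of a monogon flip, so that they are identified with flipped monogon edges, with the flip relation~\eqref{e.zrelations} in $\cS$ available should a rewriting be needed. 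Making this last step airtight is the delicate part of the argument.
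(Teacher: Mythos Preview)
Your approach is essentially the paper's. The paper's proof is the three--line version of exactly what you outline: isotope $T$ off $\Dess$ via Lemma~\ref{l.FHS}, observe that cutting $\Sigma$ along $\Dess$ yields ``ideal triangles and \emph{eyes}'' (an eye being a bigon with an open disk removed, i.e.\ your triangle with one holed monogon glued on), and then assert that any essential component lying in a triangle or an eye is $\cP$-isotopic to an element of $\D\cup\D_\mon^*\cup\cH$, hence lies in $\fXsurDp$.

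Two small comments. First, in your second step you write ``each such region is an annulus or a pair of pants''; you have dropped the most common case, the plain triangle (zero monogon edges), which is a disk. Second, your main worry --- classifying the components that cross monogon edges --- is precisely what the paper dispatches by working in an eye: in a bigon-with-hole with marked points $p',p''$ the only essential simple arcs are the two bigon edges $b,c$, the monogon edge $a$ at $p'$ and the flipped monogon edge $a^*$ at $p''$, and the only essential simple loop is the unmarked boundary $\beta$. (Your instinct that there might be ``exotic'' arcs from $p'$ to $p''$ winding several times around the hole is unfounded: such arcs are never embedded once the winding exceeds that of the two boundary arcs --- this is the same phenomenon as in an ordinary annulus.) So the ``delicate part'' you flagged is exactly the content of the paper's one-line classification inside an eye, and once you accept that, your proof and the paper's coincide.
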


\begin{proof} After a $\cP$-isotopy we can assume $T$ does not intersect any $b\in \Dess$ in $\Sigma \setminus \cP$ by Lemma \ref{l.FHS}.
Cutting $\Sigma$ along $\Dess$, one gets a collection of ideal triangles and {\em eyes}. Here  an {\em eye} is a bigon with a small open disk removed, see Figure \ref{fig:figures/eye}. Each eye has  two  quasitriangulations. 
If $x$ is a component of $T$, then $x$, lying inside of a triangle or an eye, must be $\cP$-isotopic to an element in $\D \cup \D_\mon^* \cup \cH$, which implies $x \in \fXsurDp$. Hence $T \in \fXsurDp$.
\end{proof}
 
\FIGc{figures/eye}{An {\em eye} (left), and its two quasitriangulations.}{2cm}

Recall from Subsection \ref{s.dmonomials} the notion of $D$-monomial for $D \subset \D$.

\begin{lemma}\lbl{l.surgerybasis} 
(i) The  set 
$$
\BDps := \{ \D^{\{\bk\}} \mid \bk \in \BN^{\Dess} \times \BZ^{\D_{\mon}}\}.
$$
is an $R[\cH]$-basis for $\fXsurDp$.
The map $\tsur$ maps $\fXsurDp$ injectively into $\cS$.

(ii)  The multiplicative set $\fN$ generated by all $\Dess$-monomials is a two-sided Ore set of $\fXsurDp$ and $\fXsurD= \fXsurDp \fN^{-1}$.
\end{lemma}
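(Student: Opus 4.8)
The plan for (i) is to prove separately that $\BDps$ spans $\fXsurDp$ over $R[\cH]$ and that its image under $\tsur$ is $R[\cH]$-linearly independent in $\XD$; together these give that $\BDps$ is a basis and that $\tsur$ restricted to $\fXsurDp$ is injective. That $\tsur(\fXsurDp)\subseteq\cS$ needs no argument: every generator $a\in\D$ and $a^*\in\D_\mon^*$ of $\fXsurDp$ is a $\cP$-arc, hence lies in $B_\SP\subseteq\cS$.

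For spanning, let $V\subseteq\fXsurDp$ be the $R[\cH]$-span of $\BDps$. Since $1=\D^{\{0\}}\in V$, it suffices to show $V$ is closed under right multiplication by each generator. By \eqref{eq.PP}, any two generators $q$-commute except the pairs $(a,a^*)$ with $a\in\D_\mon$; therefore right multiplication of a monomial $\D^{\{\bk\}}$ by a generator either shifts a single coordinate of $\bk$ by $\pm1$ (giving an element of $\BDps$ up to a power of $q$) or produces a forbidden adjacency $aa^*$ or $a^*a$, which is removed using the quadratic relation \eqref{e.zrelations}. The right-hand side of \eqref{e.zrelations} involves only edges of $\D$ (plus a factor $\beta\in\cH$ absorbed into the coefficient ring) and no starred generator, so each use of \eqref{e.zrelations} strictly decreases the number of starred generators occurring in a monomial while the commutations of \eqref{eq.PP} preserve it; an induction on this number (or the Diamond Lemma, with \eqref{eq.PP} oriented by a monomial order) shows every monomial in the generators rewrites to an $R[\cH]$-combination of $\D^{\{\bk\}}$ with $\bk\in\BN^{\Dess}\times\BZ^{\D_\mon}$, so $V=\fXsurDp$.

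For linear independence, observe that $\tsur(\D^{\{\bk\}})$ is a single preferred-basis element of $\cS$. Because $\tsur$ is an algebra map preserving the $q$-commutation exponents, $\tsur(\D^{\{\bk\}})=\big[\,\prod_{a\in\D}a^{\{\bk(a)\}}\,\big]$ computed in $\cS$, and by the reordering relation this Weyl-normalized product of pairwise $q$-commuting $\cP$-arcs equals the $\cP$-tangle $T_\bk$ obtained as the disjoint union, with multiplicities recorded by $\bk$, of copies of the arcs $\{a\mid a\in\D\}$ and $\{a^*\mid a\in\D_\mon\}$. All of these arcs are pairwise non-crossing in $\Sigma\setminus\cP$ — the only crossing pair being $(a,a^*)$ for a fixed $a\in\D_\mon$, which never occurs in $T_\bk$ since $a^{\{k\}}$ contributes copies of $a$ for $k\ge0$ and copies of $a^*$ for $k<0$ — and they are pairwise non-$\cP$-isotopic (each $a^*$ is the new edge of a flip, hence isotopic to no edge of $\D$, and $a^*,b^*$ for distinct monogon edges surround the distinct unmarked components $\beta_a\ne\beta_b$). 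Hence $T_\bk$ is an essential $\cP$-tangle whose component multiplicities recover $\bk$, so $\bk\mapsto T_\bk$ is injective into the preferred $R[\cH]$-basis of $\cS$ (no component is an unmarked boundary loop), which gives the desired $R[\cH]$-linear independence. Together with the spanning this proves $\BDps$ is an $R[\cH]$-basis of $\fXsurDp$ and that $\tsur|_{\fXsurDp}$ is injective, establishing (i).

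For (ii), each $\Dess$-monomial $q$-commutes with every generator, so $\fN$ is (up to powers of $q$) a set of normalizing elements of $\fXsurDp$ and the two-sided Ore condition for $\fN$ is immediate (the left condition using $\heta$, which fixes $\Dess$-monomials), so $\fXsurDp\fN^{-1}$ exists. Localizing the basis of part (i) in the $\Dess$-directions shows $\fXsurDp\fN^{-1}$ is free over $R[\cH]$ on $\BDs=\{\D^{\{\bk\}}\mid\bk\in\BZ^\D\}$ and is generated over $R[\cH]$ by $a^{\pm1}$ ($a\in\Dess$) together with $a,a^*$ ($a\in\D_\mon$), subject to exactly \eqref{eq.PP} and \eqref{e.zrelations}; comparing with the presentation in part (a), the natural surjection $\fXsurDp\fN^{-1}\to\fXsurD$ (surjective since $\fXsurD$ is generated by $\fXsurDp$ together with the inverses $a^{-1}$, $a\in\Dess\subseteq\fN$) is an isomorphism — injectivity follows by composing with $\tsur$, which is injective on $\fXsurDp$ and sends $\fN$ into the units of $\XD$. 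Thus $\fXsurD=\fXsurDp\fN^{-1}$, and this also yields part (c). The one genuinely delicate point is the termination of the normal-form rewriting in the spanning step, since \eqref{e.zrelations} for one monogon edge may reintroduce another monogon edge; tracking the number of starred generators is exactly what makes the induction terminate, and everything else is routine bookkeeping.
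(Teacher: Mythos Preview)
Your proof is correct and follows essentially the same approach as the paper: for (i) you reduce arbitrary monomials to $\BDps$ by repeatedly eliminating adjacent $a,a^*$ pairs via \eqref{e.zrelations} (the paper's induction is on the number of such pairs, yours on the number of starred letters, which is equivalent), and then read off linear independence from the geometric realization in $B_\SP$; for (ii) both arguments rest on the fact that $\Dess$-monomials $q$-commute with all generators, though the paper phrases this as ``$\fXsurD$ is already a ring of fractions of $\fXsurDp$'' while you verify the Ore condition first and then identify the localization with $\fXsurD$ via $\tsur$. One small remark: your phrase ``comparing with the presentation in part (a)'' is slightly off, since at this point in the paper $\fXsurD$ is \emph{defined} by that presentation---but your actual argument (the natural map $\fXsurDp\fN^{-1}\to\fXsurD$ is onto by generators and injective because the composite with $\tsur$ is) is fine.
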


\begin{proof} 
(i) Clearly $\fXsurDp$ is $R[\cH]$-spanned by monomials in elements of $\D \cup \D_\mon^*$. 
Since each $b\in \Dess$ will $q$-commute with any element of $\D \cup \D_\mon^*$, 
every monomial in elements of $\D \cup \D_\mon^*$ is equal to, up to a factor which is a power of $q$, an element of the form
\be
\lbl{eq.1q}
x= a_1 \dots a_l \,  \Dess^\bk, \quad \bk\in \BN^{\Dess},
\ee
where $a_i \in \D_\mon\cup \D_\mon^*$ and $\Dess^\bk$ is understood to be a $\Dess$-monomial. Each $a \in \D_\mon\cup \D_\mon^*$ commutes with every element of $\D_\mon\cup \D_\mon^*$ except for $a^*$. 
If for any $a\in \D_\mon \cup \D_\mon^*$, $\{a,a^*\} \not\subset \{a_1,\dots, a_l\}$, then  $x\in \BDps$, up to a factor which is a power of $q$.

If $\{a,a^*\} \subset \{a_1,\dots, a_l\}$, then we can permute the product to bring one $a$ next to one $a^*$, and relation \eqref{e.zrelations} shows that $x$ is equal to an $R[\cH]$-linear combination of elements of the form \eqref{eq.1q} each of which have a smaller number of $a,a^*$. Induction shows that elements $x$ of the form \eqref{eq.1q} are linear combinations of elements of the same form \eqref{eq.1q} in which not both $a$ and $a^*$ appear for every $a\in \D_\mon \cup \D_\mon^*$. This shows $\BDps$ spans $\fXsurDp$ as an $R[\cH]$-module.

The geometric realization of elements in $\BDps$ (i.e. their image in $\cS$) shows that $\tsur$ maps $\BDps$ injectively into the preferred basis $B_{\SP}$ of $\cS$.
 This shows that $\BDps$ must be $R[\cH]$-linearly independent, that $\BDps$ is an $R[\cH]$-basis of $\fXsurDp$, and that $\tsur$ maps $\fXsurDp$ injectively into $\cS$.
 
(ii) As $\fXsurDp$ embeds in $\cS$, which is a domain, $\fN$ contains only regular elements.
To get $\fXsurD$ from $\fXsurDp$ we need to invert all $a\in \Dess$. 
 As every $a\in \Dess$ will $q$-commute with any other generator, 
every element  of $\fXsurD$ has the form $x \fn^{-1}$ and also the form $(\fn')^{-1} x'$, where $x,x'\in\fXsurDp$ and $\fn, \fn'$ are $\Dess$-monomials. Thus $\fXsurD$ is a  ring of fractions of $\fXsurDp$ with respect to $\fN$. This shows that $\fN$ is a two-sided Ore set of $\fXsurDp$ and $\fXsurD = \fXsurDp \fN^{-1}$.
\end{proof}

Suppose $\tsur(x)=0$ where $x\in \cZ$. Then $x=y \fn^{-1}$ for some $y\in \fXsurDp$ and  $\fn\in \fN$. Hence $\tsur(y) = \tsur(x) \tsur(\fn)=0$. 
 Lemma \ref{l.surgerybasis} shows $y=0$. Consequently $x=0$. This proves the injectivity of~$\tsur$.

(b) Suppose $x \in \cS$. Using Lemma \ref{r.monomial} with $D=\Dess$, there is a $\Dess$-monomial $\mathfrak{n}$ such that   $\mu(a, x \mathfrak{n}) =0$ for all $a\in \Dess$. Then $x \mathfrak{n}$ is an $R$-linear combination of essential $\cP$-tangles which do not intersect any edge in $\Dess$ by Lemma \ref{l.FHS}. By Lemma \ref{r.sinczp}, it follows that $x \mathfrak{n} \in  \fXsurDp$.
Hence $x = (x \mathfrak{n}) \mathfrak{n}^{-1} \in \fXsurDp  \fN^{-1}  = \fXsurD$. This proves $\cS \subset \fXsurD$.

(c) As any element of $\fXsurD$ may be written as $x \fn^{-1}$, where $x\in\fXsurDp$ and $\fn$ is a $\Dess$-monomial, and $\BDps$ spans $\fXsurDp$ as an $R[\cH]$-module, we have that $\BDs$ spans $\fXsurD$ as an $R[\cH]$-module. On the other hand, suppose we have a  $R[\cH]$-linear combination of
$\BDs$ giving 0:
$$ \sum_i c_i\,  \D^{\{\bk_i\}} =0, \quad c_i \in R[\cH].$$
Multipliying on the right by $(\Dess)^\bk$ where $\bk(a)$ is sufficiently large for each $a\in \Dess$, we get
$$ \sum_i q^{l_i/2}\, c_i \, \D^{\{\bk'_i\}} =0, \quad l_i \in \BZ,$$
where $\bk'_i(a) \ge 0$ for all $a\in \Dess$. This means each $\D^{\{\bk'_i\}}$ is in $\BDps$, an $R[\cH]$-basis of $\fXsurDp$. It follows that $c_i=0$ for all $i$. Hence, $\BDs$ is 
linearly independent over 
$R[\cH]$, and consequently an $R[\cH]$-basis of $\fXsurD$.
\end{proof}

\begin{lemma}\lbl{l.skeinsurgeryextension} An $R$-algebra homomorphism
$g: \cS \to A$  extends to an $R$-algebra homomorphism $\fXsurD \to A$ if and only if $g(a)$ is invertible for all $a \in \Dess$, and furthermore the extension is unique.

\end{lemma}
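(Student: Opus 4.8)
The plan is to recognize $\fXsurD$ as an Ore localization of $\cS$ and then to deduce the lemma from the universal property of localization together with the notion of weak generation from Subsection~\ref{sec.weakgeneration}.

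First I would assemble the inclusions $\fXsurDp \subseteq \cS \subseteq \fXsurD$. The first holds because $\fXsurDp$ is generated over $R[\cH]$ by $\D \cup \D_\mon^*$, and every edge of $\D$, every $a^*\in\D_\mon^*$, and every $\beta\in\cH$ is a $\cP$-arc or an unmarked boundary component, hence an element of $\cS$; the second is Proposition~\ref{p.ztox}(b). By Lemma~\ref{l.surgerybasis}(ii), the multiplicative set $\fN$ generated by the $\Dess$-monomials is a two-sided Ore set of $\fXsurDp$ with $\fXsurDp\fN^{-1} = \fXsurD$. Since $\fXsurDp \subseteq \cS \subseteq \fXsurDp\fN^{-1} = \fXsurD$, Proposition~\ref{r.sandwich0}(a) then gives that $\fN$ is also a two-sided Ore set of $\cS$ and that $\cS\fN^{-1} = \fXsurD$. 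In particular $\cS$ weakly generates $\fXsurD$, which immediately yields uniqueness of any extension.

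For the ``only if'' direction: each $a\in\Dess$ is a unit of $\fXsurD$ by the very definition of the surgery algebra, so if $g$ extends to an $R$-algebra homomorphism $\hat g\colon\fXsurD\to A$ then $g(a)=\hat g(a)$, being the image of a unit, must be a unit of $A$. For the ``if'' direction I would invoke the universal property of the Ore localization $\cS \hookrightarrow \cS\fN^{-1}$ (cf.\ \cite[Theorem~6.2]{GW2004}): $g$ extends to $\cS\fN^{-1} = \fXsurD$ precisely when $g(\fn)$ is invertible in $A$ for every $\fn\in\fN$. Every $\fn\in\fN$ is, up to a power of the invertible scalar $q^{1/2}\in R$ coming from the Weyl normalization, a product $\prod_{a\in\Dess}a^{\bn(a)}$ with $\bn\in\BN^{\Dess}$; hence, under the hypothesis that each $g(a)$ with $a\in\Dess$ is invertible, $g(\fn)$ is a product of units and therefore a unit. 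This produces the desired extension, and the resulting map restricted to $\cS$ is $g$ by construction.

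There is no serious obstacle here; the only points that need a little care are verifying the inclusion $\fXsurDp\subseteq\cS$ cleanly so that Proposition~\ref{r.sandwich0} applies, and checking that the generators of $\fN$ are, up to units of $R$, products of the invertible generators $a\in\Dess$, so that invertibility of the $g(a)$ is exactly the condition furnished by the universal property.
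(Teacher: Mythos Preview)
Your proposal is correct and follows essentially the same route as the paper: establish $\fXsurDp\subset\cS\subset\fXsurD=\fXsurDp\fN^{-1}$, apply Proposition~\ref{r.sandwich0} to conclude $\fXsurD=\cS\fN^{-1}$, and finish with the universal property of Ore localization. You have simply spelled out the details (the two directions, uniqueness via weak generation, and why invertibility of the $g(a)$ for $a\in\Dess$ forces invertibility of every $\fn\in\fN$) that the paper leaves implicit in the single phrase ``the lemma follows from the universality of the Ore extension.''
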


\begin{proof} We have $\fXsurDp\subset \cS\subset \fXsurD= \fXsurDp \fN^{-1}$. By Proposition \ref{r.sandwich0}, $\fN$ is a two-sided Ore subset of $\cS$ and $\fXsurD= \cS \fN^{-1}$. The lemma follows from the universality of the Ore extension. 
\end{proof}

\section{Adding marked points}\lbl{sec.addmarked}

Let $p\in \pS \setminus \cP$, and $\cP'= \cP \cup \{ p\}$. The identity map $\iota: \Sigma\to \Sigma$ induces an $R$-algebra embedding $\iota_*:\cS\SP \to \cS(\Sigma,\cP')$, see Proposition \ref{r.func}. After choosing how to extend a $\cP$-quasitriangulation $\D$ to be a $\cP'$-quasitriangulation $\D'$, we will show  that $\iota_*$ has a unique extension to an $R$-algebra homomorphism $\Psi: \fXsurD \to \fXsur(\D')$ which makes the following diagram commute. The map $\Psi$ describes how the skein coordinates change.

\be \lbl{eq.cd1}
\begin{tikzcd}
\cS(\Sigma,\cP) \arrow[d,"\iota_*"] \arrow[r,hook,"\vpD"] & \fXsurD \arrow[d,"\Psi"] \\
\cS(\Sigma,\cP') \arrow[r,hook,"\vpDp"] & \fXsur(\D')
\end{tikzcd}
\ee

There are two scenarios to consider: adding a marked point to an unmarked boundary component or to a boundary edge.

\subsection{Adding a marked point to a boundary edge}\lbl{sec.addmarkedptboundary}

Suppose $a\subset \pS$ is a boundary $\cP$-arc of $\SP$ and $p$ is a point in the interior of $a$.
Let $\cP'=\cP \cup \{p\}$.  The set $\cH'$  of unmarked boundary components of the new marked surface $(\Sigma,\cP')$ is equal to~$\cH$.

Let $\D$ be a $\cP$-quasitriangulation of $\SP$. 
 Define a $\cP'$-quasitriangulation $\D'$ of $\Sigma$ by $\D':=\D \cup \{a_1,a_2\}$ as shown in Figure \ref{fig:figures/boundarypoint} (where we have $\cP'$-isotoped $a$ away from $\partial \Sigma$ in $\D'$).

\FIGc{figures/boundarypoint}{Adding a marked point to a boundary edge. The shaded part is a subset of the interior of $\Sigma$.}{2cm}

Recall that $\fXsurD$ is weakly generated by $\D \cup \D_\mon^*$, and that $\Dess= \D\setminus \D_\mon$.

\begin{proposition}\lbl{prop.addptsurgerytorus} There exists a unique $R[\cH]$-algebra homomorphism $\Psi: \fXsurD \to \fXsur(\D')$ which makes diagram \eqref{eq.cd1} commutative. Moreover, $\Psi$ is given by  $\Psi(a)=a$ for all $a \in \D \cup \D_\mon^*$.
\end{proposition}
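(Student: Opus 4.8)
The plan is to obtain $\Psi$ as the unique Ore extension of the composite $g := \vpDp \circ \iota_* \colon \cS\SP \to \fXsur(\D')$. This composite makes sense because $\iota_*$ takes values in $\cS(\Sigma,\cP')$ and $\cS(\Sigma,\cP') \subset \fXsur(\D')$ by Proposition \ref{p.ztox}(b); and it is an $R[\cH]$-algebra homomorphism since $\cH$ coincides with the set of unmarked components of $(\Sigma,\cP')$, $\iota_*$ fixes each $\beta \in \cH$, and $\vpDp$ is linear over that ring. Lemma \ref{l.skeinsurgeryextension} then reduces the entire statement to a single verification: that $g(a)$ is invertible in $\fXsur(\D')$ for every $a \in \Dess$. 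Granting this, Lemma \ref{l.skeinsurgeryextension} produces a unique $R[\cH]$-algebra homomorphism $\Psi \colon \fXsurD \to \fXsur(\D')$ extending $g$; commutativity of \eqref{eq.cd1} is then immediate because both composites restrict to $g$ on $\vpD(\cS\SP)$, and any other homomorphism making \eqref{eq.cd1} commute must also restrict to $g$ and hence equals $\Psi$ by the uniqueness clause of Lemma \ref{l.skeinsurgeryextension}.

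The invertibility check amounts to tracking how the edges of $\D$ sit inside $\D'$. Let $e_0 \in \D$ be the boundary edge whose interior contains $p$. After adding $p$, the arc $e_0$ is $\cP'$-isotopic to the inner edge of $\D'$ drawn ``isotoped away from $\partial\Sigma$'' in Figure \ref{fig:figures/boundarypoint}; this inner edge bounds a genuine triangle together with the two new boundary edges $a_1, a_2$, so it is not a monogon edge of $\D'$. Every other boundary edge of $\D$ remains a boundary edge of $\D'$, every inner non-monogon edge of $\D$ remains one in $\D'$, and the holed monogons around the unmarked components are untouched — so $\D_\mon$ and $\D'_\mon$ consist of the same arcs and we may take $(\D'_\mon)^* = \D_\mon^*$. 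Consequently, for each $a \in \Dess$ the tangle $\iota_*(a)$ equals, in $\cS(\Sigma,\cP')$, an edge $a'$ of $\D'$ with $a' \notin \D'_\mon$, whence $g(a) = \vpDp(a') = X_{a'}$ by \eqref{eq.vpD}, which is invertible in $\fXsur(\D')$. This is exactly the hypothesis needed to invoke Lemma \ref{l.skeinsurgeryextension}.

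It remains to evaluate $\Psi$ on the generators $\D \cup \D_\mon^*$. For $a \in \D$ one gets $\Psi(a) = g(a) = \vpDp(\iota_*(a)) = X_{a'} = a'$, and $a'$ is precisely the edge of $\D'$ labelled $a$ in Figure \ref{fig:figures/boundarypoint}, so $\Psi(a) = a$ with the conventions of that figure. For $a^* \in \D_\mon^*$, the arc $a^*$ lies in a holed-monogon region untouched by the modification and is still a valid flip of the monogon edge $a$ in $\D'$, so — having chosen it as the flip for $\D'$ — $\Psi(a^*) = g(a^*) = \vpDp(a^*) = a^*$ as the corresponding generator of $\fXsur(\D')$. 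I expect the only genuinely delicate point to be the bookkeeping around $e_0$: it changes type, from a boundary edge of $\D$ to an inner, non-monogon edge of $\D'$, and one must confirm that this change, together with the insertion of $a_1, a_2$, leaves the holed-monogon structure (hence $\D_\mon$ and the choices of flips) undisturbed. Once that is pinned down, everything else follows formally from Proposition \ref{p.ztox} and Lemma \ref{l.skeinsurgeryextension}.
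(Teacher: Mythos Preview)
Your proof is correct and follows essentially the same approach as the paper: both reduce to Lemma \ref{l.skeinsurgeryextension} by checking that $g(a)=\vpDp\circ\iota_*(a)$ is invertible for every $a\in\Dess$, which amounts to the inclusion $\Dess\subset\D'_{\mathrm{ess}}$. The paper states this inclusion in a single line, whereas you spell out the edge-by-edge bookkeeping (in particular the change of type of the boundary edge containing $p$ and the fact that $\D_\mon=\D'_\mon$); this extra detail is sound and makes the argument more self-contained.
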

\begin{proof} Identify $\cS\SP$ with its image under $\vpD$ and $\cS\SPp$ with its image under $\vpDp$.
Note that $\Dess \subset \D'_{\mathrm{ess}}$. Hence if $a \in \Dess$ then $\iota_*(a)=a$ is invertible in $\fXsur(\D')$.
That 
$\Psi$ exists uniquely follows from Lemma \ref{l.skeinsurgeryextension} because $\vpDp\circ \iota_*(a)$ is invertible for all $a \in \Dess$. That $\Psi(a) = a$ for all $a \in \D \cup \D_\mon^*$ follows immediately.
\end{proof}

\subsection{Adding a marked point to an unmarked component}

Suppose $\beta\in \cH$ is an unmarked boundary component of $\SP$. Choose a point $p \in \beta$ and set $\cP' = \cP \cup \{p\}$. We call the new marked surface $(\Sigma, \cP')$ and write $\cH'= \cH \setminus \beta$ for its set of unmarked boundary components. 

Suppose $\D$ is a quasitriangulation of $\SP$. 
Let $a \in \D_\mon$ be the monogon edge bounding the eye containing the unmarked boundary component $\beta$ (as defined in Figure \ref{fig:figures/eye}), and $b,c \in \D$ be the edges immediately clockwise and counterclockwise to $a$ as depicted on the left in Figure \ref{fig:figures/holepoint}. To get a triangulation of the eye containing $\beta$ with the added marked point $p$, we need to add 3 edges $d,e,f$ as depicted on the right side of Figure \ref{fig:figures/holepoint}. Here 
 $f$ is the boundary $\cP'$-arc whose ends are both $p$. By relabeling, we can assume that $e$ is counterclockwise to $d$ at $p$.
 Up to isotopy of $\Sigma$ fixing every point in the complement of the  monogon, there is only one choice for such $d$ and $e$. Then $\D'=\D \cup \{d,e,f\}$ is a quasitriangulation of $(\Sigma,\cP')$.

\FIGc{figures/holepoint}{From $\D$ to $\D'$.}{3.1cm}

Since $\cH\neq \cH'$,  it is not appropriate to consider modules and algebras over $R[\cH]$. 
Rather we will consider both $\cS\SP$ and $\cS\SPp$ as algebras over $R$. As an $R$-algebra, $\fXsurD$ is weakly generated by $\D \cup \D_\mon^*\cup \cH$.

\begin{proposition}\lbl{lem.markedpthole} There exists a unique $R$-algebra homomorphism $\Psi: \fXsurD \to \fXsur(\D')$ which makes diagram \eqref{eq.cd1} commutative. Moreover, for
$z \in \D \cup \D_\mon^*\cup \cH$ we have
\be \lbl{eq.1f}
\Psi(z) = 
\begin{cases} z  \quad &\text{if } \ z \notin \{\beta,a^*\}, \\
[d^{-1}e]+[ad^{-1}e^{-1}f]+[de^{-1}]   &\text{if } \ z = \beta,\\ 
[a^{-1}b^2] + [a^{-1}c^2] + [a^{-1}bcd^{-1}e] + [bcd^{-1}e^{-1}f] + [a^{-1}bcde^{-1}] &\text{if } \ z = a^*.
\end{cases} 
\ee
\end{proposition}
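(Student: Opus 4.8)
The plan is to realize $\Psi$ as the unique extension furnished by Lemma~\ref{l.skeinsurgeryextension}, and then to evaluate it on the weak generating set $\D\cup\D_\mon^*\cup\cH$ of $\fXsurD$.

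First I would settle existence and uniqueness. Put $g:=\vpDp\circ\iota_*\colon \cS\SP\to\fXsur(\D')$. By Lemma~\ref{l.skeinsurgeryextension}, $g$ extends (uniquely) to an $R$-algebra homomorphism $\Psi\colon\fXsurD\to\fXsur(\D')$ exactly when $g(a)$ is invertible in $\fXsur(\D')$ for every $a\in\Dess$. This is automatic: since $\D'=\D\cup\{d,e,f\}$, each $a\in\Dess$ is an edge of $\D'$, and because the set of unmarked components drops from $\cH$ to $\cH'=\cH\setminus\{\beta\}$ we have $(\D')_{\mathrm{mon}}=\D_\mon\setminus\{a_\beta\}\subseteq\D_\mon$; hence $a\in\Dess=\D\setminus\D_\mon$ lies in $(\D')_{\mathrm{ess}}$, so $g(a)=\vpDp(a)=X_a$ is a unit. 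By construction $\Psi|_{\cS\SP}=g$, i.e.\ $\Psi\circ\vpD=\vpDp\circ\iota_*$, which is the commutativity of \eqref{eq.cd1}; uniqueness of $\Psi$ is the uniqueness clause of Lemma~\ref{l.skeinsurgeryextension} (equivalently, $\cS\SP$ weakly generates $\fXsurD$).

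Next I would read off $\Psi$ on the generators. For $z\in\D\cup\D_\mon^*\cup\cH$ with $z\notin\{\beta,a^*\}$, adjoining the marked point $p$ leaves $z$ a $\cP'$-arc (resp.\ an unmarked component) representing the corresponding generator of $\fXsur(\D')$ unchanged — concretely $\iota_*$ carries $z$ to the same isotopy class, whose image under $\vpDp$ is $X_z$ — so $\Psi(z)=z$; this includes $z=a_\beta$ itself, which survives as an edge of $\D'$, now lying in $(\D')_{\mathrm{ess}}$. It remains to compute $\Psi(\beta)$ and $\Psi(a^*)$. For $\Psi(\beta)=\vpDp(\beta)$ I would take $\beta$ to be the loop parallel to the once‑marked hole, isotope it to be $\D'$‑taut inside the triangulated eye (so that it meets $d$ and $e$ once each and is disjoint from the edges outside the eye), resolve all crossings by the Kauffman bracket skein relation, and push the outcome through the skein coordinate map exactly as in the proof of Theorem~\ref{r.torus} (multiply by a suitable $(\D')_{\mathrm{ess}}$‑monomial to land in $\fXsur_+(\D')$, recognize the resulting $\cP'$‑tangles in the preferred basis of $\fXsur_+(\D')$, and then re‑invert), keeping careful track of the Weyl‑normalization powers of $q$; this should give $\Psi(\beta)=[d^{-1}e]+[a\,d^{-1}e^{-1}f]+[d\,e^{-1}]$. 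Then $\Psi(a^*)$ follows algebraically: in $\cS\SP$ the flip edge $a^*=a_\beta^*$ satisfies $a_\beta\,a_\beta^*=q^2b^2+q^{-2}c^2+\beta\,bc$ (relation \eqref{e.zrelations}, also recorded in the proof of Proposition~\ref{r.42}). Applying $\Psi$, using $\Psi(a_\beta)=a_\beta$, $\Psi(b)=b$, $\Psi(c)=c$, and the fact that $a_\beta$ is now a unit in $\fXsur(\D')$ (since $\beta$ is no longer unmarked), I would left‑multiply by $a_\beta^{-1}$ and substitute the formula for $\Psi(\beta)$; after the bookkeeping of $q$‑powers this yields $\Psi(a^*)=[a^{-1}b^2]+[a^{-1}c^2]+[a^{-1}bc\,d^{-1}e]+[bc\,d^{-1}e^{-1}f]+[a^{-1}bc\,d\,e^{-1}]$, the factor $a$ in the middle term of $\Psi(\beta)$ cancelling the $a^{-1}$ there and explaining the single term without $a^{-1}$.

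The main obstacle is the computation of $\Psi(\beta)$: carrying out the skein resolution of the boundary‑parallel loop of the once‑marked monogon against $\{a_\beta,d,e,f\}$ and matching the Weyl normalizations precisely. Everything else is either formal (Lemma~\ref{l.skeinsurgeryextension} and the commutative square) or a short manipulation of the defining relation of $a^*$; that manipulation, read in reverse, also serves as a consistency check on the value obtained for $\Psi(\beta)$.
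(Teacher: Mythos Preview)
Your proposal is correct and follows essentially the same approach as the paper: invoke Lemma~\ref{l.skeinsurgeryextension} via the inclusion $\Dess\subset\D'_{\mathrm{ess}}$ for existence and uniqueness, then compute $\iota_*(z)$ on the weak generators. The paper's proof simply says the formula ``follows from a simple calculation of the value of $\iota_*(z)$'' without spelling it out; your plan to get $\Psi(\beta)$ by a skein resolution inside the triangulated eye and then to extract $\Psi(a^*)$ algebraically from the flip relation \eqref{e.zrelations} (using that $a$ is now invertible in $\fXsur(\D')$) is a perfectly good way to carry out that calculation.
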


\begin{proof} Again $\Dess\subset \D'_{\mathrm{ess}}$, so the existence and uniqueness of $\Psi$ follows.
Formula  \eqref{eq.1f} follows from a simple calculation of the value of $\iota_*(z)$.
\end{proof}

\section{Plugging a hole}\lbl{sec.holeplug}

The more interesting operation is plugging a hole.

Fix an unmarked boundary component $\beta \in \cH$. Let $\Sigma'$ be the result of gluing a disk to $\Sigma$ along $\beta$. Then $(\Sigma', \cP)$ is another marked surface. The natural morphism
$ \iota: \SP \to \SpP$ gives rise to an $R$-algebra homomorphism $\iota_*: \cS\SP\to \cS\SpP$. Since $\iota_*$ maps the preferred $R$-basis $B_{\SP}$ of $\cS\SP$ onto a set containing the preferred $R$-basis $B_{(\Sigma',\cP)}$ of $\cS\SpP$, the map $\iota_*$ is surjective.
\FIGc{figures/surgery}{From $\D$ to $\D'$}{2.3cm}

Suppose $\D$ is a quasitriangulation of $\SP$.
Let $a\in \D$ be the monogon edge bounding the eye containing the unmarked boundary $\beta$ and $\tau$ be the triangle having $a$ as an edge. Let $a,b,c$ be the edges of $\tau$ in counterclockwise order, as in Figure \ref{fig:figures/surgery}. Let $\D'= \D \setminus \{a,b\}$. Then $\D'$ is a $\cP$-quasitriangulation of $\Sigma'$.

We cannot extend $\iota_*: \cS\to \cS'$ to an $R$-algebra homomorphism  $ \sX(\D) \to \sX(\D')$, since $\iota_*(a)=0$ but $a$ is invertible in $\XD$. %
This is the reason why we choose to work with the smaller algebra $\fXsurD$ which does not contain $a^{-1}$.

Recall that as an $R$-algebra, $\fXsurD$ is weakly generated by $\cH \cup \D_\mon^* \cup \D$.

\begin{proposition}\lbl{t.holetrick} There exists a unique $R$-algebra homomorphism 
$\Psi: \fXsurD \to \fXsur(\D')$ such that the following diagram
\be\lbl{d.holesurgery}
\begin{tikzcd}
\cS(\Sigma,\cP) \arrow[d,"\iota_*"] \arrow[r,hook,"\vpD"] & \fXsurD \arrow[d,"\Psi"] \\
\cS(\Sigma',\cP) \arrow[r,hook,"\vpDp"] & \fXsur(\D')
\end{tikzcd}
\ee
is commutative. Explicitly, $\Psi$ is defined on the generators in $\cH \cup \D_\mon^* \cup \D$ as follows: 
\begin{align}
\Psi(e)& = e  \quad \text{if} \ e  \in (\cH \cup \D_\mon^* \cup \D)\setminus \{a,a^*, b, \beta\}
\lbl{eq.2a}\\
\Psi(a)&= \Psi(a^*)=0, \ 
 \Psi(b)=c, \ 
 \Psi(\beta)= -q^2 - q^{-2}.
 \lbl{eq.2b}
\end{align}
The map  $\Psi$ is surjective and its  kernel is  the ideal $J$ of $\fXsurD$ generated by $a, a^*, b-c, \beta + q^2 + q^{-2}$. 
\end{proposition}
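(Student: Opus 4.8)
The plan is to obtain $\Psi$ by extending the composite $g:=\vpDp\circ\iota_*\colon \cS\SP\to\fXsur(\D')$ across the embedding $\vpD\colon\cS\SP\hookrightarrow\fXsurD$ using Lemma \ref{l.skeinsurgeryextension}. That lemma yields a unique extension as soon as $g(e)$ is invertible in $\fXsur(\D')$ for every essential edge $e\in\Dess$. The monogon edge $a$ is \emph{not} essential, so it is no obstruction; for every other essential edge $e$ one checks, reading off the local surgery picture (Figures \ref{fig:figures/eye}, \ref{fig:figures/quasiflip}, \ref{fig:figures/surgery}), that $\iota_*(e)$ is again an essential edge of $\D'$ — equal to $e$ itself, except that $\iota_*(b)=c$ — and hence a unit in $\fXsur(\D')$. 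Uniqueness of $\Psi$ is automatic since $\fXsurD$ is weakly generated by $\cH\cup\D_\mon^*\cup\D$.

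With existence in hand I would compute $\Psi$ on the generators. Each generator $z\in\cH\cup\D_\mon^*\cup\D$ is a $\cP$-arc or an unmarked component, hence an element of $\cS\SP$, so commutativity of \eqref{d.holesurgery} gives $\Psi(z)=\Psi(\vpD(z))=\vpDp(\iota_*(z))$, and it only remains to identify $\iota_*(z)\in\cS\SpP$ geometrically. Plugging $\beta$ turns $\beta$ into a trivial loop, so $\iota_*(\beta)=-q^2-q^{-2}$; it makes the monogon edge $a$ and its flip $a^*$ each bound a disk, so $\iota_*(a)=\iota_*(a^*)=0$ by the trivial arc relation; it renders $b$ isotopic to $c$, so $\iota_*(b)=c$; and all remaining generators are untouched, so $\iota_*(z)=z$. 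Applying $\vpDp$, which fixes every edge of $\D'$ and every unmarked component of $\Sigma'$, yields exactly formulas \eqref{eq.2a}--\eqref{eq.2b}.

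For surjectivity, recall that $\iota_*\colon\cS\SP\onto\cS\SpP$ is surjective (it sends the preferred basis onto a set containing the preferred basis of $\cS\SpP$), so $\operatorname{im}(\Psi)$ contains $\vpDp(\cS\SpP)=\cS\SpP$; moreover every essential edge $e$ of $\D'$ lies in $\Dess$, whence $e^{-1}\in\fXsurD$ and $\Psi(e^{-1})=e^{-1}$. By Proposition \ref{p.ztox}(b) the subalgebra $\cS\SpP$ together with the inverses of the essential edges generates all of $\fXsur(\D')$, so $\Psi$ is onto. The inclusion $J\subseteq\ker\Psi$ is immediate from \eqref{eq.2b}. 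For the reverse inclusion I would pass to $R[\cH']$-bases, $\cH'=\cH\setminus\{\beta\}$: using the monomial basis $\BDs=\{\D^{\{\bk\}}\mid\bk\in\BZ^\D\}$ of $\fXsurD$ from Proposition \ref{p.ztox}(c), one sees that modulo $J$ every basis monomial with $\bk(a)\neq0$ vanishes (it contains a positive power of $a$ or of $a^*$), $b$ may be replaced by $c$, and $\beta$ by the scalar $-q^2-q^{-2}$; hence $\fXsurD/J$ is spanned over $R[\cH']$ by the monomials $\{(\D')^{\{\bm\}}\mid\bm\in\BZ^{\D'}\}$. The induced surjection $\bar\Psi\colon\fXsurD/J\to\fXsur(\D')$ carries this spanning set bijectively onto the monomial $R[\cH']$-basis of $\fXsur(\D')$, and a surjective module map taking a spanning set to a basis is an isomorphism; therefore $\ker\Psi=J$.

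The principal obstacle is the topological bookkeeping underlying the first two paragraphs: one must justify carefully, from the configuration of the holed monogon and the triangle $\tau$, that $\iota_*$ kills $a$ and $a^*$, identifies $b$ with $c$, sends $\beta$ to $-q^2-q^{-2}$, and carries every essential edge of $\D$ to a unit of $\fXsur(\D')$, paying attention to possibly degenerate positions of $\tau$ (for instance when $\tau$ also abuts a second holed monogon) so that the hypotheses of Lemma \ref{l.skeinsurgeryextension} are genuinely met. Once those generator-level facts are secured, the surjectivity and kernel computations are routine manipulations with the monomial bases of the two surgery algebras.
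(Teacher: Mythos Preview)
Your proposal is correct and follows essentially the same route as the paper: both obtain $\Psi$ from Lemma \ref{l.skeinsurgeryextension}, read off the generator formulas from the geometry of $\iota_*$, and determine the kernel by showing that the images of the monomials $\D^{\{\bk\}}$ with $\bk(a)=\bk(b)=0$ (and $\beta$ replaced by a scalar) span $\fXsurD/J$ and are carried bijectively onto the monomial basis of $\fXsur(\D')$. The only cosmetic difference is that you argue surjectivity of $\Psi$ separately from the surjectivity of $\iota_*$ before treating the kernel, whereas the paper extracts both surjectivity and $\ker\Psi=J$ in one stroke from the bijection-of-bases argument; your separate surjectivity step is therefore redundant but harmless.
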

\begin{proof} Identify $\cS\SP$ with its image under $\vpD$ and $\cS\SPp$ with its image under $\vpDp$.

The existence and uniqueness of 
 $\Psi$ follows from Lemma \ref{l.skeinsurgeryextension}, since $ \iota_*(x)$ is invertible in $\fXsur(\D')$ for all $x \in \D \setminus \D_\mon$. By checking the value of $ \iota_*(x)$ for $x\in \cH \cup \D_\mon^* \cup \D$, we get \eqref{eq.2a} and \eqref{eq.2b}. 
 It follows that $J$ is in the kernel $\ker\Psi$. Hence $\Psi$ descends to an $R$-algebra homomorphism
$$ \bPsi: \fXsurD/J \to \fXsur(\D').$$
We will prove that $\bPsi$ is bijective by showing that there is  an $R$-basis $Y'$ of $\fXsur(\D')$ and an $R$-spanning set $\bY$  of $ \fXsurD/J$ such that $\bPsi$ maps $\bY$ bijectively onto $Y'$. Then $\bPsi$ is an isomorphism. Let 
\begin{align*}
Y &=  \{ \D^{\{\bk\}} \, (\cH)^\bn \mid \bk \in \BZ^{\D}, \bn \in \BN^{\cH}  \},   \\
Y_0 &=  \{ \D^{\{\bk\}} \, (\cH)^\bn \in X \mid \bk(a)= \bk(b)=\bn(\beta)=0   \} \subset X, \\
Y' &=\{ (\D')^{\{\bk\}} \, (\cH')^\bn \mid \bk \in \BZ^{\D'}, \bn \in \BN^{\cH'}  \}.   
\end{align*}
By Proposition \ref{p.ztox}(c), the sets $Y$ and $Y'$ are respectively $R$-bases of $\fXsurD$ and $\fXsur(\D')$. As $\D'= \D \setminus \{a,b\}$ and $\cH'= \cH\setminus \{\beta\}$, Formula \eqref{eq.2a} shows that $\Psi$ maps $Y_0$ bijectively onto $X'$. Consequently, the projection $\pi: \fXsurD\to \fXsurD/J$ maps $Y_0$ bijectively onto a set $\bY$ and
$\bPsi$ maps $\bY$ bijectively onto $Y'$.

It remains to be shown that  the $R$-span $R\la \bY\ra$ of $\bY$ equals $\fXsurD/J$. Suppose $x= \D^{\{\bk\}} \cH^\bn  \in \Psi(X) \setminus Y_0$. 
Then either $\bk(a)\neq 0$, or $\bk(b)\neq 0$, or $\bn(\beta)\neq 0$.

If $\bk(a)\neq 0$, then in $x$ there is factor of $a$ or $a^*$ which is in $J$, and hence $\pi(x)=0$.
Because $b-c$ and $\beta + q^2 + q^{-2}$ are in $ J$, in $\fXsurD/J$ we can replace $b$ by $c$  and $\beta$ by the scalar $-q^2 - q^{-2}$. Thus, $\pi(x) \in R\la \bY\ra$. As $Y$ spans $\fXsurD$, this shows $\bY$ spans $\fXsurD/J$. The proposition is proven.

\end{proof}

\chapter{Chebyshev-Frobenius homomorphism}\lbl{c.cf}

For the case when the marked set is empty,  Bonahon and Wong  \cite{BW2016-2} constructed a remarkable algebra homomorphism, called the Chebyshev homomorphism, from the skein algebra with quantum parameter $q=\xi^{N^2}$ to the skein algebra with quantum parameter $q=\xi$, where $\xi$ is a complex root of unity, and $N$ is the order (as a root of unity) of $\xi^4$.
In \cite{BW2016-2} the proof of the existence of the Chebyshev homomorphism is based on the theory of the quantum trace map \cite{BW2011}. Since the result can be formulated using only elementary skein theory, Bonahon and Wong asked for a skein theoretic proof of their results. Such a proof was offered in \cite{Le2015}.

Here we extend the result of Bonahon and Wong to the case of marked 3-manifolds. Our proof  is different  from the two above mentioned proofs even in the case of the marked set being empty; it does not rely on many computations but rather on the functoriality of the skein algebras.

\section{Setting}

Throughout this section we fix a marked 3-manifold  $(M,\cN)$. The ground ring $R$ is $\BC$. Let $\Cx$ denote the set of non-zero complex numbers. Recall that a root of unity is a complex number $\xi$ such that there exists a positive integer $N$ such that $\xi^N=1$, and the smallest such $N$ is called the order of $\xi$, denote by $\ord(\xi)$.

The skein module $\cS\MN$ depends on the choice of  $q=\xi \in \Cx$, and we denote the skein module with this choice by $\cS_\xi(M,\cN)$.  To be precise, we also  choose and fix one of the two square roots of $\xi$ for the value of $\xi^{1/2}$, but the choice of $\xi^{1/2}$ is not important due the symmetry of complex conjugation.

Similarly, if $\SP$ is a marked surface with quasitriangulation $\D$, then we use the notations $\cS_\xi\SP$, $\sX_\xi(\D), \fXsur_\xi(\D)$ to denote what were respectively the $\cS\SP, \sX(\D), \fXsurD$ of Subsections \ref{sec.torus} and \ref{sec.suralg} with ground ring $\BC$ and $q=\xi$. We will always identify $\cS_\xi\SP$ as a subset of $\sX_\xi(\D)$, its division algebra $\widetilde {\sX}_\xi(\D)$, and $\fXsur_\xi(\D)$.

\section{Formulation of the result}\lbl{sec.formulationmain} 

For $\xi\in \mathbb{C}^\times$ recall that $\cS_\xi\MN= \cT\MN/\Rel_\xi$, where $\cT\MN$ is the  $\BC$-vector space with basis the set of all $\cN$-isotopy classes of $\cN$-tangles in $M$  and $\Rel_\xi$ is the subspace spanned by the trivial loop relation elements, the trivial arc relation elements, and the skein relation elements, see Subsection \ref{s.def00}. For $x\in \cT\MN$ denote by $[x]_\xi$ its image in $\cS_\xi\MN= \cT\MN/\Rel_\xi$.

For an $\cN$-arc or an $\cN$-knot  $T$ in  $(M,\cN)$ and $k \in \BN$ let $T^{(k)}$ be {\em the $k$th framed power of $T$}, which is $k$ parallel copies of $T$ obtained using the framing, which will be considered as an $\cN$-tangle lying in a small neighborhood of $T$. The $\cN$-isotopy class of $T^{(k)}$ depends only on the $\cN$-isotopy class of $T$.

Given a polynomial $P(z) = \sum c_i z^i \in \BZ[z]$, and an $\cN$-tangle $T$ with a single component we define an element $P^{\text{fr}}(T) \in \cT\MN$ called the {\em threading of $T$ by $P$} by $P^{\text{fr}}(T) = \sum c_i T^{(i)}$. If $T$ is a $\cP$-knot in a marked surface $\SP$ then $P^{\text{fr}}(T)=P(T)$. Using the definition \eqref{e.simul} one can easily check that $P^{\text{fr}}(T)=P(T)$ for the case when $T$ is a $\cP$-arc as well.

Fix  $N\in \BN$. Suppose  $T_N(z)= \sum c_i z^i$ is the $N$th Chebyshev polynomial of type 1 defined by~\eqref{e.chebyshev}.
Define a $\BC$-linear map
$$ \hPhi_{N}: \LMN \to \LMN$$ so that  if $T$ is an $\cN$-tangle then  $\hPhi_N(T)\in \LMN$ is the union of $a^{(N)}$ and $(T_N)^{\text{fr}}(\al)$ for each $\cN$-arc component $a$ and each $\cN$-knot component $\al$ of $T$.  See Subsection \ref{sec.func} for the precise definition of union for skein modules. In other words, $\hPhi_N$ is given by threading each $\cN$-arc by $z^N$ and each $\cN$-knot by $T_N(z)$. More precisely, if
the $\cN$-arc components of $T$ are $a_1, \dots, a_k$ and the $\cN$-knot components  are $\al_1, \dots, \al_l$, then

\be\lbl{eq.def1}
\hPhi_N(T) = \sum_{0\le j_1, \dots, j_l\le N} c_{j_1} \dots c_{j_l}  a_1^{(N)}
 \cup \dots \cup  a_k^{(N)} \cup \, \al_1^{(j_1)} \cup \cdots \cup \al_l^{(j_l)} \ \in \  \LMN.
 \ee
See Section \ref{sec.func} for the precise definition of the union map in~\eqref{eq.def1}.

\begin{theorem}\lbl{t.ChebyshevFrobenius}
Let $(M,\cN)$ be a marked 3-manifold and  $\xi$ be a complex root of unity. Let $N:=\text{ord}(\xi^4)$ and $\ve:=\xi^{N^2}$. 

There exists
a unique $\BC$-linear map $\Phi_\xi: \cS_\ve(M,\cN) \to \cS_\xi(M,\cN)$ such that if 
$x\in \cS_\ve(M,\cN)$ is presented by an $\cN$-tangle $T$ then 
$
\Phi_\xi(x) = [\hPhi_{N}(T)]_\xi
$
in $ \Sx\MN$. 

In other words, the map $\hPhi_N: \cT\MN \to \cT\MN$ descends to a well-defined map $\Phi_\xi: \cS_\ve(M,\cN) \to \cS_\xi(M,\cN)$, meaning that the following diagram commutes. 

\[
\begin{tikzcd}\lbl{dia.Phixi}
\LMN  \arrow[d,"{[\,\cdot\,]_\ve}"] \arrow[r,"\hPhi_N"] & \LMN \arrow[d,"{[\, \cdot \, ]_\xi}"] \\
\cS_\ve\MN  \arrow[r,"\Phi_\xi"]& \cS_\xi\MN
\end{tikzcd}
\]
\end{theorem}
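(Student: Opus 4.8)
The strategy is to reduce the statement for a general marked 3-manifold $\MN$ to the already-established machinery for triangulable marked surfaces, exploiting functoriality of skein modules under embeddings. First I would dispose of uniqueness: since $\cN$-tangles span $\cT\MN$ and hence their classes span $\cS_\ve\MN$, any $\BC$-linear map agreeing with the prescribed formula on tangle classes is determined, so the content is entirely in \emph{well-definedness} — that $\hPhi_N$ carries $\Rel_\ve \subset \cT\MN$ into $\Rel_\xi$. Equivalently, for each of the three types of relation element (skein relation, trivial loop relation, trivial arc relation), I must check that applying $\hPhi_N$ and then projecting to $\cS_\xi\MN$ yields $0$.

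\textbf{Reduction to a local/surface computation.} The trivial loop and trivial arc relations are local and cheap: for a trivial arc $a$, $a^{(N)}$ is $N$ parallel trivial arcs, each of which is again trivial, so its class in $\cS_\xi\MN$ is a scalar — here one uses the refined formula $\Phi_\xi(a) = a^{(N)}$ and the computation of $(T_N)^{\fr}$ on arcs noted after Theorem~\ref{t.ChebyshevFrobenius} (namely $P^{\fr}(a)=P(a)$ for arcs), together with the trivial arc relation in $\cS_\xi$; similarly the trivial loop relation follows from DeMoivre's identity \eqref{e.demoivre} applied to the trivial knot, whose value is a root of a quadratic, giving $T_N(\text{trivial loop})$ a fixed scalar. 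The genuinely hard relation is the \textbf{skein relation} $T - q T_+ - q^{-1}T_-$, which involves a crossing being resolved. Here the key move is: any skein relation element is supported in a small ball $B \subset M$ meeting the tangle in two strands; choose a small marked surface neighborhood — concretely, a regular neighborhood of the two strands together with a bit of $\cN$ if boundary is involved — and realize the relevant $\cN$-knots/$\cN$-arcs as living in (the cylinder over) a triangulable marked surface $\SP$, e.g. a disk with four marked points, or an annulus, via a morphism $\SP \times(-1,1) \embed \MN$. By the union map and functoriality (Subsection~\ref{sec.func}), it suffices to verify the identity $\hPhi_N(T) \equiv q\,\hPhi_N(T_+) + q^{-1}\hPhi_N(T_-)$ in $\cS_\xi$ of this local model surface.

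\textbf{The surface computation via the Muller algebra.} On the model surface one invokes the machinery of Chapters~\ref{c.embed} and \ref{c.surgery}: Theorem~\ref{thm.surface} (answering Question~B), which states that under $\xi$ a root of unity with $N=\ord(\xi^4)$, $\ve=\xi^{N^2}$, the Frobenius homomorphism $F_N$ of \eqref{e.introfrob} restricts from $\sX_\ve(\D)$ to a map $\cS_\ve\SP \to \cS_\xi\SP$ sending a $\cP$-arc $a$ to $a^N$ and a $\cP$-knot $\al$ to $T_N(\al)$ — and that this is independent of the triangulation. The independence-of-triangulation, established using the surgery theory (the transfer isomorphisms of Proposition~\ref{r.42} and the plugging/adding-marked-point propositions of Chapter~\ref{c.surgery}, in particular Proposition~\ref{r.knot}), is exactly what lets one pass a $\cP$-knot through the coordinate description: Corollary~\ref{c.32} (equivalently the vanishing $c(N,r,j)=0$ from Proposition~\ref{r.chebygen}) gives the "miraculous cancellation" $T_N(X+X^{-1}+Y) = X^N + X^{-N} + Y^N$ inside the quantum torus, which is precisely the algebraic shadow of the fact that threaded knots satisfy the skein relation. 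So the skein-relation check on the model surface is: express $T, T_+, T_-$ in Muller coordinates via $\vp_\D$, apply $F_N$, and observe the identity is the image under $F_N$ of the (trivial) skein relation in $\sX_\ve(\D)$ since $F_N$ is an algebra homomorphism — the point being that $\hPhi_N$ composed with $[\cdot]_\xi$ equals $F_N$ composed with $\vp_\D$ on the model.

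\textbf{Main obstacle.} The delicate step is the \emph{reduction to the model surface} for the skein relation: one must be careful that the two strands in the ball $B$, after choosing a neighborhood, really do sit inside a cylinder over a triangulable surface in a way compatible with the framing and with the marked structure $\cN$ (the arcs may end on $\cN$, so the local model may be a disk with marked points rather than an annulus). I expect handling the $\cN$-arc case — ensuring the threading $a\mapsto a^{(N)}$ interacts correctly with the boundary and that the local model's skein relation genuinely maps under the embedding to the one in $\MN$ — to require the most care, and this is presumably why the paper isolates Theorem~\ref{thm.surface} and builds the surgery apparatus of Chapter~\ref{c.surgery} (especially Proposition~\ref{r.knot}) as the technical engine. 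Once the surface statement and its triangulation-independence are in hand, assembling Theorem~\ref{t.ChebyshevFrobenius} is a matter of covering $T$'s components by model neighborhoods and applying the union map plus functoriality.
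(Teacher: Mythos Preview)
Your high-level plan is right and matches the paper's: uniqueness is immediate, the trivial loop relation is handled by the identity $T_N(-\xi^2-\xi^{-2})=-\ve^2-\ve^{-2}$ (Lemma~\ref{r.unknot}), and the real work is the skein relation, which is reduced to a surface computation feeding into Theorem~\ref{thm.surface}. But there is a genuine gap in how you execute the reduction for the skein relation.

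The problem is that $\hPhi_N$ acts \emph{globally on components}, not locally near the crossing. The component structure of $T$ can differ from that of $T_+$ and $T_-$: two strands that form a single knot in $T$ may split into two knots in $T_+$, etc. So a fixed local model like the disk with four marked points (which works in Appendix~\ref{app.simple} only for the pure-arc case) cannot see this. The paper instead takes $M'$ to be a regular neighborhood of $B\cup T$ --- the \emph{entire} tangle --- which is diffeomorphic to a thickened surface $\Sigma\times(-1,1)$, enlarges the marked set to make $(\Sigma,\cQ)$ triangulable, and then argues there. Even then one cannot simply say ``apply $F_\xi$ to the skein relation $[x]_\ve=0$'': Theorem~\ref{thm.surface} only gives $F_\xi$ on single $\cP$-arcs and $\cP$-knots, so to identify $\hPhi_\xi(T)$ with $F_\xi([T]_\ve)$ one must write $T$ as a \emph{product} of its components in the surface skein algebra. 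This is possible precisely when the two strands in $B$ lie in \emph{different} components of $T$ (the paper's Case~I), since then $T=T_1T_2$ with $T_1$ the overpass component. When the two strands belong to the \emph{same} component (Case~II), $T$ is not such a product, and the paper needs a separate trick: isotope $T_+$ to create a bigon, apply the already-proved Case~I skein relation to the bigon's crossing (whose strands \emph{are} in different components), and use the kink-removal identity to close the loop. Your proposal does not distinguish these cases and in particular has no mechanism for Case~II.

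A smaller point: for the trivial arc relation, the $N$ parallel copies $a^{(N)}$ are not each individually trivial in the sense of the definition (the disk for one copy may meet the others); the paper uses the reordering relation (Figure~\ref{fig:figures/boundary}) to rearrange endpoint heights so that one copy becomes trivial, killing the whole tangle.
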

 Note that if $\ord(\xi^4)=N$ and $\ve=\xi^{N^2}$, then $\ve\in \{\pm 1, \pm i\}$. If $\cN=\emptyset$, then the skein module $\Se\MN$ with $\ve\in \{\pm 1, \pm i\}$
has an interpretation in terms of classical objects and is closely related to the $SL_2$-character variety, see \cite{Tu1991,Bu1997,PS2000,Si2004,Ma2011} or Section \ref{s.rootsof1}.

We call $\Phi_\xi$ the {\em Chebyshev-Frobenius homomorphism}. As mentioned, for the case when $\cN=\emptyset$ (where there are no arc components), Theorem \ref{t.ChebyshevFrobenius} was proven in \cite{BW2016-2} with the help of the quantum trace map, and was reproven in \cite{Le2017} using elementary skein methods. 
 We will prove Theorem \ref{t.ChebyshevFrobenius} in Subsection \ref{sec.mainthm}, using a result on skein algebras of triangulable marked surfaces discussed below, which is also of independent interest.

\section{Proof of theorem}

\subsection{Independence of triangulation problem}

Let $\D$ be a triangulation of a (necessarily totally) marked surface $\SP$. 
Suppose $N$ is a positive integer and $\xi \in \Cx$ not necessarily a root of unity.  For now we do not require $N=\ord(\xi^4)$.  Let $\ve=\xi^{N^2}$.

By Proposition~\ref{r.Frobenius}, we have a $\BC$-algebra embedding (the Frobenius homomorphism):
\begin{align}
F_N: \sX_{\ve}(\D) \to \sX_\xi(\D),\quad
\lbl{eq.defPhi1}
F_N(a)& = a^N\quad \text{for all} \ a \in \D.
\end{align}

Consider the embedding $\vpD: \cS_\xi \SP\embed \sX_\xi(\D)$ as a coordinate map depending on a triangulation. If we try to define a function on $\cS_\xi\SP$ using the coordinates, then we have to ask if the function is well-defined, i.e. it does not depend on the chosen coordinate system. Let us look at this problem for the Frobenius homomorphism. 

Identify $\cS_\nu \SP$ as a subset of $\sX_\nu(\D)$ via $\vpD$, for $\nu=\ve,\xi$. We investigate when a dashed arrow exists in the following diagram that makes it commute.
\be \lbl{dia.sx}
\begin{tikzcd}
\cS_\ve \SP  \arrow[hookrightarrow]{r} \arrow[dashed,"?"]{d}& \sX_\ve(\D)  \arrow[d, "F_N"] \\
\cS_\xi \SP \arrow[hookrightarrow]{r} & \sX_\xi(\D)
\end{tikzcd}
\ee

We answer the following questions about $F_N$:

A. For what $\xi \in \Cx$ and $N\in \BN$ does $F_N$  restrict to a map from $\cS_{\ve}\SP$ to $\cS_{\xi}\SP$ and the restriction does not depend on the triangulation $\D$?

B.  If $F_N$ can restrict to such a map, can one define the restriction of $F_N$ onto $\cS_{\ve}\SP$ in an intrinsic way, not referring to any triangulation $\D$?

The answers are given in the following two theorems.

Question A is answered by the following theorem.
\begin{theorem} \lbl{thm.A} Suppose $\xi\in \Cx$ and $N\ge 2$ and $\ve=\xi^{N^2}$. Assume that $\SP$ has at least two different triangulations.  
If $F_N: \sX_{\ve}(\D) \to \sX_\xi(\D)$
restricts to a map $\cS_\ve\SP \to \cS_\xi\SP$ for all triangulations $\D$ and the restriction does not depend on the triangulations, then $\xi$ is a root of unity and $N=\ord(\xi^4)$.
\end{theorem}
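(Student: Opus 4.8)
The plan is to convert the coordinate-independence hypothesis into a single polynomial identity inside a quantum torus, and then to read off the arithmetic of $\xi$ from that identity via the $q$-binomial theorem. First I would reduce to a single flip: any two triangulations of $\SP$ are joined by flips, and since a triangulable surface is totally marked, every flip is of Case~1 (Subsection~\ref{sec.flipandtransfer}); moreover, since $\SP$ has at least two triangulations it is not a single triangle, so every triangulation of $\SP$ contains an inner edge. Fix a triangulation $\D$, an inner edge $a\in\D$, and let $\D'=(\D\setminus\{a\})\cup\{a^*\}$ be the flip of $\D$ at $a$. Let $\Phi\colon\cS_\ve\SP\to\cS_\xi\SP$ be the common restriction of $F_N$, which by hypothesis is independent of the triangulation. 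Identifying $\cS_\nu\SP$ with its image in $\sX_\nu(\D)$ via $\vpD$ and in $\sX_\nu(\D')$ via $\vpDp$, and using that $F_N$ sends each variable $X_x$ to $X_x^N$, we get $\Phi(x)=x^N$ for every edge $x\in\D\cap\D'$ when computed through $\D$; in particular $\Phi(a)=a^N$.

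Next I would compute $\Phi(a)$ instead through $\D'$. The flip formula \eqref{eq.aa1} (Case~1) gives $\vpDp(a)=X^{\bu}+X^{\bv}$ in $\sX_\ve(\D')$, where $\bu,\bv\in\BZ^{\D'}$ are the exponent vectors of the two Weyl-normalized monomials occurring there, each built from $a^*$ and two of the four sides of the flipped quadrilateral. By Proposition~\ref{r.Frobenius}, $F_N(X^{\bk})=X^{N\bk}$, so $\Phi(a)=X^{N\bu}+X^{N\bv}$ in $\sX_\xi(\D')$. Since also $\Phi(a)=a^N=\vpDp(a)^N=\bigl(X^{\bu}+X^{\bv}\bigr)^N$ (flip formula again, now at $q=\xi$), writing $P:=X^{\bu}$ and $Q:=X^{\bv}$ in $\sX_\xi(\D')$ I obtain
\[
(P+Q)^N=P^N+Q^N .
\]
The two smoothings of the quadrilateral crossing are genuinely distinct multicurves, and no triangle is self-folded so the two non-$a$ edges of each of the two triangles are distinct; hence $\bu\neq\bv$, and therefore the monomials $X^{k\bu+(N-k)\bv}$ for $0\le k\le N$ are pairwise distinct, so $\BC$-linearly independent in $\sX_\xi(\D')$.

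Now write $QP=\xi^{m}PQ$ with $m:=\la\bv,\bu\ra_{P_{\D'}}$. The $q$-binomial theorem for $q$-commuting variables expands $(P+Q)^N=\sum_{k=0}^N\qbinom{N}{k}_{\xi^{m}}P^kQ^{N-k}$, so the displayed identity together with the linear independence just noted forces $\qbinom{N}{k}_{\xi^{m}}=0$ for all $1\le k\le N-1$. A direct evaluation of the vertex matrix $P_{\D'}$ on the flipped quadrilateral gives $m=\pm4$. Running the mechanism of Corollary~\ref{c.32} in reverse, $\qbinom{N}{k}_{\zeta}=0$ for all $1\le k\le N-1$ holds precisely when $\zeta$ is a primitive $N$th root of unity; applied to $\zeta=\xi^{\pm4}$ this shows $\xi^4$ is a primitive $N$th root of unity. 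In particular $\xi$ is a root of unity and $N=\ord(\xi^4)$, as claimed.

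The step I expect to be the main obstacle is making the computation $m=\pm4$ uniform over all triangulable surfaces: when two sides of the flipped quadrilateral coincide in $\D'$ — which is unavoidable for some surfaces (for instance an annulus with one marked point on each boundary component has only degenerate flips) — the vectors $\bu,\bv$ and the exponent $m$ change and must be recomputed case by case, and one must check that the resulting constraint still pins down $\ord(\xi^4)$ (rather than, say, $\ord(\xi^2)$). When the flipped quadrilateral is embedded one can instead use functoriality (Proposition~\ref{r.func}, Subsection~\ref{sec.surfunc}) to restrict $\Phi$ to the skein algebra of a disk with four marked points and run the clean computation there, but as the annulus example shows this shortcut is not always available, so the degenerate bookkeeping is genuinely needed.
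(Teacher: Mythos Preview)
Your approach is essentially the paper's. The paper also reduces to a single flip, writes the flipped edge as $a=X+Y$ in $\sX_\xi(\D')$, derives $(X+Y)^N=X^N+Y^N$ from coordinate-independence, asserts $XY=\xi^4YX$, and concludes via the Gauss binomial formula that $\ord(\xi^4)=N$ (this is the content of Proposition~\ref{prop.two}(b), from which Theorem~\ref{thm.A} is deduced in one line).

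Regarding your flagged obstacle: the paper does not address it either---it simply asserts $XY=\xi^4YX$ without discussing whether sides of the flipped quadrilateral may coincide. In fact the concern does not materialize. One way to see this: the skein identities $aa^*=qT_++q^{-1}T_-$ and $a^*a=q^{-1}T_++qT_-$ force $\langle\bm_+,e_{a^*}\rangle_{P_{\D'}}=2$ and $\langle\bm_-,e_{a^*}\rangle_{P_{\D'}}=-2$ regardless of identifications, so $m=\langle\bm_+,\bm_-\rangle_{P_{\D'}}-4$; and since $P$ is by definition a sum over ordered pairs of half-edges, the contribution to $\langle\bm_+,\bm_-\rangle$ localizes to the four corners of the abstract quadrilateral and alternates in sign, giving $0$. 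Your own annulus example, carried through, already gives $m=-4$, confirming this. So the ``degenerate bookkeeping'' you anticipate is not actually needed.
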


Question B is answered by the following converse to Theorem \ref{thm.A}.
\begin{theorem}
\lbl{thm.surface}
Suppose $\SP$ is a triangulable surface and $\xi$ is a root of unity. Let  $N =\ord(\xi^4)$ and $\ve=\xi^{N^2}$. Choose a triangulation $\D$  of $\SP$.  

(a) The map $F_N$ restricts to a $\BC$-algebra homomorphism  $F_\xi:\cS_\ve\SP \to \cS_\xi\SP$ which does not depend on the triangulation $\D$.

(b) If $a$ is a $\cP$-arc, then $F_\xi(a) = a^N$, and if $\al$ is a $\cP$-knot, then $F_\xi(\al)= T_N(\al)$.
\end{theorem}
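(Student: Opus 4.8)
The plan is to prove that the Frobenius homomorphism $F_N=F_N^\D\colon\sX_\ve(\D)\to\sX_\xi(\D)$ of Proposition~\ref{r.Frobenius} carries $\cS_\ve\SP$ into $\cS_\xi\SP$, to identify the resulting restriction with an intrinsic ``threading'' operation on tangles, and to deduce both parts of the theorem and the independence of $\D$ from that identification. First I would record that $F_N$ respects Weyl normalization: since $F_N(X^\bk)=X^{N\bk}$ and the $q$-commutation weight between normalized monomials is bilinear in the exponents, $F_N$ sends a Weyl-normalized product $[u_1\cdots u_k]$ of pairwise $q$-commuting normalized monomials to $[F_N(u_1)\cdots F_N(u_k)]$. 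By the reordering relation (Figure~\ref{fig:figures/boundary}) every essential $\cP$-tangle $T$ equals, in $\cS\SP$, the Weyl-normalized product of its components, and $\vpD$ is an algebra embedding, so it suffices to prove the two component identities
\[
F_N(\vpD(a))=\vpD(a^N)\ \ \text{for a }\cP\text{-arc }a,\qquad
F_N(\vpD(\al))=\vpD(T_N(\al))\ \ \text{for a }\cP\text{-knot }\al.
\]

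Granting these, for every essential $\cP$-tangle $T$ one gets $F_N(\vpD(T))=\vpD(\hPhi_N(T))$, where $\hPhi_N$ threads each arc by $z^N$ and each knot by $T_N$; since $B_\SP$ is a $\BC$-basis of $\cS_\ve\SP$, this shows $F_N(\cS_\ve\SP)\subseteq\cS_\xi\SP$. The restriction $F_\xi\colon\cS_\ve\SP\to\cS_\xi\SP$ is then a $\BC$-algebra homomorphism (being the restriction of the algebra map $F_N$ to subalgebras) that agrees on $B_\SP$ with $\hPhi_N$; as neither $B_\SP$ nor $\hPhi_N$ refers to $\D$, the map $F_\xi$ is independent of $\D$. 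This proves (a), and (b) is exactly the two component identities.

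For the arc identity, when $a\in\D$ it is immediate from $\vpD(a)=X_a$ and $F_N(X_a)=X_a^N=\vpD(a^N)$. For a general $\cP$-arc $a$ I would choose a triangulation $\D'$ of $\SP$ having $a$ as an edge and transport the identity to $\D$ using the transfer isomorphisms $\Theta_{\D',\D}$ of Proposition~\ref{r.42}, via the purely quantum-torus compatibility $\Theta^\xi_{\D',\D}\circ F_N^{\D'}=F_N^{\D}\circ\Theta^\ve_{\D',\D}$ on $\tiX(\D')$. This is checked on the generators $X_b$, $b\in\D'$: for an unflipped edge it is trivial, and for a flipped edge it reduces, through the explicit flip formula~\eqref{eq.aa1} (only Case~1 occurs, triangulable surfaces being totally marked) together with $F_N(X^\bk)=X^{N\bk}$, to the identity $(u+v)^N=u^N+v^N$ for the two normalized monomials $u,v$ occurring in the flip formula; this holds because $u,v$ $q$-commute with a weight that is a multiple of $4$, and $\ord(\xi^4)=N$ forces the intermediate Gaussian binomial coefficients to vanish, exactly as in the vanishing exploited in Corollary~\ref{c.32}. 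Since any two triangulations are joined by Case~1 flips, the compatibility holds for all pairs, whence $F_N^\D(\vpD(a))=F_N^\D\!\big(\Theta^\ve_{\D',\D}(X_a)\big)=\Theta^\xi_{\D',\D}\!\big(F_N^{\D'}(X_a)\big)=\Theta^\xi_{\D',\D}\!\big(\vp_{\D'}(a^N)\big)=\vpD(a^N)$.

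The main obstacle is the $\cP$-knot identity, which I plan to prove (as Proposition~\ref{r.knot}) with the surgery theory of Chapter~\ref{c.surgery}. Given a $\cP$-knot $\al$ in $\SP$, by adding marked points (Propositions~\ref{prop.addptsurgerytorus} and~\ref{lem.markedpthole}) and plugging unmarked components (Proposition~\ref{t.holetrick}) --- operations under which the skein-coordinate maps change by the explicit homomorphisms of surgery algebras recorded there, and which are compatible with the Frobenius homomorphisms by the same root-of-unity ``freshman's dream'' as in the previous paragraph --- I would reduce to a small model surface in which $\al$ is $\D$-simple and $\vpD(\al)$ has the explicit shape $K+K^{-1}+E$ with $K$ invertible and $KE=\xi^{4}EK$. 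There, since $\ord(\xi^4)=N$, Proposition~\ref{r.chebygen} (via Corollary~\ref{c.32}, with $\xi^4$ in the role of $q^2$) gives $T_N(K+K^{-1}+E)=K^N+K^{-N}+E^N=F_N(K+K^{-1}+E)$, which is the desired identity in the model; the surgery compatibilities then carry it back to $\SP$. The delicate points will be tracking Weyl-normalization scalars across the surgery maps and verifying that the reduction genuinely lands in a sub-surface where the coordinate model $K+K^{-1}+E$ applies; with the component identities and the Weyl-compatibility of $F_N$ in hand, assembling (a) and (b) is routine.
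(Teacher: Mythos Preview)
Your proposal follows essentially the paper's approach: the flip compatibility $\Theta^\xi_{\D',\D}\circ F_N^{\D'}=F_N^{\D}\circ\Theta^\ve_{\D',\D}$ you verify is exactly Proposition~\ref{prop.two}, which gives triangulation-independence and the arc formula (Proposition~\ref{r.arc5}), and Proposition~\ref{r.knot} is indeed the heart of (b), proved via Corollary~\ref{c.32} on a model where $\vpD(\al)=K+K^{-1}+E$. The paper makes your knot step concrete as follows: first compute directly on the marked annulus with one point on each boundary circle (where the core loop has exactly the $K+K^{-1}+E$ form), then extend by functoriality and Proposition~\ref{r.func} to any homologically nontrivial $\al$ by embedding an annular neighborhood of $\al$ together with a transverse arc, and finally for a homologically trivial $\al$ \emph{remove} a small disk inside the region it bounds (so that $\al$ becomes nontrivial in the holed surface) and use the plugging map $\Psi$ of Proposition~\ref{t.holetrick}---checked to intertwine the two Frobenius maps---to carry the identity back to $\SP$; thus the surgery runs in the direction opposite to what your sketch suggests.
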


We prove Theorem \ref{thm.A} in Subsection \ref{sec.triind} and Theorem  \ref{thm.surface} in Subsection \ref{sec.trisurfacethm}.

\subsection{Division algebra}\lbl{sec.triind} Assume $\SP$ is triangulable, $\xi \in \Cx$, and $N\in \BN$. Choose a triangulation $\D$ of $\SP$.
Let $\tXeD$ and $\tXzD$  be the division algebras of $\XeD$ and $\XzD$,  respectively. The $\BC$-algebra embedding $F_N:\XeD\to \XzD$ extends to a $\BC$-algebra embedding $$
\tF_N: \tXeD \to \tXzD.
$$ 

For $\nu=\ve,\xi$ let $\tS_\nu\SP$
be the division algebra of   $\cS_\nu\SP$. By Theorem \ref{r.torus} the embedding 
$\vpD : \cS_\nu\SP \embed \sX_\nu(\D)$ induces an isomorphism $\tvpD : \tcS_\nu\SP \overset \cong \longrightarrow \tX_\nu(\D)$. Diagram~\eqref{dia.sx} becomes
\be \notag
\begin{tikzcd}
\tS_\ve \SP  \arrow[rightarrow,"\cong"]{r}[swap]{\tvpD} %
& \tX_\ve(\D)  \arrow[d, "\tF_N"] \\
\tS_\xi \SP \arrow[rightarrow,"\cong"]{r}[swap]{\tvpD} & \tX_\xi(\D)
\end{tikzcd}
\ee
By pulling back $\tF_N$ via $\tvpD$, we get a $\BC$-algebra embedding

\be
\tF_{N,\D}: \tSe\SP \to  \tSz\SP,
\ee
which a priori depends on the $\cP$-triangulation $\D$.

\begin{proposition}\lbl{prop.two} Let $\SP$ be a triangulable marked surface, $\xi\in \Cx$, and $N\in \BN$. %

(a) If $\xi$ is a root of unity and 
 $N:=\text{ord}(\xi^4)$, 
then $\tF_{N,\D}$   does not depend on the triangulation $\D$.

(b)  Suppose $\SP$ has at least 2 different triangulations and $N \ge 2$. Then $\tF_{N,\D}$   does not depend on $\D$ if and only if $\xi$ is a root of unity and $N=\ord(\xi^4)$.

\end{proposition}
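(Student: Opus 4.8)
The plan is to reformulate the independence question as a commutativity statement about transfer isomorphisms and then reduce it to a single flip. Since $\tvpD$ identifies $\tSe\SP$ and $\tSz\SP$ with $\tXeD$ and $\tXzD$, and since the skein coordinate maps commute with the transfer isomorphisms (Proposition~\ref{r.42}), unwinding the definition $\Theta_{\D,\D'}=\tilde\vp_{\D'}\circ(\tvpD)^{-1}$ shows that $\tF_{N,\D}=\tF_{N,\D'}$ holds precisely when the square
\[
\begin{tikzcd}
\tXeD \arrow[r,"\tF_N"] \arrow[d,"\Theta^\ve_{\D,\D'}"'] & \tXzD \arrow[d,"\Theta^\xi_{\D,\D'}"] \\
\widetilde\sX_\ve(\D') \arrow[r,"\tF_N"] & \widetilde\sX_\xi(\D')
\end{tikzcd}
\]
commutes, where $\Theta^\nu_{\D,\D'}$ denotes the transfer isomorphism computed with quantum parameter $\nu$. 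Because any two triangulations of the (necessarily totally marked) surface $\SP$ are joined by a sequence of flips, all of Case~1, and because transfer isomorphisms compose, it suffices to test commutativity when $\D'$ is obtained from $\D$ by a single flip at an inner edge $a$, replacing $a$ by $a^*$. Both composites in the square are algebra homomorphisms out of $\tXeD$, which is weakly generated by the edges of $\D$, so it is enough to compare them on each $u\in\D$.

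For $u\in\D\setminus\{a\}$ both composites send $u$ to $u^N$, using $\Theta_{\D,\D'}(u)=u$ and $F_N(u)=u^N$. For $u=a$, the flip formula \eqref{eq.aa1} gives $\Theta^\nu_{\D,\D'}(a)=[ce(a^*)^{-1}]_\nu+[bd(a^*)^{-1}]_\nu$, with $b,c,d,e$ the outer edges of the flip square (Figure~\ref{fig:figures/mutation}). Set $K:=[ce(a^*)^{-1}]_\xi=X^{\bk_1}$ and $L:=[bd(a^*)^{-1}]_\xi=X^{\bk_2}$, normalized monomials of $\widetilde\sX_\xi(\D')$. Using $\tF_N(X^\bk_\ve)=X^{N\bk}_\xi=(X^\bk_\xi)^N$ (Proposition~\ref{r.Frobenius} and \eqref{eq.power}), the two composites applied to $a$ are $(K+L)^N$ (right then down) and $K^N+L^N$ (down then right). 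Hence the square commutes if and only if $(K+L)^N=K^N+L^N$ in $\widetilde\sX_\xi(\D')$.

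The elements $K,L$ are $q$-commuting, $KL=\xi^{\langle\bk_1,\bk_2\rangle_P}LK$, and a local computation with the vertex matrix $P$ at the two vertices of the flip square gives $\langle\bk_1,\bk_2\rangle_P=\pm 4$. By the quantum binomial theorem for $q$-commuting variables,
\[
(K+L)^N=\sum_{i=0}^{N}\qbinom{N}{i}_{\xi^{\pm 2}}\,[K^iL^{N-i}],
\]
and the monomials $[K^iL^{N-i}]=X^{i\bk_1+(N-i)\bk_2}$ are pairwise distinct (as $\bk_1\neq\bk_2$), hence linearly independent. So $(K+L)^N=K^N+L^N$ iff $\qbinom{N}{i}_{\xi^{\pm 2}}=0$ for all $0<i<N$, which holds iff $\xi^{\pm 4}$ is a primitive $N$-th root of unity, i.e. $\xi$ is a root of unity and $\ord(\xi^4)=N$: if $\ord(\xi^4)=N$ then $[N]_{\xi^{\pm 2}}=0$ while $[j]_{\xi^{\pm 2}}\neq 0$ for $1\le j\le N-1$, forcing all middle $q$-binomials to vanish; conversely, assuming $N\ge 2$, if $\xi^4$ is not a root of unity then $\qbinom{N}{1}_{\xi^{\pm 2}}=[N]_{\xi^{\pm 2}}\neq 0$, while if $\ord(\xi^4)=N'<N$ then the $q$-Lucas congruence gives $\qbinom{N}{N'}_{\xi^{\pm 2}}=[N/N']_{\xi^{\pm 2N'}}\neq 0$.

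This settles both parts. For (a): under the hypothesis $N=\ord(\xi^4)$ the equivalence above gives $(K+L)^N=K^N+L^N$ for every flip, so the square commutes for all $\D,\D'$ and $\tF_{N,\D}$ is independent of $\D$. For (b): $(\Leftarrow)$ is (a); for $(\Rightarrow)$, having at least two triangulations means some triangulation admits a genuine Case~1 flip at an inner edge, and independence of $\tF_{N,\D}$ forces that flip's square to commute, hence $(K+L)^N=K^N+L^N$, hence (using $N\ge 2$) $\xi$ is a root of unity with $\ord(\xi^4)=N$. The main obstacle I expect is the local computation $\langle\bk_1,\bk_2\rangle_P=\pm 4$, to be carried out uniformly including for folded flip squares in which some of $b,c,d,e$ are identified, together with pinning down the statement ``$\qbinom{N}{i}_q=0$ for all $0<i<N$ iff $\ord(q^2)=N$'' via $q$-Lucas; everything else is formal bookkeeping with the transfer isomorphisms and the quantum binomial theorem.
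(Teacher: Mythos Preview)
Your proposal is correct and follows essentially the same route as the paper: reformulate independence as commutativity of the transfer square, reduce to a single flip, check on the edge generators, and for the flipped edge reduce to $(K+L)^N=K^N+L^N$ for two $\xi^4$-commuting monomials, settled by the Gauss/quantum binomial formula. The paper simply asserts $XY=\xi^4 YX$ without worrying about folded flip squares (since $\cP\subset\partial\Sigma$ rules out self-folded triangles, cf.\ Remark~\ref{r.noselffold}) and dispatches the ``all middle $q$-binomials vanish $\Leftrightarrow$ $\ord(\xi^4)=N$'' step as well-known, whereas you spell these out via the vertex-matrix pairing and $q$-Lucas; the substance is the same.
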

\begin{remark} A totally marked surface $\SP$ has at least 2  triangulations if and only if it is not a disk with less than 4 marked points.
\end{remark}

\begin{proof} As (a) is a consequence of (b), let us prove (b).

By Proposition \ref{r.42}, the map $\tF_{N,\D}$   does not depend on $\cP$-triangulations $\D$ if and only if the diagram
\be 
\begin{tikzcd}
\tiX_\ve(\D) \arrow[r,"\Theta_{\D,\D'}" ] \arrow[d, "\tF_N" ]& \tiX_\ve(\D')  \arrow[d, "\tF_N" ] \\
\tiX_\xi(\D) \arrow[r,"\Theta_{\D,\D'}" ] & \tiX_\xi(\D')
\lbl{dia.xx}
\end{tikzcd}
\ee
is commutative for any two $\cP$-triangulations $\D,\D'$. Since any two $\cP$-triangulations are related by a sequence of flips, in \eqref{dia.xx} we can assume that   $\D'$ is obtained from $\D$ by a flip at an edge $a\in \D$, with  the notation as given in Figure \ref{fig:figures/mutation}. Then  $\D'= \D\cup \{a^*\} \setminus \{a\}$.
The commutativity of \eqref{dia.xx} is equivalent to
\be 
\lbl{eq.aa3}
(\tF_N \circ \Theta_{\D,\D'})(x) = (\Theta_{\D,\D'} \circ \tF_N) (x), \quad \text{for all } \ x\in \tiX_\ve(\D).
\ee
 Since  $\D$ weakly generates the algebra $\tiX_\xi(\D)$, it is enough to show \eqref{eq.aa3} for $x\in \D$.

If $x \in \D \setminus \{ a\}$, then by \eqref{eq.aa0} one has $\Theta_{\D,\D'}(x)= x$, and hence we have \eqref{eq.aa3} since both sides are equal to $x^N$ in $\tiX_\ve(\D')$.
Consider the remaining case $x=a$.  By \eqref{eq.aa1}, we know that
\be 
\lbl{eq.aa2}
\Theta_{\D,\D'}(a)  = X + Y, \quad \text{where} \ X=[  b  d  (a^*)^{-1}], \ Y=[  c  e  (a^*)^{-1}].
\ee 
Using the above identity and the definition of $\tF_N$, we  
calculate the left hand side of \eqref{eq.aa3}:
\begin{align}  \lbl{eq.l0}
(\tF_N \circ \Theta_{\D,\D'})(a) & =  \tF_N\left( [  b  d  (a^*)^{-1}] + [  c  e  (a^*)^{-1}] \right) \\
& =  [  b^N  d^N  (a^*)^{-N}] + [  c^N  e^N  (a^*)^{-N}]= X^N + Y^N. \nonumber
\end{align}

Now we calculate the right hand side of \eqref{eq.aa3}:
\begin{align}
\lbl{eq.l1}
(\Theta_{\D,\D'} \circ \tF_N)(a) &= \Theta_{\D,\D'} (a^N)  
= \left(\Theta_{\D,\D'} (a)\right)^N     =  (X+Y)^N.
\end{align} 
Comparing \eqref{eq.l0} and \eqref{eq.l1}, we see that \eqref{eq.aa3} holds if and only if 
\be  (X+Y)^N= X^N + Y^N
\lbl{eq.gauss}
\ee From the $q$-commutativity of elements in $\D'$ one can check that $XY = \xi^4 YX$. By the Gauss binomial formula (see e.g. \cite{KC2001}),
$$ (X+Y)^N= X^N + Y^N + \sum_{k=1}^{N-1} \binom Nk_{\xi^4} Y^k X^{N-k},\ \text{
where} \
\binom Nk_{\xi^4}= \prod_{j=1}^k \frac{1- \xi^{4(N-j+1)}}{1-\xi^{4j}}.$$
Note that $Y^k X^{N-k}$ is a power of $\xi$ times a monomial in $b,c,d,e$, and $(a^*)^{-1}$, and these monominals are distinct for $k=0,1,\dots, N$. As monomials (with positive and negative powers) in edges form a $\BC$-basis of $\sX_\xi(\D')$, we see that $(X+Y)^N= X^N + Y^N$ if and only if 
\be  \binom Nk_{\xi^4} =0 \ \text{for all } k = 1,2, \dots, N-1. \lbl{eq.gauss2}.
\ee
It is well-known, and easy to prove, that \eqref{eq.gauss2} holds if and only if $\xi^4$ is a root of unity of order $N$. %
\end{proof}

As the edge $a$ in the proof of Proposition \ref{prop.two} is in $\cS\SP$,  Theorem \ref{thm.A} follows immediately.

\subsection{Frobenius homomorphism \texorpdfstring{$\tF_\xi:=\tF_{N,\D}$}{Fxi=FND}}

For the remainder of this chapter let $\xi \in \Cx$ be a root of unity, $N=\ord(\xi^4)$, $\ve=\xi^{N^2}$. Suppose $\SP$ is a triangulable marked surface. Since $\tF_{N,\D}$ does not depend on $\D$ and $N=\ord(\xi^4)$, we denote 
\[
\tF_\xi:= \tF_{N,\D}: \tcS_\ve\SP \to \tcS_\xi\SP.
\]

\subsection{Arcs in \texorpdfstring{$\SP$}{(S,P)}}

\begin{proposition} 
\lbl{r.arc5}
Suppose $a\subset \Sigma$ is a $\cP$-arc. Then $\tF_\xi(a)= a^N$. 
\end{proposition}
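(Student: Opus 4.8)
The plan is to reduce the statement to the case where $a$ is an edge of a triangulation, since there it is forced immediately by the definition of $\tF_\xi$. First I would dispose of the degenerate case: if the $\cP$-arc $a$ bounds a disk in $\Sigma$, then by the trivial arc relation $a=0$ in $\cS_\ve\SP$, so both $\tF_\xi(a)$ and $a^N$ vanish and there is nothing to prove. Hence I may assume $a$ is essential. It is standard (see e.g.\ \cite{Pe2012}) that an essential $\cP$-arc on a triangulable marked surface extends to a triangulation; so I fix a triangulation $\D$ of $\SP$ with $a\in\D$.

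With such a $\D$ in hand the computation is short. Since $a\in\D$, \eqref{eq.vpD} gives $\vpD(a)=X_a$, the generator of $\sX_\xi(\D)$ (resp.\ $\sX_\ve(\D)$) attached to $a$. By the definition \eqref{eq.defPhi1} of the Frobenius homomorphism together with \eqref{eq.power} one has $F_N(X_a)=X_a^N$, and since $\vpD$ is an algebra homomorphism, $X_a^N=\vpD(a)^N=\vpD(a^N)$. Recall that $\tF_\xi=\tF_{N,\D}$ is by construction obtained from $\tF_N\colon\tXeD\to\tXzD$ by conjugating with the isomorphisms $\tvpD$ of Theorem \ref{r.torus}, and that by Proposition \ref{prop.two}(a) this does not depend on $\D$. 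Restricting to $\cS_\ve\SP$, on which $\tvpD$ restricts to $\vpD$, we obtain
\[
\tF_\xi(a)=(\tvpD)^{-1}\bigl(\tF_N(X_a)\bigr)=(\tvpD)^{-1}\bigl(X_a^N\bigr)=(\tvpD)^{-1}\bigl(\vpD(a^N)\bigr)=a^N ,
\]
which is the claim.

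The main (and essentially only) obstacle is the reduction step. One must be certain that every essential $\cP$-arc really does lie in some triangulation, because for a $\cP$-arc $a$ that is \emph{not} an edge of $\D$ the coordinate $\vpD(a)$ is a genuine sum of monomials, and then $F_N(\vpD(a))\neq\vpD(a)^N$ in general, so the short computation above breaks down; it only works after arranging $a\in\D$. Once $a$ is an edge, the equality $\tF_\xi(a)=a^N$ is an immediate consequence of the definition of $\tF_\xi$ and of $F_N(X_a)=X_a^N$, and the independence-of-triangulation statement of Proposition \ref{prop.two}(a) is precisely what guarantees the answer is the same regardless of which triangulation containing $a$ we chose, so that writing $\tF_\xi$ without reference to $\D$ is legitimate.
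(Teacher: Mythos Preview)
Your proof is correct and follows essentially the same approach as the paper's one-line proof, which simply reads ``Since $a$ is an element of a $\cP$-triangulation $\D$, we have $\tF_\xi(a)= \tF_{N,\D}(a) = a^N$.'' You have merely made explicit the degenerate trivial-arc case and the extension-to-a-triangulation step that the paper takes for granted, and unwound the definition of $\tF_{N,\D}$ in more detail.
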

\begin{proof} Since $a$ is an element of a $\cP$-triangulation $\D$, we have $\tF_\xi(a)= \tF_{N,\D}(a) = a^N$. 
\end{proof}

\subsection{Functoriality}

\begin{proposition}
\lbl{prop.three} Suppose $\SP$ and  $(\Sigma', \cP')$ are triangulable marked surfaces such that  $\Sigma\subset \Sigma'$ and $\cP\subset \cP'$. For any $\zeta \in \Cx$, the embedding $\iota: \SP \embed (\Sigma',\cP')$ induces a $\BC$-algebra homomorphism $\iota_*: \widetilde \cS_\zeta(\Sigma, \cP) \to \widetilde \cS_\zeta(\Sigma', \cP')$.

Let $\xi\in \Cx$ be a root of unity, $N =\ord(\xi^4)$ and $\ve= \xi^{N^2}$. Then  the following diagram commutes.

\[
\begin{tikzcd}
\ttS_\ve\SP \arrow[r,"\iota_*"] \arrow[d, "\tF_\xi"]& \ttS_\ve(\Sigma',\cP') \arrow[d, "\tF_\xi"] \\
\ttS_\xi\SP \arrow[r,"\iota_*"] & \ttS_\xi(\Sigma',\cP')
\end{tikzcd}
\]

\end{proposition}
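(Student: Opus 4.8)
The plan is to first establish the existence of the induced map $\iota_*$ on division algebras, and then verify commutativity of the square by reducing to a generating set. For the first part, recall from Proposition \ref{r.func} (and its skein-algebra analog, Proposition \ref{r.func} in Section \ref{sec.surfunc}) that the inclusion $\iota: \SP \embed (\Sigma',\cP')$ induces an injective $R$-algebra homomorphism $\iota_*: \cS_\zeta\SP \to \cS_\zeta(\Sigma',\cP')$, since the preferred basis $B_{\SP}$ maps to a subset of $B_{(\Sigma',\cP')}$. Both $\cS_\zeta\SP$ and $\cS_\zeta(\Sigma',\cP')$ are Ore domains by Theorem \ref{r.torus}(b), so they have division algebras $\widetilde\cS_\zeta\SP$ and $\widetilde\cS_\zeta(\Sigma',\cP')$. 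An injective algebra homomorphism between Ore domains extends to their division algebras: concretely, $\iota_*$ sends the Ore set $\cS_\zeta\SP \setminus \{0\}$ into $\cS_\zeta(\Sigma',\cP') \setminus \{0\}$ (by injectivity and the domain property), so by the universal property of Ore localization it extends uniquely to $\iota_*: \widetilde\cS_\zeta\SP \to \widetilde\cS_\zeta(\Sigma',\cP')$.

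For the commutativity of the square, I would argue as follows. By the weak generation property (Subsection \ref{sec.weakgeneration}), an algebra homomorphism out of $\widetilde\cS_\ve\SP$ is determined by its values on any subset that weakly generates $\widetilde\cS_\ve\SP$. The set of $\cP$-arcs and $\cP$-knots in $\SP$ weakly generates $\cS_\ve\SP$ (indeed the preferred basis $B_\SP$ consists of Weyl-normalized products of such, by the reordering relation), hence weakly generates $\widetilde\cS_\ve\SP$. So it suffices to check
\[
(\tF_\xi \circ \iota_*)(x) = (\iota_* \circ \tF_\xi)(x)
\]
for $x$ a $\cP$-arc or $\cP$-knot of $\SP$. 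If $x = a$ is a $\cP$-arc, then $\iota_*(a)$ is a $\cP'$-arc of $(\Sigma',\cP')$, so by Proposition \ref{r.arc5} applied in both surfaces, $(\tF_\xi \circ \iota_*)(a) = \iota_*(a)^N = \iota_*(a^N) = (\iota_* \circ \tF_\xi)(a)$, using that $\iota_*$ is an algebra homomorphism. If $x = \al$ is a $\cP$-knot, then $\iota_*(\al)$ is a $\cP'$-knot of $(\Sigma',\cP')$; by Theorem \ref{thm.surface}(b) (which gives $\tF_\xi(\al) = T_N(\al)$, a polynomial in $\al$), we get $(\tF_\xi \circ \iota_*)(\al) = T_N(\iota_*(\al)) = \iota_*(T_N(\al)) = (\iota_* \circ \tF_\xi)(\al)$, again because $\iota_*$ is an algebra homomorphism and $T_N$ has integer coefficients.

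The main subtlety — not so much an obstacle as a point requiring care — is making sure that Theorem \ref{thm.surface}(b) is genuinely available at this point: it characterizes $\tF_\xi$ intrinsically (without reference to a triangulation), which is exactly what lets me transport the identity along $\iota_*$ even when a triangulation of $\SP$ does not obviously extend to a compatible triangulation of $(\Sigma',\cP')$. One should also confirm that $\SP$ being triangulable is enough for the $\cP$-arcs and $\cP$-knots to weakly generate (rather than merely generate together with boundary data); this follows since every element of $B_\SP$ is a Weyl-normalized monomial in the components of an essential $\cP$-tangle, each of which is a $\cP$-arc or $\cP$-knot, and Weyl normalization only introduces powers of $q^{1/2} \in \BC^\times$. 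With these points in place the argument is routine.
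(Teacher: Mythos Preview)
Your argument for the knot case invokes Theorem \ref{thm.surface}(b), i.e.\ the identity $\tF_\xi(\al)=T_N(\al)$ for $\cP$-knots $\al$. In the paper's logical order this is Proposition \ref{r.knot}, and its proof (via Lemma \ref{r.knot1a}) explicitly appeals to Proposition \ref{prop.three} for the functoriality of $\tF_\xi$ under inclusions of marked surfaces. So the ``point requiring care'' you flag is in fact a genuine circularity: Theorem \ref{thm.surface}(b) is \emph{not} available at this stage, and your proof as written is circular.

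The paper sidesteps this by observing that for a triangulable $\SP$ one does not need knots at all. Since $\widetilde\cS_\ve\SP \cong \widetilde\sX_\ve(\D)$ for any triangulation $\D$ (Theorem \ref{r.torus}(b)), and the latter is weakly generated by the edges $a\in\D$---all of which are $\cP$-arcs---it suffices to verify commutativity on $\cP$-arcs alone. That reduces everything to Proposition \ref{r.arc5}, which is already established. Your treatment of the arc case is correct; the fix is simply to drop the knot case and replace your generating-set claim (``$\cP$-arcs and $\cP$-knots generate $\cS_\ve\SP$'') by the sharper observation that the edges of a fixed triangulation already weakly generate the division algebra. A minor side remark: your injectivity argument for $\iota_*$ on skein algebras asserts that $B_\SP$ maps into $B_{(\Sigma',\cP')}$, but an essential $\cP$-knot in $\Sigma$ can become trivial in $\Sigma'$, so this step also needs more care; the paper does not spell out the construction of $\iota_*$ on division algebras.
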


\begin{proof} If $a\subset \Sigma$ is a $\cP$-arc, then it is also a $\cP'$-arc in $\Sigma'$. Hence by Proposition \ref{r.arc5}, both $\iota_*\circ \tF_\xi(a)$ and $\tF_\xi\circ \iota_*(a)$ are equal to $a^N$ in $\ttS_\xi(\Sigma',\cP')$. Since for a triangulable marked surface $\SP$, the set of all sums of $\cP$-arcs and their inverses generates $\tXeD=\widetilde\cS_\ve\SP  $, we have the commutativity of the diagram.
\end{proof}

\subsection{Knots in \texorpdfstring{$\cS\SP$}{S(S,P)}}

We find an intrinsic definition of $\tF_\xi(\al)$, where $\al$ is a $\cP$-knot. 
\begin{proposition}\lbl{r.knot} Suppose $\SP$ is a triangulable marked surface, $\xi$ is a root of unity, and $N=\ord(\xi^4)$.
 If $\al$ is a $\cP$-knot in $\SP$, then $\tF_\xi(\al) = T_N(\al)$.
\end{proposition}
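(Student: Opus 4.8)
The plan is to rephrase the statement as an explicit identity in a Muller algebra and then, using functoriality together with the surgery theory of Chapter~\ref{c.surgery}, to reduce it to the model computation already carried out in Corollary~\ref{c.32}. To set things up, I would first unwind the definitions: since $\al\in\cS_\ve\SP$ and $\vpD$ is an algebra embedding, $\vpD(T_N(\al))=T_N(\vpD(\al))$, whereas by the very definition $\tF_\xi=\tF_{N,\D}$ the element $\tF_\xi(\al)$ corresponds under $\tvpD$ to $F_N(\vpD(\al))$, where the Frobenius homomorphism $F_N$ carries a normalized monomial $X^{\bk}$ to $X^{N\bk}$ by~\eqref{eq.power}. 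Writing $z_\nu:=\vpD(\al)\in\sX_\nu(\D)$ for $\nu\in\{\ve,\xi\}$, the proposition is therefore equivalent to the identity
\[
F_N(z_\ve)=T_N(z_\xi)\qquad\text{in }\sX_\xi(\D),
\]
in which the left side is computed monomial by monomial while the right side is the Chebyshev polynomial evaluated at the single element $z_\xi$.

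Next I would reduce to a local model. Because $\tF_\xi$ does not depend on the triangulation (Proposition~\ref{prop.two}(a)), it commutes with every transfer isomorphism $\TDDp$ (Proposition~\ref{r.42}); and because it is compatible with inclusions of marked surfaces (Proposition~\ref{prop.three}) and with the add-marked-point and hole-plugging maps of Chapter~\ref{c.surgery} (Propositions~\ref{prop.addptsurgerytorus}, \ref{lem.markedpthole}, \ref{t.holetrick}), one may replace $\SP$ by a conveniently chosen marked surface in which $\al$ is simple. Concretely, take a regular neighbourhood of $\al$ together with one or two arcs joining $\al$ to marked points of $\pS$; this produces a small subsurface $(\Sigma_0,\cP_0)$, with $\cP_0=\Sigma_0\cap\cP$, in which $\al$ is isotopic to an unmarked boundary component $\beta$, and a morphism $(\Sigma_0,\cP_0)\embed\SP$ carrying $\beta$ to $\al$. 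Adding a marked point to $\beta$ makes $(\Sigma_0,\cP_0)$ triangulable and turns $\al$ into a $\D_0$-simple $\cP_0$-knot lying in a single eye (Figure~\ref{fig:figures/eye}) or a pair of adjacent triangles. Since every square in this chain commutes with $\tF_\xi$, it suffices to prove the displayed identity for $(\Sigma_0,\cP_0)$; the few degenerate cases (for example $|\cP|=1$, so that the auxiliary arcs cannot be built) are handled by first enlarging $\cP$ via the add-marked-point surgery.

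Then I would do the model computation. For such an $\al$ the flip relation~\eqref{e.zrelations} (equivalently, a direct skein resolution inside the eye) writes $z_\xi=\al$ in $\fXsur_\xi(\D_0)$ as a sum $E+K+K^{-1}$ of at most three normalized monomials in the edges $a,a^{*},b,c$, with $K$ invertible and, by a short computation with the vertex matrix, $KE=\xi^{\pm4}EK$. As $N=\ord(\xi^{4})$, the scalar $\xi^{\pm4}$ is a root of unity of order $N$, so Corollary~\ref{c.32}, applied with $K$ and $E$ playing the roles of the $q$-commuting variables (with ``$q^{2}$'' $=\xi^{\pm4}$), gives $T_N(E+K+K^{-1})=E^{N}+K^{N}+K^{-N}$; in the remaining case, where $\al$ meets several triangles and $z_\xi$ has more terms, Proposition~\ref{r.chebygen} is used in exactly the same way, together with the vanishing~\eqref{eq.cnrj}. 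On the other hand $F_N(z_\ve)=E^{N}+K^{N}+K^{-N}$ as well, since $F_N$ multiplies the exponent of each normalized monomial by $N$, provided the scalar prefactors are matched using $\ve=\xi^{N^{2}}$ and $\xi^{2N}=\pm1$. Comparing the two computations yields $F_N(z_\ve)=T_N(z_\xi)$, and transporting this back along the inclusion and surgery maps of the reduction gives $\tF_\xi(\al)=T_N(\al)$ in $\cS_\xi\SP$.

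The hard part will be the reduction step, not the algebra. One must choose the subsurface $(\Sigma_0,\cP_0)$ so that $\al$ genuinely becomes simple, deal with the degenerate configurations, and—above all—verify carefully that $\tF_\xi$, which a priori is defined only after a triangulation has been fixed, commutes with every map used in the reduction: the transfer isomorphisms $\TDDp$, the add-marked-point homomorphisms, and the hole-plugging homomorphisms $\Psi$. (Establishing these commutativities for $\tF_\xi$ is really the technical heart of the section, relying on Proposition~\ref{r.42}, Proposition~\ref{prop.three}, and Lemma~\ref{l.skeinsurgeryextension}.) Once that is in place, the model computation is routine, reducing entirely to Corollary~\ref{c.32} (or Proposition~\ref{r.chebygen}) and~\eqref{eq.power}, with the only finicky point being the $q$-power bookkeeping, which is forced by reflection invariance of $z_\xi$ and the relations $\ve=\xi^{N^{2}}$, $N=\ord(\xi^{4})$.
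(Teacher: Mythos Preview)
Your high-level strategy---reduce to a local model and invoke Corollary~\ref{c.32}---is the same as the paper's, and your reformulation $F_N(z_\ve)=T_N(z_\xi)$ is correct. But the reduction step, as written, has a genuine gap.

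The subsurface $(\Sigma_0,\cP_0)$ you build from a neighbourhood of $\al$ together with arcs ``joining $\al$ to marked points'' has $\al$ parallel to an \emph{unmarked} boundary component $\beta$, so $(\Sigma_0,\cP_0)$ is not totally marked and hence not triangulable; $\tF_\xi$ is not defined there, and Proposition~\ref{prop.three} does not apply. Your fix---add a marked point to $\beta$---does not restore functoriality: $\beta$ lies in the interior of $\Sigma$, so the new marked point cannot be added to $\cP$, and there is no inclusion $(\Sigma_0,\cP_0')\hookrightarrow(\Sigma,\cP')$ of marked surfaces to carry the computation back. More seriously, you invoke compatibility of $\tF_\xi$ with the hole-plugging map $\Psi$ of Proposition~\ref{t.holetrick} as if it were already available, but that compatibility is \emph{not} established anywhere prior to Proposition~\ref{r.knot}; Proposition~\ref{prop.three} only covers inclusions. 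In fact, verifying $\Psi\circ\tF_\xi=\tF_\xi\circ\Psi$ on the generator $\beta\in\cH$ already requires knowing $\tF_\xi(\beta)=T_N(\beta)$ for the boundary knot $\beta$---precisely an instance of the proposition you are trying to prove.

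The paper breaks this circularity by a homology dichotomy. If $[\al]\neq 0$ in $H_1(\Sigma;\BZ)$, one can find a \emph{properly embedded} arc $a$ with $|a\cap\al|=1$ and endpoints on $\pS$; the neighbourhood of $\al\cup a$ is then an annulus with one marked point on \emph{each} boundary component, all marked points lying on $\pS$, so Proposition~\ref{prop.three} applies directly and the annulus computation (Lemma~\ref{r.markedannulus}) gives $\tF_\xi(\al)=T_N(\al)$. If $[\al]=0$, no such arc exists (one side of $\al$ has no boundary of $\Sigma$); instead one drills a hole inside the subsurface $\al$ bounds, making $\al$ homologically nontrivial in the new surface $\Sigma'$, applies the previous case there, and then checks by hand---on generators, using the arc formula and the just-established nontrivial-knot case for the new boundary $\beta$---that $\tF_\xi$ commutes with the hole-plugging map $\Psi$ back to $\Sigma$. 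That last verification is exactly the content of Claims~1 and~2 in the paper's proof, and it is where the real work hides.
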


We break the proof of Proposition \ref{r.knot} into lemmas.
\begin{lemma}\lbl{r.unknot}
(a) Proposition \ref{r.knot} holds if $\al$ is a trivial $\cP$-knot, i.e. $\al$ bounds a disk in $\Sigma$.

(b) If $\xi$ is a root of unity with $\ord(\xi^4)=N$ and $\ve=\xi^{N^2}$, then
\be 
\lbl{eq.vexi}
T_N(-\xi^2-\xi^{-2})= -\ve^2 - \ve^{-2}.
\ee
\end{lemma}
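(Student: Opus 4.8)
The plan is to reduce part (a) to part (b) and then establish (b) by a direct application of DeMoivre's identity \eqref{e.demoivre} together with elementary arithmetic of orders of roots of unity. For part (a), suppose $\al$ bounds a disk in $\Sigma$. Then the trivial loop relation forces $\al = -\nu^2 - \nu^{-2}$ as an element of $\cS_\nu\SP$ for $\nu = \ve$ and $\nu=\xi$. In particular, viewed in $\tcS_\ve\SP$ the element $\al$ is the scalar $-\ve^2-\ve^{-2}\in\BC$, and since $\tF_\xi$ is a $\BC$-algebra homomorphism it fixes scalars, so $\tF_\xi(\al) = -\ve^2-\ve^{-2}$. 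On the other hand $T_N(\al)$, with $\al$ regarded inside $\cS_\xi\SP$ where $\al = -\xi^2-\xi^{-2}$, equals $T_N(-\xi^2-\xi^{-2})$. Hence the assertion $\tF_\xi(\al) = T_N(\al)$ of Proposition \ref{r.knot} for a trivial $\cP$-knot is precisely identity \eqref{eq.vexi}, so (a) is a consequence of (b).

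For part (b), I would set $u = -\xi^2$, so that $u^{-1} = -\xi^{-2}$ and $u + u^{-1} = -\xi^2 - \xi^{-2}$. DeMoivre's identity \eqref{e.demoivre}, valid for the invertible element $u\in\BC$, then gives
\[
T_N(-\xi^2-\xi^{-2}) = u^N + u^{-N} = (-1)^N\bigl(\xi^{2N} + \xi^{-2N}\bigr).
\]
Because $N = \ord(\xi^4)$ we have $\xi^{4N}=1$, so $\xi^{2N}\in\{1,-1\}$ and in particular $\xi^{-2N}=\xi^{2N}$; thus the right-hand side simplifies to $2(-1)^N\xi^{2N}$. Similarly $\ve^2 = \xi^{2N^2} = (\xi^{2N})^N \in\{1,-1\}$, so $-\ve^2-\ve^{-2} = -2\ve^2 = -2(\xi^{2N})^N$. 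It therefore remains to verify the purely numerical identity $(-1)^N\xi^{2N} = -(\xi^{2N})^N$.

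This last identity is checked according to the parity of $N$. If $N$ is odd, then $2N^2-2N = 2N(N-1)$ is a multiple of $4N$, whence $(\xi^{2N})^N = \xi^{2N^2} = \xi^{2N}$, and both sides equal $-\xi^{2N}$. If $N$ is even, then $\xi^{2N} = (\xi^4)^{N/2}$ is an element of order $N/\gcd(N,N/2) = 2$, hence $\xi^{2N} = -1$, and both sides equal $-1$. This settles (b), and with it (a). The only mildly delicate point is the order bookkeeping — in particular noticing that $\ord(\xi^4)=N$ with $N$ even forces $\xi^{2N}=-1$ — but no genuine obstacle arises; the whole argument is a short computation once (a) has been reduced to (b).
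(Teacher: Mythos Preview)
Your proof is correct and follows essentially the same route as the paper: both arguments reduce (a) to (b) via the trivial loop relation and the $\BC$-linearity of $\tF_\xi$, and both establish (b) by applying DeMoivre's identity and then tracking the sign of $\xi^{2N}$. The only cosmetic difference is that the paper splits cases according to whether $\ord(\xi^2)$ equals $N$ or $2N$, whereas you split on the parity of $N$; these amount to the same thing since $N$ even forces $\xi^{2N}=-1$ (your observation), while $\xi^{2N}=1$ forces $N$ odd (the paper's observation).
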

\begin{proof} Let us prove (b) first. The left hand side and the right hand side of \eqref{eq.vexi} are
\begin{align}
\lbl{eq.xx1}
LHS&=T_N(-\xi^2 -\xi^{-2})=(-\xi^2)^N + (-\xi^{-2})^{N}=(-1)^N(\xi^{2N} + \xi^{-2N})\\
RHS&= -\ve^2 - \ve^{-2} = -\xi^{2N^2} -\xi^{-2N^2}. \lbl{eq.xx2}
  \end{align}
  Since $\ord(\xi^4)=N$, either $\ord(\xi^2)=2N$ or $\ord(\xi^2)=N$.
  
  Suppose $\ord(\xi^2)=2N$. Then both  right hand sides of \eqref{eq.xx1} and \eqref{eq.xx2} are equal to $2(-1)^N$, and so they are equal.
  
  Suppose $\ord(\xi^2)=N$. Then $N$ must be odd since otherwise $\ord(\xi^4)=N/2$. 
Then both  right hand sides  of \eqref{eq.xx1} and \eqref{eq.xx2} are equal to $-2$. This completes the proof of (b).

(a) Since $\al$ is a trivial knot, $\al= -\ve^2 - \ve^{-2}$ in $\cS_\ve\SP$ and $\al= -\xi^2 -\xi^{-2}$ in $\cS_\ve\SP$. Hence
\begin{align*}
\tF_\xi(\al) &= \tF_\xi (-\ve^2 - \ve^{-2})= -\ve^2 - \ve^{-2}= T_N(-\xi^2 -\xi^{-2})=
T_N(\al),
\end{align*} 
where the third identity is part (b). Thus $\tF_\xi(\al) = T_N(\al)$.
\end{proof}

\begin{lemma}\lbl{r.markedannulus}  Proposition \ref{r.knot} holds if $\Sigma = \An$, the annulus, and $\cP\subset \partial \An$ consists of two points, one in each connected component of $\partial \An$.

\end{lemma}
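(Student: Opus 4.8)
The plan is to reduce the statement to an explicit coordinate computation in one fixed triangulation of the marked annulus, and then to feed the result into the Chebyshev identity for $q$-commuting variables proved earlier.

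First I would fix a triangulation $\D$ of $(\An,\cP)$. Since $(\An,\cP)$ is totally marked and quasitriangulable it has a triangulation, and an Euler characteristic count shows $\D$ has exactly four edges: the two boundary loops $\partial_1$ (based at $p$) and $\partial_2$ (based at $q$), together with two inner $\cP$-arcs $d,e$ from $p$ to $q$, cutting $\An$ into the two triangles $\{d,e,\partial_1\}$ and $\{d,e,\partial_2\}$. Up to $\cP$-isotopy the only $\cP$-knot in $\An$ not $\cP$-isotopic to a trivial knot is the core curve $\al$, and the trivial case of Proposition \ref{r.knot} is already covered by Lemma \ref{r.unknot}(a); so it suffices to prove $\tF_\xi(\al)=T_N(\al)$ for $\al$, which I isotope so that it misses $\partial_1,\partial_2$ and meets each of $d,e$ transversally in a single point.

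Second, I would compute $\vpD(\al)$. Since $\al$ is $\D$-simple, $\vpD(\al)\in\sX_\xi(\D)$ is obtained by resolving the two crossings of $\al$ with $d$ and with $e$ (equivalently, from the combinatorial expansion of the skein coordinate map on a $\D$-simple knot), and the identical formula, with Weyl normalizations taken with respect to $\ve$, computes $\vpD(\al)\in\sX_\ve(\D)$. The expected outcome is $\vpD(\al)=K+K^{-1}$, where $K$ is an invertible monomial in $X_d,X_e,X_{\partial_1},X_{\partial_2}$ (up to sign), the coefficients being trivial because $\vpD(\al)$ is reflection invariant by Theorem \ref{r.torus}(c). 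Should the expansion instead produce three monomials, it has the shape $K+K^{-1}+E$ with $KE=\xi^4EK$ (the commutation constant being read off from $\langle\,\cdot\,,\cdot\,\rangle_{P_\D}$), and the argument below goes through verbatim with Corollary \ref{c.32} replacing DeMoivre.

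Third, I would assemble the identities. In $\sX_\xi(\D)$ one has $T_N(\vpD(\al))=T_N(K+K^{-1})=K^N+K^{-N}$ by DeMoivre's identity \eqref{e.demoivre} (or $T_N(K+K^{-1}+E)=K^N+K^{-N}+E^N$ by Corollary \ref{c.32}, which applies since $N=\ord(\xi^4)$). On the other hand the Frobenius homomorphism $F_N:\sX_\ve(\D)\to\sX_\xi(\D)$ of Proposition \ref{r.Frobenius} sends a normalized monomial $X^\bm$ to $X^{N\bm}$, and $(X^\bm)^N=X^{N\bm}$ by \eqref{eq.power}, so $\tF_N(\vpD(\al))=K^N+K^{-N}$ as well (respectively $K^N+K^{-N}+E^N$). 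Hence $\tF_N(\vpD(\al))=T_N(\vpD(\al))=\vpD(T_N(\al))$, the last equality because $\vpD$ is an algebra homomorphism. Since $\tF_\xi=\tF_{N,\D}$ is by definition the conjugate of $\tF_N$ by $\tvpD$, and $\tvpD$ is injective (Theorem \ref{r.torus}(b)), we conclude $\tF_\xi(\al)=T_N(\al)$ in $\widetilde\cS_\xi(\An,\cP)$. The main obstacle is the second step: pinning down the vertex matrix $P_\D$ from the cyclic orders of half-edges at $p$ and $q$, verifying that $\vpD(\al)$ really has the normalized form $K+K^{-1}$ (or $K+K^{-1}+E$ with commutation exactly $\xi^4$), and that the coefficients are trivial; once this is in hand everything downstream is a direct application of Corollary \ref{c.32} (or \eqref{e.demoivre}) together with the elementary behaviour of $F_N$ on monomials.
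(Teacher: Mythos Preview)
Your approach is essentially the paper's: fix a triangulation, express $\al$ as a sum of normalized monomials in $\sX_\xi(\D)$, and compare $\tF_\xi(\al)$ with $T_N(\al)$ via Corollary~\ref{c.32}. Two remarks. First, it is the three-term case that actually occurs: with your labels one gets $\vpD(\al)=K+K^{-1}+E$ where $K=[X_dX_e^{-1}]$ and $E=[X_d^{-1}X_e^{-1}X_{\partial_1}X_{\partial_2}]$, and $KE=\xi^4 EK$ (the paper obtains this by computing $d\cdot\al$ via the skein relation and a single flip at $e$, rather than by resolving crossings directly). Second, your reflection-invariance shortcut for ``trivial coefficients'' is not quite safe at a root of unity, since $\xi^r=\xi^{-r}$ no longer forces $r=0$; either do the short explicit computation the paper does, or run your argument for generic $q$ in $\BZ[q^{\pm 1/2}]$ first and then specialize, which also makes Corollary~\ref{c.32} apply cleanly.
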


\begin{proof} If $\al$ is a trivial $\cP$-knot, then the result follows from  Lemma \ref{r.unknot}. We assume $\al$ is non-trivial. Then $\al$ is the core of the annulus, i.e. $\al$ is a parallel of a boundary component of $\An$.
Let $\D=\{a,b,c,d\}$ be the triangulation of $(\BA,\cP)$ shown in Figure \ref{fig:figures/annulusquasitriangulation}.
\FIGc{figures/annulusquasitriangulation}{Triangulation of $(\BA,\cP)$. $c,d$ are boundary $\cP$-arcs.}{2.2cm}

The Muller algebra $\sX_\xi(\D)$ is a quantum torus with generators $a,b,c,d$ where any two of them commute, except for $a$ and $b$ for which  $ab = \xi^{-2} ba$. 
We calculate $\al$ as an element of $\sX_\xi(\D)$ as follows.

First, calculate $a\al$ by using the skein relation, see Figure \ref{fig:figures/case3eqn2}.
\FIGc{figures/case3eqn2}{Computation of $a\al = \xi b^* + \xi^{-1}b$.}{2cm}

Here $b^*$ is new edge obtained from the flip of $\D$ at $b$ as defined in Figure \ref{fig:figures/mutation}. From Equation (\ref{eq.aa1}) we have that $b^* = [b^{-1}a^2] + [b^{-1}cd]$. Thus,
\begin{align}
\al & = a^{-1} (a\al)= a^{-1} (\xi b^* + \xi ^{-1}b) = a^{-1} (\xi  ([b^{-1}a^2] + [b^{-1}cd] )+ \xi ^{-1}b) \nonumber\\
\lbl{eq.v1}& =   [a^{-1} b^{-1} c d]+[a b^{-1}] +[ a^{-1} b]= X + Y + Y^{-1}.
\end{align}
where $X= [a^{-1} b^{-1} c d]$ and $Y= [a b^{-1}]$. From the commutation relations in $\sX_\xi(\D)$, we get $YX= \xi^4 XY$. 
Since each   of $\{a,b,c,d\}$ is a $\cP$-arc, from Proposition \ref{r.arc5}, we have
\be 
\lbl{eq.v2}
\tF_\xi(\al)= [a^{-N} b^{-N} c^N d^N]+[a^N b^{-N}] +[ a^{-N} b^N]= X^N + Y^N + Y^{-N}.
\ee
Since $\ord(\xi^4)=N$, 
Corollary \ref{c.32} shows that
$$
T_N(\al) = T_N(X + Y + Y^{-1})= X^N + Y^N + Y^{-N},
$$
which is equal to $\tF_\xi(\al)$ by \eqref{eq.v2}. This completes the proof.
\end{proof}

\begin{lemma} \lbl{r.knot1a} Proposition \ref{r.knot} holds if $\al$ is not 0 in $H_1(\Sigma, \BZ)$. 
\end{lemma}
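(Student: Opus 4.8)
The plan is to deduce this case from the annulus case (Lemma~\ref{r.markedannulus}) by using the functoriality of the Frobenius homomorphism (Proposition~\ref{prop.three}): I will realize $\al$ as the core of an embedded marked annulus inside a marked surface obtained from $\SP$ by adding two marked points.

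First I would produce the dual arc. Since $\al$ is an embedded simple closed curve with $[\al]\neq 0$ in $H_1(\Sigma;\BZ)$, the class $[\al]$ is primitive (either $\al$ is non-separating, or $\al$ separates $\Sigma$ into two pieces each of which contains at least one boundary component of $\Sigma$, since otherwise the boundary classes on one side would sum to $0$ and force $[\al]=0$). Because $\SP$ is triangulable we have $\pS\neq\emptyset$, so in either case there is an embedded arc $a\subset\Sigma$ with both endpoints on $\pS$, interior in $\mathrm{int}(\Sigma)\setminus\al$, and $|a\cap\al|=1$; this is a standard surface‑topology fact (Lefschetz duality plus primitivity, or an explicit arc joining boundary components on the two sides of $\al$). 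Let $A:=N(\al\cup a)$ be a regular neighborhood. A short local analysis at the unique intersection point of $\al$ and $a$ shows that $A$ is an annulus whose core is isotopic in $A$ to $\al$, and that each of the two boundary circles of $A$ contains a sub‑arc lying on $\pS$ (the two ``free ends'' of the whisker $a$ land on different boundary circles of $A$). Choosing a point $p_i$ on each such sub‑arc and setting $\cP':=\cP\cup\{p_1,p_2\}$, the pair $(A,\{p_1,p_2\})$ is a triangulable marked annulus with one marked point on each boundary component, embedded as a marked subsurface of $(\Sigma,\cP')$, and the inclusion sends the core of $A$ to $\al$.

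Now I would run the functoriality argument. Enlarging the marked set from $\cP$ to $\cP'$ gives, for $\nu\in\{\ve,\xi\}$, an injective $R$‑algebra map $\cS_\nu\SP\embed\cS_\nu(\Sigma,\cP')$, and the inclusion of the annulus gives $\iota_*:\cS_\nu(A,\{p_1,p_2\})\to\cS_\nu(\Sigma,\cP')$; by Proposition~\ref{prop.three} both commute with $\tF_\xi$. By Lemma~\ref{r.markedannulus} (together with Lemma~\ref{r.unknot} for the trivial case, though here the core is essential) we have $\tF_\xi(\text{core})=T_N(\text{core})$ in $\widetilde{\cS}_\xi(A,\{p_1,p_2\})$. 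Pushing this identity forward by $\iota_*$ gives $\tF_\xi(\al)=T_N(\al)$ in $\widetilde{\cS}_\xi(\Sigma,\cP')$, and pulling back along the injection $\cS\SP\embed\cS(\Sigma,\cP')$ (which fixes the $\cP$‑knot $\al$ and commutes with $\tF_\xi$) yields $\tF_\xi(\al)=T_N(\al)$ in $\widetilde{\cS}_\xi\SP$, as required.

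The main obstacle is the geometric reduction in the first paragraph: producing the embedded arc meeting $\al$ exactly once and verifying that $N(\al\cup a)$ is an annulus touching $\pS$ along each of its two boundary circles; everything after that is immediate from functoriality and the already‑established annulus case. As an alternative that avoids the explicit embedding, one can instead enlarge $\cP$ and choose a triangulation $\D$ of $(\Sigma,\cP')$ in which $\al$ is $\D$‑simple and meets exactly two edges $a,b$, each once — forcing the two triangles crossed by $\al$ to be glued along $a$ and $b$ into an annular region with core $\al$ — and then copy the computation of Lemma~\ref{r.markedannulus} verbatim: resolve $a\al=\xi b^*+\xi^{-1}b$, apply the flip formula for $b^*$ to write $\al=X+Y+Y^{-1}$ with $X,Y$ Weyl‑normalized monomials in $\cP$‑arcs and $YX=\xi^4XY$, use Proposition~\ref{r.arc5} to get $\tF_\xi(\al)=X^N+Y^N+Y^{-N}$, and invoke Corollary~\ref{c.32} (valid since $\ord(\xi^4)=N$) to identify this with $T_N(X+Y+Y^{-1})=T_N(\al)$.
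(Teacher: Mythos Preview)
Your proposal is correct and follows essentially the same route as the paper: construct a properly embedded arc $a$ with $|a\cap\al|=1$ (the paper does this by cutting along $\al$ and running arcs to other boundary components, which is exactly your parenthetical explanation), take the tubular neighborhood of $\al\cup a$ as a marked annulus inside $(\Sigma,\cP')$ with $\cP'=\cP\cup\partial a$, apply Lemma~\ref{r.markedannulus} there, and then use functoriality (Proposition~\ref{prop.three}) together with the injectivity of $\cS\SP\embed\cS(\Sigma,\cP')$ (Proposition~\ref{r.func}) to pull the identity back to $\SP$. Your alternative triangulation argument at the end is a minor variant that unwinds Lemma~\ref{r.markedannulus} in place rather than citing it, but it is not what the paper does.
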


\begin{proof} 
{\em Claim.} If $\al$ is not 0 in $H_1(\Sigma, \BZ)$, then there exists a properly embedded arc $a \subset \Sigma$ such that $|a \cap \al|=1$. \\
{\em Proof of Claim.} Cutting $\Sigma$ along $\al$ we get a (possibly non-connected) surface $\Sigma'$ whose boundary contains two components $\beta_1, \beta_2$ coming from $\al$. That is, we get $\Sigma'$ from $\Sigma$ by gluing $\beta_1$ to $\beta_2$ via the quotient map $\pr: \Sigma' \to \Sigma$, where $\pr(\beta_1)=\pr(\beta_2)=\al$. Choose a point $p\in \al$ and let $p_i\in \beta_i$ such that $\pr(p_i)=p$ for $i=1,2$.

Suppose first that $\Sigma'$ is connected. For $i=1,2$ choose a properly embedded arc $a_i$ connecting  $p_i\in \beta_i$ and a point in a boundary component of $\Sigma'$ which is not $\beta_1$ nor $\beta_2$. We can further assume that $a_1 \cap a_2=\emptyset$ since if they intersect once then replacing the crossing with either a positive or negative smoothing from the Kauffman skein relation (only one will work) will yield arcs that do not intersect and end at the same points as $a_1,a_2$, and the general case follows from an induction argument. Then $a=\pr(a_1 \cup a_2)$ is an arc such that $|a \cap \al|=1$.

Now suppose $\Sigma'$ has 2 connected components $\Sigma_1$ and $\Sigma_2$, with $\beta_i \subset \Sigma_i$. Since $\al$ is not homologically trivial, each $\Sigma_i$ has a boundary component other than $\beta_i$. For $i=1,2$ choose a properly embedded arc $a_i$ connecting  $p_i\in \beta_i$ and a point in a boundary component of $\Sigma'$ which is not $\beta_i$. Then $a=\pr(a_1 \cup a_2)$ is an arc such that $|a \cap \al|=1$. This completes the proof of the claim.

Let $\cQ = \partial a$ and 
$\cP'=\cP \cup \cQ$. Let $\Xi \subset \Sigma$ be the closure of a tubular neighborhood of $\al \cup a$. Then $\Xi$ is an annulus, and $\cQ$ consists of 2 points, one in each connected component of $\partial \Xi$. Let $\tF_{\xi,(\Xi,\cQ)}$, $\tF_{\xi,\SP}$, and $\tF_{\xi,(\Sigma,\cP')}$ be the map $\tF_\xi$ applicable respectively to the totally marked surfaces $(\Xi,\cQ)$, $\SP$, and $(\Sigma,\cP')$. By the functoriality of the inclusion $(\Xi,\cQ)\subset (\Sigma, \cP')$ (see Proposition \ref{prop.three}) we get the first of the following identities 
$$ \tF_{\xi,(\Sigma,\cP')} (\al) = \tF_{\xi,(\Xi,\cQ)}(\al) = T_N(\al)\ \text{in } \cS_\xi(\Sigma,\cP'),$$
while the second follows from Lemma \ref{r.markedannulus}. The functoriality of the inclusion $ \cP \subset \cP'$  gives
$$ \tF_{\xi,(\Sigma,\cP)} (\al) =  \tF_{\xi,(\Sigma,\cP')} (\al) \ \text{in } \cS_\xi(\Sigma,\cP').$$
It follows that $$ \tF_{\xi,(\Sigma,\cP)} (\al)=  T_N(\al) \ \text{in } \cS_\xi(\Sigma,\cP').$$
Since the natural map $\cS\SP \to \cS(\Sigma, \cP')$ is an embedding (Proposition \ref{r.func}), we also have $ \tF_{\xi,(\Sigma,\cP)} (\al)=  T_N(\al) \ \text{in } \cS_\xi(\Sigma,\cP)$, completing the proof.
\end{proof}

Now we proceed to the proof of Proposition \ref{r.knot}.
\begin{proof}[Proof of Proposition \ref{r.knot}]
If $\al\neq 0$ in $H_1(\Sigma,\BZ)$, then the statement follows from Lemma~\ref{r.knot1a}.  Assume  $\al=0$  in $H_1(\Sigma,\BZ)$. The idea we employ is to remove a disk in $\Sigma$ so that $\al$ becomes homologically non-trivial in the new surface, then use the surgery theory developed in Chapter \ref{c.surgery}.

 Since $\al=0$ in $H_1(\Sigma,\BZ)$, there is a surface $\Xi \subset \Sigma$ such that $\al=\partial \Xi$. Let $\Omega \subset \Xi$ be a closed disk in the interior of $\Xi$ and  $\beta=\partial \Omega$. Let $\Sigma'$ be obtained from $\Sigma$ by removing the interior of $\Omega$. Fix a point $p\in \beta$ and let $\cP'=\cP \cup \{p\}$. Since $\SP$ is triangulable, $\SPpp$ is also triangulable and $(\Sigma', \cP)$ is quasitriangulable.

Choose an arbitrary quasitriangulation $\D'$ of $(\Sigma', \cP)$. Via Proposition \ref{t.holetrick}, by plugging the unmarked boundary component $\beta$ we get a triangulation $\D$ of $\SP$ and a quotient map $\Psi_\zeta: \fXsur_\zeta(\D') \to \fXsur_\zeta(\D)$ for each $\zeta \in \Cx$, which we will just call $\Psi$ unless there is confusion. 
Since $\D$ is a triangulation, we have $\fXsur_\zeta(\D) =\sX_\zeta(\D)$.

For each $\zeta \in \Cx$ we have the inclusions  
$$\fXsur_\zeta(\D')\subset \sX_\zeta(\D') \subset \widetilde{\cS}_\zeta(\Sigma', \cP'),$$ where the second one comes from 
$\sX_\zeta(\D') \subset \widetilde{\cS}_\zeta(\Sigma', \cP) \subset \widetilde{\cS}_\zeta(\Sigma', \cP')$.

\noindent {\em Claim 1.} The map $\tF_\xi: \tcS_\ve(\Sigma',\cP') \to \tcS_\xi(\Sigma',\cP')$ restricts to a map from $\fXsur_\ve(\D')$ to $\fXsur_\xi(\D')$. That is, $\tF_\xi(\fXsur_\ve(\D')) \subset \fXsur_\xi(\D')$. In other words, there exists a map $F^\beta_\xi$ corresponding to the dashed arrow in the following commutative diagram.
\[
\begin{tikzcd}
\tcS_\ve(\Sigma',\cP') 
\arrow[r,"\tF_\xi"] & \tcS_\xi(\Sigma',\cP')  
\\
\fXsur_\ve(\D')\arrow[u, hook] \arrow[r,dashed,"F^\beta_\xi"] & \fXsur_\xi(\D') \arrow[u, hook]
\end{tikzcd}
\]
{\em Proof of Claim 1.} Let $a\in \D'$ be the only monogon edge (which must correspond to $\beta$). By definition, the set consisting of \\
(i) elements in $\D'\setminus \{a\}$ and their inverses,  $a$ and $a^*$, and \\
(ii) $\beta$\\  generates the $\BC$-algebra $\fXsur_\ve(\D')$. Let us look at each of these generators. If $x$ is an element of type (i) above, then by Proposition \ref{r.arc5}, we have $\tF_\xi(x)= x^N$ which is in $\fXsur_\xi(\D')$. Consider the remaining case $x=\beta$. Since the class of $\beta$ in $H_1(\Sigma',\BZ)$ is nontrivial, by Lemma \ref{r.knot1a}, we have
\be \lbl{eq.8aa}
\tF_\xi(\beta)= T_N(\beta)
\ee
 which is also in $\fXsur_\xi(\D')$. Claim 1 is proved.

\noindent{\em Claim 2.} The following diagram is commutative.
\be \lbl{eq.cd5}
\begin{tikzcd}
\fXsur_\ve(\D') \arrow[d,"\Psi_\ve"] \arrow[r,"F^\beta_\xi"] & \fXsur_\xi(\D') \arrow[d,"\Psi_\xi"] \\
\sX_\ve(\D) \arrow[r,"F_N"] & \sX_\xi(\D)
\end{tikzcd}
\ee
{\em Proof of Claim 2.} We have to show that
\be \lbl{eq.8a}
(F_N \circ \Psi)(x) = (\Psi \circ F^\beta_\xi)(x) \quad \text{for all} \ x\in \fXsur_\ve(\D').
\ee
It is enough to check the commutativity on the set of generators of $\fXsur_\ve(\D')$ described in (i) and (ii) above. If \eqref{eq.8a} holds for $x$ which is invertible, then it holds for $x^{-1}$. Thus it is enough to check \eqref{eq.8a} for $x\in \D' \cup \{ a^*, \beta\}$. 
Assume the notations $a,b,c$ of the edges near $\beta$ are as in Figure \ref{fig:figures/surgery}. 

First assume $x \not \in \{ a,a^*,b,\beta\}$. By \eqref{eq.2a}, we have $\Psi(x)=x$. Hence the left hand side of \eqref{eq.8a} is
$$F_N(\Psi(x))= F_N(x)= x^N.$$
On the other hand, the right hand side of \eqref{eq.8a} is
$$\Psi(F^\beta_\xi(x))=\Psi(\tF_\xi(x)) = \Psi(x^N)= x^N, $$
which proves \eqref{eq.8a} for $x \not \in \{ a,a^*,b,\beta\}$.

Assume $x=a$ or $x=a^*$. By \eqref{eq.2b}, we have $\Psi(x)=0$. Hence the left hand side of \eqref{eq.8a} is 0. On the other hand, the right hand side is
$$\Psi(F^\beta_\xi(x))= \Psi(\tF_\xi(x)) = \Psi(x^N)=0,$$
which proves \eqref{eq.8a} in this case.

Now consider the remaining case $x=\beta$. By \eqref{eq.2b}, we have $\Psi(\beta)= -\ve^2 -\ve^{-2}$. Hence the left hand side of \eqref{eq.8a} is
\be \nonumber
  F_N(\Psi(\beta))= F_N(-\ve^2 -\ve^{-2}) = -\ve^2 -\ve^{-2} = T_N(-\xi^2 -\xi^{-2}),
  \ee
  where the last identity is \eqref{eq.vexi}.
On the other hand, using \eqref{eq.8aa} and the fact that $\Psi$ is a $\BC$-algebra homomorphism, we have
\be 
\nonumber
  \Psi(F^\beta_\xi(x))=\Psi(\tF_\xi(x)) = \Psi(T_N(\beta))= T_N(\Psi(\beta))= T_N(-\xi^2 -\xi^{-2}).
  \ee
 Thus we always have \eqref{eq.8a}. This completes the proof of Claim 2.
 
 Let us continue with the proof of the proposition. Since the class of $\al$ is not 0 in $H_1(\Sigma',\BZ)$, by Lemma \ref{r.knot1a}, we have $F^\beta_\xi(\al)=\tF_\xi(\al)= T_N(\al)$. The commutativity of Diagram \eqref{eq.cd5} and the fact that $\Psi$ is a $\BC$-algebra homomorphism implies that
 \begin{align} F_N(\Psi_\ve(\al ))&= \Psi_\xi(F^\beta_\xi(\al ))= \Psi_\xi(T_N(\al )) \nonumber\\
 &= T_N(\Psi_\xi(\al )).
 \lbl{eq.9p} 
 \end{align}
 Note that $\al $ defines an element in $\cS_\nu(\Sigma', \cP)$ for $\nu=\ve,\xi$. Following the commutativity of Diagram \eqref{d.holesurgery} in Proposition \ref{t.holetrick}, we have that
 \begin{align}
&\Psi_\ve(\al)=\al \in \cS_\ve\SP \subset \sX_\ve(\D), \lbl{eq.alve}\\
&\Psi_\xi(\al)=\al \in \cS_\xi\SP \subset \sX_\xi(\D). \lbl{eq.alxi}
\end{align}

Then we may compute
\begin{align*}
\tF_\xi(\al) & = F_N(\al), \quad \text{by Proposition~\ref{prop.two}} \\
& = F_N(\Psi_\ve(\al)), \quad \text{by \eqref{eq.alve}} \\
& = T_N(\Psi_\xi(\al)), \quad \text{by \eqref{eq.9p}} \\
& = T_N(\al), \quad \text{by \eqref{eq.alxi}},
\end{align*}
completing the proof of Proposition \ref{r.knot}.
\end{proof}

\subsection{Proof of theorem \ref{thm.surface}}\lbl{sec.trisurfacethm}

\begin{proof} [Proof of Theorem \ref{thm.surface}]

The $\BC$-algebra $\Se\SP$ is generated by $\cP$-arcs and $\cP$-knots. If $a$ is a $\cP$-arc, then by Proposition \ref{r.arc5}, $\tF_\xi(a)= a^N\in \Sx\SP$. If $\al$ is a $\cP$-knot, then by Proposition \ref{r.knot}, $\tF_\xi(\al)= T_N(\al)\in \Sx\SP$. It follows that $\tF_\xi(\Se\SP)\subset \Sx\SP$. Hence  $\tF_\xi$ restricts to a $\BC$-algebra homomorphism $F_\xi: \Se\SP \to \Sx\SP$. Since on $\XD$, $\tF_\xi$ and $F_N$ are the same, $F_\xi$ is the restriction of $F_N$ on $\Se\SP$.
From Proposition \ref{prop.two}, $F_\xi$ does not depend on the triangulation~$\D$. This proves part (a). Part (b) was established in Propositions \ref{r.arc5} and \ref{r.knot}.
\end{proof}

\subsection{Proof of theorem \ref{t.ChebyshevFrobenius}}\lbl{sec.mainthm}

\begin{proof}[Proof of Theorem \ref{t.ChebyshevFrobenius}]   Recall that $\hPhi_N: \LMN \to \LMN$ is the $\BC$-linear map defined so that that if 
$T$ is an $\cN$-tangle with arc components $a_1, \dots, a_k$ and  knot components $\al_1,\dots,\al_l$, then
\be\lbl{eq.def00}
\hPhi_N(T) = \sum_{0\le j_1, \dots, j_l\le N} c_{j_1} \dots c_{j_l}  a_1^{(N)}
 \cup \dots \cup  a_k^{(N)} \cup \,\al_1^{(j_1)} \cup \cdots \cup \al_l^{(j_l)} \ 
 \ee 
where $T_N(z) = \sum_{i=0}^N c_i z^i$ is the $N$th Chebyshev polynomial of type 1, see~\eqref{e.chebyshev}. 
To show that $\hPhi_N: \LMN \to \LMN$ descends to a map $\Se\MN \to \Sx\MN$ we have to show that $\hPhi_N(\Rel_\ve) \subset \Rel_\xi$. Let $\hPhi_\xi: \LMN \to \Sx\MN$ be the composition
$$ \hPhi_\xi: \LMN \overset {\hPhi_N} \longrightarrow \LMN \overset{{[\,\cdot\,]_\xi}}{\longrightarrow} \Sx\MN.$$
Then we have to show that $\hPhi_\xi(\Rel_\ve)=0$.
There are 3 types of elements which span $\Rel_\ve$: trivial arc relation elements, trivial knot relation elements, and skein relation elements, and we consider them separately.

(i) Suppose $x$ is a trivial arc relation element (see Figure \ref{fig:figures/trivialarc}).  The $N$ copies $a^{(N)}$ in $\hPhi_\xi(x)$ have $2N$ endpoints, and by 
reordering the height of the endpoints, from  $a^{(N)}$ we can obtain a trivial arc. Hence, the reordering relation (see Figure \ref{fig:figures/boundary}) and the trivial arc relation show that $\hPhi_{\xi}(x) =0$.

(ii) Suppose $x=\ve^2 + \ve^{-2} +\al $ is a trivial loop relation element, where $\al$ is a trivial loop. Each parallel of $\al$ is also a trivial loop, which is equal to $-\xi^2 -\xi^{-2}$ in $\Sx\MN$. Hence $\hPhi_\xi(\al) = T_N(-\xi^2 -\xi^{-2})$, and
$$ \hPhi_{\xi}(x)= \ve^2 + \ve^{-2} + T_N((\al))=  \ve^2 + \ve^{-2} + T_N(-\xi^2 -\xi^{-2})=0,$$
where the last identity is \eqref{eq.vexi}. 

(iii) Suppose $x = T - \ve T_+ - \ve^{-1} T_-$ is a skein relation element. Here 
 $T, T_+, T_-$ are  $\cN$-tangles which are identical outside a ball $B$ in which they look like in Figure \ref{fig:figures/skein1a}.

 \FIGc{figures/skein1a}{From left to right: the tangles $T, T_+, T_-$}{1.3cm}

 \underline{Case I: the two strands of $T\cap B$ belong to two distinct components of $T$}.
 Let $T_1$ be the component of $T$ containing the overpass strand of $T \cap B$ and $T_2= T \setminus T_1$.
  Let $M'$ be the closure of a small neighborhood of $B \cup T=B\cup T_+=B\cup T_-$.   
 Write
 $\cN':= \cN \cap \partial (M')$. The functoriality 
 of the inclusion $(M', \cN') \to \MN$ implies that it is enough to show
  $\hPhi_\xi(x)=0$ for $(M', \cN')$. Thus now we replace $\MN$ by $(M', \cN')$.

 Note that $M'$ is homeomorphic to $\Sigma \times (-1,1)$ where $\Sigma$ is an oriented surface which is the union of the shaded disk of Figure \ref{fig:figures/skein1a} and the ribbons obtained by thickening the tangle $T_+$. As usual, identify $\Sigma $ with $\Sigma \times \{0\}$. Then, all four $\cN'$-tangles $T_1, T_2, T_+, T_-$ are in $\Sigma$ and have vertical framing. Note that $\Sigma$ might be disconnected, but each of its connected components has non-empty boundary. Let $\cP= \cN \cap \Sigma$. Then $\cS_\nu(M', \cN')= \cS_\nu\SP$ for $\nu=\xi,\ve$. Enlarge $\cP$ to a larger set of marked points $\cQ$ such that $(\Sigma, \cQ)$ is triangulable. Since the induced map $\iota_*: \Sx\SP \to \Sx\SQ$ is injective (by Proposition \ref{r.func}), it is enough to show that $\hPhi_\xi(x)=0$ in $\Sx\SQ= \LSQ/\Rel_\xi$. Here $\LSQ:= \cT(\Sigma \times (-1,1), \cQ \times (-1,1))$.
 
  The vector space $\LSQ$ is a $\BC$-algebra, where the product $\al \beta$ of two ($\cQ\times (-1,1)$)-tangles $\al$ and $\beta $ is the result of placing $\al$ on top of $\beta$. The map $\hPhi_\xi :\LSQ \to \Sx\SQ$ is an algebra homomorphism. Recall that for an element $y\in \LSQ$ we denote by $[y]_\nu$ its image under the projection $\LSQ \to \cS_\nu(\Sigma,\mathcal{Q})= \LSQ/\Rel_\nu$ for $\nu=\xi,\ve$.
 
 As $\SQ$ is triangulable, by Theorem \ref{thm.surface} we have the map $F_\xi: \Se\SQ\to \Sx\SQ$. %

Suppose  $y$ is a component of one of $T_1,T_2,T_+,T_-$, then $y$ is either a $\cQ$-knot (in $\Sigma$) or a $\cQ$-arc (in $\Sigma$) whose end points are distinct, with vertical framing in both cases. It follows that $[y^{(k)}]_\xi= [y^k]_\xi$. If $y$ is a knot component then Proposition \ref{r.knot} shows that $F_\xi([y]_\ve)= T_N([y]_\xi)= \hPhi_\xi(y) $.
Each of $T_1,T_2,T_+,T_-$ is the product (in $\LSQ$) of its components as the components are disjoint in $\Sigma$. Hence from the definition of $\hPhi_\xi$, we have
\be
\notag 
\hPhi_{\xi} (T_i)= F_\xi ([T_i]_\ve) \quad \text{for all } \ T_i \in \{ T_1, T_2, T_+, T_-\}.
\ee
As $T= T_1 T_2$ in $\LSQ$, we  have
 \be 
 \notag
 \hPhi_{\xi}(T)=  \hPhi_{\xi}(T_1 T_2) = \hPhi_{\xi}(T_1) \hPhi_{\xi}(T_2) = F_\xi ([T_1]_\ve) F_\xi([T_2]_\ve)= F_\xi([T]_\ve).
 \ee
As $x= T- \ve T_+ - \ve^{-1} T_-$, we also have $\hPhi_\xi(x) = F_\xi([x]_\ve).$ But $[x]_\ve =0$ because $x$ is a skein relation element.
This completes the proof that $\hPhi(x)=0$ in Case I.

\underline{Case II: Both strands of $T \cap B$ belong to the same component of $T$}. We show that this case reduces to the previous case.

Both strands of $T \cap B$ belong to the same component of $T$ means that some pair of non-opposite points of $T \cap \partial B$ are connected by a path in $T\setminus B$ (see Figure \ref{fig:figures/nonopposite}). Assume that the two right hand points of $T \cap \partial B$ are connected by a path in $T\setminus B$. All other cases are similar.
\FIGc{figures/nonopposite}{Four possibilities for non-opposite points being connected by a path in $T\setminus B$.}{2cm}
Then the two strands of $T_+$ in $B$ belong to two different components of $T_+$.
We isotope $T_+$ in $B$ so that its diagram forms a bigon, and calculate $\hPhi_{\xi}(T_+)$ as follows.
 
 \begin{align}  \hPhi_{\xi} \Lplu&=  \hPhi_{\xi} \Lpluss   \qquad \text{by isotopy} \nonumber\\
  &= \ve\,  \hPhi_{\xi} \uplus + \ve^{-1} \hPhi_{\xi} \uminus  \notag\\
 &= \ve (-\ve^{-3})  \hPhi_{\xi}\Lminus + \ve^{-1} \hPhi_{\xi} \LLL   \notag\\
 & =  -\ve^{-2}\hPhi_{\xi} (T_-) + \ve^{-1} \hPhi_{\xi} (T), \lbl{eq.zz1}
 \end{align} 
 where the second equality follows from the skein relation which can be used since the two strands of $T_+$ in the applicable ball belong to different components of $T_+$  (by case I), and the third equality follows from the well-known identity correcting a kink in the skein module:
$$ \negkinka = \ve\negkinkb + \ve^{-1}\negkinkc = (\ve + \ve^{-1}(-\ve^2-\ve^{-2}))\negkinkd = -\ve^{-3}\negkinkd.
$$
The  identity \eqref{eq.zz1} is equivalent to $\hPhi_\xi(x)=0$. 
This completes the proof of the theorem.
\end{proof}

\section{Consequences for marked and finite type surfaces}

Suppose $\SP$ is a marked surface, with no restriction at all. Apply Theorem \ref{t.ChebyshevFrobenius} to $\MN=(\Sigma \times (-1,1), \cP \times (-1,1))$. Note that in this case $\Phi_\xi$ is automatically an algebra homomorphism. Besides, since the set of $\cP$-arcs and $\cP$-knots generate $\Se\SP$ as an algebra,  we get the following corollary.

\begin{proposition}\lbl{prop.five}
Suppose $\SP$ is a marked surface, $\xi$ is a root of unity, $N=\ord(\xi^4)$, and $\ve = \xi^{N^2}$.  Then there exists a unique $\BC$-algebra homomorphism $\Phi_\xi: \cS_\ve\SP \to \cS_\xi\SP$ such that for $\cP$-arcs $a$ and $\cP$-knots $\al$,
\[
\Phi_\xi(a) = a^N, \ \ \ \ \Phi_\xi(\al) = T_N(\al).
\]
\end{proposition}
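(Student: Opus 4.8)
The plan is to deduce Proposition~\ref{prop.five} directly from Theorem~\ref{t.ChebyshevFrobenius} applied to the cylinder marked 3-manifold associated to $\SP$, taking advantage of the fact that the skein algebra of a surface is just the skein module of this cylinder equipped with the stacking product. First I would set $\MN = (\Sigma \times (-1,1), \cP \times (-1,1))$, so that by definition $\cS_\nu\SP = \cS_\nu\MN$ for $\nu = \ve, \xi$, and invoke Theorem~\ref{t.ChebyshevFrobenius} to obtain the $\BC$-linear map $\Phi_\xi \colon \cS_\ve\SP \to \cS_\xi\SP$. The content to check is threefold: that $\Phi_\xi$ is an algebra homomorphism, that it satisfies the stated formulas on $\cP$-arcs and $\cP$-knots, and that these formulas characterize it uniquely.

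For multiplicativity, the point is that for two $\cP$-tangles $S, T$ drawn on $\Sigma$, the product $ST$ in $\cS\SP$ is obtained by stacking $S$ above $T$ in the vertical direction, so the components of $ST$ are exactly the (disjoint, vertically separated) union of the components of $S$ and $T$. Since $\hPhi_N$ from~\eqref{eq.def00} acts componentwise — threading each $\cN$-arc component by $z^N$ and each $\cN$-knot component by $T_N$ — and since the union map is associative and compatible with stacking, one gets $\hPhi_N(ST) = \hPhi_N(S)\,\hPhi_N(T)$ at the level of $\cT\SP$, and hence $\Phi_\xi(ST) = \Phi_\xi(S)\Phi_\xi(T)$ after passing to the quotient. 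I would phrase this using the union map of Subsection~\ref{sec.func} together with the embedding $\iota_1, \iota_2$ defining the product, noting that the relative heights of strands coming to a marked point are irrelevant up to the reordering relation (Proposition~\ref{r.reorderingrel}) and the $q$-power prefactor in~\eqref{e.simul}. Since $\Phi_\xi$ is $\BC$-linear and multiplicative on the spanning set $B_\SP$ of $\cP$-tangles, it is an algebra homomorphism.

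The formulas on generators are then immediate from the definition of $\hPhi_N$: if $a$ is a single $\cP$-arc then $\hPhi_N(a) = a^{(N)}$, and by the remark following~\eqref{e.simul} (using definition~\eqref{e.simul}) one has $a^{(N)} = a^N$ in $\cS_\xi\SP$ since parallel copies of a $\cP$-arc are the framed power, which coincides with the algebra power; if $\al$ is a single $\cP$-knot then $\hPhi_N(\al) = (T_N)^{\fr}(\al) = T_N(\al)$ because for a knot the framed threading is literally the polynomial evaluated at $\al$ in the algebra. Uniqueness holds because the $\cP$-arcs and $\cP$-knots generate $\cS_\ve\SP$ as a $\BC$-algebra — indeed every essential $\cP$-tangle is a Weyl-normalized product of its arc and knot components by the reordering relation — so any algebra homomorphism is determined by its values on them.

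I do not anticipate a serious obstacle here: the proposition is essentially a corollary, and the only thing requiring a moment's care is verifying that $\Phi_\xi$ respects the product, which reduces to the observation that $\hPhi_N$ is defined componentwise and that the stacking product of tangles is just disjoint union after vertical separation. The mild subtlety worth spelling out is the bookkeeping of the $q$-prefactors from~\eqref{e.simul} when several strands meet at a marked point, but these are handled uniformly by the Weyl normalization and do not interact with the threading operation, since threading an arc $a$ by $z^N$ produces $N$ parallel strands at each endpoint whose reordering contributes only an overall power of $q$ that is already absorbed into the convention making framed powers reflection-invariant.
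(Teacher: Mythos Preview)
Your proposal is correct and takes essentially the same approach as the paper: the paper simply states that applying Theorem~\ref{t.ChebyshevFrobenius} to the cylinder $\MN=(\Sigma\times(-1,1),\cP\times(-1,1))$ makes $\Phi_\xi$ ``automatically an algebra homomorphism,'' and that uniqueness follows since $\cP$-arcs and $\cP$-knots generate $\cS_\ve\SP$. You have spelled out in more detail the multiplicativity argument and the identification $a^{(N)}=a^N$, $(T_N)^{\fr}(\al)=T_N(\al)$, but the logical structure is identical.
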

\begin{remark}
It follows from uniqueness that in the case where $\SP$ is a triangulable surface, $\Phi_\xi$ is the same as $F_\xi$ obtained in Theorem \ref{thm.surface}.
\end{remark}
\begin{corollary}
Proposition \ref{prop.five} applies equally well to the case where the surface under consideration is a finite type surface $\fS$, since $\cS(\fS)$ is defined in terms of the skein module of the marked 3-manifold $(\fS \times (-1,1),\emptyset)$.
\end{corollary}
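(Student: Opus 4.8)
The plan is to observe that $\cS_\xi(\fS)$ and $\cS_\ve(\fS)$ are, by definition, the skein modules of the marked $3$-manifold $(M,\cN) := (\fS \times (-1,1),\emptyset)$ at $q=\xi$ and $q=\ve$, so Theorem \ref{t.ChebyshevFrobenius} applies verbatim to this $(M,\cN)$ and already produces a $\BC$-linear map $\Phi_\xi \colon \cS_\ve(\fS) \to \cS_\xi(\fS)$ fitting into the commutative diagram of that theorem. Since $\cN = \emptyset$, every $\cN$-tangle is a framed link all of whose components are $\cN$-knots, and there are no $\cN$-arc components; hence in \eqref{eq.def1} the map $\hPhi_N$ simply threads each (loop) component by the Chebyshev polynomial $T_N$, so $\Phi_\xi$ sends the class of a framed link $\alpha_1 \cup \cdots \cup \alpha_l$ to $(T_N)^{\fr}(\alpha_1) \cup \cdots \cup (T_N)^{\fr}(\alpha_l)$. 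In particular, for a loop $\alpha$ with vertical framing, $(T_N)^{\fr}(\alpha) = T_N(\alpha)$ (the powers $\alpha^{(i)}$ coincide with the algebra powers $\alpha^i$), so $\Phi_\xi(\alpha) = T_N(\alpha)$.

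Next I would check that $\Phi_\xi$ is a $\BC$-algebra homomorphism, which is exactly the remark already made for marked surfaces in this section. The product on $\cS(\fS)$ is $L_1 L_2 = \iota_1(L_1) \cup \iota_2(L_2)$, a disjoint union after a vertical isotopy; since $\hPhi_N$ acts componentwise inside a small neighborhood of each component, it commutes with the union map, i.e.\ $\hPhi_N(\iota_1(L_1)\cup\iota_2(L_2)) = \iota_1(\hPhi_N(L_1)) \cup \iota_2(\hPhi_N(L_2))$. Passing to the skein modules via the commutative diagram of Theorem \ref{t.ChebyshevFrobenius} gives $\Phi_\xi(xy) = \Phi_\xi(x)\Phi_\xi(y)$ for all $x,y \in \cS_\ve(\fS)$. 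For uniqueness, recall (\cite{PS2000}) that the simple diagrams form a basis of $\cS(\fS)$, and a simple diagram, being a disjoint union of loops, equals in $\cS(\fS)$ the product of its connected components (these being pairwise disjoint loops, they commute); hence $\cS_\ve(\fS)$ is generated as a $\BC$-algebra by single loops. Any $\BC$-algebra homomorphism out of $\cS_\ve(\fS)$ is thus determined by its values on loops, so the requirement $\Phi_\xi(\alpha) = T_N(\alpha)$ for every loop $\alpha$ pins it down, yielding precisely the analog of Proposition \ref{prop.five}.

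There is essentially no obstacle here beyond this bookkeeping; the only point worth emphasizing is that Theorem \ref{t.ChebyshevFrobenius} is stated for arbitrary marked $3$-manifolds and makes no triangulability hypothesis on anything, so the corollary covers \emph{all} finite type surfaces, including closed surfaces and those (such as the sphere or the unpunctured torus) admitting no ideal triangulation. The triangulability reductions used in proving Theorem \ref{t.ChebyshevFrobenius} (passing to triangulable marked subsurfaces via functoriality of Proposition \ref{r.func}) live entirely inside that proof and need not be revisited. If desired, one may add a one-line remark that, when combined with the interpretation of $\cS_\ve(\fS)$ at $\ve \in \{\pm 1, \pm i\}$ in terms of the $SL_2$-character variety, this $\Phi_\xi$ recovers the original Chebyshev homomorphism of Bonahon and Wong for finite type surfaces.
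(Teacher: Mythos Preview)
Your proposal is correct and follows exactly the paper's reasoning: the corollary is stated without a separate proof, the justification being embedded in the statement itself (a finite type surface is by definition a marked surface with empty boundary and empty marked set, so Proposition~\ref{prop.five} applies verbatim). You have simply spelled out in more detail what the paper leaves implicit—namely that with $\cN=\emptyset$ there are no arc components, that $\Phi_\xi$ is automatically multiplicative because $\hPhi_N$ acts componentwise, and that uniqueness follows from loops generating $\cS_\ve(\fS)$ as an algebra—all of which is fine and matches the one-line remarks preceding Proposition~\ref{prop.five}.
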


\chapter{Center of \texorpdfstring{$\cS\SP$}{S(P)} and (skew-)transparency}

In this chapter we investigate the center of the skein algebra of a surface, as well as the more general notion of (skew)-transparent elements in skein modules of marked 3-manifolds when the quantum parameter $q$ is a root of unity, which are roughly elements such that passing a strand through that element results in either no change or sign flip to its value in the skein module.

In Section \ref{sec.trans} we first review known results on the center of the skein algebra of an unmarked surface when the quantum parameter $q$ is a root of unity. We then show that the image of the Chebyshev-Frobenius homomorphism $\Phi_\xi: \cS_\ve\SP \to \cS_\xi\SP$ discussed in Chapter \ref{c.cf} is either ``transparent'' or ``skew-transparent'' depending on whether $\xi^{2N}=\pm 1$. Our result generalizes known theorems regarding the center of the skein algebra \cite{BW2016-2,Le2015,FKL2017} of an unmarked surface and (skew-)transparent elements in the skein module of an unmarked 3-manifold \cite{Le2015}.

In Section \ref{sec.reducedskein} we find the center of the skein algebra of a marked surface when the quantum parameter $q$ is not a root of unity via a Muller algebra argument.

\section{Image of \texorpdfstring{$\Phi_\xi$}{Phixi} and (skew-)transparency}\lbl{sec.trans}

We fix the ground ring to be $R=\BC$ throughout this section.

\subsection{Center of the skein algebra of an unmarked surface}

Fix a compact oriented surface $\Sigma$ with (possibly empty) boundary. For $\xi \in \Cx$ we write $\cS_\xi := \cS_\xi(\Sigma,\emptyset)$. In this context, and when $\xi$ is a root of unity, the Chebyshev-Frobenius homomorphism $\Phi_\xi: \cS_\ve \to \cS_\xi$ specializes to the Chebyshev homomorphism for the skein algebra of $\Sigma$ given in \cite{BW2016-2}. The image of $\Phi_\xi$ is closely related to the center of $\cS_\xi$.

\begin{theorem}[\cite{FKL2017}]\lbl{r.bwcenter}
Let $\xi$ be a root of unity, $N= \ord(\xi^4)$, $\ve = \xi^{N^2}$, and $\cH$ the set of boundary components of $\Sigma$. Note that $\xi^{2N}$ is either 1 or $-1$.
Then the center $Z(\cS_\xi)$ of $\cS_\xi$ is given by

\be\lbl{e.surfacecenter}
Z(\cS_\xi) = \left\{ \begin{array}{ll}
\Phi_\xi(\cS_\ve)[\cH] & \text{if }\xi^{2N}=1, \\
\Phi_\xi(\cS_\ve^{\text{ev}})[\cH] & \text{if }\xi^{2N}=-1. \end{array} \right.
\ee
\end{theorem}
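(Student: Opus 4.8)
The plan is to prove the two inclusions in \eqref{e.surfacecenter} separately; ``$\supseteq$'' is essentially immediate from transparency, while ``$\subseteq$'' will go through the quantum torus / associated graded machinery already developed in the thesis.

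For ``$\supseteq$'': each $\beta\in\cH$ is a central element of $\cS_\xi$ since a boundary curve can be isotoped into a collar and slid past any tangle, so $[\cH]$ is central. When $\xi^{2N}=1$, Theorem \ref{r.mutransparent} shows every element of $\Phi_\xi(\cS_\ve)$ is transparent, and a transparent element $z$ is central: for any tangle $T$ the stack $zT$ equals $Tz$ in $\cS_\xi$ because $T$ may be slid through $z$ without changing the value. Hence $\Phi_\xi(\cS_\ve)[\cH]\subseteq Z(\cS_\xi)$. When $\xi^{2N}=-1$, Theorem \ref{r.mutransparent} gives only skew-transparency: pushing a single strand of $T$ through $\Phi_\xi(L)$ introduces a factor $-1$. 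Thus if $L$ is a link with $[L]=0$ in $H_1(\Sigma;\BZ/2)$, the total sign picked up when sliding an arbitrary test tangle $T$ past $\Phi_\xi(L)$ is $(-1)^{\,L\cdot T\ \mathrm{mod}\ 2}=1$, so $\Phi_\xi(L)$ is central; since $\cS_\ve^{\mathrm{ev}}$ is spanned by such $L$, we again get $\Phi_\xi(\cS_\ve^{\mathrm{ev}})[\cH]\subseteq Z(\cS_\xi)$.

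For ``$\subseteq$'': first dispose of the cases $\chi(\Sigma)\ge 0$ by direct computation. Otherwise, identifying each boundary circle with a puncture we may regard $\cS_\xi=\cS_\xi(\fS)$ for a finite type surface $\fS$ with $\chi<0$ and a puncture, and fix an ideal triangulation $\fT$. Apply the $\BN$-filtration $\{F_n\}$ of Subsection \ref{sec.intrograded}: $\gr\cS_\xi$ is a finitely generated monomial subalgebra $\BA(\Lambda)$ of a quantum torus $\BT_\xi(U)$, with $\Lambda=E_\fT(B_\fS)$ and $U$ built from the face matrix. Since $\gr\cS_\xi$ is a domain and the filtration is good, for $z\in Z(\cS_\xi)$ and any $x$ one has $\overline z\,\overline x=\overline{zx}=\overline{xz}=\overline x\,\overline z$, so the leading term $\overline z$ lies in $Z(\gr\cS_\xi)$; as in the proof of Proposition \ref{r.qtorusmaximal} (applied to a monomial subalgebra, cf. Proposition \ref{r.gradedorderlift}), using \eqref{e.normalizedtorus} one gets $Z(\BA(\Lambda))=\BA(\Lambda\cap\Gamma)$ with $\Gamma=\{\bk:\langle\bk,\bn\rangle_U\in N'\BZ\ \ \forall\bn\in\Lambda\}$ for the modulus $N'$ determined by $\operatorname{ord}(\xi)$. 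The two alternatives $\xi^{2N}=1$ and $\xi^{2N}=-1$ change whether the divisibility condition is uniform in $N$ or carries a parity twist on the loop coordinates, and this is exactly where the ``even'' subalgebra originates.

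The main step, and the expected obstacle, is to match $\BA(\Lambda\cap\Gamma)$ with the leading terms of $\Phi_\xi(\cS_\ve)[\cH]$ (resp.\ of $\Phi_\xi(\cS_\ve^{\mathrm{ev}})[\cH]$) and then lift from $\gr$ back to $\cS_\xi$. For the matching: by \eqref{eq.power} the Frobenius $F_N$ of Proposition \ref{r.Frobenius} sends $X^\bk\mapsto X^{N\bk}$, and Theorem \ref{thm.surface} together with Proposition \ref{prop.five} identifies $\Phi_\xi$ with $a\mapsto a^N$ on edges and with $\alpha\mapsto T_N(\alpha)$ on loops; since $T_N(z)=z^N+(\text{lower order})$ one checks that the leading monomial of $\Phi_\xi(\alpha)$ for a simple diagram $\alpha$ is $N\cdot E_\fT(\alpha)$, and that boundary curves supply precisely the generators of $\Lambda\cap\Gamma$ modulo $N\Lambda$. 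Hence $\overline z$ lies in the span of leading terms of the claimed subalgebra, so there is an element $w$ of that subalgebra with $\overline w=\overline z$; then $z-w\in Z(\cS_\xi)$ has strictly smaller filtration degree, and induction on degree (using the descending chain condition, Remark \ref{r.dccorder}) closes the argument. The two delicate points are (i) verifying that leading terms of Chebyshev-threaded diagrams multiply as expected and hit exactly $\Lambda\cap\Gamma$, which needs the explicit form of $T_N$ and the root-of-unity cancellations behind Corollary \ref{c.32}, and (ii) tracking the $\BZ/2$ parity carefully enough that $\cS_\ve^{\mathrm{ev}}$ appears precisely when $\xi^{2N}=-1$; the statement itself is due to \cite{FKL2017}.
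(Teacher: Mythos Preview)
The paper does not prove this theorem: it is stated as a citation from \cite{FKL2017}, followed only by the historical remark that the inclusion $\supseteq$ was first shown in \cite{BW2016-2} via quantum Teichm\"uller methods, reproved skein-theoretically in \cite{Le2015}, and that the case $\xi^{2N}=-1$ together with the reverse inclusion is due to \cite{FKL2017}. So there is nothing in the paper to compare your argument against directly.

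That said, your outline is essentially the strategy of \cite{FKL2017}. Your $\supseteq$ direction is correct and is in fact a clean application of the paper's own Theorem \ref{r.mutransparent}: transparency (resp.\ skew-transparency with even mod-$2$ intersection) immediately gives centrality, and boundary loops are central for elementary isotopy reasons. For $\subseteq$, passing to the associated graded monomial subalgebra of a quantum torus and computing its center as $\BA(\Lambda\cap\Gamma)$ is exactly the mechanism used in \cite{FKL2017}; your identification of the leading term of $\Phi_\xi(\alpha)$ with the monomial at $N\cdot E_\fT(\alpha)$ is correct since $T_N(z)=z^N+(\text{lower})$ and the filtration is good.

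The one place where your sketch remains genuinely incomplete is the sentence ``boundary curves supply precisely the generators of $\Lambda\cap\Gamma$ modulo $N\Lambda$.'' This lattice computation is the actual content of the $\subseteq$ inclusion and is not automatic: one has to show explicitly, using the form of the face matrix and the triangle constraints defining $\Lambda$, that any $\bk\in\Lambda$ with $\langle\bk,\bn\rangle_U\equiv 0$ for all $\bn\in\Lambda$ decomposes as $N\bk'$ plus a $\BZ$-combination of peripheral classes (with the parity refinement when $\xi^{2N}=-1$). In \cite{FKL2017} this takes real work and is where Dehn--Thurston coordinates and the explicit description of the pairing enter. Your descending-induction lift from $\gr\cS_\xi$ back to $\cS_\xi$ is then fine once that lattice identity is in hand.
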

Here $\cS_\xi^{\text{ev}}$ is the $\BC$-subspace of $\cS_\xi$ spanned by all 1-dimensional closed submanifolds $L$ of $\Sigma$ such that $\mu(L,\al) \equiv 0 \pmod{2}$ for all knots $\al\subset \Sigma$.
In \cite{BW2016-2}, the right hand side of \eqref{e.surfacecenter} was shown to be a subset of the left hand side using methods of quantum Teichm\"uller theory in the case where $\xi^{2N}=1$. This result was reproven in \cite{Le2015} using elementary skein methods. The generalization to $\xi^{2N}=-1$ and the converse inclusion was shown in \cite{FKL2017}.

\subsection{(Skew-)transparency} In the skein module of a 3-manifold, we don't have a product structure, and hence cannot define central elements. Instead we will use the notion of {\em (skew-)transparent elements}, first considered in \cite{Le2015}. Throughout this subsection we fix a marked 3-manifold $\MN$.

Suppose $T'$ and $T$ are disjoint $\cN$-tangles. 
Since $\Phi_\xi(T')$ can be presented by a $\BC$-linear combination of $\cN$-tangles in a small neighborhood of $T'$, one can define $\Phi_\xi(T')\cup T$  as an element of $\cS_\xi\MN$, see Subsection \ref{sec.func}.

Suppose $T_1,T_2$, and $T$ are $\cN$-tangles. We say that $T_1$ and $T_2$ are connected by a {\em  single $T$-pass $\cN$-isotopy} if there is a continuous family of $\cN$-tangles $T_t$, $t \in [1,2]$, connecting $T_1$ and $T_2$ such that $T_t$ is transversal to $T$ for all $t \in [1,2]$ and furthermore that $T_t \cap T=\emptyset$ for $t \in [1,2]$ except for a single $s \in (1,2)$ for which $|T_s \cap T|=1$.

\begin{theorem}\lbl{r.mutransparent}
Suppose $\MN$ is a marked 3-manifold, $\xi$ is a root of unity, $N = \ord(\xi^4)$. Note that $\xi^{2N}$ is either 1 or $-1$.

(a) If $\xi^{2N}=1$ then the image of the Chebyshev-Frobenius homomorphism is {\em transparent} in the sense that if $T_1, T_2$ are $\cN$-isotopic $\cN$-tangles disjoint from another $\cN$-tangle $T$, then in $\cS_\xi\MN$ we have
\be 
\Phi_\xi(T) \cup T_1 = \Phi_\xi(T) \cup T_2.
\ee 

(b) If $\xi^{2N}=-1$ then the image of the Chebyshev-Frobenius homomorphism is {\em skew-transparent} in the sense that if $\cN$-tangles $T_1, T_2$ are connected by a single $T$-pass $\cN$-isotopy, where $T$ is another $\cN$-tangle, then in $\cS_\xi\MN$ we have
\be 
\Phi_\xi(T) \cup T_1 = - \Phi_\xi(T) \cup T_2.
\ee 
\end{theorem}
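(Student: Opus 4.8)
\textbf{Proof proposal for Theorem \ref{r.mutransparent}.}

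The plan is to reduce the statement to a purely local computation about skein modules of a thickened surface and then to invoke Theorem \ref{t.ChebyshevFrobenius} together with the surface-level results already established. First I would observe that both claims (a) and (b) are genuinely local: if $T_1$ and $T_2$ are $\cN$-isotopic (resp. connected by a single $T$-pass $\cN$-isotopy), then $T_1 \cup T$ and $T_2 \cup T$ differ inside a small ball $B$ in which the relevant portion of $T$ looks like a single unknotted strand passing by (resp. through) a strand of the isotoped tangle. Taking $M'$ to be the closure of a regular neighborhood of $B$ together with the component(s) of $T$ meeting $B$, and $\cN' = \cN \cap \partial M'$, the functoriality of $\Phi_\xi$ (Theorem \ref{t.ChebyshevFrobenius}, applied to the inclusion $(M',\cN') \hookrightarrow \MN$, which commutes with the Chebyshev-Frobenius maps by the union-map compatibility built into its definition) reduces us to proving the identity in $\cS_\xi(M',\cN')$. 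As in Case I of the proof of Theorem \ref{t.ChebyshevFrobenius}, $M'$ is homeomorphic to a thickened surface $\Sigma \times (-1,1)$ with all tangles in question carrying vertical framing, so we may pass to $\cS_\xi\SP$, and then — using injectivity of $\iota_*$ (Proposition \ref{r.func}) — enlarge $\cP$ to a set $\cQ$ making $(\Sigma,\cQ)$ triangulable; thus it suffices to prove the statement for a triangulable marked surface.

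Next, for the triangulable case I would embed $\cS_\xi(\Sigma,\cQ)$ into the Muller algebra $\sX_\xi(\D)$ via $\vpD$ (Theorem \ref{r.torus}) for a triangulation $\D$ chosen so that the strand of $T$ involved, call it $\gamma$, is $\D$-simple or at least so that $\Phi_\xi(\gamma)$ has a controllable expression. The key input is Theorem \ref{thm.surface}: $\Phi_\xi = F_\xi$ is the restriction of the Frobenius homomorphism $F_N$, so for a $\cP$-arc $a$ we have $\Phi_\xi(a) = a^N$ and for a $\cP$-knot $\alpha$ we have $\Phi_\xi(\alpha) = T_N(\alpha)$. The heart of the matter is then: if $x$ is a simple $\cP$-knot or $\cP$-arc and $\gamma$ another one crossing it geometrically once, how does $T_N(x)$ (resp. $x^N$) $q$-commute with $\gamma$ inside the quantum torus? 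Using formula \eqref{e.normalizedtorus} for the Muller algebra together with the explicit shape of $\varkappa_\D$-type resolutions, one computes that conjugating $\gamma$ past a generator raised to the $N$th power multiplies by $\xi^{\pm N \cdot (\text{intersection number})}$, and for a single geometric intersection this scalar is $\xi^{\pm N}$. Since $\xi^{2N} = \pm 1$ (because $N = \ord(\xi^4)$ forces $\xi^{4N}=1$, hence $\xi^{2N} \in \{1,-1\}$), we get $\xi^{N} \cdot \xi^{N} = \xi^{2N} = \pm 1$, and the contribution of passing a strand all the way through (two intersections' worth, one for each side, or equivalently $\gamma$ and $\gamma$ again after the isotopy) is exactly $\xi^{2N}$. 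When $\xi^{2N}=1$ this yields transparency; when $\xi^{2N}=-1$ it yields the sign, i.e. skew-transparency. The Chebyshev threading does not spoil this because $T_N$ is a polynomial in $x$ and each monomial $x^j$ in it either contributes the same scalar or, by Proposition \ref{r.chebygen}/Corollary \ref{c.32}, the intermediate terms vanish at the root of unity so only $x^N + x^{-N}$ (or $x^N$) survive, each picking up $\xi^{\pm 2N}$.

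The step I expect to be the main obstacle is making the ``conjugation by a once-crossing strand scales by $\xi^{N}$'' computation rigorous and coordinate-independent. The subtlety is that $\Phi_\xi(T) \cup T_i$ is an element of a skein module where there is no multiplication, so one cannot literally ``commute'' things; instead one must realize the $T$-pass isotopy as a sequence of skein-relation moves in a thickened surface where $\Phi_\xi(T)$ and $T_i$ genuinely multiply, track the resolution of the single crossing using the skein relation $T = \xi T_+ + \xi^{-1}T_-$, and observe that applying $\Phi_\xi$ (which threads the $T$-component by $T_N$ and arc-components by the $N$th power) converts the single crossing into $N$ parallel crossings whose resolution, after the ``miraculous cancellations'' encoded in Corollary \ref{c.32}, collapses to the scalar $\xi^{2N}$ times the original. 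An efficient way to organize this is to first prove the arc case directly (where $\Phi_\xi(a) = a^N$ makes the thickened-surface computation a clean application of \eqref{e.normalizedtorus}), then deduce the knot case by the same homological trick used in Lemma \ref{r.knot1a} and Proposition \ref{r.knot}: puncture the surface near the knot so it becomes an arc-like object meeting a transversal arc once, run the arc argument, and descend via the surgery maps $\Psi$ of Proposition \ref{t.holetrick}. Once the local scalar is pinned down as $\xi^{2N} \in \{\pm 1\}$, assembling (a) and (b) and passing back up through functoriality is routine.
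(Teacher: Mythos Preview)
Your overall shape --- reduce to a local model, split into the knot and arc cases, and pin down the scalar $\xi^{2N}$ --- is right, but you are making both halves much harder than the paper does, and your proposed route through the Muller algebra for the crossing computation does not actually work as stated.

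For the knot case, the paper simply invokes \cite{Le2015}, which already proves (skew-)transparency of $\Phi_\xi(T)$ in $\cS_\xi(M,\emptyset)$ when every component of $T$ is a knot; functoriality then pushes this into $\cS_\xi\MN$. Your plan to rerun the surgery apparatus of Proposition~\ref{r.knot} and Lemma~\ref{r.knot1a} is unnecessary here --- those tools were needed to identify $\tF_\xi(\alpha)$ with $T_N(\alpha)$, not to prove transparency once that identification is known.

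For the arc case, your suggestion to read off the scalar from \eqref{e.normalizedtorus} in $\sX_\xi(\D)$ is the part that fails: the Muller generators are edges of a quasitriangulation and hence have \emph{no interior crossings}; the vertex matrix $P$ records only boundary incidences, so $q$-commutation in $\sX_\xi(\D)$ does not encode the geometric pass you need. The paper instead does exactly the direct skein computation you describe as your fallback: take $\Phi_\xi(a) = a^{(N)}$ as $N$ parallel copies in a tubular neighborhood, let a single strand of $T_1$ (resp.\ $T_2$) cross all $N$ of them, and resolve each crossing with the skein relation. At every step one of the two smoothings produces a trivial arc (hence vanishes), so the $N$ resolutions collapse to a single term with coefficient $\xi^{N}$ on one side and $\xi^{-N}$ on the other (Figure~\ref{fig:figures/center4}). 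Comparing the two gives $\Phi_\xi(a)\cup T_1 = \xi^{2N}\,\Phi_\xi(a)\cup T_2$, which is transparency or skew-transparency according to the sign of $\xi^{2N}$. No Muller embedding, no Chebyshev cancellation via Corollary~\ref{c.32}, and no surgery are needed for this half.
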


\begin{proof}
 
(a) and (b) are proven in \cite{Le2015} for the case where $\cN = \emptyset$. That is, given an $\emptyset$-tangle $T$, it is shown that $\Phi_\xi(T)$ is (skew-)transparent in $\cS_\xi(M,\emptyset)$ where we necessarily have that all components of $T$ are knots. By functoriality, $\Phi_\xi(T)$ is (skew-)transparent in $\cS_\xi\MN$ as well when all components of $T$ are knots.

We show that $\Phi_\xi(a)$ is (skew-)transparent when $a$ is a $\cN$-arc. Let $T_1,T_2$ be $\cN$-isotopic $\cN$-tangles connected by a single $a$-pass $\cN$-isotopy $T_t$. Consider a neighborhood $\Xi$ consisting of the union of a small tubular neighborhood of $a$ and a small tubular neighborhood of $T_t$. We may assume that the strands of $\Phi_\xi(a)=a^{(N)}$ are contained in the tubular neighborhood of $a$, and furthermore that $T_t$ is a single $a_i$-pass $\cN$-isotopy for each component $a_i$ of $a^{(N)}$. Write $\cQ = \cN \cap \Xi$. Then both $a^{(N)} \cup T_1$ and $a^{(N)} \cup T_2$ are $(\Xi,\cQ)$-tangles and we apply the skein relation and trivial arc relation inductively in $\cS_\xi(\Xi,\cQ)$ to derive the equations in Figure \ref{fig:figures/center4}.

\FIGc{figures/center4}{Resolving crossings between $a^{(N)}$ and $T_1,T_2$ in $\cS_\xi(\Xi,\cQ)$.}{3.35cm}

By functoriality, the computation in $\cS_\xi(\Xi,\cQ)$ is true in $\cS_\xi\MN$ as well. We see from Figure \ref{fig:figures/center4} that $\Phi_\xi(a)$ is transparent if and only if $\xi^{N}=\xi^{-N}$, ie that $\xi^{2N}=1$. We also see that $\Phi_\xi(a)$ is skew-transparent if and only if $\xi^{N}=-\xi^{-N}$, ie that $\xi^{2N}=-1$.
\end{proof}

\section{Center for \texorpdfstring{$q$}{q} not a root of unity}\lbl{sec.reducedskein}

Throughout this section $R$ is a commutative domain with a distinguished invertible element $q^{1/2}$ and $\SP$ is a marked surface. In particular we note that $R$ is no longer required to be Noetherian in this section. We write $\cS$ for the skein algebra $\cS\SP$ defined over $R$. We will calculate the center of $\cS\SP$ for the case when $q$ is not a root of unity.

\subsection{Center of the skein algebra} Let $\cH$ denote the set of all unmarked components in $\pS$ and $\cHd$ the set of all marked components.  If $\beta\in \cH$ let $z_\beta=\beta$ as an element of $\cS$. If $\beta\in \cHd$ let 
$$ z_\beta = \left [ \prod a \right] \in \cS,$$
where the product is over all boundary $\cP$-arcs in $\beta$.

\begin{theorem}
\lbl{thm.center1}
Suppose $\SP$ is a  marked surface. Assume that $q$ is not a root of unity. Then the center $Z(\cS\SP)$ of $\cS\SP$ is the $R$-subalgebra generated by $\{z_\beta \mid  \beta \in\cH\cup \cHd\}$.
\end{theorem}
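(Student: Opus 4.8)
The plan is to embed $\cS := \cS\SP$ into the Muller algebra $\sX(\D)$ via the skein coordinate map $\vpD$ of Theorem \ref{r.torus}, compute the center of $\sX(\D)$, and then intersect back with $\cS$. Recall that the embedding $\vpD$ identifies $\cS$ with a subring sandwiched between $\fX_+(\D)$ and $\fX(\D)$, and by Theorem \ref{r.torus}(b) it induces an isomorphism of division algebras $\tvpD : \widetilde{\cS} \overset{\cong}{\to} \widetilde{\fX}(\D)$. Since $\cS$ and $\sX(\D)$ share a division algebra (Proposition \ref{r.sandwich0}), and since the center of any ring sits inside the center of its division algebra, we get $Z(\cS) = \cS \cap Z(\widetilde{\fX}(\D))$, and similarly $Z(\sX(\D)) = \sX(\D) \cap Z(\widetilde{\fX}(\D))$. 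So the first step is: quasitriangulate $\SP$ (handling the non-quasitriangulable exceptional surfaces — disk with $\le 1$ marked point, etc. — separately by hand, where the skein algebra is commutative or otherwise trivially handled), fix a quasitriangulation $\D$, and reduce to computing $Z(\widetilde{\fX}(\D))$.

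\textbf{Center of the quantum torus.} The Muller algebra $\fX(\D)$ is a quantum torus over $R[\cH]$ with basis variables $x_a$, $a \in \D$, and commutation matrix the vertex matrix $P = P_\D$. Using \eqref{e.normalizedtorus}, an element $\sum_\bn c_\bn X^\bn$ (with $c_\bn \in R[\cH]$) is central iff for every $\bn$ in its support, $\la \bn, \bm \ra_P = 0$ for all $\bm \in \BZ^\D$ — because $q$ is not a root of unity, $q^{\la \bn,\bm\ra_P} = 1$ forces $\la \bn, \bm\ra_P = 0$. Thus $Z(\fX(\D)) = R[\cH] \otimes \BA(\Gamma)$ where $\Gamma = \{\bn \in \BZ^\D : \la \bn, \cdot \ra_P \equiv 0\}$ is the radical of the form. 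The same computation in the division algebra $\widetilde{\fX}(\D)$ gives $Z(\widetilde{\fX}(\D)) = \widetilde{R[\cH]}(\Gamma)$, the analogous Laurent-polynomial field extension. The main computational step is then to identify $\Gamma$ explicitly: I expect $\Gamma$ to be spanned by (i) the classes $\beta \in \cH$ — but these live already in $R[\cH]$, so really the content is about the $x_a$'s — and (ii) for each marked component $\beta \in \cHd$, the vector $e_\beta := \sum_{a} e_a$ summed over boundary $\cP$-arcs $a$ lying in $\beta$, and (iii) for each $\beta \in \cH$, the monogon edge class. This is a direct combinatorial computation with the vertex matrix: one checks $P(e_\beta, a) = 0$ for all $a \in \D$ by a local cancellation at each vertex of $\beta$ (the two half-edge contributions at a vertex cancel in a telescoping fashion around the boundary component), and conversely that any null vector is supported on such combinations, using that $\D_\inn$ cuts $\Sigma$ into triangles and holed monogons and that the face/vertex structure is nondegenerate transverse to these boundary directions. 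This step — pinning down $\Gamma$ precisely and showing it is exactly generated by the $z_\beta$-monomials — is the main obstacle, and is essentially a bookkeeping argument about which monomials in the $x_a$ Weyl-commute with everything.

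\textbf{Matching $z_\beta$ with the torus generators and descending to $\cS$.} Next I would verify that under $\vpD$, for $\beta \in \cHd$ the element $z_\beta = [\prod_{a \subset \beta} a]$ maps to $X^{e_\beta}$ (up to the Weyl normalization factor, which is accounted for by \eqref{e.simul} and \eqref{eq.35}), and for $\beta \in \cH$ the element $z_\beta = \beta$ maps to the corresponding generator of $R[\cH]$ inside $\fX(\D)$; also note each $z_\beta$ is genuinely central in $\cS$ since, being (a normalization of) a union of boundary-parallel curves/arcs, it $q$-commutes trivially with every $\cP$-tangle by the reordering relation and in fact commutes outright. Thus the $R$-subalgebra $Z_0$ generated by the $z_\beta$ is contained in $Z(\cS)$. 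For the reverse inclusion, take $z \in Z(\cS) = \cS \cap Z(\widetilde{\fX}(\D))$. Then $z \in \widetilde{R[\cH]}(\Gamma)$, i.e. $z = \sum_i c_i\, X^{\bn_i}$ with $\bn_i \in \Gamma$ and $c_i$ in the field of fractions of $R[\cH]$. Because $z \in \cS \subset \fX(\D)$ has a presentation in the $R[\cH]$-basis $\{X^\bn\}$, the coefficients $c_i$ must actually lie in $R[\cH]$, and $\bn_i$ ranges over the monoid generated by the $e_\beta$'s (for $\beta \in \cHd$) together with possibly negative powers — but the sandwiching $\fX_+(\D) \subset \cS$ and the fact that $z$, being a skein element, has non-negative exponents in the preferred basis rules out negative powers of the non-invertible-in-$\cS$ arcs. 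Invoking the reflection anti-involution $\heta$ (Theorem \ref{r.torus}(c)): $z$ is reflection invariant, so by the Weyl-normalization bookkeeping the coefficients $c_i$ are $\eta$-invariant, and combined with the above this forces $z \in R[\cH]\la X^{e_\beta} \ra = Z_0$. A clean way to package the last paragraph is: the center of $\fX(\D)$ (not just its division algebra) is easily computed to be exactly $R[\cH][X^{e_\beta} : \beta \in \cHd]$; since $Z(\cS) \subseteq Z(\widetilde\cS) = Z(\widetilde\fX(\D))$ and $Z(\cS) \subseteq \cS$, and since for a ring $S$ sandwiched as $\fX_+(\D) \subset S \subset \fX(\D)$ one has $Z(S) = S \cap Z(\fX(\D))$ — this intersection statement needs a short separate argument using that any central element of $\fX(\D)$ lying in $S$ is central in $S$, which is immediate — we conclude $Z(\cS) = \cS \cap R[\cH][X^{e_\beta}]$, and finally $\cS \cap R[\cH][X^{e_\beta}] = R[\cH][z_\beta : \beta \in \cHd] = Z_0$ because each $X^{e_\beta}$ already lies in $\cS$ (it is the boundary multicurve $z_\beta$) and these monomials together with $R[\cH]$ form a free sub-$R$-module spanned by honest skein basis elements. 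This completes the identification $Z(\cS\SP) = R\la z_\beta : \beta \in \cH \cup \cHd\ra$.
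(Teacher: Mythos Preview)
Your approach is essentially the paper's: embed $\cS$ in $\fX(\D)$, show $Z(\cS)=\cS\cap Z(\fX(\D))$ via the sandwich $\fX_+(\D)\subset\cS\subset\fX(\D)$ and weak generation, compute $Z(\fX(\D))=R[\cH][X^\bk:\bk\in\ker P]$, and then identify $\ker P$ with the span of the $\bk_\beta$ for $\beta\in\cHd$.

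Two points of divergence worth noting. First, your item (iii) is a slip: monogon edges are \emph{not} in $\ker P$ in general, and the unmarked components $\beta\in\cH$ contribute only through the coefficient ring $R[\cH]$, not through the torus variables. Second, and more substantively, the determination of $\ker P$ is the real content and is harder than ``a direct combinatorial computation with the vertex matrix.'' Showing that the $\bk_\beta$ lie in $\ker P$ is indeed a local cancellation (and the paper does it the way you describe, via the geometric centrality of $z_\beta$), but showing that they \emph{span} $\ker P$ is not obvious from the local picture. The paper establishes $\operatorname{Null}(P)=|\cHd|$ by appealing to the totally marked case \cite[Lemma 4.4(b)]{Le2017} and then running an induction: each time one adds a marked point to an unmarked component the nullity goes up by at least one, and comparing with the totally marked endpoint forces equality. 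A separate short argument upgrades the $\BQ$-basis to a $\BZ$-basis using that the $\bk_\beta$ have disjoint supports. Your sketch (``face/vertex structure is nondegenerate transverse to these boundary directions'') would need to reproduce this.

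Finally, the detour through the division algebra, reflection invariance, and ruling out negative exponents is unnecessary: once you know $Z(\fX(\D))=R[\cH][X^{\bk_\beta}:\beta\in\cHd]$ and each $X^{\bk_\beta}=z_\beta\in\cS$, you have $Z(\fX(\D))\subset\cS$ outright, so $Z(\cS)=Z(\fX(\D))\cap\cS=Z(\fX(\D))$ immediately. The paper takes exactly this shortcut.
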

\begin{proof}
It is easy to verify Theorem \ref{thm.center1} for the few  cases of  non quasi-triangulable surfaces. We will from now on assume that $\SP$ is quasitriangulable, and fix a quasitriangulation $\D$ of $\SP$.

We write $Z(A)$ to denote the center of an algebra $A$. 
\begin{lemma}\lbl{r.weakcenters}
Let $A \subset B$ be $R$-algebras such that $A$ weakly generates $B$. Then $Z(A) \subset  Z(B)$.
\end{lemma}

\begin{proof}
 Let $x \in Z(A)$. Then $x$ commutes with elements of $A$, and hence with their inverses in $B$ if the inverses exist.  So $x \in Z(B)$.
\end{proof}

 Since $\sX_+(\D)$ weakly generates $\XD$, and $\sX_+(\D) \subset \cS \subset \XD$ by
 Theorem \ref{r.torus}, we have 
\begin{corollary}\lbl{l.centerinclusions}
One has   
$Z(\cS) \subset Z(\XD) $. Consequently $Z(\cS) =  Z(\XD) \cap  \cS$.
\end{corollary}

\begin{lemma}\lbl{l.zbetacentral} For all $\beta \in\cH\cup \cHd$, one has
$z_\beta \in Z(\cS) \subset Z(\sX(\D))$.
\end{lemma}

\begin{proof} It is clear that $z_\beta \in Z(\cS)$ if $\beta\in \cH$.
Let $\beta \in \cHd$.
Any  $\cP$-knot $\al$ can be isotoped away from the boundary, and therefore $z_\beta \al = \al z_\beta$.
Let $a \in \cS$ be a $\cP$-arc. If $a$ does not end at some $p \in \beta \cap \cP$ then $az_\beta=z_\beta a$ is immediate. Assume that $a$ has an end at $p \in \beta \cap \cP$. $\cP$-isotope $a$ so that its interior does not intersect $\pS$. Then in the support of $z_\beta$ there is one strand clockwise to $a$ at $p$ and one counterclockwise. Therefore $az_\beta = z_\beta a$ by the reordering relation in Figure \ref{fig:figures/boundary}.
Since $\cS$ is generated as an $R$-algebra by $\cP$-arcs and $\cP$-knots, we have $z_\beta \in Z(\cS)$.
\end{proof}

 For each $\beta \in \cHd$, we define $\bk_\beta \in \BZ^\D$ so that $z_\beta= X^{\bk_\beta}$. In other words,
\be\lbl{e.bkbeta}
\bk_\beta(a) = \left\{ \begin{array}{ll}
1 & \text{if }a \subset \beta, \\
0 & \text{otherwise.} \end{array} \right.
\ee

We write $P$ for the vertex matrix of $\D$ (see Subsection \ref{sec.vmatrix}).

\begin{lemma}\lbl{l.xcenter}
One has $Z(\sX(\D)) =  R[\cH][X^\bk \mid  \bk \in \ker P]$.

\end{lemma}

\begin{proof}

Recall that $\sX(\D)$ is a $\BZ^\D$-graded algebra given by
$$
\sX(\D) = \bigoplus_{\bk \in \BZ^\D} R[\cH] \cdot X^\bk.
$$
The center of a graded algebra is the direct sum of the centers of the homogeneous parts. Hence
$$
Z(\sX(\D)) = \bigoplus_{\bk \in \BZ^\D: X^\bk \text{ is central}} R[\cH] \cdot X^\bk.
$$
By the commutation relation \eqref{e.normalizedtorus} we have $X^\bk X^\bl = q^{\langle \bk, \bl \rangle_P} X^\bl X^\bk$. Thus $X^\bk$ is central if and only if $q^{\langle \bk, \bl \rangle_P} = 1$ for all $\bl \in \BZ^\D$. Since $q$ is not a root of unity, this is true if and only if $\langle \bk, \bl \rangle_P = 0$ for all $\bl\in \BZ^\D$. Equivalently, $\bk \in \ker P$. This proves the lemma.
\end{proof}

\begin{lemma}\lbl{r.kerPZm}
The kernel $\ker P$ is the free $\BZ$-module with basis $\{\bk_\beta \mid  \beta \in \cHd\}$.
\end{lemma}

\begin{proof}  Let $\text{Null}(P)$ be the nullity of $P$.

Lemmas \ref{l.zbetacentral} and \ref{l.xcenter} imply that $\bk_\beta \in \ker P$ for each $\beta \in \cHd$. Since the $\bk_\beta$'s, as functions from $\D$ to $\BZ$, have pairwise disjoint supports, the set $\{\bk_\beta \mid  \beta \in \cHd\}$ is $\BQ$-linear independent. In particular, 
\be 
\lbl{eq.hh}
\text{Null}(P) \ge |\cHd|.
\ee

{\em Claim 1.} Assume that $\beta\in \cH$ is an unmarked boundary component. Choose a point $p_\beta \in \beta$ and let $\cP'=\cP \cup \{p_\beta\}$. Then let $\D'$ be an extension of $\D$ to a $\cP'$-quasitriangulation as depicted in Figure \ref{fig:figures/holepoint} (this guarantees that $\D \subset \D'$), and $P'$ the associated vertex matrix. Then $\text{Null}(P') \geq \text{Null}(P) + 1$.

{\em Proof of Claim 1.} Consider $\BZ^\D \subset \BZ^{\D'}$ via extension by zero. Choose a $\BZ$-basis $B$ of $\ker P$. Then because the $\D \times \D$ submatrix of $P'$ equals $P$, one has $B\subset \ker P'$. Let $\bk_\beta \in \BZ^{\D'}$ be as given in \eqref{e.bkbeta} with $\D$ replaced by $\D'$. Then $\bk_\beta \in \ker P'$.
Since $\bk_\beta$ does not have support in $\D$, $\bk_\beta$ is $\BZ$-linearly independent of $B$. Therefore $\text{Null}(P')$ must be at least 1 greater than $\text{Null}(P)$. This completes the proof of Claim 1.

{\em Claim 2.} One has $\text{Null}(P) = |\cHd|$.

{\em Proof of Claim 2.} 
By \cite[Lemma 4.4(b)]{Le2017}, the claim is true if $\SP$ is totally marked, i.e. if $\cH=\emptyset$. 

Suppose $|\cH|=k$. By sequentially adding marked points to unmarked components in $\cH$ and extending the triangulation as in Claim 1, we get a totally marked surface $(\Sigma, \cP^{(k)})$ with a new vertex matrix $P^{(k)}$. From Claim 1 we have $\text{Null}(P^{(k)}) \ge \text{Null}(P) + k$. On the other hand since $(\Sigma, \cP^{(k)})$ is totally marked and having $|\cHd|+k$ boundary components, we have $\text{Null}(P^{(k)})= |\cHd|+k$. It follows that $|\cHd|   \ge \text{Null}(P)$. Together with \eqref{eq.hh} this shows $\text{Null}(P) = |\cHd|$, completing the proof of Claim 2.

Claim 2 and the fact that  $\{\bk_\beta \mid  \beta \in \cHd\}$ is a $\BQ$-linear independent subset of $\ker P$ shows that  $\{\bk_\beta \mid  \beta \in \cHd\}$ is a $\BQ$-basis of $\ker P$. Let us show that $\{\bk_\beta \mid  \beta \in \cHd\}$ is a $\BZ$-basis of $\ker P$.

Let $x \in \BZ^\D$ be in $\ker P$. Since $\{\bk_\beta \mid  \beta \in \cHd\}$ is $\BQ$-basis of $\ker P$, we have
$$
x = \sum_{\beta \in \cHd} c_\beta \bk_\beta, \quad c_\beta \in \BQ.
$$
Since $\bk_\beta$'s are functions from $\D$ to $\BZ$, have pairwise disjoint supports, and $x:\D \to \BZ$ has integer values, each $c_\beta$ must be an integer. Hence $x$ is a $\BZ$-linear combination of $\{\bk_\beta \mid  \beta \in \cHd\}$. This shows that $\{\bk_\beta \mid  \beta \in \cHd\}$ is a $\BZ$-basis of $\ker P$.
\end{proof}
Lemmas \ref{l.xcenter} and \ref{r.kerPZm} show that $Z(\XD)= R[\cH][X^{\bk_\beta} \mid \beta \in \cHd]$, which is a subset of $\cS$.  By Corollary~\ref{l.centerinclusions}, we also have $Z(\cS)= Z(\XD) \cap \cS= R[z_\beta | \beta \in \cH \cup \cHd]$, completing the proof.
\end{proof}

\no{

\section{Center of reduced skein algebra}

\section{The reduced skein algebra}

Let $\SP$ be a marked surface. We say that a $\cP$-arc in $\SP$ is {\em $\partial$-parallel} if it is isotopic relative its endpoints to an arc entirely contained in $\partial \Sigma$ (which may contain a marked point in its interior). We write $\hatP$ for the set of all $\cP$-isotopy classes of $\partial$-parallel $\cP$-arcs in $\SP$.  Denote the two-sided ideal in $\cS:=\cS\SP$ generated by $\hatP$ as $I_\hatP$. The {\em reduced skein algebra} of $\SP$, introduced in \cite{Le3,PS2}, is
$$
\reS=\cS/I_\hatP.
$$
We will show that $\reS$ also embeds into a quantum torus, and use this fact to give a quick proof of a result of 
\cite{PS2} on  the center  of $\reS$. Throughout, we assume that $\Sigma$ is not a disk, since otherwise all $\cP$-arcs are $\partial$-parallel and so the reduced skein algebra is trivial.

\begin{remark}\lbl{r.redzerodivisors}
Let  $I_{\partial}$ be the two-sided ideal generated by boundary arcs. We have that $\partial \subset \hatP$. The reason that we mod out by $I_\hatP$ instead of $I_{\partial}$ is because $\cS\SP/I_\partial$ in general contains non-trivial zero divisors, which we wish to avoid. In \cite{PS2}, it is proved that $\reS\SP$ does not have any non-trivial zero divisors.
\end{remark}

\subsection{Embbeding into quantum torus} A quasi-triangulation $\D$ of $\SP$ is {\em good} if any non-boundary  edge $a\in \D$ is not $\partial$-parallel. We will show that any quasi-triangulable marked surface has a good quasi-triangulation, see Lemma \ref{r.good}.

In ??? we saw that there are natural embeddings
\be 
\lbl{eq.emb1}
\Xp 
\overset {\iota_1} {\embed} \cS  \overset {\iota_2} {\embed} \sX'(\D),
\ee
such that $\iota_2\circ \iota_1(x)=x$ for all $x\in \Xp$. Here $\sX'(\D)$ is ....

Let $\bXp= \Xp/(\partial)$ and $\bXpr= \\sX'(\D)/(\partial)$, where in both cases $(\partial)$ is the ideal generated by boundary edges. 
\begin{theorem} \lbl{thm.embed2}
Suppose $\D$ is a good quasi-triangulation of a marked surface $\SP$. Then the embeddings in \eqref{eq.emb1} descends to the embeddings
\be 
\lbl{eq.emb2}
\bXp 
\overset {\bar \iota_1} {\embed} \reS  \overset {\bar \iota_2} {\embed}\bXpr.
\ee
\end{theorem}

\subsection{Center} From Theorem \ref{thm.embed2} we will give a quick proof of the following.

\begin{theorem}[\cite{PS2}]
\lbl{t.reducedcenter}
Let $\SP$ be a quasitriangulable marked surface. Then $Z(\reS\SP)$ is generated as an $R$-algebra by $\cH$.
\end{theorem}

\begin{remark}This had been proved in \cite{PS2} for marked surfaces with or without boundary.
\end{remark}

\subsection{Proofs}
\begin{lemma}\lbl{r.reducemarkedtri}
Let $\SP$ be a quasitriangulable marked surface, with  $\cHd=\{ \beta_1,\dots,\beta_k\}$. Choose a point $p_i \in \beta_i \cap \cP$, and let $\cP'=\{p_1,\ldots,p_r\}$. Then $(\Sigma,\cP')$ is quasitriangulable.
\end{lemma}
\begin{proof}
If $(\Sigma,\cP')$ is not quasitriangulable then it is either a disk or a marked surface with no marked points. Disks are already excluded from our discussion, and if $(\Sigma,\cP')$ has no marked points then neither does $\SP$, a contradiction.
\end{proof}

Given a $\cP$-quasitriangulation $\D$ with vertex matrix $P$, we write $\D_\inn$ for the set of inner arcs (those which are not boundary arcs), and $P_\inn$ for the $\D_\inn \times \D_\inn$ submatrix of $P$. We write $\siD$ for the subset of {\em strictly inner arcs}, which are those which are not $\partial$-parallel. We also assume the inclusion $\BZ^{\D_\inn} \subset \BZ^\D$ by extension by zero.

\begin{lemma}\lbl{r.good}
Let $\SP$ be a quasitriangulable marked surface. Then there exists a $\cP$-quasitriangulation $\D$ such that $\D_\inn = \siD$.
\end{lemma}

\begin{proof}
First we note that $\D_\inn=\siD$ is always true whenever $\SP$ is a quasitriangulable marked surface with at most one marked point on each boundary component. By Lemma \ref{r.reducemarkedtri}, the marked surface $(\Sigma,\cP')$ obtained by choosing to keep only one marked point on each $\beta_i \in \cHd$ is quasitriangulable. We use this fact to construct a $\cP$-quasitriangulation such that $\D_\inn=\siD$.

{\em Claim.} Let $\SP$ be a marked surface such that there exists a $\cP$-quasitriangulation such that $\D_\inn=\siD$. Choose a point $p \in \partial \Sigma \backslash \cP$ and define $\cP' = \cP \cup \{p\}$. Then there exists a $\cP'$-quasitriangulation such that $\D_\inn = \siD$.

{\em Proof of Claim.} If $p$ is on an unmarked component then the claim follows directly from Lemma \ref{r.reducemarkedtri}, so assume otherwise. Let $a \in \D$ be a boundary edge such that $p \in a$ with endpoints $p_1,p_2 \in \cP$ (not necessarily distinct). We define a $\cP'$-quasitriangulation $\D' = (\D \backslash \{a\}) \cup \{a_1,a_2,a^*\}$ following the procedure outlined in Remark \ref{r.addmarkedpointalternative}, with the result depicted in Figure \ref{fig:dindin}. It is clear that $a_1,a_2$ are boundary edges. We claim that $a^* \in \siD'$.

\FIGc{figures/dindin}{Obtaining $\D'$ from $\D$}{2.8cm}

Let $b_1 \in \D$ be the edge immediately counterclockwise to $a_1$ at $p_1$ and $b_2 \in \D$ the edge immediately clockwise to $a_2$ at $p_2$, so that $a,b_1,b_2$ form a triangle in $\D$. If either $b_i$ is a boundary edge, then the other $b_j$ must be in $\hatP$, which contradicts our assumption on $\D$. Therefore $b_1,b_2$ are both strictly inner.

Next we note that in $\D'$, $a^*,a_1,b_1$ form a triangle, and so in particular $a^*,b_1$ cobound a disk with $\partial \Sigma$. If $a^* \in \hatP$ then $a^*$ cobounds a disk with $\partial \Sigma$, and therefore $b_1$ also cobounds a disk with $\partial \Sigma$. This would make $b_1 \in \hatP$, a contradiction, therefore $a^*$ is not $\partial$-parallel and so $a^* \in \siD'$.

Thus for any quasitriangulable marked surface $\SP$, one may construct a $\cP$-quasitriangulation $\D$ such that $\D_\inn=\siD$ by choosing $\cP' \subset \cP$ such that each $\beta \in \cHd$ has exactly one marked point in $\cP'$ and then following the procedure outlined above to add marked points to $\cP'$ until $\cP$ is obtained.
\end{proof}

Given a quasitriangulation $\D$ of $\SP$, let $\sX'(\D) \subset \sX(\D)$ be the $R$-subalgebra generated by $\{X_a^{\pm 1}: a \in \D_\inn\} \cup \{X_a: a \in \D \backslash \D_\inn\} \cup \cH$, and $\sX_+'(\D) \subset \sX'(\D)$ the $R$-subalgebra generated by $\{X_a: a \in \D\} \cup \cH$. We note that $\sX_+'(\D) = \sX_+(\D)$ and so will always use the latter notation.

Let $\bfN \subset \sX_+(\D)$ be the multiplicative set generated by $\D_\inn$-monomials.

\begin{lemma}\lbl{r.innerembed}
$\cS \subset \sX'(\D)$. In addition, $\bfN$ is a 2-sided Ore subset of $\cS$.
\end{lemma}
\begin{proof}
It suffices to show that $x \in \sX'(\D)$ for all essential $\cP$-tangles $x \in \cS$. For all boundary edges $b \in \D$, $\mu(x,b)=0$ since $b$ cobounds a disk with $\partial \Sigma$ and $x$ may be $\cP$-isotoped so that $(\text{int }x) \cap \partial \Sigma = \emptyset$. By Lemma \ref{r.monomial}, there is a $\D_\inn$-monomial $\mathfrak{n}$ such that for all $a \in \D_\inn$, $\mu(a,x\mathfrak{n})=0$. Then $\mu(a,x\mathfrak{n})=0$ for all $a \in \D$. It follows that $x\mathfrak{n} \in \sX_+(\D)$.

$\bfN$ is a 2-sided Ore subset of $\cS$ by essentially the same argument as given in Lemma \ref{l.oresubset}. We have that $\sX_+(\D)\bfN^{-1} \cong \sX'(\D)$, where $\sX_+(\D)\bfN^{-1}$ is the Ore localization of $\sX_+(\D)$ at $\bfN$.

Using the fact that $x\mathfrak{n} \in \sX_+(\D)$ with $\mathfrak{n} \in \bfN$, one may follow the proof of Theorem \ref{r.torus} replacing $\bfM$ with $\bfN$ to obtain the lemma and Corollary \ref{r.innerore}.
\end{proof}

\begin{corollary}\lbl{r.innerore}
The natural map $\sX_+(\D) \bfN^{-1} \to \cS \bfN^{-1}$ is an isomorphism.
\end{corollary}

\red{ We have to use the fact that non element of $\bfN$  is a zero-divisor in $\cS$.}
\begin{lemma}\lbl{r.weaknonweak}
Let $\D$ be a quasitriangulation of $\SP$ such that $\D_\inn = \siD$. Then $I_\partial \sX'(\D)=I_\hatP \sX'(\D)$.
\end{lemma}

\begin{proof}
It is clear that $I_\partial\sX'(\D) \subset I_\hatP\sX'(\D)$. Let us prove the inverse inclusion.

Let $\al \in \hatP$. Let 
 $\beta \in \cHd$ be the boundary component such that $\al$ may be endpoint-fixed isotoped to  $\al' \subset \beta$. Define $l(\al):= | \text{int } \al' \cap \cP|$. Then $l(\al)$ is well-defined since $\Sigma$ is not a disk. We proceed by induction on $l(\al)$.

If $l(\al)=0$ then $\al \in I_\partial$ and we are finished. Assume that $l(\al)>0$.
Since $\al \in \hatP$, one has $\al \notin \siD=\D_\inn$. Because $\al$ is not a boundary edge, we conclude that $\al\not \in \D$.

 Assume $p$ is a point of $\cP$ which is interior in $\al'$. The two boundary edges having $p$ as an endpoint cannot be edges of one triangle, i.e. $\al \in \D$. This implies there is an edge $e\in \D$ coming out of $p$ which is not one the the two boundary edges incident with $p$. We have $e\in \D_\inn = \siD$. Clearly $\mu(e, \al)=1$.

\FIGc{figures/pparallel}{}{2cm}

 Applying the skein relation to the intersection point of $e$ and $\al$, we get $e\al = x_1 \al_1 + x_2 \al_2$, where $x_1,x_2 \in \cS$ and $\al_1,\al_2$ are $\partial$-parallel $\cP$-arcs such that $l(\al_1),l(\al_2) \leq l(\al)$, see Figure \ref{fig:pparallel}. Thus $e\al \in I_\partial \sX'(\D)$ by induction. Since $b \in \D_\inn$, $b$ is invertible in $\sX'(\D)$ and so $\al \in I_\partial \sX'(\D)$ as well.
\end{proof}

Define $\reX(\D):=\sX'(\D)/I_\hatP\sX'(\D)$, $\reX_+(\D):=\sX_+'(\D)/I_\hatP\sX'(\D)$.

Given a quasitriangulation $\D$ of $\SP$, define $\csX(\D) \subset \sX(\D)$ to be the $R$-subalgebra generated by $\{X_a^{\pm 1}: a \in \D_\inn \} \cup \cH$, and $\csX_+(\D) \subset \sX_+(\D)$ the $R$-subalgebra generated by $\D_\inn \cup \cH$. We will see that $\csX(\D)$ is isomorphic to $\reX(\D)$ when $\D_\inn=\siD$ and exploit this fact in Lemma \ref{l.redembeddings}.

Let $\reNf \subset \reX_+(\D)$ be the multiplicative set generated by $\D_\inn$-monomials.

\begin{remark}\lbl{r.nequaln}
When $\D_\inn = \siD$, the quotient map $\cS \to \reS$ sends $\bfN$ to $\reNf$ bijectively since $\D_\inn \cap \hatP = \emptyset$.
\end{remark}

\begin{lemma}\lbl{l.redembeddings}
Let $\D$ be a quasitriangulation of $\SP$ such that $\D_\inn=\siD$. There is a series of natural embeddings given by $\reX_+(\D) \hookrightarrow \reS \hookrightarrow \reX(\D)$.
\end{lemma}

\begin{proof}
{\em Claim 1.} $\csX(\D) = \reX(\D)$. In particular, this implies an embedding $\reX_+(\D) \hookrightarrow \reS$.

{\em Proof of Claim 1.}
Since $\csX_+(\D) \subset \sX_+(\D)$, we already know that $\csX_+(\D) \hookrightarrow \cS$ as an $R$-algebra via the map $X_a \mapsto a$ for $a \in \D$, which we denote as $\iota$.

Using a form of Lemma \ref{l.skeinsurgeryextension}, since $\vpD'(\iota(X_a))$ is invertible in $\sX'(\D)$ for all $a \in \D_\inn$ and $\csX_+(\D)$ weakly generates $\csX(\D)$, $\vpD'\circ \iota$ extends uniquely to an $R$-algebra embedding $\tilde\iota: \csX(\D) \to \sX'(\D)$ given by $\tilde\iota(X_a)=X_a$.

Let $\pi: \sX'(\D) \to \reX(\D)$ be the quotient map. Since $\D_\inn = \siD$, $\csX(\D) \cap I_\hatP\sX'(\D) = \emptyset$. Thus $\pi$ is injective on $\csX(\D)$ and is given by $\pi(X_a) = X_a$ for $a \in \D_\inn \cup \cH$. In other words, we have $\reX(\D)$ as both a subalgebra and a quotient of $\sX'(\D)$.

It follows that $\reX_+(\D) = \csX_+(\D)$, and so $\reX_+(\D) \hookrightarrow \cS$. Again, since $\reX_+(\D) \cap I_\hatP \sX'(\D)= \emptyset$, the quotient map $\cS \to \reS$ is injective on $\reX_+(\D)$, and so we have an embedding $\reX_+(\D) \hookrightarrow \reS$.
This completes the proof of Claim 1. In light of this, we will cease to use the notation $\csX(\D)$.

{\em Claim 2.} There is an $R$-algebra embedding $\reS \hookrightarrow \reX(\D)$.

{\em Proof of Claim 2.} Since $\D_\inn = \siD$, the quotient map sends $\bfN$ to $\reNf$ bijectively by Remark \ref{r.nequaln}. It was shown in Lemma \ref{r.innerembed} that $\bfN$ is a 2-sided Ore subset of $\cS$. It follows that $\reNf$ is a 2-sided Ore subset of $\reS$ and $\reX(\D)$ since $\reS$ has no non-trivial zero divisors by \cite{PS2}.

By Corollary \ref{r.innerore}, $\sX_+(\D) \bfN^{-1} \cong \cS \bfN^{-1}$. Since Ore localization commutes with taking quotients when there are no non-trivial zero divisors present, it follows that $\reX_+(\D) \reNf^{-1} \cong \reS \reNf^{-1}$ as well. Note that $\reX_+(\D) \reNf^{-1} \cong \reX(\D)$ since $\D_\inn = \siD$, and since $\reS \hookrightarrow \reS \reNf^{-1}$ because $\reS$ has no non-trivial zero divisors, this completes the proof the the claim.
\end{proof}

\begin{lemma}\lbl{l.redxcenter}
Let $\D$ be a quasitriangulation of $\SP$ such that $\D_\inn=\siD$ with vertex matrix $P$. Then $Z(\reX(\D)) = R[\cH][X^\bk: \bk \in \ker P_\inn]$ and $Z(\reX_+(\D)) = R[\cH][X^\bk: \bk \in \ker P_\inn \cap \BZ^{\D_\inn}_{\geq 0}]$.
\end{lemma}

\begin{proof}
That $Z(\reX_+(\D)) = R[\cH][X^\bk: \bk \in \ker P_\inn \cap \BZ^{\D_\inn}_{\geq 0}]$ follows immediately from Lemma \ref{l.xcenter} and the fact that $\reX_+(\D) \subset \cS \subset \sX(\D)$ whenever $\D_\inn = \siD$ as shown in Lemma \ref{l.redembeddings}. $Z(\reX(\D)) = R[\cH][X^\bk: \bk \in \ker P_\inn]$ then follows from the fact that $\reX_+(\D) \reNf^{-1} \cong \reX(\D)$.
\end{proof}

\begin{lemma}\lbl{l.redskeinxcenter}
Let $\D$ be a quasitriangulation of $\SP$ such that $\D_\inn = \siD$. Then $Z(\reS) \subset Z(\reX(\D))$.
\end{lemma}

\begin{proof}
Let $x \in Z(\reS)$. As shown in Claim 2 of Lemma \ref{l.redembeddings}, $\reX(\D) \cong \reS \reNf^{-1}$. Let $y \in \reX(\D)$. Then $y$ may be written as $y = s \mathfrak{n}^{-1}$ for some $s \in \reS$ and $\mathfrak{n} \in \reNf$. $x$ commutes with both $s$ and $\mathfrak{n}$ since $x \in Z(\reS)$, and therefore $x$ commutes with $y$ so $x \in Z(\reX(\D))$.
\end{proof}

\begin{lemma}\lbl{l.corankPin}
Let $\D$ be a quasitriangulation of $\SP$ such that $\D_\inn=\siD$ with vertex matrix $P$. Then the corank of $P_\inn$ is 0.
\end{lemma}

\begin{proof}
Suppose $x \in \ker P_\inn$. Then $x \in \ker P$ as well, so it suffices to show that elements of $\ker P$ do not have support in $\D_\inn$. By Lemma \ref{r.kerPZm}, $\ker P$ is generated over $\BZ$ by $\{\bk_\beta: \beta \in \cHd\}$. As given in \eqref{e.bkbeta}, each element of $\{\bk_\beta: \beta \in \cHd\}$ has support contained in $\hatP$. Since $\D_\inn = \siD$, $\D_\inn \cap \hatP = \emptyset$ and thus elements of $\{\bk_\beta: \beta \in \cHd\}$ have no support in $\D_\inn$.
\end{proof}

Combining Lemmas \ref{l.redxcenter}-\ref{l.corankPin}, we get that $Z(\reS)$ is generated as an $R$-algebra by $\cH$, proving Theorem \ref{t.reducedcenter}.

}

\chapter{\texorpdfstring{$\cS(\fS)$}{S(S)} is a maximal order}\lbl{c.maxorder}

In this chapter we show that the Kauffman bracket skein algebra of a finite-type surface with at least one puncture is a maximal order.

\section{Setting}

Throughout this chapter we fix a finite type surface $\fS=\ofS \setminus \cV$ with $|\cV| \geq 1$ and negative Euler characteristic. We set our base ring $R=\BC$, and let $\xi \in \Cx$ be a root of unity. We write $\cS:=\cS_\xi(\fS)$.

\section{Proof of result}

Our proof consists of defining a good $\BN$-filtration on $\cS$ that yields an associated graded algebra that is a monomial subalgebra of a quantum torus (see Subsection \ref{sec.monoidsubalgebra}). Then following Proposition \ref{r.ALamaximal}, that associated graded algebra is a maximal order, and then so is $\cS$ following Proposition~\ref{r.gradedorderlift}.

\subsection{Skein algebra filtration}\lbl{sec.skeinfiltration}

Suppose $\Pi = \{a_1,\ldots,a_l\}$, where each $a_i$ is an ideal arc or loop on $\fS$.

For each $n \in \BN$, define $F_n := F_n^\Pi(\cS_{\xi}(\fS))$ to be the $\BC$-subspace of $\cS_{\xi}(\fS)$ spanned by $L \in \cS$ such that $\sum_{i=1}^l \mu(a_i,L) \leq n$. Here $\mu(a_i, L)$ is the geometric intersection number. Then $F_n$ defines an $\BN$-filtration on $\cS_{\xi}(\fS)$, see e.g. \cite{FKL2017,Le2015,Le2018}.

\begin{proposition}\lbl{r.skeinfilter}
The sequence $\{F_n\}_{n=0}^\infty$ is a $\BN$-filtration of $\cS_{\xi}(\fS)$ compatible with the algebra structure.
\end{proposition}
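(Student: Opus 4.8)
The plan is to verify the three defining properties of an $\BN$-filtered algebra from Section~\ref{sec.filtergrade}. Write $\mu_\Pi(L):=\sum_{i=1}^l\mu(a_i,L)$ for a simple diagram $L$, where $\mu$ is the geometric intersection index of Subsection~\ref{sec.intersectionindex}, so that $F_n$ is the $\BC$-span of $\{L\in B_\fS:\mu_\Pi(L)\le n\}$. Two of the conditions are immediate: $F_m\subseteq F_n$ for $m\le n$ is obvious from the definition, and $\bigcup_n F_n=\cS$ holds because $B_\fS$ spans $\cS$ while every simple diagram $L$ has $\mu_\Pi(L)<\infty$, hence lies in $F_{\mu_\Pi(L)}$.

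The substantive point is multiplicativity, $F_mF_n\subseteq F_{m+n}$. By $\BC$-bilinearity it suffices to show that for simple diagrams $K,L$, every simple diagram $L'$ occurring in the expansion of the product $KL\in\cS$ satisfies $\mu_\Pi(L')\le\mu_\Pi(K)+\mu_\Pi(L)$. First I would invoke the Freedman--Hass--Scott result (cf.\ Lemma~\ref{l.FHS} and \cite{FHS1982}), in its version for loops and ideal arcs on $\fS$, to isotope $K$ and $L$ so that $K$ is taut with every $a_i$ and $L$ is taut with every $a_i$ simultaneously, and so that the superimposed diagram of $K\cup L$ on $\fS$ (with $K$ pushed above $L$) has only transverse double points, all contained in small disjoint balls avoiding every $a_i$. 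Then $|a_i\cap(K\cup L)|=\mu(a_i,K)+\mu(a_i,L)$ for each $i$. Now resolve all crossings of $KL$ via the skein relation: each resolution only reconnects strands inside a ball disjoint from all $a_i$, so it leaves $|a_i\cap(\,\cdot\,)|$ unchanged; and each subsequent application of the trivial-loop relation merely deletes a loop component, which cannot increase $|a_i\cap(\,\cdot\,)|$ (possible scalar factors from kink and trivial-loop corrections are irrelevant here). Hence every simple diagram $L'$ appearing in $KL$ has a representative with $|a_i\cap L'|\le\mu(a_i,K)+\mu(a_i,L)$, so $\mu(a_i,L')\le\mu(a_i,K)+\mu(a_i,L)$ by definition of the geometric intersection index; summing over $i$ gives the claim, and therefore $KL\in F_{\mu_\Pi(K)+\mu_\Pi(L)}$.

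The main obstacle is the geometric bookkeeping in that last step: arranging $K$ and $L$ to be simultaneously taut with respect to all curves of $\Pi$, and checking carefully that neither crossing resolution nor trivial-loop removal ever increases intersection numbers with a fixed $a_i$. Once this monotonicity is established the filtration axioms follow formally. I would also remark that this argument is essentially the one used in \cite{FKL2017,Le2015,Le2018}, and is included here only for completeness.
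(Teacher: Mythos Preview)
Your proof is correct and follows essentially the same approach as the paper. The only difference is that the paper cites \cite[Lemma~4.7]{Mu2012} for the key inequality $\mu(a_i,J_j)\le\mu(a_i,K)+\mu(a_i,L)$, whereas you prove it directly via the Freedman--Hass--Scott tautness argument and the observation that local smoothings away from the $a_i$ cannot increase intersection numbers.
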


\begin{proof}
It is clear that $\cup F_n = \cS_{\xi}(\fS)$, and for $m<n$, $F_m \subset F_n$. So we show that $F_mF_n \subset F_{m+n}$. Suppose $K \in F_m$, $L \in F_n$. Using the skein relation, $KL = \sum c_j J_j$, where $c_j \in \BC$ and $J_j$ is a simple diagram obtained by smoothing all of the crossings in $KL$, and so $\mu(J_j,a_i) \leq \mu(K,a_i) + \mu(L,a_i)$ for all $i=1,\ldots,l$ and all $j$ by \cite[Lemma 4.7]{Mu2012}, and so $KL \in F_{m+n}$.
\end{proof}

\no{

\subsection{Pants coordinates for closed surfaces}

\red{Thang}
}

\subsection{Properties of the monoid of curves}
Set $r:=6g-6+3p = -3\chi(\fS)$, where $\chi(\fS)$ is the Euler characteristic of $\fS$. Let $\La$ be the image of $B_\fS$ in $\BN^r$ under the edge coordinate map $E_\D$ as given in ~\eqref{e.edgecoords}.

\begin{proposition}\lbl{r.skeinprimmonoid}
$\Lambda$ is a primitive submonoid of $\underline{\La}$ such that $\Cone(\La)$ is closed.
\end{proposition}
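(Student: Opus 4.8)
The plan is to identify $\La = E_\fT(B_\fS)$ with the set of integer vectors in $\BN^r$ satisfying a finite system of homogeneous linear inequalities together with finitely many parity congruences, and then read off both assertions from that description. By Remark \ref{r.idealselffold} we may assume the fixed ordered ideal triangulation $\fT$ has no self-folded triangles, so that each triangle has three distinct edges. For a triangle $\tau$ of $\fT$ with edges $e_i, e_j, e_k$, call $\bn \in \BN^r$ \emph{$\tau$-admissible} if $\bn_i \le \bn_j + \bn_k$, $\bn_j \le \bn_k + \bn_i$, $\bn_k \le \bn_i + \bn_j$ and $\bn_i + \bn_j + \bn_k$ is even; call $\bn$ \emph{admissible} if it is $\tau$-admissible for every triangle $\tau$, and let $\cA \subseteq \BN^r$ be the set of admissible vectors. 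The surface case of Haken's normal coordinate theorem (see, e.g., \cite{Pe2012}; the skein-theoretic use of these coordinates also appears in \cite{Mu2012,Le2015}) asserts that $E_\fT$ maps $B_\fS$ bijectively onto $\cA$: the injectivity half is the fact already recalled in connection with \eqref{e.edgecoords}, and the surjectivity half is the statement that every admissible vector is realized by a unique normal multicurve, which automatically has no disk-bounding component. Thus $\La = \cA$, and this identification is the crux; everything that follows is elementary.

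Granting $\La = \cA$, the submonoid property is immediate, since admissibility is preserved under addition (the triangle inequalities are preserved by adding vectors, and the even integers form a subgroup of $\BZ$) and $0 \in \cA$ is the coordinate vector of the empty diagram. For primitivity, observe that $\La$ lies in the subgroup $G \subseteq \BZ^r$ cut out by the congruences $\bk_i + \bk_j + \bk_k \equiv 0 \pmod 2$ over all triangles $e_i e_j e_k$ of $\fT$; hence $\uLa \subseteq G$. Suppose now $c\bn \in \La$ for a positive integer $c$ and some $\bn \in \uLa$. From $c\bn \ge 0$ and $c > 0$ we get $\bn \ge 0$; dividing the triangle inequalities satisfied by $c\bn$ by $c$ gives the triangle inequalities for $\bn$; and $\bn \in \uLa \subseteq G$ gives the parity congruences for $\bn$. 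Hence $\bn \in \cA = \La$, so $\La$ is primitive in $\uLa$.

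For the cone, let $\sigma \subseteq \BR^r$ be the cone cut out by $\bx \ge 0$ together with all the triangle inequalities; it is defined by finitely many inequalities with integer coefficients, hence is a closed rational polyhedral cone, and it is pointed since $\sigma \subseteq \BR^r_{\ge 0}$. Plainly $\Cone(\La) \subseteq \sigma$. For the reverse inclusion, write $\sigma$ as the convex hull of its finitely many extreme rays; each extreme ray is rational and so contains a nonzero integer vector $\bv$, and then $2\bv$ is admissible — it is $\ge 0$, the triangle inequalities for it are the homogeneous scalings of those for $\bv \in \sigma$, and all its coordinates are even — so $2\bv \in \La$ and the ray $\BR_{\ge 0}\bv = \BR_{\ge 0}(2\bv)$ lies in $\Cone(\La)$. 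Therefore $\Cone(\La) = \sigma$ is closed; in fact $\La = \uLa \cap \sigma$, so $\La$ is also finitely generated by Gordan's lemma. The only substantive input is the normal-coordinate identification $\La = \cA$, and the main thing to pin down carefully in the write-up is the exact form of the admissibility conditions — which is precisely where the no-self-folded-triangles reduction of Remark \ref{r.idealselffold} is used — together with a clean reference for Haken's theorem in the version phrased with intersection numbers against the edges of $\fT$.
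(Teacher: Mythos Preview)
Your proof is correct and follows essentially the same approach as the paper: both identify $\La$ with the set of admissible vectors (nonnegativity, triangle inequalities, parity), observe that primitivity follows because the parity congruences are automatically satisfied by any element of $\uLa$ while the homogeneous inequalities divide through by $c$, and deduce closedness of the cone from its description by finitely many linear inequalities. Your treatment is somewhat more careful than the paper's on two points: you explicitly flag the identification $\La = \cA$ as the substantive input (the paper simply asserts the three conditions characterize $\La$), and you actually prove $\Cone(\La) = \sigma$ via the extreme-ray argument with the doubling trick $2\bv \in \La$, whereas the paper just says the cone ``may be described with a finite number of inequalities'' without verifying the reverse inclusion $\sigma \subseteq \Cone(\La)$.
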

\begin{proof}

Let $a,b,c$ be distinct edges of an ideal triangle $T$ of $\Sigma$. A vector $\bk \in \BZ^r$ is in $\La$ if and only if it satisfies the following properties for all ideal triangles of $\D$ with edges labeled $a,b,c$:
\begin{enumerate}[(i)]
\item $k_a,k_b,k_c \geq 0$,
\item $k_a+k_b+k_c$ is even,
\item $k_a \leq k_b+k_c$.
\end{enumerate}

Note that $0 = E_\D(\emptyset)$, and $\Lambda$ is closed under addition. Thus $\Lambda$ is a submonoid of $\BZ^r$.

$\Cone(\La)$ is closed since it may be described with a finite number of inequalities.

Let $n \in \BN$, $\bk \in \uLa$ such that $n\bk \in \Lambda$. Let $a,b,c$ be edges of an ideal triangle of $\D$. It is clear that since $n\bk$ satisfies properties (i) and (iii) for an admissible edge coloring then so does $\bk$. Furthermore, since $\uLa$ is the group completion of $\La$, for all $\bl \in \uLa$ we must have that $l_a+l_b+l_c$ is even, and so $\bk$ satisfies (ii) as well. Thus $\bk \in \La$, so $\La$ is primitive in $\uLa$.
\end{proof}

\subsection{Associated graded algebra}\lbl{sec.skeingraded}
Let $p \geq 1$ and fix an ordered ideal triangulation $\D$ of $\fS$. Let $S: \Lambda \to B_\fS$ be the inverse of $E_\D$ restricted to $\Lambda$. By abuse of notation, consider $\D$ as an unordered set and consider the filtration $F_n:=F_n^\D(\cS_{\xi}(\fS))$ as laid out in Subsection \ref{sec.skeinfiltration}. Meaning, $F_n$ is the $\BC$-subspace of $\cS_{\xi}(\fS)$ spanned by $S(\bk)$, $\bk \in \Lambda$ where $|\bk|:=\sum_{i=1}^r \bk(i) \leq n$.

\begin{proposition}\lbl{r.skeinproduct}
For $\bk,\bk' \in \Lambda$, there exists $C(\bk,\bk') \in \BZ$ such that
\be\lbl{e.skeinqcommute}
S(\bk)S(\bk') = q^{C(\bk,\bk')}S(\bk+\bk') \ \ \ \ (\text{mod }F_{<|\bk|+|\bk'|}).
\ee
\end{proposition}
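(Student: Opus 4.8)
The plan is to compute $S(\bk)S(\bk')$ by resolving all crossings with the skein relation and then to extract its unique leading term, in the style of the ``highest term'' arguments of \cite{Mu2012,Le2015,FKL2017}. First I would fix good representatives: by the Freedman--Hass--Scott theorem (cf.\ Lemma \ref{l.FHS}) together with normal curve theory, choose $\D$-normal representatives of the simple diagrams $S(\bk)$ and $S(\bk')$ that are in addition taut with respect to each other, so that their $c := \mu(S(\bk),S(\bk'))$ intersection points are transverse and lie in the interiors of triangles of $\D$, while $|a_i\cap S(\bk)| = \bk(i)$ and $|a_i\cap S(\bk')| = \bk'(i)$ for every edge $a_i$ of $\D$. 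Stacking $S(\bk)$ above $S(\bk')$ and applying the skein relation at each of the $c$ crossings gives
\[
S(\bk)S(\bk') \;=\; \sum_{\sigma} q^{\,e(\sigma)}\, J_\sigma \qquad\text{in }\cS,
\]
where $\sigma$ runs over the $2^c$ states (a choice of $T_+$- or $T_-$-smoothing at each crossing), $e(\sigma) := \#\{T_+\text{-smoothings in }\sigma\} - \#\{T_-\text{-smoothings in }\sigma\}\in\BZ$, and $J_\sigma$ is the resulting crossingless diagram on $\fS$ with vertical framing; deleting its trivial components writes $J_\sigma = (-q^2-q^{-2})^{t(\sigma)}\,J'_\sigma$ with $J'_\sigma\in B_\fS$ and $t(\sigma)\ge 0$.

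For each edge $a_i$, subadditivity of the geometric intersection number under smoothing --- \cite[Lemma 4.7]{Mu2012}, as used in \cite{Le2015,FKL2017} --- gives $\mu(a_i,J'_\sigma)\le \bk(i)+\bk'(i)$, hence $|E_\D(J'_\sigma)|\le|\bk|+|\bk'|$, so $J'_\sigma\in F_{|\bk|+|\bk'|}$, with $J'_\sigma\in F_{<|\bk|+|\bk'|}$ unless $\mu(a_i,J'_\sigma)=\bk(i)+\bk'(i)$ holds for \emph{every} $i$. Thus only the states attaining equality in all edge coordinates can contribute modulo $F_{<|\bk|+|\bk'|}$.

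The core of the argument is that exactly one such state exists. Each crossing lies in a triangle $\tau$ of $\D$, where the two arcs through it join distinct edges of $\tau$ (by normality of $S(\bk)$ and $S(\bk')$), and a short case analysis shows that precisely one of the two smoothings does not create a \emph{return}, i.e.\ an arc of the new diagram with both endpoints on a single edge of $\tau$. Let $\sigma_0$ be the state choosing the non-return smoothing at every crossing. Then $J_{\sigma_0}$ is $\D$-normal in every triangle, hence in minimal position with all edges of $\D$ (standard normal curve theory; see e.g.\ \cite{Pe2012}), so $\mu(a_i,J'_{\sigma_0}) = \bk(i)+\bk'(i)$ for all $i$; in particular $J_{\sigma_0}$ has no nullhomotopic component, so $t(\sigma_0)=0$, and by injectivity of $E_\D$ on $B_\fS$ we conclude $J'_{\sigma_0} = S(\bk+\bk')$. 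For any state $\sigma\neq\sigma_0$, some crossing was resolved with the return smoothing; pushing an innermost return across the relevant edge $a_j$ lowers the intersection count with $a_j$ by $2$ and, after also deleting trivial components, yields a $\D$-normal representative of $J'_\sigma$, whence $\mu(a_j,J'_\sigma)\le \bk(j)+\bk'(j)-2$ and so $J'_\sigma\in F_{<|\bk|+|\bk'|}$. Therefore $S(\bk)S(\bk') \equiv q^{\,e(\sigma_0)} S(\bk+\bk') \pmod{F_{<|\bk|+|\bk'|}}$, and we set $C(\bk,\bk') := e(\sigma_0)\in\BZ$.

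The main obstacle is the third step: one must simultaneously (i) check uniqueness of the non-return smoothing at each crossing, (ii) verify that resolving every crossing this way produces a genuinely $\D$-normal (hence minimal-position) diagram so that its edge coordinates add exactly --- the subadditivity bound only gives $\le$ --- and (iii) rule out all remaining states via the subadditivity bound together with the push-off-a-return reduction. One should also dispose, by a preliminary perturbation into the triangle interiors or a direct bigon count, of the degenerate configurations where an intersection point or an arc of $S(\bk)\cup S(\bk')$ meets an edge of $\D$. This is where I would follow \cite{Mu2012,Le2015,FKL2017} closely; with the Proposition in hand, $\gr\cS = \BA(\La)$ is a finitely generated monomial subalgebra of a quantum torus, and Propositions \ref{r.skeinprimmonoid}, \ref{r.ALamaximal}, and \ref{r.gradedorderlift} then give the maximal-order conclusion.
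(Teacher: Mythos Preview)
Your proposal is correct and is precisely the standard ``leading term'' argument from \cite{AF2017,FKL2017,Le2015} that the paper itself defers to; the paper gives no self-contained proof here but simply cites those references, and your write-up reconstructs that argument faithfully. The one point you flag yourself --- that choosing the non-return smoothing at \emph{each} crossing really yields a globally $\D$-normal diagram --- is indeed the only place needing care, and it follows because in each triangle the non-return smoothing of a crossing between a type-$(b,c)$ arc and a type-$(a,c)$ arc again produces arcs of those same two types, so the type counts (hence normality) are preserved after resolving all crossings.
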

This was proved in \cite{AF2017,FKL2017} for a slightly different filtration but the simple proof there also applies in this case. An explicit formula for $C(\bk,\bk')$ is given in \cite{FKL2017}.

\subsection{Maximal order}

\begin{theorem}\lbl{r.mainresult}
Let $\fS$ be a finite-type surface, and let $\xi \in \BC^\times$ be a root of unity. Then $\cS_{\xi}(\fS)$ is (i) a domain, (ii) finitely generated over its center, and (iii) a maximal order.
\end{theorem}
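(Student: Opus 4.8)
The plan is to reduce to the generic case and then feed the filtration machinery of Chapter~\ref{c.maxorder} into Proposition~\ref{r.gradedorderlift}, which was engineered precisely for this purpose.

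First, by the standing assumptions of this chapter the base ring is $R=\BC$, $\xi$ is a root of unity, and $\fS=\ofS\setminus\cV$ with $p:=|\cV|\geq 1$ and $\chi(\fS)<0$; the remaining cases $\chi(\fS)\geq 0$ admit direct elementary arguments which we omit. Claim (ii), that $\cS:=\cS_\xi(\fS)$ is finitely generated as a module over its center, is exactly the theorem of \cite{FKL2017}, so it may be taken as an input. Claims (i) and (iii) will both fall out of a single application of Proposition~\ref{r.gradedorderlift}, so the real work is to put $\cS$ into the precise shape required there.

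Second, I would fix an ordered ideal triangulation $\D=(e_1,\dots,e_r)$ of $\fS$, which exists by \cite{Pe2012}, with $r=6g-6+3p=-3\chi(\fS)$. The edge coordinate map $E_\D$ of~\eqref{e.edgecoords} identifies the basis $B_\fS$ of $\cS$ (a basis by \cite{PS2000}) with the submonoid $\La:=E_\D(B_\fS)\subset\BN^r$; write $S\colon\La\to B_\fS$ for the inverse bijection, so $\{S(\bk):\bk\in\La\}$ is a $\BC$-basis of $\cS$. Consider the $\BN$-filtration $\{F_n\}$ attached to $\D$ as in Subsection~\ref{sec.skeinfiltration}, i.e.\ $F_n$ is the span of those $S(\bk)$ with $|\bk|:=\sum_i\bk(i)\leq n$. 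By Proposition~\ref{r.skeinfilter} this is compatible with the algebra structure, and by Proposition~\ref{r.skeinproduct} the product obeys
\[
S(\bk)S(\bk') = q^{C(\bk,\bk')}\,S(\bk+\bk') \pmod{F_{<|\bk|+|\bk'|}},\qquad C(\bk,\bk')\in\BZ,
\]
which is precisely the product formula~\eqref{e.productformula} with $p=1$, $\deg=|\cdot|\colon\La\to\BN$ (a monoid homomorphism), and $\cC=C$. Next, Proposition~\ref{r.skeinprimmonoid} gives that $\La$ is primitive in its group completion $\uLa$ and that $\Cone(\La)$ is closed, the latter forcing $\La$ to be finitely generated. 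Thus every hypothesis of Proposition~\ref{r.gradedorderlift} holds: $\cS$ is a $\BC$-algebra, finitely generated over its center by \cite{FKL2017}, with $\BC$-basis $\{S(\bk)\}_{\bk\in\La}$ indexed by a finitely generated primitive submonoid, satisfying~\eqref{e.productformula} with $\xi$ a root of unity. Applying that proposition yields at once that $\cS$ is a domain (claim (i)) and a maximal order (claim (iii)), completing the proof. Internally, Proposition~\ref{r.gradedorderlift} proceeds by verifying that $\{F_n\}$ is a good filtration, recognizing $\gr\cS$ as a finitely generated monomial subalgebra of a quantum torus — the underlying antisymmetric matrix being read off from $(\bk,\bk')\mapsto C(\bk,\bk')-C(\bk',\bk)$ — invoking Proposition~\ref{r.ALamaximal} to conclude $\gr\cS$ is a maximal order, and lifting that conclusion back to $\cS$.

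The difficulty here is not in a new argument: the theorem is genuinely an assembly of the earlier results, so the burden lies in verifying that their hypotheses match. The point most deserving of care is the combinatorial input of Proposition~\ref{r.skeinprimmonoid} — that the monoid of admissible edge-coordinate vectors is primitive with closed cone — together with aligning the degree map $|\cdot|$ with the $\BN^p$-framework of Proposition~\ref{r.gradedorderlift} (here $p=1$, which is harmless). One should also be careful that the hypothesis of Proposition~\ref{r.gradedorderlift} requiring $\cS$ to be finite over its center is discharged only via \cite{FKL2017}, since that is the one ingredient not established within this dissertation.
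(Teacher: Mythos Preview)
Your proof is correct and follows the same route as the paper: verify the hypotheses of Proposition~\ref{r.gradedorderlift} using the edge-coordinate basis, Proposition~\ref{r.skeinprimmonoid}, and Proposition~\ref{r.skeinproduct}, with finite generation over the center imported from \cite{FKL2017}. The only cosmetic difference is that the paper cites Lemma~\ref{r.finitedomain} separately for (i), whereas you extract the domain conclusion from Proposition~\ref{r.gradedorderlift} as well; both are valid since that proposition explicitly concludes both ``domain'' and ``maximal order.''
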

\begin{proof} Part (i) is Lemma~\ref{r.finitedomain}. Part (ii) was shown in \cite{FKL2017}. By Proposition \ref{r.skeinprimmonoid} and ~\eqref{e.skeinqcommute}, the algebra $\cS$ satisfies all the assumption of Proposition \ref{r.gradedorderlift} and hence is a maximal order.
\end{proof}

\begin{remark}
It is also true that the skein algebra of surfaces with non-negative Euler characteristic is a maximal order. We leave these cases, which have a direct proof, as an exercise.
\end{remark}

\begin{remark}
It is also true that the reduced skein algebra as described in \cite{PS2018} and the Muller skein algebra as described in \cite{Mu2012} (including the version that allows unmarked boundary components as given in \cite{LP2018}) are maximal orders. The extension of our proof to these cases is simple and we leave it as an exercise for the reader.
\end{remark}

\section{Applications}

In this section we present two consequences of Theorem \ref{r.mainresult} and one conjecture. In Subsection \ref{s.normal} we show how Theorem \ref{r.mainresult} implies that $SL_2\BC$-character varieties are normal, which illustrate the power of utilizing quantizations to study classical topological objects. In Subsection \ref{s.refunicity} we show that Theorem \ref{r.mainresult} gives a refined version of the unicity theorem of \cite{FKL2017}. Lastly, in Subsection \ref{s.tqc} we give a conjecture detailing how Theorem \ref{r.mainresult} may open a pathway towards a scalable topological quantum compiling algorithm.

\subsection{Character varieties are normal}\lbl{s.normal}

Recall that $\Sp(\cS_q(\fS))$ is the set of equivalence classes of finite-dimensional irreducible representations of $\SxiS$ over $\BC$. Let $\ZxiS$ be the center of $\SxiS$. We have the central character map
\be 
 \chi: \Sp(\SxiS) \to \Sp(\ZxiS).
 \ee

 Suppose $\xi$ is a root of unity. Since $\SxiS$ is finitely generated as a $\BC$-algebra and finitely-generated as a module over its center, a form of the Artin-Tate lemma (see 13.9.10, \cite{MRS2001}) says that $\ZxiS$ is also finitely-generated as a $\BC$-algebra. Being a subring of the domain $\SxiS$, the algebra $\ZxiS$ is also a domain. It follows that $\Sp(\ZxiS)$ is an affine variety, denoted by $\YxiS$, and is called the {\em classical shadows variety} of $\SxiS$ in \cite{FKL2017}. As the center of a maximal order is integrally closed, we get the following consequence.
 
 \begin{theorem}\lbl{s.shadow}
 For every finite type surface $\fS$ and every root of unity $\xi$ the classical shadows variety $\YxiS$ is a normal affine variety.
 \end{theorem}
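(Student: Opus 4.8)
The plan is to obtain this as an essentially immediate corollary of Theorem~\ref{r.mainresult}. Recall that for an irreducible affine variety, normality is by definition the statement that its coordinate ring is a normal (i.e.\ integrally closed) domain. So it suffices to check two things: that $\YxiS$ is a genuine irreducible affine variety with coordinate ring $\ZxiS$, and that $\ZxiS$ is integrally closed.

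First I would establish that $\YxiS$ is an affine variety. When $\xi$ is a root of unity, $\SxiS$ is a finitely generated $\BC$-algebra (by \cite{Bu1997}) and, by Theorem~\ref{r.mainresult}(ii), is finitely generated as a module over its center $\ZxiS$. A form of the Artin--Tate lemma (see 13.9.10 of \cite{MRS2001}) then shows that $\ZxiS$ is itself a finitely generated $\BC$-algebra. Since $\ZxiS$ is a subring of $\SxiS$, which is a domain by Theorem~\ref{r.mainresult}(i), the ring $\ZxiS$ is an integral domain, and hence $\Sp(\ZxiS) =: \YxiS$ is an irreducible affine variety with coordinate ring $\ZxiS$.

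Next I would invoke integral closedness. By Theorem~\ref{r.mainresult}(iii), $\SxiS$ is a maximal order, and by the Proposition in Section~\ref{sec.maxorderintro} (the content of Proposition 5.1.10(b) of \cite{MRS2001}), the center of a domain that is finitely generated as a module over its center and is a maximal order is integrally closed. Applying this with $A=\SxiS$ yields that $\ZxiS$ is integrally closed in its field of fractions, i.e.\ $\YxiS$ is normal. I should also note that Theorem~\ref{r.mainresult} covers all finite type surfaces (the negative Euler characteristic case through the associated-graded argument, and the non-negative cases by the direct arguments indicated there), so no separate treatment of degenerate surfaces is required.

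I do not expect any genuine obstacle here: all the substance is packed into Theorem~\ref{r.mainresult} and the general fact that maximal orders have integrally closed centers. The only points demanding any care are verifying that the hypotheses of the Artin--Tate lemma are met in exactly our setting, and matching the purely ring-theoretic assertion ``$\ZxiS$ is integrally closed'' with the algebro-geometric assertion ``$\YxiS$ is a normal variety'' — both of which are routine.
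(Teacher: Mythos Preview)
Your proposal is correct and matches the paper's own argument essentially step for step: the paper likewise invokes the Artin--Tate lemma to get that $\ZxiS$ is a finitely generated $\BC$-algebra, observes it is a domain as a subring of $\SxiS$, and then concludes normality from the fact that the center of a maximal order is integrally closed.
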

 The classical shadows variety is closely related to the $SL_2(\BC)$-character variety $X(\fS):=X(\fS,SL_2\BC)$ of $\fS$ (see Section \ref{s.charvariety}). Assume that $\ord(\xi)\neq 0 \pmod 4$. Then there is a finite regular map 

 \be
 \YxiS \onto X(\fS)
 \lbl{eq.finite}
 \ee
and hence they have the same dimension, see \cite{FKL2017}. When $\xi=-1$ the skein algebra $\cS(\fS)$ is equal to its center and also equal to the coordinate ring of the $SL_2\BC$-character variety $X(\fS)$. Hence we have the following.
 \begin{corollary} For every finite type surface $\fS$, the $SL_2$-character variety $X(\fS)$ is a normal affine variety.
 \end{corollary}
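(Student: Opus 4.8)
The plan is to deduce the corollary from Theorem \ref{s.shadow} by specializing the root of unity to $\xi = -1$. Theorem \ref{s.shadow} asserts that for every root of unity $\xi$ the classical shadows variety ${\mathscr Y}_\xi(\fS) = \Sp(Z_\xi(\fS))$ is a normal affine variety, so it suffices to observe that at $\xi = -1$ this variety is literally the $SL_2\BC$-character variety $X(\fS)$; no work beyond an identification of coordinate rings is then required. First I would record why ${\mathscr Y}_{-1}(\fS) = X(\fS)$.

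At $\xi = -1$ the Kauffman bracket skein relation no longer sees crossings (Figure \ref{fig:figures/plusone}), so the union map gives a well-defined and manifestly commutative product on $\cS_{-1}(\fS)$; in particular $\cS_{-1}(\fS)$ is commutative and hence equals its own center, $\cS_{-1}(\fS) = Z(\cS_{-1}(\fS))$. By Bullock \cite{Bu1997} together with the triviality of the nilradical \cite{PS2000,CM2012} — equivalently Lemma \ref{r.finitedomain}, which already gives that $\cS_{-1}(\fS)$ is a domain — there is a natural isomorphism of $\BC$-algebras $\cS_{-1}(\fS) \cong \cR(\fS, SL_2\BC)$, the coordinate ring of $X(\fS)$ (here $\pi_1(\fS \times (-1,1)) \cong \pi_1(\fS)$). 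Consequently
\[
  {\mathscr Y}_{-1}(\fS) = \Sp\big(Z(\cS_{-1}(\fS))\big) = \Sp\big(\cS_{-1}(\fS)\big) = \Sp\big(\cR(\fS, SL_2\BC)\big) = X(\fS),
\]
and in particular $X(\fS)$ is an affine variety, its coordinate ring being a finitely generated $\BC$-domain by the Artin–Tate argument recorded before the statement of Theorem \ref{s.shadow}.

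With this identification in hand the corollary is immediate: Theorem \ref{s.shadow} applied with $\xi = -1$ says ${\mathscr Y}_{-1}(\fS) = X(\fS)$ is normal and affine. I do not expect any genuine obstacle in this step — all of the difficulty is packaged into Theorem \ref{s.shadow}, which itself rests on the maximal order result Theorem \ref{r.mainresult}. The one pitfall to avoid is trying to handle other roots of unity by pushing forward normality along the finite surjection ${\mathscr Y}_\xi(\fS) \onto X(\fS)$ of \eqref{eq.finite}: a finite image of a normal variety need not be normal, so that route fails, and the clean argument is precisely the $\xi = -1$ specialization above.
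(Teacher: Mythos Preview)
Your proposal is correct and follows exactly the paper's approach: specialize Theorem \ref{s.shadow} to $\xi=-1$, where $\cS_{-1}(\fS)$ is commutative and coincides (via Bullock together with the vanishing of the nilradical) with the coordinate ring of $X(\fS)$, so that $\mathscr{Y}_{-1}(\fS)=X(\fS)$. Your added caveat about not pushing normality along the finite map \eqref{eq.finite} is a sensible aside but not needed for the argument.
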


 \begin{remark} The fact that the character variety of a surface is normal was proved in \cite{Si94-1,Si94-2} by a difficult method. Here we recovered this result by using skein algebra theory.
 \end{remark}

\subsection{Refinement of unicity theorem}\lbl{s.refunicity}
Suppose $\xi$ is a root of unity. Let $N =\ord(\xi^4)$. Recall from Subsection \ref{sec.maxorderrep} that the degree $\deg(\SxiS)$ is the square of the dimension of the division algebra $\widetilde {\SxiS}$ over its center $\widetilde{\ZxiS}$, which is a field. The degree $\deg(\SxiS)$ was calculated in \cite{FKL2019}:
 \be 
 \deg(\SxiS) =  D(\fS, \xi):=\begin{cases} N^{3g-3+p}   \quad &\text{if } \ \ord(\xi) \neq 0 \pmod 4 \\
 2^g N^{3g-3+p}   &\text{if } \ \ord(\xi) =  0 \pmod 4.
 \end{cases}
 \ee
Since $\SxiS$ is a maximal order, by applying Theorem \ref{r.dCKP}, we get the following refinement of the unicity theorem of \cite{FKL2017}.
\begin{theorem}\lbl{r.refinedunicity}
Suppose $\fS$ is a finite type surface of negative Euler characteristic and $\xi$ is a root of unity.

(a) The variety of shadows $\YxiS$ parameterizes the $D(\fS, \xi)$-dimensional semisimple representations of $\SxiS$.

(b) The central character map $\chi: \Sp(\SxiS) \to \YxiS$ is surjective and each fiber consists of all those irreducible representations of $\Sp(\SxiS)$ which are irreducible components of the corresponding semisimple representation.  In particular, each irreducible representation of $\Sp(\SxiS)$ has dimension $\leq D(\fS, \xi)$. A point $a\in \YxiS$ is called generic if the dimension of the irreducible representation $\chi^{-1}(a)$ is $D(\fS, \xi)$.

(c) The set of generic points in $\YxiS$ is a non-empty Zariski open dense subset of $\YxiS$.

(d) Let $\cU \subset \YxiS$ be the open and dense subset consisting of all points $u$ such that $|\chi^{-1}(u)| = 1$. Let $x \in \YxiS \setminus \cU$ and write $\chi^{-1}(x) = \{r_1,\ldots,r_s\}$. Then since $\sum_{i=1}^s \dim(r_i) = D(\fS,\xi)^2$, in particular we have that $s \leq D(\fS,\xi)$.
\end{theorem}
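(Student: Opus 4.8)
The plan is to derive Theorem~\ref{r.refinedunicity} as a more or less immediate consequence of the de Concini--Kac--Procesi theorem (Theorem~\ref{r.dCKP}) applied to $A:=\SxiS$, once the structural inputs have been assembled and the degree identified. First I would verify the hypotheses of Theorem~\ref{r.dCKP} for $A$: it is a finitely generated $\BC$-algebra by \cite{Bu1997}, finitely generated as a module over its center $Z:=\ZxiS$ by \cite{FKL2017}, a domain by Lemma~\ref{r.finitedomain}, and --- the new contribution of this chapter --- a maximal order by Theorem~\ref{r.mainresult}. Next I would record that the degree of $A$ in the sense of Subsection~\ref{sec.maxorderrep} equals $d=\deg(\SxiS)=D(\fS,\xi)$, by the computation of \cite{FKL2019}, and that $\YxiS=\Sp(Z)$ is an irreducible affine variety, since $Z$ is a finitely generated $\BC$-domain (Artin--Tate, as recalled in Subsection~\ref{s.normal}).

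Given these identifications, parts (a), (b), (c) are simply Theorem~\ref{r.dCKP}(i), (ii), (iii) transcribed with $d=D(\fS,\xi)$, where the central character map of Theorem~\ref{r.dCKP} is the map $\chi\colon\Sp(A)\to\Sp(Z)$ of Subsection~\ref{sec.maxorderrep}. For (c) I would additionally note that the set $\Omega_A$ of Theorem~\ref{r.dCKP}(iii) is, by definition, precisely the set of generic points of $\YxiS$, and that any nonempty Zariski-open subset of the irreducible variety $\YxiS$ is dense.

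For (d), I would first observe that the singleton locus $\cU$ contains $\Omega_A$ --- over a generic point there is, by (b) and (c), a unique irreducible representation, of dimension $D(\fS,\xi)$ --- and is therefore dense. For the cardinality bound, take $x\in\YxiS\setminus\cU$; by (a) and (b) the set $\chi^{-1}(x)=\{r_1,\dots,r_s\}$ is exactly the set of distinct irreducible constituents of the $D(\fS,\xi)$-dimensional semisimple representation $V_x$ associated to $x$. Writing $V_x\cong\bigoplus_{i=1}^s r_i^{\oplus m_i}$ with every $m_i\ge 1$ and comparing dimensions gives $\sum_{i=1}^s m_i\dim(r_i)=D(\fS,\xi)$, whence $s\le D(\fS,\xi)$ since each term is at least $1$.

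The step I expect to carry the real weight is none of the above, but rather the hypothesis feeding into it: Theorem~\ref{r.dCKP} requires $\SxiS$ to be a maximal order, which is Theorem~\ref{r.mainresult}, the substantive result of this chapter. Within the present proof the only points needing genuine care are the identification $d=D(\fS,\xi)$ (importing \cite{FKL2019}) and making precise the meaning of ``generic'' and of the exceptional locus $\cU$ in part (d); everything else is formal.
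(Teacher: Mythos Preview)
Your proposal is correct and matches the paper's approach exactly: the paper states that ``since $\SxiS$ is a maximal order, by applying Theorem~\ref{r.dCKP}, we get the following refinement,'' and gives no further argument. You have simply (and carefully) filled in the verification of the hypotheses of Theorem~\ref{r.dCKP} and the identification $d=D(\fS,\xi)$ from \cite{FKL2019}, which is precisely what is intended.

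One minor remark on part~(d): your argument via the decomposition $V_x\cong\bigoplus_i r_i^{\oplus m_i}$ yielding $\sum_i m_i\dim(r_i)=D(\fS,\xi)$ is the correct way to obtain $s\le D(\fS,\xi)$, and is cleaner than the formula quoted in the statement (the equality $\sum_i\dim(r_i)=D(\fS,\xi)^2$ as written appears to be a misprint; the bound $s\le D(\fS,\xi)$ you derive is the intended conclusion). Note also that Theorem~\ref{r.mainresult} as proved in this chapter assumes at least one puncture, so strictly speaking the hypothesis of Theorem~\ref{r.refinedunicity} should include this; you may want to flag that when citing Theorem~\ref{r.mainresult}.
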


The unicity theorem of \cite{FKL2017} only says that $\chi$ is surjective, the dimension of every irreducible representation is $\leq D(\fS, \xi)$, and the set of generic points is Zariski dense in $\YxiS$.

\subsection{Topological quantum compiling}\lbl{s.tqc}

The author conjectures that the fact that skein algebras of surfaces are maximal orders has a potential real-world application in compiling for topological quantum computers. Topological quantum computers were first introduced by Freedman and Kitaev \cite{Fr1998,Ki2003}.

In \cite{KY2015}, Kliuchnikov and Yard give an asymptotically optimal algorithm for exact synthesis of quantum gates for topological quantum computers utilizing Fibonacci anyons, $SU(2)_k$ anyons, and others. In particular, they utilize the quantum $SU(2)$ Witten-Reshetikhin-Turaev Chern-Simons topological quantum field theory at level $k$. Their algorithm utilizes in a crucial manner the presence of maximal orders in the division algebra generated by the set of quantum gates. Translated into the quantum topology lexicon, their algorithm shows how to efficiently factorize elements of the quantum $SU(2)$ WRT Chern-Simons representations of the mapping class group of the thrice-punctured disk as a product of elements of a particular generating set. Here, the punctures correspond to anyons - while for a skein algebra, anyons correspond to Jones-Wenzl idempotents. As was shown in \cite{BW2016}, a corresponding representation may also be obtained for the Kauffman bracket skein algebra of the surface. Thus, we conjecture that under certain circumstances an analogous exact synthesis algorithm exists for skein algebras, though we note that the anyons associated to Jones-Wenzl idempotents for Kauffman bracket skein algebras are most closely associated with Temperley-Lieb-Jones anyons as described in \cite{Wa2010}. However, all of the aforementioned anyons share a close relationship.

Let us be more precise. Following the prescription laid out at the beginning of Section 3 of \cite{KY2015}, we believe the answer to the following question is within close reach. Let $\fS$ be the once-punctured torus and $\cS:=\cS(\fS)$ the Kauffman bracket skein algebra of $\fS$. Write $Z$ for the center of $\cS$ and $\tZ$ the field of fractions of $Z$. Then it is known that the division algebra $\cS \otimes_Z \tZ$ is isomorphic to the division algebra of a quantum torus on 3 generators called the quantum Teichm\"uller space of the surface, and that for $\xi=\pm i$ this quantum torus is a quaternion algebra over $\tZ$. Let $S$ be a set of prime ideals of $Z$, and let $\cS_S$ be the set of all elements of $\cS$ such that their reduced norm factors into elements of $S$. Then utilizing the results in \cite{KY2015} one may be able to determine a finite set $\mathcal{G} \subset \cS$ such that any element of $\cS_S$ can be written as $x_1 \cdots x_n$ for $x_k \in \mathcal{G}$ and find the most efficient such factorizations in terms of length. The only obvious difference between these contexts is that $\tZ$ is not a number field, but this can be rectified by working in a representation.

Potentially important elements of a way to generalize the above to other surfaces and roots of unity include the triangular decomposition of skein algebras \cite{Le2016}, the structure of the center of $\cS$ as investigated in \cite{BW2016-2,FKL2017}, as well as the more general notion of transparent and skew-transparent elements investigated in \cite{Le2015,LP2018}, the relationship between skein algebras at different roots of unity as revealed by the Chebyshev(-Frobenius) homomorphism \cite{BW2016-2,LP2018}, and the unicity theorem first proved in \cite{FKL2017} and refined in Theorem \ref{r.refinedunicity}. The real-world importance of the use of the entire mapping class group of arbitrary punctured surfaces with boundary as opposed to just punctured disks is pinpointed in \cite{BF2016}, which shows that topological quantum computation on a punctured surface with boundary may be equivalently performed on two parallel flat surfaces joined by bridges.

\begin{appendices}

\addtocontents{toc}{\protect\renewcommand{\protect\cftchappresnum}{\appendixname\space}}
\addtocontents{toc}{\protect\renewcommand{\protect\cftchapnumwidth}{6em}}

\chapter{Geometric proof that \texorpdfstring{$\cS$}{S} is a domain}\lbl{a.domain}
\no{\red{From 9/15/16}}

In this appendix we give a purely geometric proof that $\cS\SP$ contains no zero divisors for a marked surface $\SP$. Actually, we only prove the statement that essential $\cP$-arcs are not zero divisors, which is sufficient for our purposes and implies that $\cS\SP$ contains no zero divisors by \cite[Corollary 6.16]{Mu2012}. Our proof broadly follows the same strategy as Muller with the following exception. Muller's proof shows that a $\cP$-arc $a$ is not a zero divisor by first choosing an arbitrary ordering on the set single-component $\cP$-tangles to define a notion of an ``initial'' $\cP$-tangle of an element $x \in \cS$. Then, he uses this data to define a map $\gamma_a$ of the set of essential $\cP$-tangles induced by multiplication by $a$, which is shown to be injective.

We prove that a given $\cP$-arc $a$ is not a zero divisor by defining a similar injective map $\gamma_a$ on the set of all essential $\cP$-tangles, but based on a natural notion of initial $\cP$-tangle based on intersection number with $a$ and degree in $q$. While our method ends up involving greater technical difficulty than the method of Muller, we avoid making arbitrary choices and thus our argument reveals more of the geometric intuition behind why $\cS\SP$ is a domain.

Throughout this appendix we fix a marked surface $\SP$ and we write $\cS:=\cS\SP$.

\section{Injective map on essential \texorpdfstring{$\cP$}{P}-tangles}

In this section we construct a map $\inqa: \cT^e\SP \to \cT^e\SP$, where $\cT^e\SP$ is the free $\BZ[q^{\pm 1/2}]$-module of essential $\cP$-tangles considered up to $\cP$-isotopy.

\begin{proposition}\lbl{z.inj}
Let $a$ be an essential $\cP$-arc in $\SP$. Then the map

\[
T \mapsto \inqa(aT),
\]
defined below, is an injection from the set of essential $\cP$-tangles to itself.
\end{proposition}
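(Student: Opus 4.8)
The plan is to adapt Muller's argument that an essential $\cP$-arc is not a zero divisor, but to replace his auxiliary total order on $\cP$-knots by a canonical one: filter essential $\cP$-tangles first by geometric intersection index with $a$ and then, at a fixed intersection index, by degree in $q$, and let $\inqa$ record the top term of an element of $\cS$ with respect to this order. First I would fix representatives of $a$ and of the components of $T$ that are pairwise taut (Lemma \ref{l.FHS}), and realize $aT\in\cS$ by stacking $a$ above $T$. After performing the reordering moves near the marked points in $\cP$ — which, by \eqref{e.simul} and the reordering relation, only multiply by powers of $q$ — the element $aT$ is represented by a diagram on $\Sigma$ with exactly $n:=\mu(a,T)$ transverse double points, each a crossing of $a$ over a strand of $T$.

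Next I would resolve all $n$ crossings with the skein relation, producing a state sum $aT=\sum_{s}q^{\sigma(s)}D_s$ over the $2^n$ smoothing states $s$, where $D_s$ is the resulting $\cP$-tangle diagram; after applying the trivial loop and trivial arc relations each $D_s$ is either $0$ (if it contains a trivial $\cP$-arc) or a scalar in $\BZ[q^{\pm1}]$ times an essential $\cP$-tangle $E_s$. Among the states there is a distinguished one, $s^\#$, obtained by smoothing every crossing by the resolution under which $a$ fails to reconnect to itself — so that the strand of $T$ ``bounces off'' $a$ — and I would write $\gamma_a(T)$ for the essential $\cP$-tangle obtained by reducing $D_{s^\#}$. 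The core geometric claim is that, because $a$ and $T$ are taut (hence bigon-free), $D_{s^\#}$ already has no trivial component, so the contribution of $s^\#$ to $aT$ is $q^{N(a,T)}\gamma_a(T)$ for an explicit integer $N(a,T)$, and moreover this is the strictly largest term in the state sum: for every other state $s$ one has either $\mu(a,E_s)<\mu(a,\gamma_a(T))$, or $\mu(a,E_s)=\mu(a,\gamma_a(T))$ but the contribution of $D_s$ lies in strictly lower $q$-degree than $q^{N(a,T)}$. Thus no cancellation can destroy this term, and $\inqa(aT)=\gamma_a(T)$.

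Finally, to prove injectivity I would exhibit a left inverse for $\gamma_a$. Geometrically $\gamma_a(T)$ is ``$T$ pushed across $a$'', so the inverse should push back: I would check that $a$ and $\gamma_a(T)$ are again taut and that reversing the bounce-off construction at the resulting $\mu(a,\gamma_a(T))$ intersection points recovers $T$ up to $\cP$-isotopy, giving $\delta_a\circ\gamma_a=\mathrm{id}$ on essential $\cP$-tangles. Since $\gamma_a$ has a left inverse it is injective, and therefore $T\mapsto\inqa(aT)=\gamma_a(T)$ is an injection from the set of essential $\cP$-tangles to itself, as claimed; this is precisely what is needed to conclude (with \cite[Corollary 6.16]{Mu2012}) that $\cS\SP$ is a domain.

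The hard part will be the two assertions bundled into the core geometric claim: (i) that tautness of $a$ and $T$ forces the leading state $s^\#$ to produce a diagram free of trivial loops and trivial arcs — an adequacy-type statement phrased in terms of innermost disks — and (ii) the non-cancellation statement, which is exactly where Muller resorted to an arbitrary ordering and where one must instead control every competing state through the $(\mu(a,-),\,\deg_q)$ bookkeeping. Making the reflection operation $\gamma_a$ and its inverse $\delta_a$ precise, including stability of tautness under them, is routine but lengthy and will occupy the bulk of the appendix.
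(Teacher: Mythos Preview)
Your proposal is correct and follows essentially the same route as the paper: identify the all-positive smoothing $\gamma_a(T)$ as the unique leading state of $aT$, prove it has no trivial components via a bigon/innermost-disk argument from tautness, prove $\gamma_a(T)$ is again $a$-taut, and build an explicit left inverse by shifting strands back across $a$. The one substantive difference is the order of the two filtrations: you propose filtering first by $\mu(a,-)$ and then by $q$-degree, whereas the paper defines $\inqa$ by taking the top $q$-degree first and then the maximal $\mu(a,-)$ within that; both orderings isolate the same term, since the paper in fact proves that $\mu(a,\widehat R_{\sigma_+})=\mu(a,T)-1>\mu(a,\widehat R_\sigma)$ for every $\sigma\neq\sigma_\pm$, and that $\sigma_-$ (the only competitor at the same intersection level) sits at strictly lower $q$-degree. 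One small point to watch when you write out $\delta_a$: since $\mu(a,\gamma_a(T))=\mu(a,T)-1$, the inverse is not literally ``undo at the intersection points'' but must also peel off the outermost strand at each endpoint of $a$ before shifting back---the paper handles this by working inside a tubular (``banana'') neighborhood of $a$ and specifying which strand to detach at each marked endpoint.
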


By \cite[Lemma 4.1]{Mu2012}, essential $\cP$-tangles form an $R$-basis of $\cS$. Thus any element $x \in \cS$ can be uniquely expressed as
\be\lbl{e.inqa1}
x = \sum_{c \in \frac{1}{2}\BZ} q^c x_c,
\ee
where each $x_c$ is a finite $\BZ$-linear combination of essential $\cP$-tangles and all but finitely many $x_c$ are nonzero. In order to show that a given essential $\cP$-arc $a \in \cS$ is not a zero divisor we define a notion of ``highest degree term of $x$ relative to $q$ and $a$''. Define the {\em initial} formal $\BZ$-linear combination of essential $\cP$-tangles of $x$ relative to $q,a$, written $\inqa(x)$, as follows. \no{Let $x_n$ denote the $\BZ$-linear combination of essential $\cP$-tangles in~\eqref{e.inqa1} with the highest power of $q$ in front of it.} Let $n \in \frac{1}{2} \BZ$ be the highest power of $q$ appearing on the right hand side of~\eqref{e.inqa1}, and $x_n$ the corresponding $\BZ$-linear combination of essential $\cP$-tangles. Then we may write in a unique fashion a finite sum
\[
x_n = \sum_{T_j \in \supp(x_n)} c_j T_j, \ \ \ \ c_j \in \BZ.
\]
Let $M = \max_{T_j \in \supp(x_n)}\{\mu(T_j,a)\}$. Then we define
\[
\inqa(x) := \sum_{\substack{T_j \in \supp(x_n) \\ \mu(T_j,x)=M}} c_j T_j.
\]
As in \cite{Mu2012}, it will be shown that $\inqa(x)$ is equivalent to choosing the `positive smoothing' of every crossing in $a \cdot x$, where $a \cdot x$ denotes the superposition of $a$ above $x$. To be more precise, $a \cdot x$ denotes the element $\sum c_j (a \cup T_j)$ where $a$ has been $\cP$-isotoped to lie in $\Sigma \times (0,1)$ and $T_j$ has been $\cP$-isotoped to lie in $\Sigma \times (-1,0)$ such that their projection onto $\Sigma \times \{0\}$ intersects transversely. This will require several additional lemmas.

Let $a$ be an essential $\cP$-arc in $\SP$ and $T$ an essential $\cP$-tangle. We say that a given $\cP$-isotopy representative of $T$ is {\em $a$-taut} if $a$ and $T$ are transverse and have minimal geometric intersection index. Choose an $a$-taut $\cP$-isotopy representative of $T$. Denote by $a \cap T$ the set of crossings in the superposition $a \cdot T$ not including boundary intersections. For any function $\sigma: a \cap T \to \{\pm 1\}$, we denote by $R_\sigma \in \cT\SP$ the (not necessarily essential) $\cP$-tangle obtained by choosing a small disc neighborhood of each crossing and switching each crossing sent to $+1$ with a {\em positive smoothing} and each crossing sent to $-1$ with a {\em negative smoothing} as in this local picture.

\FIGc{figures/smoothing}{Positive smoothing on the left, negative smoothing on the right.}{2cm}

We also define $\|\sigma\| := \sum_{p \in a \cap T} \sigma(p) = |\sigma^{-1}(+1)|-|\sigma^{-1}(-1)|$. Then we may write
\[
aT = q^c \sum_{\sigma:a \cap T \to \{\pm 1\}} q^{\|\sigma\|}R_\sigma,
\]
with $c \in \frac{1}{2}\BZ$ the exponent given by~\eqref{e.simul}.

\begin{lemma}\lbl{loopinequalitylemma}
Choose a function $\sigma: a \cap T \to {\pm 1}$. Then for $R_\sigma$ we have the inequality
\be
\lbl{loopinequality}
2|\{\text{contractible $\cP$-knots}\}|+|\{\text{contractible $\cP$-arcs}\}| \leq |\sigma^{-1}(-1)|.
\ee
\end{lemma}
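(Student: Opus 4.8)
The plan is to put $T$ in $a$-taut position using Lemma~\ref{l.FHS}, so that $a$ and $T$ meet transversally in exactly $\mu(a,T)=:m$ points with no triple points; then for every $\sigma$ the resolution $R_\sigma$ is a genuinely embedded compact $1$-manifold in $\Sigma$ (a disjoint union of arcs with endpoints on $\cP$ and of loops). Around each crossing the strands of $a$ and of $T$ alternate, so every smoothing arc joins an $a$-end to a $T$-end; hence each component of $R_\sigma$ is an alternating concatenation of sub-arcs of $a$ and sub-arcs of $T$, the transitions occurring precisely at the smoothed crossings. Setting $\epsilon(C)=2$ if $C$ is a contractible $\cP$-knot, $\epsilon(C)=1$ if $C$ is a trivial $\cP$-arc, and $\epsilon(C)=0$ otherwise, the left-hand side of~\eqref{loopinequality} is $\sum_C\epsilon(C)$ and the claim is $\sum_C\epsilon(C)\le|\sigma^{-1}(-1)|=:n_-$. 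I will prove this by induction on $n_-$, peeling off innermost trivial components.

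For the base case $n_-=0$ I would show the all-positive resolution $R_{\sigma_+}$ is essential. Otherwise it has an innermost contractible component $c$ bounding a disk $D$ with $\mathrm{int}(D)\cap R_{\sigma_+}=\emptyset$ (when $c$ is a trivial $\cP$-arc, $\partial D$ consists of $c$ and a sub-arc of $\partial\Sigma$). Analysing $\partial D$ as a polygon whose edges are sub-arcs of $a$ and of $T$ joined at positively-smoothed crossings, one sees that its innermost-ness together with the fact that every corner was smoothed positively forces $\partial D$ to realize one of the configurations excluded by the hypotheses: a bigon between $a$ and $T$ (impossible since $a\cup T$ is taut), a self-bigon or contractible sub-loop of $a$ alone or of $T$ alone (impossible since $a$ and $T$ are embedded and essential), or a monogon or half-bigon (excluded likewise). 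Either a direct combinatorial argument or Gauss--Bonnet applied to $D$ (after realizing $a,T$ by geodesics) makes this precise; the upshot is also that in general a negative smoothing is exactly what a corner of such an innermost disk needs in order for the disk to close up, with the two marked-point corners of a trivial $\cP$-arc supplying what is needed there, so that a trivial $\cP$-arc requires only one negative smoothing while a trivial $\cP$-knot requires two.

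For the inductive step, given $n_-\ge 1$ take an innermost trivial component $c$ of $R_\sigma$ bounding a disk $D$; the base-case analysis shows that among the corners of $D$ there is at least one negatively-smoothed crossing, and at least two of them when $c$ is a $\cP$-knot. Switch one such crossing $p$ from negative to positive, producing $\sigma'$ with $n_--1$ negative smoothings. Changing a single smoothing is a surgery on the embedded $1$-manifold $R_\sigma$, so $R_{\sigma'}$ arises from $R_\sigma$ by merging two components at $p$ or by splitting one. The main obstacle is to control the resulting change in $\sum_C\epsilon(C)$: naively a split could replace a trivial knot by two trivial knots. To rule this out I would use that $\mathrm{int}(D)=\emptyset$ together with transversality of $a$ and $T$ --- a trivial $\cP$-knot has at least two corners and cannot have two of its corners at the same crossing (that would make a sub-arc of $a$, or of $T$, begin and end at one transverse intersection point), nor can it be an $(a,T)$-bigon --- which forces the surgery at $p$ to eliminate $c$ (by absorbing it into a component lying outside $D$, or via a split at most one of whose pieces is trivial) while raising $\sum_C\epsilon(C)$ by at most $1$. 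Hence $\sum_{R_\sigma}\epsilon\le\sum_{R_{\sigma'}}\epsilon+1\le(n_--1)+1$, completing the induction. I expect the serious work to be exactly this split/merge case analysis and the corner bookkeeping of the base case; everything else is formal.
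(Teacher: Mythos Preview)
Your inductive approach is genuinely different from the paper's. The paper does not induct on $n_-$; instead, for each contractible component $z$ bounding a disk $\Omega$, it takes a tubular neighborhood $\Xi$ of $a$, cuts $\Omega$ along $\partial\Xi$, and observes that the dual graph of the pieces is a tree (being a retract of the disk). A short case analysis on the leaves then shows: a leaf inside $\Xi$ contains a segment of $a\setminus T$ whose two endpoints must carry smoothings of opposite sign (otherwise the region would not be a leaf), while a leaf outside $\Xi$ would produce a bigon contradicting $a$-tautness, and at most one leaf can sit on a marked point. Hence each contractible knot sees at least two negative smoothings and each contractible arc at least one. No induction or surgery is needed.

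Your inductive step has a genuine gap. Suppose $c$ is an innermost contractible $\cP$-knot (so $\epsilon(c)=2$) and the negative crossing $p$ you choose has its \emph{other} smoothing arc lying on some essential component $c'$ outside $D$; nothing in your hypotheses prevents this. Switching $p$ to positive merges $c$ and $c'$ via a band lying just outside $D$ (it cannot lie inside, by innermostness). A band sum with a curve bounding a disk is isotopic to the other summand, so the merged curve $c''$ is isotopic to $c'$ and hence still essential, giving $\epsilon(c'')=0$. The net change is $\epsilon(c'')-\epsilon(c)-\epsilon(c')=-2$, so $\sum_{R_\sigma}\epsilon=\sum_{R_{\sigma'}}\epsilon+2$, violating the inequality $\sum_{R_\sigma}\epsilon\le\sum_{R_{\sigma'}}\epsilon+1$ you need. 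Your ``not an $(a,T)$-bigon'' remark and the innermost condition do not exclude this configuration. To repair the argument you would have to either locate a negative crossing on $c$ where \emph{both} smoothing arcs belong to $c$ (and then control the split), or switch two negatives at once for knots --- and justifying either essentially reproduces the per-component leaf analysis the paper carries out directly.
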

\begin{proof}
Our proof follows that of \cite[Lemma C.1]{Mu2012}. Let $\Xi$ be a closed tubular neighborhood of $a$ small enough that $T$ intersects $\Xi$ a minimal number of times and large enough to contain the chosen disc neighborhoods of each crossing in $a \cdot T$.

If $R_\sigma$ has no contractible components then~\eqref{loopinequality} is trivially true. Consider a contractible component $z$ in $R_\sigma$. Let $\Om \subseteq \Sigma$ be the closed disc in $\Sigma$ with $\partial \Om = z$. To help us count how many negative smoothings $z$ passes through, we construct a graph $G$. The vertices of $G$ are given by connected components of $\Om \setminus \partial \Xi$. Connect two vertices by an edge if the corresponding components of $\Om \setminus \Xi$ share a common boundary in $\Om \cap \partial \Xi$. Then $G$ is a retract of the disc $\Om$ and therefore is a tree.

Being a tree, $G$ must have at least two degree 1 vertices (leaves). We will show that in the case that $z$ is a contractible arc, at least one of these leaves will correspond to $z$ passing through a negative smoothing, and if $z$ is a contractible loop then all leaves will correspond to $z$ passing through a negative smoothing. Let $\Om_0$ be a connected component of $\Om \setminus \partial \Xi$ corresponding to a leaf. We consider the three possible cases:

\begin{itemize}
\item $\partial \Om_0$ contains a marked point. Then $z$ ends on this marked point and is therefore a contractible arc. It follows that $G$ has at most one leaf of this type.
\item $\partial \Om_0 \subset \Om \cap \Xi$ and $\partial \Om_0$ contains no marked point. $\partial \Om_0$ must contain a connected component of $a \setminus T$ with endpoints at crossings in $x \cap \alpha$. In $R_\sigma$ one of these crossings must be positive, and one must be negative. If both were positive or negative then $\Om_0$ could cross over $x$ and thus have two or more boundary components in $D \cap \partial \Xi$, a contradiction. Thus each leaf of this type implies a negative smoothing in $R_\sigma$.
\item $\partial \Om_0 \subset \Om \setminus \Xi$ and $\partial \Om_0$ contains no marked point. Then the segment of the boundary of $\partial \Om_0$ corresponding to $z$ can be deformed to the interior of $T$, reducing the crossing number of $\Xi$ and $T$. This contradicts our assumption and so there can not be any leaves of this type.
\end{itemize}

Thus a contractible $\cP$-arc $z$ has $G$ with one leaf of the first type and at least one leaf of the second type, implying that $z$ passes through at least one negative smoothing. A contractible $\cP$-knot $z$ has $G$ with all leaves of the second type, implying that $z$ passes through at least two negative smoothings.
\end{proof}

\begin{remark}
The proof of Lemma \ref{loopinequalitylemma} also works to show that
\be
\lbl{posloopinequality}
2|\{\text{contractible $\cP$-knots}\}|+|\{\text{contractible $\cP$-arcs}\}| \leq |\sigma^{-1}(+1)|.
\ee
\end{remark}

After smoothing, $R_\sigma$ may have contractible $\cP$-knots and $\cP$-arcs. Let $K_\sigma$ denote the number of contractible loops in $R_\sigma$. Write $\widehat{R}_\sigma$ for the $\cP$-tangle obtained by deleting contractible components. Let $\delta_\sigma = 0$ if $\widehat{R}_\sigma$ contains a contractible $\cP$-arc, and $\delta_\sigma = 1$ otherwise. Then via the skein relations, we have
\[
R_\sigma = (-q^2-q^{-2})^{K_\sigma}\delta_\sigma\widehat{R}_\sigma.
\]
By the binomial formula, we have
\[
aT = q^c \sum_{\sigma: a \cap T}(-1)^{K_\sigma} \delta_\sigma \widehat{R}_\sigma \sum_{i=0}^{K_\sigma} \binom{K_\sigma}{i} q^{\|\sigma\| + 2K_\sigma - 4i}
\]
Write $k=|a \cap T| = |\sigma^{-1}(\{\pm 1\})|$, so that $\|\sigma\| = k - \sigma^{-1}(-1)$. Then
\[
aT = q^{c + k} \sum_{\sigma: a \cap T}(-1)^{K_\sigma} \delta_\sigma \widehat{R}_\sigma \sum_{i=0}^{K_\sigma} \binom{K_\sigma}{i} q^{2(K_\sigma - |\sigma^{-1}(-1)| - 2i)}.
\]
Inequality~\eqref{loopinequality} implies the weaker inequality $K_\sigma - |\sigma^{-1}(-1)| \leq 0$ with equality only when $R_\sigma$ has an equal number of contractible loops and negative smoothings. Therefore the highest $q$-degree appearing in $aT$ is $c + k$, and the only $\widehat{R}_\sigma$'s appearing with this $q$-degree are those with an equal number of contractible $\cP$-knots and negative smoothings. Let $\sigma_\pm$ be the function sending every crossing to $\pm 1$. Then $\widehat{R}_{\sigma_+}$ has degree $q^{c + k}$ since~\eqref{loopinequality} implies $\widehat{R}_{\sigma_+}$ has no contractible loops. The $q$-degree of $\widehat{R}_{\sigma_-}$ is necessarily smaller than $c + k$ since $\widehat{R}_{\sigma_-}$ has no contractible loops by~\eqref{posloopinequality}.

Now we must show that $\mu(a,\widehat{R}_{\sigma_+}) > \mu(a,\widehat{R}_\sigma)$ for all $\sigma \neq \sigma_\pm$. The following lemma will be useful.

\begin{lemma}
Write $T \setminus a = \cup_i T_i$, and $a \setminus T = \cup_j a_j$ where the $T_i$ and $a_j$ are the connected components of $T \setminus a$ and $a \setminus T$, respectively. Then $T$ is $a$-taut if and only if no $T_i$ cobounds a disc with $a$.
\end{lemma}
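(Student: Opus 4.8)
The plan is to establish a version of the classical \emph{bigon criterion}. First I would pin down what ``cobounds a disc with $a$'' should mean: a component $T_i$ of $T\setminus a$ together with one of the subarcs $a_j$ of $a\setminus T$ bounds an embedded disc $D\subset\Sigma$ (a \emph{bigon}), where I also allow the degenerate case in which $T_i$ and $a_j$ share one endpoint at a marked point of $\cP$ and meet at a single interior crossing (a \emph{half-bigon}). Using Theorem~\ref{r.homoisiso} and Lemma~\ref{l.FHS} I may assume throughout that $a$ and $T$ are transverse in $\Sigma\setminus\cP$, so that $a\cap T$ is a finite set of interior crossings and the decomposition $T\setminus a=\bigcup_i T_i$, $a\setminus T=\bigcup_j a_j$ makes sense. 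The assertion to prove is then: $T$ is $a$-taut if and only if no bigon or half-bigon between $a$ and $T$ exists.

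For the easy direction I argue by contraposition. If a bigon (or half-bigon) exists, choose one, $D$, whose interior contains the fewest crossings of $a\cup T$; a routine argument shows such a minimal $D$ is innermost, i.e.\ $\mathrm{int}(D)$ is disjoint from $a\cup T$ --- otherwise an arc of $a$ or $T$ entering $\mathrm{int}(D)$ would cut off a strictly smaller bigon or half-bigon. Pushing $T_i$ across $D$ by an ambient $\cP$-isotopy supported in a neighbourhood of $D$ (legitimate since $\mathrm{int}(D)$ is clear and the isotopy fixes $\cP$ pointwise) removes the two crossings on $\partial D$ (one crossing, in the half-bigon case) and introduces none. Hence $|a\cap T|$ was not minimal, so $T$ was not $a$-taut.

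The hard direction, and the main obstacle, is the converse: if $T$ is not $a$-taut there must be a bigon. The plan is the universal-cover argument. Pass to the universal cover $p\colon\widetilde\Sigma\to\Sigma$, which is an open disc; the full preimages $p^{-1}(a)$ and $p^{-1}(T)$ are disjoint unions of properly embedded arcs, any two of which can be isotoped rel their endpoints on $\partial\widetilde\Sigma$ to meet at most once, so two lifts meeting at least twice bound an embedded bigon or half-bigon upstairs. The failure of tautness --- witnessed by a $\cP$-isotopy to some $T'$ with $|a\cap T'|<|a\cap T|$ --- forces some lift of $a$ and some lift of $T$ to meet at least twice, producing a bigon $\widetilde D$ in $\widetilde\Sigma$. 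The genuinely technical step is the descent: $p|_{\widetilde D}$ need not be injective, so I would run an innermost-disc argument on the arcs of $p^{-1}(a\cup T)$ meeting $\mathrm{int}(\widetilde D)$, together with a minimality choice of $\widetilde D$ over all lifts, to extract a sub-bigon whose interior is disjoint from $p^{-1}(a\cup T)$ and on which $p$ is injective; its image is the desired embedded bigon or half-bigon in $\Sigma$. I expect this descent, and the attendant care with half-bigons at marked points and with the finitely many non-quasitriangulable $\SP$ (handled directly, where the statement is trivial or vacuous), to be where essentially all the work lies; everything else is standard.
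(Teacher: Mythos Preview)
Your proposal is correct and follows the classical universal-cover proof of the bigon criterion (as in, e.g., Farb--Margalit). This is a genuinely different route from the paper's own argument. The paper argues instead via Reidemeister moves: since $a$ and $T$ are individually embedded, a generic $\cP$-isotopy from $T$ to an $a$-taut representative $T'$ can touch $a$ only through Reidemeister-II moves (RI and RIII would force a self-intersection in $a$ or in $T$), and an RII that decreases intersection number exhibits precisely a component $T_i$ cobounding a disc with some $a_j$. The easy direction is the same in both approaches: push across an (innermost) bigon.

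What each approach buys: the paper's Reidemeister argument is shorter and stays entirely on the surface, but it leans on the general-position statement that an isotopy of simple curves decomposes into RII moves, and one must still say why a bigon exists at the \emph{initial} stage rather than only at the moment of the first intersection-reducing RII. Your universal-cover argument is longer and has the descent step you flag as the main obstacle, but it is self-contained and handles the half-bigon case at marked points cleanly---a subtlety the paper's proof passes over. Either argument is adequate for the use made of the lemma downstream.
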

\begin{proof}
We say that {\em $T_i$ cobounds a disc with $a$} if $\overline{T_i \cup a_j}$ (the closure of $T_i \cup a_j$) bounds a disc for some $a_j$.

Assume that $T$ is not $a$-taut. Then $T$ is $\cP$-isotopic to some $T^\prime$ which is $a$-taut. $T$ and $T^\prime$ are then related by a series of Reidemeister moves. The first Reidemeister move is not allowed as it requires self-intersection, which is not permitted. The third Reidemeister move is also forbidden for the same reason. At least two of the strands must belong to either $a$ or $T$, and therefore $a$ or $T$ must be self-intersecting.

\FIGc{figures/R1andR3}{Forbidden Reidemeister moves.}{2cm}

Therefore the isotopy from $T$ to $T^\prime$ is given by a sequence of second Reidemeister moves, each one of which removes a disc cobounded by $a$ and $T$.

\FIGc{figures/R2}{Permitted Reidemeister move.}{2cm}

Thus we have shown that if $T$ is not $a$-taut then it has some $T_i$ that cobounds a disc with $a$, equivalently, that if no $T_i$ cobounds a disc with $a$ then $T$ is $a$-taut.

Similarly, assume that $T$ is $a$-taut and that $T$ cobounds a disc with $a$. Then applying the second Reidemeister move in a neighborhood of the cobounded disc reduces $\mu(T,a)$ by 2, which is a contradiction.
\end{proof}

We now define an injective map $\gamma_a$ of essential $\cP$-tangles $T$ that explicitly constructs the $\cP$-tangle obtained by replacing all crossings of $a \cdot T$ with positive smoothings. It will turn out that $\gamma_a(T) = \inqa(aT)$. We will construct this map directly in terms of local pictures of $\Sigma$ to make the inverse easy to construct. This is the same map given in Appendix C of \cite{Mu2012}.

Consider a tubular neighborhood $\Xi$ of $a$. For a given essential $\cP$-tangle $T$, choose an $a$-taut $\cP$-isotopy representative of $T$. We construct $\gamma_a(T)$ inside of $\Xi$. Cut each crossing in $a \cdot T$ and reconnect the strands by shifting to the right along $a$. Spare ends on either side are attached to the endpoints of $a$. $a$ may have distinct or identical endpoints; each case is illustrated in Figure \ref{fig:figures/gammax}

\FIGc{figures/gammax}{Explicit construction of $\gamma_a(T)$.}{6cm}

We construct an inverse to $\gamma_a$. Let $T$ be an $a$-taut essential $\cP$-tangle. Define an essential $\cP$-tangle $\nu_a(T)$ as follows.

\begin{itemize}
\item If $a$ has one end at a marked point $p$, and there are no strands of $T$ counterclockwise to $a$ at $p$, then $\nu_a(T)$ is the empty $\cP$-tangle $\emptyset$.
\item If $a$ has both ends at a marked point $p$, and there are fewer than two strands of $T$ counter-clockwise to $a$, then $\nu_a(T)$ is the empty $\cP$-tangle $\emptyset$.
\item Otherwise, construct $\nu_a(T)$ as follows. Cut $T$ along $a$, and at each end of $a$, disconnect the first strand of $T$ counter-clockwise to $a$. Reconnect these ends by shifting to the left along $a$.
\end{itemize}

Then by construction $\nu_a(\gamma_a(T))=T$, and so $\gamma_a$ is injective.

\begin{lemma}\lbl{positivetaut}
Let $a$ be an essential $\cP$-arc and $T$ be an $a$-taut essential $\cP$-tangle. Then $T_+ = \gamma_a(T)$ is also $a$-taut.
\end{lemma}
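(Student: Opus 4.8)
\textbf{Proof plan for Lemma \ref{positivetaut}.}

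The statement to prove is that if $a$ is an essential $\cP$-arc and $T$ is an $a$-taut essential $\cP$-tangle, then $T_+ = \gamma_a(T)$ is also $a$-taut. The plan is to use the characterization of $a$-tautness established just above: a $\cP$-tangle $T'$ is $a$-taut if and only if no connected component of $T' \setminus a$ cobounds a disk with $a$ (where ``cobounds a disk with $a$'' means that the closure of the union of such a component with some component of $a \setminus T'$ bounds a disk in $\Sigma$). So I would argue by contradiction: suppose $\gamma_a(T)$ is not $a$-taut, hence there is a component $z$ of $\gamma_a(T)\setminus a$ together with a component $a_j$ of $a \setminus \gamma_a(T)$ such that $\overline{z \cup a_j}$ bounds a disk $\Om$ in $\Sigma$.

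First I would set up the local picture: work inside the tubular neighborhood $\Xi$ of $a$ used to define $\gamma_a$, and recall that $\gamma_a(T)$ is obtained from the $a$-taut representative of $T$ by cutting each crossing of $a \cdot T$ and reconnecting by shifting to the right along $a$ (with spare ends attached to the endpoints of $a$), as in Figure \ref{fig:figures/gammax}. The key structural observation is that the arcs of $\gamma_a(T)$ running parallel to $a$ inside $\Xi$ are in bijection with (and recorded by) the intersection points $a \cap T$, so the components of $\gamma_a(T) \setminus a$ outside $\Xi$ are exactly the components $T_i$ of $T \setminus a$, merely reconnected in a shifted pattern inside $\Xi$. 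Then I would analyze the hypothetical disk $\Om$: its boundary alternates between a subarc of $a$ and a subarc of $\gamma_a(T)$. Since $T$ was $a$-taut, $T$ has no component cobounding a disk with $a$; I want to show that the ``right-shift'' reconnection cannot create such a bigon-type disk. The mechanism is that a bigon for $\gamma_a(T)$ and $a$ would, after undoing the right shift (applying $\nu_a$), produce either a bigon for $T$ and $a$ — contradicting $a$-tautness of $T$ — or a configuration forcing a component of $T$ to be inessential or $a$ itself to be inessential, both excluded by hypothesis. I would make this precise by pushing $\Om$ off $\Xi$ and tracking how its boundary intersects $\partial \Xi$, much as in the graph/tree argument of Lemma \ref{loopinequalitylemma}, but here the relevant graph is simpler because we are looking at a single disk bounded by one arc of $a$ and one arc of $\gamma_a(T)$.

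The main obstacle I anticipate is the careful bookkeeping of the two boundary cases of $\gamma_a$ — namely when $a$ has both endpoints at the same marked point versus distinct marked points (the two panels of Figure \ref{fig:figures/gammax}) — since the spare ends of the shifted strands get attached to $\partial a$, and one must check that this attachment does not accidentally create a trivial arc bounding a disk in $\Sigma$ that was not present before. For this I would invoke that $a$ itself is essential (so $a$ does not bound a disk) and that $T$ is essential (so no $T_i$ bounds a disk together with a subarc of $\partial\Sigma$ through a marked point). Once the case analysis confirms that every component of $\gamma_a(T) \setminus a$ fails to cobound a disk with $a$, the characterization lemma immediately gives that $\gamma_a(T)$ is $a$-taut, completing the proof. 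I would keep the argument geometric and diagrammatic, referencing Figures \ref{fig:figures/gammax}, \ref{fig:figures/R1andR3}, and \ref{fig:figures/R2} rather than grinding through coordinates.
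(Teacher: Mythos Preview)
Your plan is correct and follows essentially the same route as the paper: argue by contradiction using the ``cobounds a disk with $a$'' characterization of tautness, pick an innermost offending component of $\gamma_a(T)\setminus a$, and trace its pieces outside the tubular neighborhood back to components of $T\setminus a$ to produce a bigon for $T$, contradicting $a$-tautness of $T$. The paper's only additional device is a specific choice of tubular neighborhood it calls a \emph{banana neighborhood} (bounded by two $\cP$-arcs $a_\pm$ isotopic to $a$, chosen taut with $T$), which cleanly separates the ``inside $\Xi$'' reconnection from the ``outside $\Xi$'' pieces of $T$ and makes the case analysis (endpoint on a marked point; endpoints adjacent; both ends on the same side of $a$) go through without worrying about behavior near the marked endpoints of $a$; you will likely want something equivalent when you make your plan precise.
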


\begin{proof}
Let $T$ be $a$-taut and write $T \setminus a = \cup_i T_i$ be the connected components of $T \setminus a$. Let $\Xi$ be a {\em banana neighborhood} of $a$ with respect to $T$. A banana neighborhood is defined as follows. Let $a_-,a_+$ be two $\cP$-arcs $\cP$-isotopic to $a$, one on either side of $a$, intersecting only at the endpoint(s) of $a$. We choose $a_-$ and $a_+$ so that $T$ is $a_-$-taut and $a_+$-taut. Thus to minimize intersection with $T$ we must choose $a_-,a_+$ so that the intersection at the endpoint(s) has $a$ sandwiched between $a_-$ and $a_+$ with no strands of $T$ between them, again by the Reidemeister move argument given above. Then $\Xi$ is defined to be the disc bounded between $a_-$ and $a_+$. One advantage of banana neighborhoods is that we can ignore the behavior of $T$ near the marked points of $a$.

\FIGc{figures/banananbhd1}{Banana neighborhood of $a$, when $a$ has two or one distinct endpoints. $\Xi$ is presented in checkerboard pattern. A solid line denotes the boundary of $\Sigma$, and a dotted line denotes the boundary of the neighborhood depicted.}{3cm}
Denote by $\Xi_-$ the subset of $\Xi$ contained between $a_-$ and $a$, similarly denote $\Xi_+$ for the subset of $\Xi$ contained between $a_+$ and $a$. Let $T_i$ be a component of $T \setminus a$, and represent it by an embedding $T_i: (0,1) \to \Sigma$. For a neighborhood $D$, we say that $T_i$ {\em ends in} $D$ if there exists some $\ve > 0$ such that $T_i((0,\ve)) \subseteq D$ or $T_i((1-\ve,1)) \subseteq D$. We also say that $T_i$ {\em ends at} a crossing $c \in a \cap T$ or marked point $p \in \cP$ if $c \in \overline{T_i}$.

Let $\overset\circ{T}$ be $T$ minus the endpoints. Then $T$ induces a map of sets:
\[
w_{\pm}: \{\overset\circ{T} \cap a \} \stackrel{\cong}{\to} \{\overset\circ{T} \cap a_{\pm}\},
\]
given by sending each $p \in \overset\circ{T} \cap a$ to the unique crossing in $\overset\circ{T} \cap a_{\pm}$ connected to $p$ by a component of $T \setminus \{a,a_-,a_+\}$. Let $T_+ = \gamma_a(T)$ be the $\cP$-link obtained by replacing every crossing in $a \cdot T$ with a positive smoothing. %

We claim that $T_+$ is $a$-taut. Write $T_+ \setminus a = \cup_i T_{+,i}$ where the $T_{+,i}$ are the connected components of $T_+ \setminus a$. Suppose a component $T_{+,i}$ cobounds a disc $\Om$ with $a$. $\Om$ may contain other components $T_{+,j}$ which also cobound a disc, pick an $T_{+,j}$ which cobounds a disc that does not contain any other components of $T_+ \setminus a$. Call this component $\beta$ and the disc it bounds $\Om_0$.

Assume that $\beta$ ends on a marked point $p$ (which is necessarily a marked point that $a$ ends on). We can assume without loss of generality that the other end of $\beta$ is in $\Xi_+$. We call $q$ the point at which $\beta$ intersects $a_+$ other than the marked point. Let $\beta'$ be the segment of $\beta \setminus a_+$ with endpoints $p$ and $q$. Let $T_i$ be the segment of $T \setminus \{a \cup a_{\pm}\}$ with endpoints $q$ and $w_+^{-1}(q)$. Then because $T_+$ was constructed entirely within $\Xi$ and $\beta' \cap \Xi = \emptyset$ by the properties of banana neighborhoods, we have that $\beta' \subset T$ and so $\overline{\beta'} \cup \overline{T_i}$ is a segment of $T \setminus a$ which cobounds a disc with $a$. This is a contradiction, therefore $\beta$ cannot end on a point marked.

We claim that the endpoints $p_1,p_2$ of $\beta$ are adjacent crossings in $a \cap T_+$. Assume otherwise. Then there is a component $T_{+,i}$ that ends in $\Om_0$ and ends at a crossing between $p_1$ and $p_2$. By assumption, $T_{+,i}$ does not cobound a disc inside of $\Om_0$, and therefore also ends at a crossing or marked point outside of $\Om_0$. Thus $T_{+,i}$ must intersect $\beta$ at some point, which is forbidden. This is a contradiction, therefore $p_1$ and $p_2$ are adjacent crossings.

We claim that both ends of $\beta$ are either in $\Xi_-$ or in $\Xi_+$. Assume $\beta$ ends in both $\Xi_-$ and $\Xi_+$. Then $a \setminus \overline{\beta}$ has 3 segments $a_1,a_2,a_3$, with the middle segment $a_2$ being the one between $c_1$ and $c_2$. Since $a$ is transverse to $T$, one of the two remaining segments must be contained in $\Om_0$ and end on the boundary of $\Om_0$, which is impossible since $\partial \Om_0 = \overline{\beta} \cup \overline{a_2}$ and $\beta$ can be chosen to avoid marked points.

Without loss of generality, we can say that both ends of $\beta$ are in $\Xi_+$. Then $a_+ \setminus \beta$ has three components $\beta_1,\beta_2,\beta_3$, with the middle segment $\beta_2$ satisfying $\Xi_+ \cap \beta_2 = \emptyset$. Then since $\beta_2 \cap \Xi=\emptyset$, $\beta_2 \subset T$. Let $\{p_1,p_2\}\subset a_+$ be the endpoints of $\beta_2$, and let $T_1,T_2$ be the connected components of $T \setminus \{a\cup a_{\pm}\}$ whose endpoints are $p_1,w_+(p_1)$ and $p_2,w_+(p_2)$. Then $\overline{T_1 \cup T_2 \cup \beta_2} \subset T$ cobounds a disc with $a$, and therefore $T$ is not $a$-taut, a contradiction.
\end{proof}

\begin{lemma}
$\mu(a,\widehat{R}_{\sigma_+}) > \mu(a,\widehat{R}_\sigma)$ for all $\sigma \neq \sigma_\pm$. Therefore $\inqa(aT) = \widehat{R}_{\sigma_+} = \gamma_a(T)$.
\end{lemma}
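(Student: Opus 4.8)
The plan is to prove the stated lemma and thereby finish Proposition~\ref{z.inj}: I want to show that the ``top term'' $\inqa(aT)$ — first maximal in $q$-degree, then maximal in geometric intersection index with $a$ — is exactly the $\cP$-tangle $\gamma_a(T)$ obtained by positively smoothing every crossing of the superposition $a\cdot T$. Granting the companion fact (already constructed above) that $\gamma_a$ has the explicit inverse $\nu_a$ on essential $\cP$-tangles, this makes $T\mapsto\inqa(aT)$ injective, so the essential $\cP$-arc $a$ is not a zero divisor, and by \cite[Corollary~6.16]{Mu2012} the algebra $\cS\SP$ has no zero divisors.

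First I would identify $\mu(a,\widehat R_{\sigma_+})$. Fix an $a$-taut representative of $T$ and set $k=|a\cap T|$. Applying inequality~\eqref{loopinequality} to $\sigma_+$ forces $R_{\sigma_+}$ to have no contractible components at all; hence $\widehat R_{\sigma_+}=R_{\sigma_+}=\gamma_a(T)$ and $\delta_{\sigma_+}=1$. By Lemma~\ref{positivetaut}, $\gamma_a(T)$ is $a$-taut, so $\mu(a,\widehat R_{\sigma_+})$ is realized by the drawn copy of $\gamma_a(T)$ produced by the construction of Figure~\ref{fig:figures/gammax}; a direct inspection of that construction inside a banana neighborhood of $a$ shows this number is $k$.

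Next comes the main step, for which I would again work inside a banana neighborhood $\Xi$ of $a$ with respect to $T$, exactly as in the proof of Lemma~\ref{positivetaut}. The claim to prove is: for every $\sigma$ with $\sigma^{-1}(-1)\neq\emptyset$ one has $\mu(a,\widehat R_\sigma)<k$. At each crossing $p_i$ with $\sigma(p_i)=-1$, the negative smoothing joins a half-arc of $T$ to an adjacent segment of the pushed-off copy of $a$ in the ``turn-back'' pattern; I would argue this produces inside $\Xi$ an innermost disc cobounded by a sub-arc of $R_\sigma$ and a sub-arc of $a$ meeting $R_\sigma$ in nothing else, whose removal by an isotopy strictly drops $|a\cap R_\sigma|$. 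Performing this at every negatively smoothed crossing (choosing the discs disjoint, and checking no new intersections are created), and then deleting the contractible components of $R_\sigma$ (which can only further decrease the intersection with $a$), produces a representative of $\widehat R_\sigma$ meeting $a$ in fewer than $k$ points. This bigon/innermost-disc bookkeeping — in particular ensuring the count genuinely falls below $k$ rather than merely failing to increase, and that the several reductions do not interfere — is the step I expect to be the real obstacle; it is the surface-combinatorial core of the argument and parallels the analysis in Muller's Appendix~C.

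Finally the ``therefore'' is a bookkeeping computation on the expansion derived above,
\[
aT = q^{c+k}\sum_{\sigma}(-1)^{K_\sigma}\delta_\sigma\widehat R_\sigma\sum_{i=0}^{K_\sigma}\binom{K_\sigma}{i}q^{2(K_\sigma-|\sigma^{-1}(-1)|-2i)}.
\]
The maximal $q$-power $c+k$ is attained only by terms with $i=0$ and $K_\sigma=|\sigma^{-1}(-1)|$; the all-negative $\sigma_-$ is excluded from these by~\eqref{posloopinequality}, since its $q$-degree is strictly smaller; and among the remaining $\sigma$ the strict inequality of the previous paragraph shows $\sigma_+$ uniquely maximizes $\mu(\cdot,a)$, contributing with coefficient $(-1)^{0}\delta_{\sigma_+}\binom{0}{0}=1\neq 0$ so that no cancellation is possible. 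Hence $\inqa(aT)=\widehat R_{\sigma_+}=\gamma_a(T)$, which both proves the lemma and, combined with the injectivity of $\gamma_a$ via $\nu_a$, completes the proof of Proposition~\ref{z.inj}.
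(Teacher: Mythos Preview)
There is a concrete miscount at the very first step. You assert that inspection of Figure~\ref{fig:figures/gammax} gives $\mu(a,\widehat R_{\sigma_+})=k$. In fact the paper computes $\mu(a,\gamma_a(T))=k-1$: the strands of $\gamma_a(T)$ that end on the marked points of $a$ contribute no intersections with a fresh parallel copy of $a$ (banana neighborhoods are chosen precisely so that they miss these), and only the $k-1$ ``shift'' strands joining consecutive former crossings each meet $a$ once. You can check this directly in the case $k=1$: a single positive smoothing produces two arcs running to the endpoints of $a$, with $\mu(a,\gamma_a(T))=0$, not $1$.

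Because of this, your target inequality $\mu(a,\widehat R_\sigma)<k$ for $\sigma$ with at least one negative smoothing is too weak to separate $\sigma_+$ from the rest; you actually need $\mu(a,\widehat R_\sigma)\le k-2$. Your mechanism for the reduction is also not the right one: a single negative smoothing does not by itself create a bigon with $a$. The paper's argument instead exploits the simultaneous presence of one positive and one negative smoothing (available precisely when $\sigma\neq\sigma_\pm$): after smoothing at just those two crossings, one strand can be pulled entirely off $a$ inside the banana neighborhood, leaving only $k-2$ crossings with $a$; the remaining smoothings cannot increase this number, so $\mu(a,\widehat R_\sigma)\le k-2<k-1$. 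Your bookkeeping for the ``therefore'' clause is fine once this corrected comparison is in place.
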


\begin{proof}
First we claim that $\mu(a,\gamma_a(T)) = k-1$, where $k:=\mu(a,T)$. We prove this using local pictures, which will be sufficient as Lemma \ref{positivetaut} implies that $\gamma_a(T)$ is $a$-taut since $T$ is $a$-taut. Choose a banana neighborhood $B$ of $a$ with respect to $\gamma_a(T)$. There are two cases to consider, case (1) being where $a$ has different endpoints and case (2) being where both endpoints of $a$ are the same.

We number the crossings in $a \cdot T$ as $1,2,\ldots,k$ going from top to bottom for case (1) and going counterclockwise from the marked point for case (2). We consider the superposition $a \cdot \gamma_a(T)$ to compute the crossing number $\mu(a,\gamma_a(T))$. At crossings $1$ and $k$, we know that $a$ does not intersect $\gamma_a(T)$ since banana neighborhoods avoid any components of $\gamma_a(T) \setminus a$ ending on the marked point(s) of $a$.

For each of the $k-1$ pairs of adjacent crossings, the local picture is as shown in Figure \ref{fig:figures/crossing} for both cases.

\FIGc{figures/crossing}{$a$ intersects $\gamma_a(T)$ between each pair of adjacent crossings.}{3cm}

Thus $\mu(a,\gamma_a(T)) = k-1$. Next we show that $\gamma_a(T)$ has the highest crossing number with $a$ of all $\widehat{R}_\sigma$ other than $\sigma_-$. Let $\sigma \neq \sigma_\pm$. We claim that $\mu(a,\widehat{R}_{\sigma}) \leq k-2$. Due to $\cP$-isotopy invariance, we can build $\widehat{R}_{\sigma}$ from the picture $a \cdot T$ by repeatedly smoothing a crossing and then applying a $\cP$-isotopy within $B$ to guarantee minimal crossing number until we have obtained a picture as prescribed by $\sigma$. Number the crossings in $a \cdot T$ as $1,2,\ldots,k$ as above. Choose $i,j$ such that $\sigma(i) = +1$ and $\sigma(j) = -1$. After smoothing at crossings $i$ and $j$, we perform a $\cP$-isotopy to pull away a strand. Which direction we pull the strand is in determined by whether $i>j$ or $i<j$. Case (1) is depicted in Figure \ref{fig:figures/pullaway1} and case (2) is depicted in Figure \ref{fig:figures/pullaway2}.

\FIGc{figures/pullaway1}{$i>j$ on the left and $i<j$ on the right.}{3cm}
\FIGc{figures/pullaway2}{$i>j$ on the left and $i<j$ on the right.}{3cm}

Now the local picture of $a \cdot T$ after applying the above process has $a$ intersecting $k-2$ strands of $T$, and therefore $\mu(a,\widehat{R}_{\sigma}) \leq k-2$.
\end{proof}

\begin{remark}
This completes the proof of Proposition \ref{z.inj}.
\end{remark}

\section{\texorpdfstring{$\cS$}{S} is a domain}

For this section we set the base ring $R$ to be a commutative domain (not necessarily Noetherian) with distinguished invertible element $q^{1/2}$.

\begin{lemma}\lbl{r.arczero}
Let $a$ be an essential $\cP$-arc. Then $a$ is not a zero divisor in $\cS$.
\end{lemma}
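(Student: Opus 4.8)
The plan is to deduce Lemma~\ref{r.arczero} from Proposition~\ref{z.inj} together with the fact (Lemma~\ref{r.markedbasis}) that the essential $\cP$-tangles $B_\SP$ form an $R$-basis of $\cS$. Suppose $a$ is an essential $\cP$-arc and $x \in \cS$ is nonzero with $ax = 0$; I want a contradiction. The point is to track the ``leading part'' of $ax$ with respect to the two gradings implicit in the construction preceding the lemma: the $q$-degree and, within the top $q$-degree, the geometric intersection index with $a$. This is exactly the data packaged into the map $\inqa$.

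First I would set up the reduction carefully. Write $x = \sum_j c_j T_j$ with $c_j \in R \setminus \{0\}$ and $T_j \in B_\SP$ distinct; after isotopy assume every $T_j$ is $a$-taut (this is permissible by Lemma~\ref{l.FHS}, applied to $\{a\} \cup \{T_j\}$ simultaneously, or just to each $T_j$ individually since $a$ is fixed). The analysis in the section above shows that $a T_j = q^{c+k_j}\bigl(\gamma_a(T_j) + (\text{lower } q\text{-degree, or same } q\text{-degree but strictly smaller } \mu(a,-))\bigr)$, where $k_j = \mu(a,T_j)$ and $\gamma_a(T_j) = \inqa(aT_j)$ is an honest essential $\cP$-tangle. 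Collect the terms of $ax = \sum_j c_j a T_j$ according to $q$-degree: let $n$ be the maximal $q$-exponent occurring, and among the $T_j$ contributing at $q$-degree $n$ let $M$ be the maximum of $\mu(a, \widehat R_{\sigma_+}) = \mu(a,\gamma_a(T_j))$ — but here I must be slightly careful because the $q$-degree $c + k_j$ depends on $j$. The cleaner route: first restrict to those indices $j$ with $c + k_j$ maximal (call this value $n$), then among those, the coefficient of $q^n$ in $ax$ is $\sum_{j \in J} c_j \gamma_a(T_j) + (\text{terms supported on essential tangles of smaller }\mu(a,-))$, where $J$ indexes the maximal ones.

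The crux is then: $\gamma_a$ is injective on essential $\cP$-tangles (Proposition~\ref{z.inj}), and moreover $\mu(a,\gamma_a(T_j)) = k_j - 1$ is constant on $J$ (all $k_j$ equal since $c + k_j = n$ is fixed there). So among the $T_j$, $j \in J$, the tangles $\gamma_a(T_j)$ are pairwise distinct elements of $B_\SP$, all with the same intersection index $k_j - 1$ against $a$, while the ``error'' terms from each $aT_j$ ($j \in J$) at $q$-degree $n$ are $R$-combinations of essential tangles $\widehat R_\sigma$ with $\mu(a,\widehat R_\sigma) \le k_j - 2 < k_j - 1$. Hence in the unique basis expansion of the coefficient of $q^n$ in $ax$, the basis elements $\gamma_a(T_j)$, $j \in J$, appear with coefficient exactly $c_j \neq 0$ (no cancellation, since the error terms cannot reach intersection index $k_j - 1$, and the $\gamma_a(T_j)$ are distinct). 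Therefore the coefficient of $q^n$ in $ax$ is nonzero, so $ax \neq 0$ — contradiction. The main obstacle, and the place I would spend the most care, is bookkeeping the interaction between the two filtrations: making sure that mixing tangles $T_j$ of different intersection index $k_j$ does not allow a high-$q$-degree, low-$\mu$ term from one summand to cancel the leading term $\gamma_a(T_{j'})$ of another. Restricting first to the maximal $q$-exponent $n$ (which forces all relevant $k_j$ equal) is what resolves this; I should state that step explicitly rather than gesturing at ``the initial term of $ax$''. Once $a$ is shown not to be a left zero divisor, the reflection anti-involution $\heta$ (Subsection~\ref{sec.involution}), which fixes $a$ and reverses products, gives that $a$ is not a right zero divisor either, and then $\cS$ being a domain follows from \cite[Corollary 6.16]{Mu2012} as noted in the appendix's introduction.
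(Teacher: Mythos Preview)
Your approach is the paper's approach: isolate the leading piece of $ax$ under the double filtration by $q$-degree and by $\mu(a,-)$, and invoke injectivity of $\gamma_a$. The reflection argument for right zero-divisors is also the same. But there is a genuine gap at exactly the spot you flag as ``the main obstacle''.

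You assert that restricting to the indices $j\in J$ with maximal $q$-exponent $n$ ``forces all relevant $k_j$ equal''. This is false in general. The top $q$-degree of $c_j\cdot aT_j$ is $\deg_q(c_j)+c(T_j)+k_j$, where $c(T_j)$ is the boundary normalization exponent from \eqref{e.simul} (which depends on how many strands of $T_j$ end at the endpoints of $a$) and $\deg_q(c_j)$ is the top $q$-power in the coefficient $c_j\in R$. Fixing this sum equal to $n$ does not determine $k_j$. So for $j,j'\in J$ with $k_j>k_{j'}$, an error term $\widehat R_\sigma$ from $aT_j$ (which only satisfies $\mu(a,\widehat R_\sigma)\le k_j-2$) could perfectly well coincide with $\gamma_a(T_{j'})$ (which has $\mu=k_{j'}-1\le k_j-2$), and your ``no cancellation'' claim breaks down for such $j'$.

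The repair is exactly the second layer of filtering already built into $\inqa$: inside $J$, pass to $J'=\{j\in J:\ k_j=K\}$ where $K=\max_{j\in J}k_j$. For $j\in J'$ the tangles $\gamma_a(T_j)$ all have $\mu(a,-)=K-1$; every other essential tangle appearing at $q$-degree $n$ (error terms from any $aT_j$ with $j\in J$, or $\gamma_a(T_{j'})$ with $j'\in J\setminus J'$) has $\mu(a,-)\le K-2$. Now injectivity of $\gamma_a$ on $\{T_j:j\in J'\}$ and the basis property give $\inqa(ax)=\sum_{j\in J'}(\text{leading }\BZ\text{-coefficient of }c_j)\,\gamma_a(T_j)\neq 0$, the desired contradiction. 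This is what the paper does, phrasing it as $\inqa(ax)\neq 0$.
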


\begin{proof}
Consider $x \in \cS$ such that $ax=0$. Extend $\inqa$ to an $R$-linear map of $\cS$. We may write $x$ uniquely as a finite sum
\[
x = \sum_{T_j \in \supp(x)} \lambda_j T_j
\]
with $0 \neq \lambda_j \in R$ and $T_j$ essential $\cP$-tangles. Then
\[
0 = ax = \sum_{T_j \in \supp(x)} \lambda_j aT_j = \sum_{T_j \in \supp(x)} \lambda_j (q^{n_j}\gamma_a(T_j) + \text{lower order terms in }q)
\]

Let $M = \max_{T_j \in \supp(x)}\{\deg_q(\lambda_j)+n_j\}$, where $\deg_q(\lambda_j)$ is the exponent of the highest degree $q$ factor in $\lambda_j$, and $N = \max_{T_j \in \supp(x)}\{\mu(a,T_j)\}$. Then because $\inqa$ drops all terms not of maximal $q$-degree and $a$-crossing number, we may ignore such terms in the summation and have that
\[
0 = \inqa(az) = \sum_{\substack{T_j \in \supp(x) \\ \deg_q(\lambda_j+n_j)=M \\ \mu(a,T_j)=N}} \inqa(\lambda_j) \gamma_a(T_j)
\]
Then because $\gamma_a$ is an injection and essential $\cP$-tangles form an $R$-basis, the elements $[\gamma_a(T_j)]$ are linearly independent over $R$. Since $\inqa(\lambda_j)$ must be nonzero, we must have that $\supp(x)$ is empty, and therefore $x=0$. Thus $a$ is not a left zero divisor. In addition, since $a$ is reflection invariant, $a$ is also not a right zero divisor.
\end{proof}

\begin{corollary}
Let $\SP$ be a marked surface. Then $\cS\SP$ is a domain.
\end{corollary}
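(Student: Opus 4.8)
The plan is to deduce the corollary from Lemma~\ref{r.arczero} by a localization argument, the point being that once essential $\cP$-arcs are known to be regular elements of $\cS\SP$, domain-ness is formal. All of the geometric work is already packed into Lemma~\ref{r.arczero} — the construction and injectivity of the map $T\mapsto \inqa(aT)$ on essential $\cP$-tangles — so for the corollary only a short argument remains, and the one thing to be careful about is that it must not secretly reinvoke Muller's non-geometric domain theorem.

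First I would treat the main case, $\SP$ quasitriangulable, and fix a $\cP$-quasitriangulation $\D$. Every edge of $\D$ is an essential $\cP$-arc, hence a non-zero-divisor of $\cS\SP$ by Lemma~\ref{r.arczero}; since a product of regular elements is regular and Weyl normalization multiplies only by the unit $q^{c}$, every $\D$-monomial is regular, so the multiplicative Ore set $\fM$ generated by the $\D$-monomials (Lemma~\ref{l.oresubset}) consists of regular elements. Therefore the canonical map $\cS\SP\to\cS\SP\,\fM^{-1}$ is injective. Now run the argument in the proof of Theorem~\ref{r.torus}, replacing its single use of ``$\cS$ is a domain'' by ``$\fM$ consists of regular elements'' (that is the only place domain-ness was invoked there), to obtain an isomorphism $\cS\SP\,\fM^{-1}\cong\fX(\D)$. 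But $\fX(\D)$ is a quantum torus over the polynomial ring $R[\cH]$, and an iterated skew Laurent polynomial extension of a commutative domain is a domain; hence $\cS\SP$ embeds in the domain $\fX(\D)$ and is itself a domain. (When $\SP$ is totally marked one may instead simply quote \cite[Corollary~6.16]{Mu2012}; the general quasitriangulable case then reduces to that one via the embedding $\cS\SP\embed\cS(\Sigma,\cP')$ of Proposition~\ref{r.func}, for any finite $\cP'\supseteq\cP$ making $(\Sigma,\cP')$ totally marked.)

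Finally I would dispose of the marked surfaces with no quasitriangulation — a disk with at most one marked point, an annulus with one marked point, and the cases $\cP=\emptyset$. In the first two, $\cS\SP$ is either $R$ or a polynomial ring over $R$ in the central unmarked boundary loops, hence a domain; when $\cP=\emptyset$ the claim is Lemma~\ref{r.finitedomain} together with its analogue for compact surfaces with boundary. The one genuine obstacle in the whole chain — showing that $T\mapsto\inqa(aT)$ is well defined and injective, via the taut-position analysis and the smoothing inequalities \eqref{loopinequality}--\eqref{posloopinequality} — has already been cleared in Lemma~\ref{r.arczero}, so nothing further is required.
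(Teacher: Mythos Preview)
Your proposal is correct, and in spirit it coincides with the paper's proof: both take Lemma~\ref{r.arczero} (essential $\cP$-arcs are regular) as the only substantive input and then pass to a quantum torus via localization. The paper, however, compresses this entire second step into a single citation: ``Lemma~\ref{r.arczero} implies that $\cS$ has no zero divisors by \cite[Corollary~6.11]{Mu2012}.'' What you have written out---that regularity of the edges makes the monomial Ore set $\fM$ consist of regular elements, so $\cS\embed\cS\fM^{-1}\cong\fX(\D)$, and that this is the \emph{only} place the proof of Theorem~\ref{r.torus} used domain-ness---is precisely the content behind that citation. Your version is therefore more self-contained, which matches the stated goal of Appendix~\ref{a.domain}, whereas the paper is content to outsource this last formal step back to Muller.

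One small inaccuracy: the skein algebra of the annulus with one marked point is \emph{not} just a polynomial ring in the unmarked boundary loop---there is also an essential $\cP$-arc from the marked point to itself going once around the hole. Rather than computing this algebra directly, handle this case (and indeed every non-quasitriangulable case with $\partial\Sigma\neq\emptyset$) the same way you already suggested parenthetically: enlarge $\cP$ to $\cP'$ so that $(\Sigma,\cP')$ becomes quasitriangulable and invoke the injection of Proposition~\ref{r.func}. Only the closed case $\partial\Sigma=\emptyset$, $\cP=\emptyset$ genuinely needs Lemma~\ref{r.finitedomain}.
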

\begin{proof}
Lemma \ref{r.arczero} implies that $\cS$ has no zero divisors by \cite[Corollary 6.11]{Mu2012}.
\end{proof}

\chapter{Chebyshev-Frobenius homomorphism for the marked disk}\lbl{app.simple}

\no{\section{Marked disk}}
\no{\red{From 11/2016 draft}}

In this Appendix, we prove directly that the Chebyshev-Frobenius homomorphism $\Phi_\xi$ as given in Chapter \ref{c.cf} is well-defined on the disk with four marked points, denoted $(\Om,\cP)$. This is done by investigating how the skein relations apply to intersecting $\cP$-arcs raised to the $N$th power, resulting in $2^{N^2}$ terms, most of which then either equal zero due to the trivial arc relation or cancel when the quantum parameter $\xi^4$ is a root of unity of order $N$.

This exercise reveals intuition on how the Chebyshev-Frobenius homomorphism behaves for $\cP$-arcs at the level of the skein algebra without involving the Muller algebra technique we used to prove the existence of the homomorphism. Throughout this section we write $\cS_q :=\cS_q(\Om,\cP)$. Our base ring is either $\BZ[q^{\pm 1/2}]$ when dealing with an indeterminate $q$, or $\BC$ when we set $q$ to be a root of unity $\xi$. 

For $\xi$ a primitive root of unity, $N:=\ord(\xi^4)$, and $\ve:=\xi^{N^2}$, we prove the following equality in $\cS_\ve$.

\FIGc{figures/case4eqn1}{The skein relation that must be satisfied.}{2.5cm}

We label the single component $\cP$-arcs in $(\Om,\cP)$ as in Figure \ref{fig:figures/markeddiskarcs}.

\FIGc{figures/markeddiskarcs}{Labels of the $\cP$-arcs in $(\Om,\cP)$.}{2.5cm}

\begin{lemma}\lbl{l.case41}
Let the base ring be $R=\BZ[q^{1/2}]$ and let $N$ be a positive integer. Then in $\cS_q$ we have that

\[
x^Ny^N=q^{-N^2}a^Nc^N \sum_{j=0}^N \binom{N}{j}_{q^4} q^{2j^2}(bda^{-1}c^{-1})^j.
\]
\end{lemma}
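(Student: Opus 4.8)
The plan is to compute $x^Ny^N$ directly in $\cS_q$ by repeatedly applying the skein relation to the crossings that appear when $x$ is stacked above $y$. Recall the $\cP$-arcs of $(\Om,\cP)$ are labelled $a,b,c,d$ (the boundary arcs) and $x,y$ (the two diagonals), with $x$ and $y$ meeting in a single interior crossing. First I would fix convenient $a$-taut representatives so that the superposition $x^N \cdot y^N$ has exactly $N^2$ crossings, arranged in an $N\times N$ grid; each of the $2^{N^2}$ smoothings of this diagram is a $\cP$-tangle which, after resolving trivial arcs and trivial loops, is a monomial in $a,b,c,d$. The key combinatorial observation (this is the classical ``square smoothing'' computation underlying the Kauffman bracket on a tangle with all strands parallel) is that for the $N\times N$ grid of crossings the surviving nonzero smoothings are indexed by a single integer $j\in\{0,1,\dots,N\}$: the smoothing in which the ``bottom-left $j\times j$ block'' is resolved one way and the rest the other way. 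Every other smoothing produces a $\cP$-arc bounding a disk, hence is killed by the trivial arc relation. For the $j$-th surviving term one reads off the resulting multicurve as $a^{N-j}c^{N-j}b^jd^j$ (up to reordering), and bookkeeping the powers of $q$ from the $N^2$ skein resolutions plus the Weyl-normalization factors from \eqref{e.simul} gives the coefficient.

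Concretely, I would carry out the steps in the following order. (1) Set up the grid picture and record that $xy = q^{?}(\cdots)$ for a single crossing, i.e. the base case $N=1$: $xy = q\,[bd] + q^{-1}[ac]$ after normalization, which pins down the exponent conventions. (2) Prove by induction on $N$ that the surviving smoothings of the $N\times N$ grid are exactly the $N+1$ ``staircase'' smoothings described above, using the trivial arc relation to discard all others; this is essentially the statement that a $j$-out-of-$N$ choice must be ``monotone'' for the resulting arcs to be essential. (3) Track the exponent of $q$: each grid picture contributes a global factor $q^{-N^2}$ (coming from a combination of the $\tfrac14\sum k_p(k_p-1)$ normalization prefactors for $x^N,y^N$ and their product, together with the $q$'s produced by reordering parallel strands), and the $j$-th term carries an additional $q^{2j^2}$ from the quadratic count of ``positive'' vs ``negative'' smoothings in the $j\times j$ corner, giving the factor $q^{2j^2}$ and a binomial multiplicity $\binom{N}{j}_{q^4}$ by the standard Gauss-binomial identity for $q^4$-commuting variables (here $b$ and $d$ $q$-commute with $a,c$ with the relevant exponent $4$, exactly as in the Muller algebra computation in Section~\ref{sec.torus}, cf. \eqref{eq.aa1}). (4) Assemble: $x^Ny^N = q^{-N^2}\sum_{j=0}^N \binom{N}{j}_{q^4} q^{2j^2}\, a^{N-j}c^{N-j}b^jd^j$, and then rewrite $a^{N-j}c^{N-j}b^jd^j = a^Nc^N(bda^{-1}c^{-1})^j$ inside the Muller algebra $\sX_q(\D')$ (where the relevant diagonal has been flipped so that $a,b,c,d$ generate a quantum torus), absorbing any leftover powers of $q$ into the identification; this last rewriting is legitimate because, by Theorem~\ref{r.torus}, $\cS_q$ embeds into $\sX_q(\D')$ and there $a,c$ are invertible.

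The main obstacle I anticipate is bookkeeping the powers of $q$ correctly: the normalization prefactor in \eqref{e.simul}, the reordering relation of Figure~\ref{fig:figures/boundary}, and the $\pm$ exponents from each of the $N^2$ smoothings all contribute, and getting the clean answer $q^{-N^2}\cdot q^{2j^2}$ requires care. I would handle this by doing the computation inside the Muller algebra from the start — i.e. compute $\vp_{\D}(x^N)\vp_{\D}(y^N)$ where $\vp_{\D}(x)$ is the two-term expression \eqref{eq.aa1} for the flip — so that the ``miraculous cancellations'' of the trivial-arc relation are automatic (they correspond to the vanishing of certain monomials) and the $q$-powers are governed purely by \eqref{e.normalizedtorus}; then the identity reduces to the Gauss binomial formula $(\,[bd(a^*)^{-1}] + [ac]\,)^N = \sum_j \binom Nj_{q^4}[\cdots]$, which is exactly the mechanism already used in the proof of Proposition~\ref{prop.two} (equation~\eqref{eq.gauss}). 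Once phrased this way the lemma is a one-line consequence of the Gauss binomial formula together with the flip formula~\eqref{eq.aa1}, and the only remaining work is to translate back to the geometric statement about $x^Ny^N$ in $\cS_q$.
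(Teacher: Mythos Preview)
Your geometric claim in step~(2) is wrong, and this is the main gap. It is \emph{not} true that only $N+1$ smoothings survive, one for each ``$j\times j$ block'' choice; in fact $2^N$ smoothings survive. The paper's argument shows why: once the $(1,1)$ crossing is resolved positively, the trivial arc relation forces the entire first row \emph{and} the entire first column to be positive, producing a factor $q^{2N-1}bd$ and leaving an $(N-1)\times(N-1)$ inner grid; if instead the $(1,1)$ crossing is negative, the first row and the \emph{last} column are forced negative, producing $q^{1-2N}ac$ and again an $(N-1)\times(N-1)$ grid. This gives a genuine two-branch recursion
\[
x^Ny^N \;=\; q^{2N-1}\,bd\cdot x^{N-1}y^{N-1} \;+\; q^{1-2N}\,ac\cdot x^{N-1}y^{N-1},
\]
hence $x^Ny^N=\prod_{j=0}^{N-1}\bigl(q^{2N-1-2j}bd+q^{1-2N+2j}ac\bigr)$. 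Only after this does the paper pass to the division algebra, observe that $bd$ commutes with $ac$, factor out $a^Nc^N$, and apply Gauss's product formula $(1+u)_{q^4}^N=\sum_j\qbinom{N}{j}_{q^4}q^{2j(j-1)}u^j$ with $u=q^2bda^{-1}c^{-1}$. The Gaussian binomial thus arises from collapsing $2^N$ terms into $N+1$ classes, not from there being $N+1$ surviving resolutions.

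Your proposed fallback --- compute $\vp_\D(x^Ny^N)$ directly in the Muller algebra via the flip formula and the Gauss binomial identity --- is a legitimate alternative route and would indeed bypass the geometric recursion entirely. But note that you have the roles slightly tangled: in a triangulation $\D$ containing $x$ as an edge, it is $\vp_\D(y)$ (not $\vp_\D(x)$) that is the two-term flip expression, and the product $x^N\cdot\vp_\D(y)^N$ still requires pushing $x^{-1}$'s past $a,b,c,d$ before the Gauss binomial applies. This can be done, and is arguably slicker than the paper's hybrid skein-then-algebra argument, but the $q$-bookkeeping you flag as the ``main obstacle'' is exactly the content of the computation and cannot be waved away by appeal to \eqref{eq.gauss} (which is the $N$th-power-of-a-binomial identity, not the product form you need here).
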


\begin{proof}
The set of crossings in the product $x^Ny^N$ in $\cS_q$ naturally form an $N \times N$ grid. We label each crossing with the index $(i,j)$, $i,j=1,\ldots,N$, denoting row and column. Thus, using skein relations, $x^Ny^N$ resolves as a sum of $2^{N^2}$ terms, one for each way of filling the $N \times N$ grid with $+$'s and $-$'s to represent positive or negative crossings. This is known as the state sum picture of the Kauffman bracket skein relations \cite{Ka1987}. However, many of these terms will turn out to be zero by the trivial arc relation. Consider the set of all resolutions of crossings which have a $+$ in the top left corner, as shown the example of $x^4y^4$ in Figure \ref{fig:figures/case4crossings}.

\FIGc{figures/case4crossings}{$x^4y^4$ with the $(1,1)$ crossing resolved to be positive.}{3.5cm}

\FIGc{figures/case4vanish}{$x^4y^4$ with the $(1,1)$ crossing resolve to be positive and the $(2,1)$ crossing resolved to be negative.}{3.5cm}

Then, if crossing $(2,1)$ is made to be negative, the strand marked with $\star$ in Figure \ref{fig:figures/case4vanish} will be set to 0 and therefore any term with a $+$ crossing in the $(1,1)$ position and a $-$ crossing in the $(2,1)$ position will vanish. Thus, if the $(1,1)$ crossing is $+$, the $(2,1)$ crossing must be $+$ as well in order for the associated term to not vanish. Continuing inductively, this means that the entire first column of the grid must consist of $+$ crossings in order for the term to not vanish.
\[
\begin{array}{|c|c|c|c|} \hline
+ & \phantom{+} & \phantom{+} & \phantom{+} \\ \hline
\phantom{+} &  &  &  \\ \hline
\phantom{+} &  &  & \\ \hline
\phantom{+} &  &  &  \\ \hline \end{array} \ \ \text{ implies } \ \ 
\begin{array}{|c|c|c|c|} \hline
+ & \phantom{+} & \phantom{+} & \phantom{+} \\ \hline
+ &  &  &  \\ \hline
+ &  &  & \\ \hline
+ &  &  &  \\ \hline \end{array}.
\]
Similarly, looking to the right of the $(1,1)$ crossing, if the $(1,2)$ crossing is negative then a trivial arc will be obtained and the entire term will vanish. So by the same logic as the first column, the entire first row of crossings must be positive as well.
\[
\begin{array}{|c|c|c|c|} \hline
+ & \phantom{+} & \phantom{+} & \phantom{+} \\ \hline
+ &  &  &  \\ \hline
+ &  &  & \\ \hline
+ &  &  &  \\ \hline \end{array} \ \ \text{ implies} \ \ 
\begin{array}{|c|c|c|c|} \hline
+ & + & + & + \\ \hline
+ &  &  &  \\ \hline
+ &  &  & \\ \hline
+ &  &  &  \\ \hline \end{array}.
\]
Thus, the only nonvanishing term that has a $+$ crossing in the $(1,1)$ position is given by $q^{2N-1}bdx^{N-1}y^{N-1}$ (the other terms obtained by resolving the remaining crossings are ``contained'' in $x^{N-1}y^{N-1}$). Following similar logic as for the $+$ crossing, if the $(1,1)$ crossing is $-$, then the crossings to the right of the $(1,1)$ crossing are also forced to be $-$, and then the crossings in the $N$th column are forced to be $-$ as well in order for the term to not vanish.
\[
\begin{array}{|c|c|c|c|} \hline
- & \phantom{-} & \phantom{-} & \phantom{-} \\ \hline
\phantom{-} &  &  &  \\ \hline
\phantom{-} &  &  & \\ \hline
\phantom{-} &  &  &  \\ \hline \end{array} \ \ \text{ implies } \ \
\begin{array}{|c|c|c|c|} \hline
- & - & - & - \\ \hline
\phantom{-} &  &  &  \\ \hline
\phantom{-} &  &  & \\ \hline
\phantom{-} &  &  &  \\ \hline \end{array} \ \ \text{ implies } \ \ 
\begin{array}{|c|c|c|c|} \hline
- & - & - & - \\ \hline
 &  &  & - \\ \hline
 &  &  & - \\ \hline
 &  &  & - \\ \hline \end{array}.
\]
Thus we have that
\[
x^Ny^N = q^{2N-1}bdx^{N-1}y^{N-1} + q^{1-2N}acx^{N-1}y^{N-1}.
\]
Proceeding inductively,
\[
x^Ny^N = \prod_{j=0}^{N-1}(q^{2N-1-2j}bd+q^{1-2N+2j}ac).
\]
Next we note that both $b$ and $d$ commute with the product $ac$.\no{Furthermore, $[bd]=bd$ and $[ac]=ac$, } Working in the division algebra $\tcS_q$, we can write
\begin{align*}
\prod_{j=0}^{N-1}(q^{2N-1-2j}bd+q^{1-2N+2j}ac) & = \prod_{j=0}^{N-1}q^{1-2N+2j}ac(1+q^{2(2N-1-2j)}bda^{-1}c^{-1}) \\
& = \left( \prod_{j=0}^{N-1}q^{1-2N+2j} \right) a^Nc^N \times \\
& \left( \prod_{j=0}^{N-1}(1+q^{2(2N-1-2j)}bda^{-1}c^{-1}) \right) \\
& = q^{-N^2}a^Nc^N \prod_{j=0}^{N-1}(1+q^{2(2N-1-2j)}bda^{-1}c^{-1}).
\end{align*}
For a variable $u$, we write $(1+u)_q^n$ to represent the product $(1+u)(1+qu) \cdots (1+q^{n-1}u) = \prod_{j=0}^{n-1}(1+q^ju)$. It is a formula of Gauss that tells us
\[
(1+u)_q^n = \sum_{j=0}^n \qbinom nj_q q^{\frac{j(j-1)}{2}}u^j.
\]
Consider $u=bda^{-1}c^{-1}$. Then we have that
\[
(1+q^2u)_{q^4}^N = \prod_{j=0}^{N-1}(1+q^{2+4j}u) = \prod_{j=0}^{N-1}(1+q^{2(2N-1-2j)}u).
\]
Then
\begin{align*}
x^Ny^N = q^{-N^2}a^Nc^N(1+q^2u)_{q^4}^N & = q^{-N^2}a^Nc^N \sum_{j=0}^N \qbinom Nj_{q^4} q^{2j(j-1)}q^{2j}u^j \\
& = q^{-N^2}a^Nc^N \sum_{j=0}^N \qbinom Nj_{q^4} q^{2j^2}u^j.
\end{align*}
\end{proof}

\begin{corollary}
If we set $q=\xi$ then $\qbinom Nj_{\xi^4}= 0$ for all $1 \leq j \leq N-1$ since $\xi^4$ is a primitive root of unity of order $N$ by Corollary \ref{c.32}. Thus, in $\cS_\xi$, we have that
\[
x^Ny^N = \ve^{-1}a^Nc^N + \ve^{-1}a^Nc^N\ve^2(bda^{-1}c^{-1})^N = \ve^{-1}a^Nc^N+\ve b^Nd^N,
\]
which is the expression shown in Figure \ref{fig:figures/case4eqn1}. Therefore the Chebyshev-Frobenius homomorphism $\Phi_\xi: \cS_\ve \to \cS_\xi$ satisfies the skein relation and so is well-defined.
\end{corollary}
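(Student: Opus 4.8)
The plan is to reduce the statement to the generic identity of Lemma \ref{l.case41} together with the vanishing of $q$-binomial coefficients at a root of unity, and then to observe that on the marked disk this one relation is essentially all that must be checked. So the real work lies in Lemma \ref{l.case41}, which I would establish first.

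To prove Lemma \ref{l.case41} I would use the Kauffman state sum. Presenting $x^Ny^N$ as a single $\cP$-tangle, its crossings form an $N\times N$ grid, and resolving each crossing by the skein relation expands $x^Ny^N$ into $2^{N^2}$ terms indexed by sign assignments on the grid. The key point is that essentially all of these vanish: examining the top row and left column, if a crossing there is smoothed oppositely to an adjacent one, the resulting $\cP$-tangle contains an arc bounding a disk, which is $0$ by the trivial arc relation. Propagating this constraint forces the surviving configurations to be the ``staircase'' ones, from which one reads off the telescoping product
\be
x^Ny^N = \prod_{j=0}^{N-1}\bigl(q^{2N-1-2j}\,bd + q^{1-2N+2j}\,ac\bigr)
\ee
in $\cS_q(\Om,\cP)$. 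Since, by the reordering relation (Proposition \ref{r.reorderingrel}), each of $b$ and $d$ commutes with the product $ac$, I would pass to the division algebra of $\cS_q(\Om,\cP)$, factor $a^Nc^N$ out of the product, and recognize the remaining factor as a Gauss $q$-binomial expansion of the type $(1+q^2u)^N_{q^4}$ with $u=bda^{-1}c^{-1}$; expanding via the Gauss binomial theorem gives the closed form in the lemma.

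Granting the lemma, the Corollary is the specialization $q=\xi$ with $N=\ord(\xi^4)$, so that $\xi^4$ is a primitive $N$th root of unity. Then $\qbinom Nj_{\xi^4}=0$ for $1\le j\le N-1$ by Corollary \ref{c.32}, so the sum collapses to the extreme terms $j=0$ and $j=N$; using $\xi^{-N^2}=\ve^{-1}$, $\xi^{2N^2}=\ve^2$, and $(bda^{-1}c^{-1})^N a^Nc^N=b^Nd^N$ (again from the commutation relations among $a,b,c,d$), this yields $x^Ny^N=\ve^{-1}a^Nc^N+\ve\,b^Nd^N$ in $\cS_\xi(\Om,\cP)$, i.e. the identity of Figure \ref{fig:figures/case4eqn1}.

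Finally, to conclude that $\hPhi_N$ descends to a well-defined $\Phi_\xi\colon \cS_\ve(\Om,\cP)\to \cS_\xi(\Om,\cP)$: on the marked disk there are no essential $\cP$-knots and $\cS_\ve(\Om,\cP)$ is generated by $\cP$-arcs, so one only needs $\hPhi_N$ to send each trivial arc relation element and each skein relation element into $\Rel_\xi$. For a trivial arc element this is immediate, since $N$ parallel copies of a disk-bounding arc still bound a disk after reordering. For a skein relation element, by locality it suffices to treat the generating case of two essential $\cP$-arcs meeting once, which is $xy-\ve^{-1}ac-\ve\,bd$ in the notation of Figure \ref{fig:figures/markeddiskarcs}; applying the $\BC$-linear map $\hPhi_N$ turns this into $x^Ny^N-\ve^{-1}a^Nc^N-\ve\,b^Nd^N$, which is $0$ in $\cS_\xi$ by the computation above. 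Hence $\hPhi_N$ descends and $\Phi_\xi$ is the asserted homomorphism. The main obstacle throughout is the combinatorial heart of Lemma \ref{l.case41}: justifying, uniformly in $N$, that exactly the staircase sign-assignments survive the trivial arc relations and that the survivors telescope into the displayed product; everything after that is bookkeeping with roots of unity and the Gauss binomial theorem.
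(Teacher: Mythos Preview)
Your proposal is correct and follows essentially the same route as the paper: the Corollary is obtained directly from Lemma~\ref{l.case41} by specializing $q=\xi$, using the vanishing of $\qbinom{N}{j}_{\xi^4}$ for $1\le j\le N-1$ (Corollary~\ref{c.32}), and reading off the skein relation of Figure~\ref{fig:figures/case4eqn1}. Your sketch of Lemma~\ref{l.case41} via the state-sum/trivial-arc recursion and Gauss binomial expansion matches the paper's proof, and your additional paragraph explaining why this single skein-relation check suffices for well-definedness on the disk (no essential $\cP$-knots, trivial arc relations preserved) is a welcome elaboration that the paper leaves implicit.
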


\no{
\section{Punctured torus}

\red{From 04/16 draft}

\begin{lemma}\lbl{l.case12}
Let $\xi^4$ be a primitive root of unity, $N$ the order of $\xi^4$, and $\ve=\xi^{N^2}$. Let $\cS_q(p-\BT)$ be the skein algebra of the punctured torus and $\D$ an ideal triangulation of the punctured torus. Let $\Trq^\D:\cS_q(p-\BT) \to \cY_q(\D)$ be the Bonahon-Wong quantum trace map associated to $\D$. Then we may define $\tPhi_N: \cY_{q^{N^2}}(\D) \to \cY_q(\D)$ by sending each generator of the quantized shear coordinates $X_a$ to $X_a^N$. $\tPhi_N$ restricts to a map $\cS_{q^{N^2}}(p-\BT) \to \cS_q(p-\BT)$ and is equivariant under the change-of-coordinate isomorphism $\Theta_{\D\D'}: \cY_q(\D) \to \cY_q(\D')$ for another ideal triangulation $\D'$. Then taking the quotient by $(q-\xi)$ of the skein algebras, we obtain the Chebyshev-Frobenius homomorphism $\Phi_N: \cS_\ve(p-\BT)\to\cS_\xi(p-\BT)$.
\end{lemma}

This lemma and its proof is summarized by the following commutative diagrams.

\[
\begin{tikzcd}
\cS_{q^{N^2}}(p-\BT) \arrow[r,hook,"\TrqN^\D"] \arrow[d,dashed,"\Phi_N"] & \cY_{q^{N^2}}(\Delta) \arrow[d, "\tPhi_N"] \\
\cS_q(p-\BT) \arrow[r,hook,"\Trq^\D"] & \cY_q(\Delta)
\end{tikzcd}
\]
After computing the image of $\Phi_N \circ \TrqN$ in $\cY_q(\Delta)$, it is noted that the $\tPhi_N$ cannot be restricted to a map $\Phi_N$ on the skein algebra since skein relations will not be obeyed unless $q^4$ is a primitive root of unity of order $N$. Thus we quotient each algebra by $(q-\xi)$ and then $\tPhi_N$ may be restricted to $\Phi_N:\cS_\ve(p-\BT)\to\cS_\xi(p-\BT)$.
\[
\begin{tikzcd}
\cS_{\ve}(p-\BT) \arrow[r,hook,"\TrqN^\D"] \arrow[d,"\Phi_N"] & \cY_{\ve}(\Delta) \arrow[d, "\tPhi_N"] \\
\cS_\xi(p-\BT) \arrow[r,hook,"\Trq^\D"] & \cY_\xi(\Delta)
\end{tikzcd}
\]

\begin{proof}
As a model for the punctured torus $p-\BT$ we take the unit square with the four corner points removed and then identifying opposite edges in the usual fashion. We give it the ideal triangulation $\D$ in Figure \ref{fig:figures/ptorustriangulation}.

\FIGc{figures/ptorustriangulation}{The top edge is identified with the bottom edge, and the left with the right. The marked point is a puncture.}{3cm}

Let $R=\BC[q^{\pm 1}]$. The Chekhov-Fock algebra associated to the ideal triangulation $\D$ given in Figure \ref{fig:figures/ptorustriangulation} has associated $\{a,b,c\}\times\{a,b,c\}$ face matrix

$$
Q=\left( \begin{array}{ccc}
0 & 2 & -2 \\
-2 & 0 & 2 \\
2 & -2 & 0 \end{array} \right).
$$
Then the associated Chekhov-Fock algebra $\cY_q(\D)$ is the quantum torus $\BT(Q,q^{-2})$:
$$
\cY_q(\D) = R\langle Z_a^{\pm 1},Z_b^{\pm 1},Z_c^{\pm 1}\rangle/(Z_aZ_b=q^{-4}Z_bZ_a,Z_bZ_c=q^{-4}Z_cZ_b,Z_aZ_c=q^4Z_cZ_a).
$$

Now we wish to show the relation depicted in Figure \ref{fig:figures/ptorusrelation} in $\cS_\xi(p-\BT)$. Note that $\gamma,\gamma^*$ are both single $\cP$-knots due to the identification of sides.

\FIGc{figures/ptorusrelation}{The Chebyshev-Frobenius homomorphism for the punctured torus.}{3cm}

Denoting the vertical strand as $\alpha$ and the horizontal strand as $\beta$, Figure \ref{fig:figures/ptorusrelation} says

$$
T_N(\alpha)\cdot T_N(\beta) = \ve T_N(\gamma) + \ve^{-1}T_N(\gamma^*).
$$
Instead of proving this directly in the skein algebra, we prove it in the image of the skein algebra in the Chekhov-Fock algebra. So we need to find the images of $T_N(\alpha),T_N(\beta),T_N(\gamma),T_N(\gamma^*)$ in $\cY(\D)$ under $\Trq^\D$.

As given in equation (\ref{e.varkappa}), for a $\D$-simple $\cP$-knot $\alpha$,
$$
\Trq^\D(\alpha)=\varkappa_\D(\alpha) = \sum_{C \in \Col(\alpha,\D)} \left[ \prod_{a\in\D}Z_a^{C(a)}\right] \in \cY_q(\D).
$$
where $\Col(\alpha,\D) \subset \BZ^{\circD}$ is the set of colorings of $\alpha$ with respect to $\D$. Our $\alpha$ is a $\D$-simple knot, so we compute its colorings. It is easy to check that the admissable colorings are
$$
(C(a),C(b),C(c))=(1,0,1),(1,0,-1),(-1,0,-1).
$$
Thus we have that
\be
 \lbl{e.qtrace1}
\Trq^\D(\alpha)=\varkappa_\D(\alpha) = [Z_aZ_c]+[Z_aZ_c^{-1}]+[Z_a^{-1}Z_c^{-1}].
\ee
Recall Proposition \ref{p.prop31}, which supposes that $K^{\pm 1}$ and $E$ are variables such that $KE=v^2EK$. Applied to our situation, we have $v=q^{-2}$, $K=[Z_aZ_c]$, $K^{-1}=[Z_a^{-1}Z_c^{-1}]$, $E=[Z_aZ_c^{-1}]$. Then by Equation (\ref{eq.38}) we have that
$$
T_N(\Trq(\alpha)) = [Z_a^NZ_c^N]+[Z_a^{-N}Z_c^{-N}]+[Z_a^NZ_c^{-N}] + (\text{other terms}).
$$
As given in Corollary \ref{c.32}, the other terms will vanish when $v^2$ is a root of unity of order $N$, or in other words, if $q^{-4}=\xi^{-4}$ is a primitive root of unity of order $N$, which will be the case when we set $q=\xi$.

Working similarly for the horizontal strand $\beta$ and $\gamma^*$, we have that $\Trq(\beta) = [Z_bZ_c]+[Z_b^{-1}Z_c]+[Z_b^{-1}Z_c^{-1}]$, and $\Trq(\gamma^*) = [Z_bZ_a]+[Z_bZ_a^{-1}]+[Z_b^{-1}Z_a^{-1}]$ so that
$$
T_N(\Trq(\beta)) = [Z_b^NZ_c^N]+[Z_b^{-N}Z_c^N]+[Z_b^{-N}Z_c^{-N}] + (\text{other terms}),
$$
$$
T_N(\Trq(\gamma^*)) = [Z_a^NZ_b^N]+[Z_a^{-N}Z_b^N]+[Z_a^{-N}Z_b^{-N}] + (\text{other terms}),
$$
with the other terms vanishing when $q=\xi$.

Next we handle $\gamma$. Since $\gamma$ is not $\D$-simple, we need to make use of another triangulation $\D'$ for which $\gamma$ is $\D'$-simple to find $\varkappa_{\D'}(\gamma)$, and then use the Chekhov-Fock change-of-coordinate isomorphism to find $\varkappa_\D(\gamma)=\Trq^\D(\gamma)$. For this, we flip the triangulation at $c$ to obtain a new triangulation $\D'$ given in Figure \ref{fig:figures/ptorustriangulation2}.

\FIGc{figures/ptorustriangulation2}{The flip of $\D$ at $c$ is used to obtain this triangulation $\D'$.}{3cm}

The Chekhov-Fock algebra associated to $\D'$ is given by
$$
\cY_q(\D') = R\langle Z_{a'}^{\pm 1}, Z_{b'}^{\pm 1}, Z_{c^*}^{\pm 1} \rangle/(Z_{a'}Z_{b'}=q^4Z_{b'}Z_{a'}, Z_{b'}Z_{c^*}=q^4Z_{c^*}Z_{b'},Z_{a'}Z_{c^*}=q^{-4}Z_{c^*}Z_{b'}).
$$
We compute that $\varkappa_{\D'}(\gamma) = [Z_{a'}Z_{b'}]+[Z_{a'}Z_{b'}^{-1}]+[Z_{a'}^{-1}Z_{b'}^{-1}]$.

To make use of the Chekhov-Fock change-of-coordinate isomorphism, we need to move to the fraction division algebra $\tilde{\cY}_q(\D) \supset \cY_q(\D)$ of the Chekhov-Fock algebra (though all of our computations will end up taking place inside of $\cY_q(\D)$ anyways). $\cY_q(\D)$ is a Noetherian ring and a right Ore domain, and so $\tilde{\cY}_q(\D)$ exists. The change-of-coordinate isomorphism $\Psi_{\D'\D}: \tilde{\cY}_q(\D') \to \tilde{\cY}_q(\D)$ is described in Section 3 of \cite{Liu2009} (he uses the notation $\Theta_{\D'\D}$). In our case, it is given as

\begin{align*}
\Psi_{\D'\D}(Z_{a'}) & = (1+qZ_c)(1+q^3Z_c)Z_a, \\ \Psi_{\D'\D}(Z_{b'}) & = (1+qZ_c^{-1})^{-1}(1+q^3Z_c^{-1})^{-1}Z_b \\ \Psi_{\D'\D}(Z_{c^*}) & = Z_c^{-1}.
\end{align*}
By \cite{Liu2009}, we know that $\Psi_{\D\D'}$ is compatible with the skew-symmetry relations of $\tilde{\cY}_q(\D')$. That is

\be\lbl{e.chekhovskew}
\Psi_{\D\D'}(Z_i)\Psi_{\D'\D}(Z_j) = q^{-2Q'(i,j)}\Psi_{\D'\D}(Z_j)\Psi_{\D'\D}(Z_i),
\ee
for $i,j \in \D'$ where $Q'$ is the face matrix associated to $\D'$. Writing $\Psi_{\D'\D}$ as just $\Psi$, we have that

\begin{align*}
\Psi(\Trq^{\D'}(\gamma)) & = \Psi([Z_{a'}Z_{b'}])+[Z_{a'}Z_{b'}^{-1}]+[Z_{a'}^{-1}Z_{b'}^{-1}]) \\
& = \Psi([Z_{a'}Z_{b'}]) + \Psi([Z_{a'}Z_{b'}^{-1}]) + \Psi([Z_{a'}^{-1}Z_{b'}^{-1}]) \\
& =: K + E + K^{-1}.
\end{align*}
We note that the first and third terms above are inverses of one another, and that $KE=q^4EK$. Thus the hypotheses of Proposition \ref{p.prop31} are satisfied with $v=q^2$, and so we have that

$$
T_N(\Psi(\Trq^{\D'})(\gamma)) = K^n + K^{-n} + E^n + \text{other terms},
$$
with the other terms vanishing when $q=\xi$. Using (\ref{e.chekhovskew}), we can see that

\begin{align*}
T_N(\Psi(\Trxi^{\D'})(\gamma)) & = \Psi([Z_{a'}Z_{b'}])^N + \xi^{2N}\Psi(Z_{a'})^N\Psi(Z_{b'}^{-1})^N + \Psi([Z_{a'}^{-1}Z_{b'}^{-1}])^N \\
& = \Psi([Z_{a'}Z_{b'}])^N - \Psi(Z_{a'})^N\Psi(Z_{b'}^{-1})^N + \Psi([Z_{a'}^{-1}Z_{b'}^{-1}])^N
\end{align*}
We begin computing terms of this expression. We start by computing $\Psi(Z_{a'}Z_{b'})$. By \cite{Le2017}, we know that $\Psi_{\D'\D} = (\psi_{\D})^{-1} \circ \Theta_{\D'\D} \circ \psi_{\D'}$, where $\psi_\D$ is the shear-to-skein map (see Section \ref{sec.sheartoskein}). We note that the vertex matrix for $\D,\D'$ are both the zero matrix, so that $\sX_q(\D),\sX_q(\D')$ are commutative, and that $\oD=\D$ so that the face matrix $Q$ is equal to the $H$ in the definition of $\psi_{\D}$. Then we have that

\begin{align*}
(\Theta_{\D'\D} \circ \psi_{\D'})([Z_{a'}Z_{b'}]) & = \Theta_{\D'\D}(\psi_{\D'}(Z^{\langle 1,1,0 \rangle}) \\
& = \Theta_{\D'\D}(X^{\langle 1,1,0 \rangle Q'}) \\
& = \Theta_{\D'\D}(X^{\langle -2,2,0 \rangle}) \\
& = X^{\langle -2,2,0 \rangle} = [X_a^{-2}X_b^2].
\end{align*}
Then we see that

$$
\psi_\D(Z^{\langle 1,1,2\rangle}) = X^{\langle 1,1,2\rangle Q} = X^{\langle -2,2,0 \rangle} = (\Theta_{\D'\D} \circ \psi_{\D'})([Z_{a'}Z_{b'}]).
$$
Thus we have that

\be\lbl{eq.gamma-K}
\Psi([Z_{a'}Z_{b'}])^N = [Z_a^NZ_b^NZ_c^{2N}] \ \ \text{ and } \ \ \Psi([Z_{a'}^{-1}Z_{b'}^{-1}])^N = [Z_a^{-N}Z_b^{-N}Z_c^{-2N}].
\ee

Next we compute the $E$ term. Here we apply $\Psi$ directly. For $Z_{a'} \in \cY_q(\D')$, we have that

\begin{align*}
 \Psi(Z_{a'})^N & = (1+qZ_c)(1+q^3Z_c)\cdots (1+q^{1+2N}Z_c)(1+q^{3+4N}Z_c)Z_a^N \\
 & = \prod_{k=0}^{N-1}(1+q^{1+4k}Z_c)\prod_{k=0}^{N-1}(1+q^{3+4k}Z_c)Z_a^N \\
 & = (1+qZ_c)_{q^4}^N(1+q^3Z_c)_{q^4}^NZ_a^N \\
 & = \left( \sum_{j=0}^N \qbinom Nj_{q^4} q^{\frac{j(j-1)}{2}} q^j Z_c^j \right) \left( \sum_{j=0}^N \qbinom Nj_{q^4} q^{\frac{j(j-1)}{2}}q^{3j}Z_c^j\right) Z_a^N.
\end{align*}
When $q=\xi$, all terms in the above summations vanish except for the $j=0,N$ terms. Thus we have that

$$
\Psi(Z_{a'})^N = (1+\xi^{\frac{N(N+1)}{2}}Z_c^N)(1+\xi^{\frac{N(N+5)}{2}}Z_c^N)Z_a^N = (1+\xi^{\frac{N(N+1)}{2}}Z_c^N)(1-\xi^{\frac{N(N+1)}{2}}Z_c^N)Z_a^N = (1+\ve \xi^{3N} Z_c^{2N})Z_a^N.
$$
Working similarly for $\Phi(Z_{b'}^{-1})^N$, we have that
$$
\Psi(Z_{b'})^N = Z_b^{-N}(1+\ve \xi^{3N} Z_c^{-2N}).
$$
Combining these results, we have that

\begin{align*}
\Psi(Z_{a'})^N\Psi(Z_{b'})^N & = (1+\ve \xi^{3N}Z_c^{2N})Z_a^NZ_b^{-N}(1+\ve \xi^{3N}Z_c^{-2N}) \\
& = -2[Z_a^NZ_b^{-N}] + \ve \xi^{3N}Z_a^NZ_b^{-N}(Z_c^{2N}+Z_c^{-2N})
\end{align*}
When $N=3+4k$, $\xi^{3N} = \ve$, and we get that

\begin{align*}
\Psi(Z_{a'})^N\Psi(Z_{b'})^N  & = -2[Z_a^NZ_b^{-N}] - (Z_a^NZ_b^{-N}(Z_c^{2N}+Z_c^{-2N}) \\ & = -2[Z_a^NZ_b^{-N}]+[Z_a^NZ_b^{-N}Z_c^{2N}]+[Z_a^NZ_b^{-N}Z_c^{-2N}].
\end{align*}

We compute $T_N(\Trq(\alpha))T_N(\Trq(\beta))$ when $q=\xi$ as

\begin{align*}
T_N(\Trq(\alpha))T_N(\Trq(\beta)) = & ([Z_a^NZ_c^N]+[Z_a^{-N}Z_c^{-N}]+[Z_a^NZ_c^{-N}])([Z_b^NZ_c^N]+[Z_b^{-N}Z_c^{-N}]+[Z_b^{-N}Z_c^N]) \\
= & \ve^{-1}[Z_a^NZ_b^NZ_c^{2N}] + \ve[Z_a^NZ_b^{-N}] + \ve^{-1}[Z_a^NZ_b^{-N}Z_c^{2N}] \\
& + \ve[Z_a^{-N}Z_b^N] + \ve^{-1}[Z_a^{-N}Z_b^{-N}Z_c^{-2N}] + \ve[Z_a^{-N}Z_b^{-N}] \\
& + \ve[Z_a^NZ_b^N] +\ve^{-1}[Z_a^NZ_b^{-N}Z_c^{-2N}] + \ve^{-3}[Z_a^NZ_b^{-N}].
\end{align*}

\end{proof}

}
\end{appendices}

\begin{singlespace}  %
	\setlength\bibitemsep{\baselineskip}  %
	\printbibliography[title={References}]
\end{singlespace}

\addcontentsline{toc}{chapter}{References}  %

\chapter*{Vita}
\addcontentsline{toc}{chapter}{Vita}  %
Jonathan Paprocki was born in March 1989 in Lapeer, MI, and is known by his friends as ``Poprox''. He graduated with Highest Honors from the Georgia Institute of Technology in 2011 with a B.S. in Applied Mathematics and a B.S. in Physics, then with a Ph.D. in Mathematics in 2019.

His academic interests include mathematical physics, quantum topology, quantum computing, distributed systems, biomimicry, regenerative agriculture, categorical informatics, cybernetics, complex systems, and philosophy of physics and mathematics. The two scientists whose work most inspires him are mathematician Louis Kauffman and physicist John Wheeler. In his free time he enjoys artistic pursuits, especially in the context of the regional Burning Man community with the open-ended Survey Flags project.

\end{document}